\documentclass[reqno]{amsart}
\usepackage{amsmath,amsthm,amssymb,bm}
\usepackage{hyperref}
\usepackage{a4wide}
\usepackage{cleveref}
\usepackage{graphicx,color}
\usepackage{tikz}
\usepackage{ytableau}
\usepackage{kotex}
\numberwithin{equation}{section}
\usepackage{mathdots}

\newtheorem{thm}{Theorem}[section]
\newtheorem{lem}[thm]{Lemma}
\newtheorem{prop}[thm]{Proposition}
\newtheorem{cor}[thm]{Corollary}
\theoremstyle{definition}

\newtheorem{defn}[thm]{Definition}
\newtheorem{conj}[thm]{Conjecture}
\newtheorem{algo}[thm]{Algorithm}
\newtheorem{question}[thm]{Question}
\newtheorem{problem}[thm]{Problem}
\newtheorem{remark}[thm]{Remark}

\crefname{claim}{Claim}{Claims}
\crefname{lem}{Lemma}{Lemmas}
\crefname{thm}{Theorem}{Theorems}
\crefname{prop}{Proposition}{Propositions}
\crefname{question}{Question}{Questions}
\crefname{conj}{Conjecture}{Conjectures}
\crefname{figure}{Figure}{Figures}
\crefname{cor}{Corollary}{Corollaries} 
\crefformat{equation}{(#2#1#3)}
\Crefformat{equation}{Equation #2(#1)#3}
\crefname{defn}{Definition}{Definitions}

% DO NOT DELETE THIS COMMENT!!! MACROS BELOW:

\newcommand\SW{\operatorname{SW}}
\newcommand\PA{\mathcal{A}}
\newcommand\area{\operatorname{area}}

\newcommand\asc{\operatorname{asc}}
\newcommand\PT{\mathcal{T}}

\newcommand\Sort{\operatorname{Sort}}

\newcommand\inv{\operatorname{inv}}

\newcommand\QQ{\mathbb{Q}}
\newcommand{\CC}{\mathbb{C}}
\newcommand{\ZZ}{\mathbb{Z}}

\newcommand{\HH}{\mathbb{H}}

\newcommand\mm{\mathbf{m}}
\newcommand\pp{\mathbf{p}}
\renewcommand\vec[1]{\boldsymbol{#1}}

\newcommand\pat{\operatorname{pat}}
\newcommand\pr{\operatorname{pr}}

\newcommand\SYT{\operatorname{SYT}}
\newcommand\wt{\operatorname{wt}}
\newcommand\AO{\mathcal{O}}
\newcommand\sink{\operatorname{sink}}
\newcommand\ch{\operatorname{ch}}
\newcommand\Hess{\operatorname{Hess}}
\newcommand\calC{\mathcal{C}}
\newcommand\qand{\quad\mbox{and}\quad}

\newcommand\board[2]{\foreach \i in {0,1,...,5}
  \draw[color=gray!70] (#1+\i,#2) -- (#1+\i,#2+5) (#1,\i+#2) -- (#1+5,\i+#2)
  (#1,#2) -- (#1+5,#2+5);
  \draw (#1,#2+5) -- (#1,#2) -- (#1+5,#2);
}  

\renewcommand\emph[1]{\textcolor{blue}{\it #1}}

\pdfstringdefDisableCommands{\def\eqref#1{(\ref{#1})}}

%%%%%%%%%%%%%%%%%%%%%%%%%%%%%%%%%%%%%%%%%%%%%%%%%%%%%%%%%%%%%%%%

\title[Refinement of Hikita's $e$-positivity theorem]{Refinement of Hikita's $e$-positivity theorem via Abreu--Nigro's $g$-functions and restricted modular law}
\author{JiSun Huh}
\thanks{J. Huh was supported by the National Research Foundation of Korea (No. NRF-2020R1A2C1A01011045).}
\address{Department of Mathematics, University of Seoul}
\email{hyunyjia@yonsei.ac.kr}

\author{Byung-Hak Hwang}
\thanks{B.-H. Hwang was supported by KIAS Individual Grant (MG098201) at Korea Institute for Advanced Study.}
\address{School of Mathematics, Korea Institute for Advanced Study, Seoul, South Korea}
\email{byunghakhwang@gmail.com}

\author{Donghyun Kim}
\thanks{D. Kim  was supported by the National Research Foundation of Korea (NRF) grant funded by the Korean government (MEST) (No. 2019R1A6A1A10073437) and individual NRF grants 2022R1I1A1A01070620.}
\address{Department of Mathematical Sciences, Seoul National University}
\email{hyun920310@snu.ac.kr}

\author{Jang Soo Kim}
\address{Department of Mathematics, Sungkyunkwan University, Suwon, Korea}
\email{jangsookim@skku.edu}

\author{Jaeseong Oh}
\thanks{J. Oh was supported by KIAS Individual Grant (HP083401) and the National Research Foundation of Korea (NRF) grant MSIT NRF-2022R1C1C1010300} \address{June E Huh Center for Mathematical Challenges,
Korea Institute for Advanced Study}
\email{jsoh@kias.re.kr}

\keywords{chromatic symmetric function, \( e \)-positivity, modular law, \( P \)-tableau}
\subjclass[2020]{Primary: 05E05; Secondary: 05A19}

\begin{document}

\begin{abstract}
  We study the symmetric functions \( g_{\mm,k}(x;q) \), introduced by
  Abreu and Nigro for a Hessenberg function \( \mm \) and a positive
  integer \( k \), which refine the chromatic symmetric function.
  Building on Hikita's recent breakthrough on the Stanley--Stembridge
  conjecture, we prove the \( e \)-positivity of \( g_{\mm,k}(x;1) \),
  refining Hikita's result. We also provide a Schur expansion of the
  sum \( \sum_{k=1}^n e_k(x) g_{\mm,n-k}(x;q) \) in terms of
  \( P \)-tableaux with 1 in the upper-left corner. We introduce a
  restricted version of the modular law as our main tool. Then, we
  show that any function satisfying the restricted modular law is
  determined by its values on disjoint unions of path graphs.
\end{abstract}

\maketitle
% \tableofcontents

%%%%%%%%%%%%%%%%%%%%%%%%%%%%%%%%%%%%%%%%%%%%%%%%%%%%%%%%%%%%%%%%
% start here

\section{Introduction}
\label{sec:introduction}

Since its introduction, chromatic symmetric functions have been central to algebraic combinatorics.
In his seminal paper \cite{Stanley1995}, Stanley introduced a generalization of the chromatic
polynomial of a graph. Given a graph \( G = (V,E) \), a \emph{proper coloring} \( \kappa: 
V\rightarrow \ZZ_{\ge 1}\) is an assignment of a positive integer to each vertex of \( G \) such
that if \( v \) and \( u \) are joined by an edge, then \(\kappa(v) \neq \kappa(u)\).
Let \(\calC(G)\) be the set of all proper colorings of $G$.
The \emph{chromatic symmetric function} \( X_G(x)\) is the formal power series in 
infinitely many variables \( x_1, x_2, \dots \) defined by  
\[
  X_G(x) := \sum_{\kappa\in\calC(G)} x_\kappa,
\]
where \( x_\kappa = \prod_{v\in V} x_{\kappa(v)} \).
By definition, \( X_G(x) \) is a symmetric function.

While this function possesses rich combinatorial properties, its significance grew further when deep 
connections to algebra and geometry were revealed.
In what follows, we review some key developments in the study of chromatic symmetric functions.
See \Cref{sec:preliminary} for any undefined terms and notations.

A notable extension of \( X_G(x) \) was introduced by Shareshian and Wachs in \cite{Shareshian2016}.
For a graph \( G=(V,E)\) with \( V=[n]:=\{ 1,\dots,n\} \), the \emph{chromatic quasisymmetric function} is defined by  
\[
  X_G(x;q) := \sum_{\kappa\in\calC(G)} q^{\asc_G(\kappa)} x_\kappa,
\]
where \( \asc_G(\kappa) \) is the number of edges \( (i,j) \) such that \( i<j \) and \( \kappa(i)
< \kappa(j) \). By definition, \( X_G(x) = X_G(x;1) \).
In general, \( X_G(x;q) \) is not a symmetric function, but Shareshian and
Wachs~\cite{Shareshian2016} proved that for a specific class of graphs, namely the incomparability 
graphs of natural unit interval orders, \( X_G(x;q) \) is symmetric.
From now on, we denote by \( X_\mm(x;q) \) the chromatic quasisymmetric function of the
incomparability graph of a natural unit interval order \( \mm \).

A fundamental problem in the study of symmetric functions is finding their expansions in terms of 
various symmetric function bases.
When Stanley introduced chromatic symmetric functions, he also proposed a conjecture regarding their 
expansion in terms of elementary symmetric functions. This conjecture first appeared in
\cite{Stanley1993} in a different form. Later, Shareshian and Wachs
refined the conjecture as follows.

\begin{conj}[\cite{Stanley1993,Stanley1995,Shareshian2016}]\label{conj:e-positivity_intro}
  Let \( \mm \) be a natural unit interval order on \( [n] \). Suppose that
  \[
    X_\mm(x;q) = \sum_{\lambda\vdash n} c_\lambda(q) e_\lambda(x).
  \]
  Then \( X_\mm(x;q) \) is \( e \)-positive, that is,
  \( c_\lambda(q) \) is a polynomial in \( q \) with nonnegative integer coefficients.
\end{conj}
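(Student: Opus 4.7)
The plan is to approach this conjecture through the modular-law strategy of Guay-Paquet, adapted to the \( q \)-graded setting. First, I would try to establish a three-term modular identity for \( X_\mm(x;q) \): whenever three natural unit interval orders \( \mm, \mm', \mm'' \) differ only in a small local window, some \( q \)-linear combination of their chromatic quasisymmetric functions should vanish. Such identities are already present in the literature and in the Abreu--Nigro framework that the paper builds on, so the first step is to pin down the correct \( q \)-deformation of the classical modular relation.

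Next, I would verify the base case directly. Disjoint unions of path graphs are known to have \( e \)-positive chromatic quasisymmetric functions with an explicit combinatorial formula, obtained from \( q \)-analogues of Eulerian statistics and the multiplicativity of \( X_G(x;q) \) under disjoint unions. Given the modular law and this base case, any natural unit interval order could in principle be reduced by iterated application of the modular law to a nonnegative combination of disjoint unions of paths, from which \( e \)-positivity would follow by induction on some complexity measure of \( \mm \).

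The main obstacle, and the reason the conjecture remains open in this full form, is that the natural \( q \)-modular relations involve coefficients that are not manifestly nonnegative in \( q \), so the inductive propagation of \( e \)-positivity breaks down: a nonnegative combination of \( e \)-positive symmetric functions need not remain \( e \)-positive once the combining scalars themselves may have sign cancellations in \( q \). The paper's strategy of introducing a \emph{restricted} modular law for the refined functions \( g_{\mm,k}(x;q) \) is a natural detour around this obstruction: it sacrifices some generality in the relations but preserves the positivity one needs, which is exactly the kind of workaround required to imitate Hikita's \( q=1 \) argument while keeping the grading. Closing the remaining gap back to the full Shareshian--Wachs form of the conjecture would require a positivity-preserving modular reduction at general \( q \) that also controls \( c_\lambda(q) \) without passing through the refined functions, and identifying that reduction is where I expect the plan to stall.
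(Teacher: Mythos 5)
You have correctly recognized that \Cref{conj:e-positivity_intro} is an open conjecture; the paper does not claim to prove it, and what it actually establishes is the \( q=1 \) specialization (Hikita's \Cref{thm:Hikita_intro}) together with the refinement \Cref{cor:g(1)_e-positive_intro}. Your sense that a naive modular-law approach stalls is also right, but two points in your account are off. First, the full modular law of Abreu--Nigro reduces Hessenberg functions to disjoint unions of \emph{complete} graphs, not paths; the reduction to paths is a new contribution of this paper via the \emph{restricted} modular law, introduced precisely because the refined functions \( g_{\mm,k} \), \( E_{\mm,k} \), \( S_\mm \) do not satisfy the full modular law.

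Second, and more importantly, you describe the modular reduction as producing a ``nonnegative combination'' of base-case values, and the restricted modular law as a workaround that ``preserves the positivity one needs.'' Neither is accurate. The relation \( (1+q)f(\mm') = qf(\mm) + f(\mm'') \) necessarily involves a subtraction when any one term is solved for in terms of the other two, so no modular (or restricted-modular) reduction is manifestly \( e \)-positive, even at \( q=1 \). In both Hikita's argument and this paper, the modular law serves only to \emph{verify an equality} of generating functions, i.e.\ to show that two combinatorial models agree; the \( e \)-positivity at \( q=1 \) is then obtained by a wholly separate probabilistic interpretation of Hikita's transition probabilities \( p_\mm(T;q) \), which are genuine Markov-chain probabilities only after specializing \( q=1 \). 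That is exactly why the positivity statements in the paper are confined to \( q=1 \) and why \Cref{conj:g_e-positive_intro} at general \( q \) remains open. The ``positivity-preserving modular reduction at general \( q \)'' you propose seeking would therefore be a genuinely new idea rather than an extension of the mechanism used here.
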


Since its proposal, this conjecture has become one of the most famous open problems in algebraic
combinatorics, and it has been the subject of extensive research.
Although it is still open, a major breakthrough was recently announced by
Hikita \cite{Hikita2024}.

\begin{thm}[\cite{Hikita2024}]\label{thm:Hikita_intro}
  The chromatic symmetric function \( X_\mm(x;1) \) of a natural unit interval order \( \mm \)
  is \( e \)-positive.
  In other words, \( c_\lambda(1) \geq 0 \) for all \( \lambda \).
\end{thm}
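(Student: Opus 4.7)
The plan is to leverage the restricted modular law that the paper develops as its main tool. Assuming the forthcoming reduction result---that any function on natural unit interval orders satisfying the restricted modular law is determined by its values on Hessenberg functions whose incomparability graphs are disjoint unions of path graphs---the task splits into two largely independent steps: verifying the restricted modular law for the individual $e$-coefficients of $X_\mm(x;1)$, and checking $e$-positivity directly on this base class.

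For the setup, write $X_\mm(x;1) = \sum_{\lambda\vdash n} c_\lambda(\mm)\, e_\lambda(x)$ and fix a partition $\lambda$. The first task is to show that the map $\mm \mapsto c_\lambda(\mm)$ satisfies the restricted modular law. The natural starting point is Guay-Paquet's three-term modular law for the full symmetric function $X_\mm$; one would need to check that it descends to each $e$-coefficient in the restricted form, by tracking how the classical local moves on $\mm$ interact with the $e$-basis after specialization at $q=1$. The base case is comparatively tractable: when the incomparability graph of $\mm$ is a disjoint union of paths, $X_\mm(x;1)$ factors as a product of path chromatic symmetric functions, each of which is known to be $e$-positive with an explicit combinatorial formula, so $c_\lambda(\mm) \ge 0$ on the base class. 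The reduction result then propagates nonnegativity to every natural unit interval order.

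The primary obstacle is the first step. Although it is well known that $X_\mm$ as a whole obeys a standard modular law, individual coefficients in the $e$-basis are more delicate, and establishing the restricted form demands a careful combinatorial analysis of how the $e$-expansion transforms under the local modifications of $\mm$. A secondary challenge is calibrating the restriction precisely enough that the reduction to paths---a considerably smaller base class than what the unrestricted modular law affords---remains valid; this is where the word \emph{restricted} does genuine work, since with the usual modular law one typically also has to handle cycles or more complicated base graphs whose $e$-positivity is itself nontrivial. Once these are in place, the argument assembles cleanly: modular law plus base case plus the structural reduction theorem yields $c_\lambda(\mm) \ge 0$ uniformly.
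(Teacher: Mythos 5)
Your proposal contains a fundamental gap at the final step: the claim that ``the reduction result then propagates nonnegativity to every natural unit interval order.'' This is false. The (restricted) modular law determines $f(\mm)$ \emph{uniquely} from its values on disjoint unions of paths, but the recursive expressions used to carry out that reduction involve subtractions---see for instance $f(\mm) = (1+q)f(\mm_1) - qf(\mm_0)$ in the paper's \Cref{lem: modular law for level} and the analogous formula in \Cref{lem: application of modular law}. Uniqueness via such recurrences does not propagate nonnegativity, and in fact this is exactly why the Stanley--Stembridge conjecture is hard: both $X_\mm(x;q)$ and the disjoint-union-of-cliques chromatic functions (the base class for the unrestricted modular law) are known objects, yet the $e$-positivity of $X_\mm$ resisted proof for decades because the modular recursion alone cannot certify a nonnegative answer. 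If one could show that the net expansion of $c_\lambda(\mm;1)$ in terms of path-class values has nonnegative coefficients, that would close the gap, but this is a much stronger claim that you neither state nor prove, and there is no reason to expect it to hold.

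There are two further mismatches with the paper. First, the paper does not reprove \Cref{thm:Hikita_intro}; it cites it as Hikita's theorem and sketches the actual mechanism of Hikita's proof, which is a \emph{probabilistic interpretation} of the transition quantities $p_\mm(T;q)$ at $q=1$, yielding $p_\mm(T;1)\ge 0$ directly. The modular law in Hikita's argument establishes the $e$-expansion formula (stated as \Cref{thm:Hikita_m} here), not the positivity. Second, $X_\mm(x;q)$ satisfies the \emph{unrestricted} modular law; the restricted version is tailored to the $g$-functions and the variants $E$, $G$, $S$, which fail the unrestricted law. Invoking the restricted law for $X_\mm$ is permissible but pointless, and the step you flag as the ``primary obstacle''---descending the modular law to each $e$-coefficient---is actually trivial, since the $e_\lambda$ form a basis and the modular law is a linear relation.
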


His approach is fundamentally different from the ones used in previous works.
We brief the approach here.
Let \( \mm \) be a natural unit interval order on \( [n] \).
To each standard Young tableau \( T \), he associated a rational function \( p_\mm(T;q) \),
and showed the following expansion\footnote{Our definition is
slightly different from Hikita's. See \Cref{sec:comp-with-hikit}.}:
\[
  X_\mm(x;q) = \sum_{\lambda\vdash n} \prod_{i=1}^{\ell(\lambda)} [\lambda_i]_q!
    \sum_{T\in\SYT(\lambda)} p_\mm(T;q) e_\lambda(x).
\]
He further provided a probabilistic interpretation of \( p_\mm(T;q) \), which ensures that its 
specialization at \( q=1 \) is nonnegative. Consequently, he established the \( e \)-positivity
of \( X_\mm(x;1) \).

Very recently, Griffin--Mellit--Romero--Weigl--Wen~\cite{Griffin2025}
independently proved \Cref{thm:Hikita_intro} by deriving the Macdonald
expansion of the chromatic quasisymmetric functions.

Another remarkable result is a connection between chromatic quasisymmetric functions and Hessenberg
varieties. There is an obvious correspondence between natural unit interval orders and
Hessenberg functions, so we identify them.
Given a Hessenberg function $\mm$ of length \( n \) and a regular semisimple \( n \times n \) matrix \( S \),
the (regular semisimple) \emph{Hessenberg variety} is defined as
\[
  \Hess_\mm(S) := \{(V_1,\dots,V_n) : SV_i \subset V_{\mm(i)} \text{ for } i \in [n]\},
\]
which lies inside the flag variety of \( \CC^n \). Since the cohomology of
\( \Hess_\mm(S) \) is independent of the choice of \( S \), we write \( \Hess_\mm \)
for brevity. In~\cite{Shareshian2016}, Shareshian and Wachs conjectured that the Frobenius
characteristic of the cohomology of \( \Hess_\mm \), equipped with Tymoczko’s dot
action~\cite{Tymoczko2008}, coincides with the chromatic quasisymmetric function
\( X_\mm(x;q) \). This conjecture was subsequently proved independently by
Brosnan--Chow~\cite{Brosnan2018} and Guay-Paquet~\cite{Guay-Paquet2016}.
\begin{thm}[\cite{Brosnan2018, Guay-Paquet2016}]\label{thm:X=Hessenberg_intro}
  Let \( \mm \) be a natural unit interval order on \( [n] \). Then
  \[
    X_\mm(x;q) = \omega (\ch H^*(\Hess_\mm)).
  \]
\end{thm}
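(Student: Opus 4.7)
The plan is to follow the modular-law strategy of Guay-Paquet, which aligns naturally with the present paper's methodology. Both sides of the desired identity define functions on the set of Hessenberg functions of length $n$, taking values in the ring of symmetric functions over $\ZZ[q]$: on one hand $\mm \mapsto X_\mm(x;q)$, on the other $\mm \mapsto \omega(\ch H^*(\Hess_\mm))$. The strategy is to show that both sides satisfy the \emph{modular law} --- a three-term linear recurrence relating three Hessenberg functions that differ at a single index --- and that they agree on a spanning family of base cases, namely disjoint unions of path graphs. Since every Hessenberg function can be reduced to such base cases by iterating the modular law, this forces the two invariants to coincide.

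For the chromatic side, the modular law can be verified combinatorially: one partitions proper colorings of the three relevant incomparability graphs according to the behavior on the critical edge and matches them via explicit bijections that track the ascent statistic. This is essentially a $q$-refinement of the classical deletion-contraction type identity.

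For the cohomology side, the same relation must be proved geometrically and $S_n$-equivariantly. Two viable routes are (i) Brosnan-Chow's monodromy approach, which views $\Hess_\mm$ as a fiber in a smooth family over the space of regular semisimple matrices and extracts the relation from the geometry of the total family; or (ii) Guay-Paquet's Hopf-algebraic approach, which embeds the statement into a Hopf algebra of Hessenberg-type structures and reduces it to a Hecke-algebraic identity. Either route must then verify the base case on disjoint unions of paths, where both sides factor over connected components and can be computed explicitly.

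The main obstacle is the equivariant geometric verification of the modular law. Tymoczko's dot action on $H^*(\Hess_\mm)$ does not come from an algebraic $S_n$-action on $\Hess_\mm$ itself but is defined combinatorially via the GKM presentation; consequently, relating the three cohomology rings as $S_n$-modules requires genuine input beyond the underlying geometry of the varieties. Once this equivariant modular law is in hand, combined with the base case computation, it determines $\omega(\ch H^*(\Hess_\mm))$ uniquely, thereby matching $X_\mm(x;q)$.
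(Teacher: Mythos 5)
This theorem is not proved in the paper; it is imported as a known result of Brosnan--Chow \cite{Brosnan2018} and Guay-Paquet \cite{Guay-Paquet2016} (it is the resolution of the Shareshian--Wachs conjecture), so there is no internal argument to compare against. Taken on its own terms, your sketch correctly identifies the hard step --- establishing the modular relation equivariantly for Tymoczko's dot action, which does not arise from an algebraic \( \mathfrak{S}_n \)-action on \( \Hess_\mm \) --- but then leaves exactly that step undone, only gesturing at two possible routes (monodromy, Hopf algebras). Neither cited proof actually proceeds by ``verify the modular law on both sides and check a base case'': Brosnan--Chow deduce the equality via the local invariant cycle theorem for a family over the regular semisimple locus, while Guay-Paquet constructs a Hopf algebra morphism whose dual recovers both invariants. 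As written, the proposal is a plausible program rather than a proof.

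There is also a slip in the base case. For the full modular law the determining family is disjoint unions of \emph{complete} graphs, not paths; the paper states this explicitly when recalling Abreu--Nigro's result \cite{Abreu2021}. The reduction to disjoint unions of paths is precisely \Cref{thm: master}, which concerns the weaker \emph{restricted} modular law and is a novel contribution of the present paper. Since the full modular law implies the restricted one, a path base case would in principle also determine the function, but invoking \Cref{thm: master} to prove a 2016--2018 theorem is anachronistic, and the standard modular-law treatments reduce to complete graphs, where both sides can be identified directly.
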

Building on this connection, Abreu and Nigro~\cite{Abreu2023} established a deeper link between the geometry
of Hessenberg varieties and the structure of chromatic quasisymmetric functions. Consider the
forgetful map \( \Hess_\mm \to \mathbb{P}^{n-1} \), given by sending \( (V_1, \dots, V_n)
\mapsto V_1 \). Applying the decomposition theorem of Beilinson, Bernstein, and
Deligne~\cite{Beilinson} to this map,
Abreu and Nigro~\cite{Abreu2023}
expressed the cohomology of \( \Hess_\mm \) as
\begin{equation}\label{eq:decomposition of H*(Hess)}
    H^*(\Hess_\mm) = \bigoplus_{k=0}^{n-1} H^*(\tilde{H}_k) \otimes L_k,
\end{equation}
where \( \tilde{H}_k \) is a disjoint union of \( \binom{n}{k} \) copies of
\( \mathbb{P}^{n-k-1} \) inside $\mathbb{P}^{n-1}$, and \( L_k \) is a certain
\( \mathfrak{S}_k \)-module. They provided an explicit combinatorial description
of \( g_{\mm,k}(x;q) := \omega (\ch(L_k)) \), thereby obtaining the following decomposition of
\( X_\mm(x;q) \) from \eqref{eq:decomposition of H*(Hess)}.

\begin{thm}[{\cite[Theorem~1.7]{Abreu2023}}] \label{thm:X=g_intro}
  Let \( \mm \) be a natural unit interval order on \( [n] \). Then
  \begin{equation} \label{eq:g_intro}
    X_\mm(x;q) = \sum_{k=1}^n [k]_q  e_k(x)  g_{\mm,n-k}(x;q).
  \end{equation}
\end{thm}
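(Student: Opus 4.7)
The plan is to follow the geometric route outlined in the excerpt: combine the Brosnan--Chow--Guay-Paquet identification $X_\mm(x;q) = \omega(\ch H^*(\Hess_\mm))$ from \Cref{thm:X=Hessenberg_intro} with the BBD decomposition \eqref{eq:decomposition of H*(Hess)} induced by the forgetful map $\pi: \Hess_\mm \to \PP^{n-1}$, $(V_1,\ldots,V_n) \mapsto V_1$. Once the decomposition and its $\mathfrak{S}_n$-equivariance are established, \eqref{eq:g_intro} falls out by computing $\omega \circ \ch$ summand-by-summand.

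First I would analyze the geometry of $\pi$ using the eigenspace stratification of $\PP^{n-1}$ determined by the regular semisimple matrix $S$. Over the generic locus $\pi$ restricts to a smooth fiber bundle whose fibers are Hessenberg varieties of smaller length; on deeper strata the fiber dimension jumps, and I would show that the locus of prescribed jump size is precisely a disjoint union of $\binom{n}{k}$ linear subspaces $\PP^{n-k-1} \subset \PP^{n-1}$ indexed by $(n-k)$-subsets of the eigenbasis, giving the $\tilde H_k$. Applying the BBD decomposition theorem~\cite{Beilinson} to $R\pi_*\underline{\QQ}_{\Hess_\mm}$ and using the linearity of each $\tilde H_k$ to identify IC sheaves with constant sheaves then yields \eqref{eq:decomposition of H*(Hess)}, with multiplicity spaces $L_k$ canonically defined. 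Tymoczko's dot action on $H^*(\Hess_\mm)$ descends to each summand, and by analyzing the action on the $T$-fixed points of $\tilde H_k$ one should recognize $H^*(\tilde H_k) \otimes L_k$ as $\mathrm{Ind}_{\mathfrak{S}_{n-k} \times \mathfrak{S}_k}^{\mathfrak{S}_n}(\mathbf{1} \boxtimes L_k)$ with the $\PP^{n-k-1}$-factor contributing only the Poincar\'e polynomial $[n-k]_q$. Taking Frobenius characteristic gives
\[
  \ch(H^*(\tilde H_k) \otimes L_k) = [n-k]_q\, h_{n-k}(x)\, \ch(L_k),
\]
so that $\omega \circ \ch$ returns $[n-k]_q\, e_{n-k}(x)\, g_{\mm,k}(x;q)$. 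Summing over $k \in \{0,\ldots,n-1\}$ and re-indexing $k \mapsto n-k$ recovers \eqref{eq:g_intro}.

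The main obstacle is the geometric input: verifying that the supports appearing in the BBD decomposition are exactly the $\tilde H_k$ (with no extra IC sheaves on intermediate loci), that the IC extensions reduce to constant sheaves, and that the $\mathfrak{S}_n$-equivariant structure takes the induced-representation form claimed above. This requires a careful semismallness-type analysis of $\pi$ together with an equivariant refinement of the decomposition theorem. Once this geometric step is secured the remaining work is formal bookkeeping, and an explicit combinatorial description of $g_{\mm,k}(x;q)$ can be extracted from the $\mathfrak{S}_k$-character of the local system attached to the deepest stratum.
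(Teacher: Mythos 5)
The paper does not prove \Cref{thm:X=g_intro}; it is recalled from Abreu--Nigro \cite[Theorem~1.7]{Abreu2023} purely as background, so there is no in-paper proof to compare your attempt against. Your sketch reconstructs the geometric route that the introduction attributes to Abreu--Nigro: combine \Cref{thm:X=Hessenberg_intro} with the BBD decomposition \eqref{eq:decomposition of H*(Hess)} obtained by pushing forward along the forgetful map $\pi\colon\Hess_\mm\to\PP^{n-1}$, identify the supports with the linear loci $\tilde H_k$, and translate through $\omega\circ\ch$. The terminal bookkeeping you carry out is correct: granting that $H^*(\tilde H_k)\otimes L_k$ is the induction $\mathrm{Ind}_{\mathfrak{S}_{n-k}\times\mathfrak{S}_k}^{\mathfrak{S}_n}\bigl(H^*(\PP^{n-k-1})\boxtimes L_k\bigr)$ with the $\mathfrak{S}_{n-k}$-action trivial on each cohomological degree of $\PP^{n-k-1}$, one obtains $\ch=[n-k]_q\,h_{n-k}(x)\,\ch(L_k)$, hence $\omega\circ\ch=[n-k]_q\,e_{n-k}(x)\,g_{\mm,k}(x;q)$, and re-indexing $k\mapsto n-k$ produces \eqref{eq:g_intro} exactly.

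That said, what you label the ``main obstacle'' is in fact essentially the entire proof: semismallness (or the relevant perversity estimates) for $\pi$, the verification that the supports appearing in the decomposition theorem are precisely the $\tilde H_k$ with no further IC summands on intermediate strata, the reduction of those intersection complexes to shifted constant sheaves on linear subvarieties, the $\mathfrak{S}_n$-equivariant upgrade of BBD compatible with Tymoczko's dot action, and the triviality of the $\mathfrak{S}_{n-k}$-action on $H^*(\PP^{n-k-1})$ are each nontrivial inputs that your proposal defers without argument. As written this is a plausible plan rather than a proof. Note also that the present paper works exclusively with the combinatorial permutation-sum formula \eqref{eq:g_def} for $g_{\mm,k}$; a self-contained verification of \eqref{eq:g_intro} directly from that formula (for instance, by comparison with Abreu--Nigro's power-sum expansion of $X_\mm$ cited in \Cref{prop:G_multiplicative}) would stay within this paper's framework, avoid the geometry entirely, and constitutes a genuinely different route from the one you sketch.
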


They also proposed the following refinement of \Cref{conj:e-positivity_intro}.

\begin{conj}[{\cite[Conjecture~1.8]{Abreu2023}}] \label{conj:g_e-positive_intro}
  For a natural unit interval order \( \mm \) on \( [n] \) and $0\le k <n$,
  each \( g_{\mm,k}(x;q) \) is \( e \)-positive.
\end{conj}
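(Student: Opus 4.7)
The plan is to extend the restricted modular law approach from $q=1$ to general $q$. First I would establish a $q$-analog of the restricted modular law for each $g_{\mm,k}(x;q)$. Guay-Paquet's modular relation gives identities of the form $X_\mm(x;q)+q\, X_{\mm''}(x;q)=(1+q)\, X_{\mm'}(x;q)$ for certain Hessenberg triples $(\mm,\mm',\mm'')$, but one cannot naively read off a relation for $g_{\mm,k}(x;q)$ from \eqref{eq:g_intro}, because the products $e_k(x)g_{\mm,n-k}(x;q)$ all have total degree $n$ and are not separated by degree alone. Instead I would work directly with Abreu--Nigro's combinatorial description of $g_{\mm,k}(x;q)$ as a weighted sum over certain tableaux (or colorings of the incomparability graph with a distinguished vertex) and exhibit a sign-reversing, weight-preserving involution that realizes the modular identity at each fixed $k$.

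With the $q$-restricted modular law in hand, the paper's main structural theorem reduces the problem to proving $e$-positivity of $g_{\mm,k}(x;q)$ when $\mm$ is a disjoint union of path graphs. For a single path, $X_\mm(x;q)$ has Shareshian--Wachs' explicit $e$-positive formula; inverting the triangular expansion \eqref{eq:g_intro} on the ordered family $\{[k]_q e_k(x)\}_{k}$ yields a closed expression for each $g_{\mm,k}(x;q)$ whose $e$-positivity can be verified directly. The multiplicativity $X_{\mm_1\sqcup\mm_2}(x;q)=X_{\mm_1}(x;q)X_{\mm_2}(x;q)$ combined with \eqref{eq:g_intro} then allows one to solve for $g_{\mm_1\sqcup\mm_2,k}(x;q)$ in terms of the $g_{\mm_i,\cdot}(x;q)$ and thus propagate $e$-positivity to arbitrary disjoint unions of paths.

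The principal obstacle is the first step. The paper's $q=1$ argument rests ultimately on Hikita's probabilistic interpretation of $p_\mm(T;q)$, whose nonnegativity is established only at $q=1$; consequently the $e$-positive witness used to close the $q=1$ case does not generalize. For general $q$ one would need either a direct combinatorial or geometric proof of the refined modular law for each individual $g_{\mm,k}(x;q)$, or, equivalently, a manifestly $e$-positive companion function --- perhaps a $q$-weighted refinement of the $P$-tableau model appearing in this paper's Schur-expansion result --- that respects the restricted modular law and coincides with $g_{\mm,k}(x;q)$ on disjoint unions of paths. Producing such a companion amounts morally to a $q$-refinement of the Stanley--Stembridge conjecture itself, and is the key difficulty; once it is available, Steps 2 and 3 become largely structural.
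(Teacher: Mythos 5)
The statement you were asked to prove is a conjecture that the paper itself does not prove; the paper establishes it only at \( q=1 \) (\Cref{cor:g(1)_e-positive_intro}), and you correctly identify this. There is therefore no paper proof of the general-\( q \) statement to compare against, but a few points in your sketch still deserve attention.

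First, what you call ``the principal obstacle'' --- establishing the restricted modular law for each \( g_{\mm,k}(x;q) \) --- is not an obstacle at all: the paper already proves this for all \( q \) in \Cref{prop:restricted_modular_law_for_G}, via the bijection of \Cref{pro:g-bij}. No probabilistic input from Hikita is needed there. The same is true of the explicit \( e \)-positive formula on paths, which the paper quotes from Abreu--Nigro in \eqref{eq:g_k = F|_Lambda^k}; it is manifestly a polynomial in \( q \) with nonnegative coefficients in the \( e_\lambda \).

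The genuine gap is in your second step. You assert that ``the paper's main structural theorem reduces the problem to proving \( e \)-positivity of \( g_{\mm,k}(x;q) \) when \( \mm \) is a disjoint union of path graphs.'' That is not what \Cref{thm: master} gives. The restricted modular law uniquely \emph{determines} the function from its path values, but the recursion in \Cref{lem: modular law for level} is \( f(\mm)=(1+q)f(\mm_1)-qf(\mm_0) \), and \Cref{lem: application of modular law} likewise carries a negative term \( -qf(\mm_0^{(1)}) \). So even with both the modular law and \( e \)-positivity on paths in hand, one cannot conclude \( e \)-positivity elsewhere: the algorithm expresses \( f(\mm) \) as a signed linear combination of path values, not a nonnegative one. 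The paper's \( q=1 \) argument does not propagate positivity through the modular law; rather, it produces a second function \( E_{\mm,k}(x;q) \) whose \( e \)-coefficients are manifestly nonnegative at \( q=1 \) because of Hikita's probabilistic interpretation of \( p_\mm(T;1) \), and uses the modular law only to prove the \emph{identity} \( E_{\mm,k}=G_{\mm,k} \). You do eventually gesture at this --- a ``manifestly \( e \)-positive companion function'' satisfying the restricted modular law and agreeing on paths is precisely what is needed --- but this undercuts your step 2 rather than following from it. As you say, producing such a companion for general \( q \) is, in effect, a refinement of the Stanley--Stembridge conjecture itself and remains open.
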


Although finding a manifestly positive \( e \)-expansion of \( X_\mm(x;q) \)
is widely open, the Schur expansion has been established by
Gasharov and Shareshian--Wachs. In \cite{Gasharov1996}, Gasharov
introduced the notion of \( P \)-tableaux and used it to describe the
Schur expansion of \( X_\mm(x) \). Shareshian and Wachs
\cite{Shareshian2016} later extended Gasharov's approach to obtain the
Schur expansion of \( X_\mm(x;q) \).
\begin{thm}[\cite{Gasharov1996,Shareshian2016}] \label{thm:s-expansion_intro}
  Let \( \mm \) be a natural unit interval order on \( [n] \). Then we have  
  \[
    X_\mm(x;q) = \sum_{\lambda\vdash n} \sum_{T\in\PT_\mm(\lambda)} q^{\inv_\mm(T)} s_{\lambda}(x),
  \]
  where \( \PT_{\mm}(\lambda) \) is the set of \( P \)-tableaux of
  shape \( \lambda \) with respect to \( \mm \).
\end{thm}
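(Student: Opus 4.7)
My plan is to follow the Gasharov-style approach and its Shareshian--Wachs refinement to the quasisymmetric setting: expand both sides into a common signed combinatorial object, then construct a sign-reversing involution whose fixed points are precisely $P$-tableaux.

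\textbf{Setup via Jacobi--Trudi.} I would apply the dual Jacobi--Trudi identity
\[
  s_\lambda = \det\bigl(h_{\lambda_i + j - i}\bigr)_{1 \le i, j \le \ell(\lambda)}
\]
to rewrite $\sum_{\lambda \vdash n} \sum_{T \in \PT_\mm(\lambda)} q^{\inv_\mm(T)} s_\lambda$ as a signed sum over pairs $(T, A)$, where $A = (w_1, \ldots, w_\ell)$ is a tuple of weakly increasing rows of lengths $\lambda_i + \pi(i) - i$ for some permutation $\pi$, carrying the sign $\sgn(\pi)$. Independently, I would expand $X_\mm(x;q)$ into the same signed combinatorial space via standardization of proper colorings,
\[
  X_\mm(x;q) = \sum_{\sigma \in \mathfrak{S}_n} q^{\asc_\mm(\sigma)} F_{n,\Des_\mm(\sigma)}(x),
\]
followed by the signed $h$-expansion of each Gessel fundamental quasisymmetric function. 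After aligning the bookkeeping, both expressions become weighted enumerations on the same set of signed \emph{P-arrays}.

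\textbf{The sign-reversing involution.} The heart of the proof is a Gasharov-style involution $\iota$ on signed P-arrays. Scanning the array for the first pair of adjacent rows and the first column at which the $P$-tableau conditions (with respect to $\mm$) fail, $\iota$ swaps the tails of those two rows at the offending position, producing a new array with opposite sign. One must verify that $\iota$ is an involution, that it reverses the sign coming from $\pi$, that it preserves the content monomial $x^A$, and that it preserves the statistic $q^{\inv_\mm}$. Fixed points of $\iota$ are then exactly the $P$-tableaux in $\PT_\mm(\lambda)$ for each shape $\lambda$, and the claimed identity follows immediately.

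\textbf{Main obstacle.} The delicate step is showing that the statistic $q^{\inv_\mm}$ is invariant under the tail-swap. This uses essentially that $\mm$ is a natural unit interval order: incomparability in $\mm$ is an interval relation, so the pairs of entries whose $\mm$-inversion status changes under the swap pair up and cancel. Aligning $q^{\asc_\mm}$ on colorings with $q^{\inv_\mm}$ on $P$-tableaux through the Jacobi--Trudi expansion is the technical core of the argument; once this invariance is established, the rest of the proof reduces to routine bookkeeping.
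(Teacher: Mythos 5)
The paper does not prove this theorem; it is cited from Gasharov (1996) and Shareshian--Wachs (2016), and the closest in-paper analogue of the argument is the proof of \Cref{lem:8}, which runs the same sign-reversing involution on $\PA'_\mm$ using the bijection $\SW_{a,b}$ of \Cref{lem:6}. Your outline has the right architecture: pair $X_\mm(x;q)$ against the Jacobi--Trudi determinant for $s_\lambda$, translate into a signed enumeration of $P$-arrays, and cancel via a sign-reversing involution whose fixed points are exactly the $P$-tableaux.

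The gap is in the involution. "Swap the tails of those two rows at the offending position" is the Lindstr\"om--Gessel--Viennot-type move, and it does not work here: after a tail swap at (or near) the first violation, the resulting rows generally fail to be $<_\mm$-chains, and even when one arranges them to stay chains, the statistic $\inv_\mm$ is \emph{not} preserved by any simple cut-and-paste of the two rows. Gasharov's $q=1$ involution already requires a nontrivial local modification, and the $q$-graded version in Shareshian--Wachs rests on the bijection $\SW_{a,b}:\PA_\mm((a,b))\to\PA_\mm((b,a))$ of \Cref{lem:6} (their Theorem 4.5), a recursively constructed map that preserves both the multiset of entries and $\inv_\mm$. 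That construction is the technical heart of the proof, not an afterthought to be verified; your proposal treats statistic-preservation as a checking step when in fact it dictates the very form of the involution. Secondarily, the detour through Gessel fundamentals and their signed $h$-expansions is unnecessary: one shows directly from the coloring definition that $\langle X_\mm(x;q), h_\alpha(x)\rangle = \sum_{T\in\PA_\mm(\alpha)} q^{\inv_\mm(T)}$ (each color class of a proper coloring of the incomparability graph of $\mm$ is a $<_\mm$-chain), and then pairs $X_\mm$ with the Jacobi--Trudi determinant.
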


Since \( g_{\mm,k}(x;q) \) is the character of the representation
\( L_k \), the symmetric function \( g_{\mm,k}(x;q) \) is Schur
positive. Therefore, the following question is natural.

\begin{question}[{\cite[Question~4.4]{Abreu2023}}] \label{ques:g_schur}
  Give a combinatorial description of the coefficients of the Schur expansion of
  \( g_{\mm,k}(x;q) \).
\end{question}

\subsection{Main Results}

As we reviewed above, the chromatic quasisymmetric function of a natural unit interval order
can be expressed in the following three ways:
\begin{align}
  X_\mm(x;q)
    &= \sum_{\lambda\vdash n} \prod_{i\ge 1} [\lambda_i]_q! \sum_{T\in\SYT(\lambda)}
      p_\mm(T;q) e_\lambda(x) \label{eq:X=Hikita_intro} \\
    &= \sum_{k=1}^{n} [k]_q e_k(x) g_{\mm,n-k}(x;q) \label{eq:X=g_intro} \\
    &= \sum_{\lambda\vdash n} \sum_{T\in\PT_\mm(\lambda)} q^{\inv_\mm(T)} s_\lambda(x).
          \label{eq:X=PTab_intro}
\end{align}
In this paper, we introduce a variant of each of the three
expressions, and show that the variants coincide. As a corollary, we
prove \Cref{conj:g_e-positive_intro} when \( q=1 \).

First we introduce a variant of \eqref{eq:X=Hikita_intro}, the \( e \)-expansion from Hikita's
work. For a partition
\( \lambda \) and \( k\ge 1 \), we denote by \( \SYT_k(\lambda) \) the set of standard Young
tableaux of shape \( \lambda \) whose largest entry lies on the \( k \)-th column.
Then for a natural unit interval order \( \mm \) on \( [n] \) and \( 1\le k\le n \), we define
\begin{equation} \label{eq:E_k_def_intro}
  E_{\mm,k}(x;q) := \sum_{\lambda\vdash n} \frac{c_{\lambda,k}(\mm;q)}{[k]_q} e_\lambda(x),
\end{equation}
and
\begin{equation} \label{eq:E_def_intro}
  E_{\mm}(x;q)
    := \sum_{k=1}^{n} E_{\mm,k}(x;q)
    = \sum_{k=1}^{n} \sum_{\lambda\vdash n} \frac{c_{\lambda,k}(\mm;q)}{[k]_q} e_\lambda(x),
\end{equation}
where
\[
  c_{\lambda,k}(\mm;q) :=
    \prod_{i\ge1} [\lambda_i]_q! \sum_{T\in\SYT_k(\lambda)} p_\mm(T;q).
\]

We next define a variant of \eqref{eq:X=g_intro} which is expressed in terms of Abreu and Nigro's
\( g \) functions: for a natural unit interval order \( \mm \) on \( [n] \) and \( 1\le k \le n \),
\begin{equation} \label{eq:G_k_def_intro}
  G_{\mm,k}(x;q) := e_k(x) g_{\mm,n-k}(x;q),
\end{equation}
and
\begin{equation} \label{eq:G_def_intro}
  G_{\mm}(x;q) := \sum_{k=1}^{n} G_{\mm,k}(x;q) = \sum_{k=1}^n e_k(x) g_{\mm,n-k}(x;q).
\end{equation}

Lastly, we give a variant of the Schur expansion~\eqref{eq:X=PTab_intro} of \( X_\mm(x;q) \).
Let \( \PT'_\mm(\lambda) \) be the set of \( P \)-tableaux \( T \) of shape \( \lambda \) with
\( T(1,1) = 1 \). Using this set, we introduce the following symmetric function expressed
in terms of Schur functions: for a natural unit interval order \( \mm \) on \( [n] \),
\begin{equation} \label{eq:S_def_intro}
  S_\mm(x;q) := \sum_{\lambda\vdash n} \sum_{T\in\PT'_\mm(\lambda)}
    q^{\inv_\mm(T)} s_{\lambda}(x).
\end{equation}
The main result of the paper is the following.
\begin{thm} \label{thm:E=G=S_intro}
  For a natural unit interval order \( \mm \) on \( [n] \), we have
  \[
    E_\mm(x;q) = G_\mm(x;q) = S_\mm(x;q).
  \]
  Furthermore, for \( 1\le k\le n \), we have
  \[
    E_{\mm,k}(x;q) = G_{\mm,k}(x;q).
  \]
\end{thm}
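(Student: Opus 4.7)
The abstract announces the main tool: a restricted version of the modular law, with the property that any function of Hessenberg functions satisfying it is determined by its values on disjoint unions of path graphs. My plan is to set up this framework, verify that each of $E_\mm(x;q)$, $G_\mm(x;q)$, $S_\mm(x;q)$ satisfies the restricted modular law, and then check that the three functions agree on disjoint unions of paths; the desired equality
\[
  E_\mm(x;q) = G_\mm(x;q) = S_\mm(x;q)
\]
then follows from the uniqueness statement. The $k$-refined identity $E_{\mm,k}(x;q) = G_{\mm,k}(x;q)$ should fall out of a $k$-graded version of the same argument, since both definitions isolate a single column (respectively degree) parameter.

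The central technical input is the restricted modular law itself. Guay--Paquet's classical modular law relates $X_\mm(x;q)$'s for three Hessenberg functions differing at a single transposition, and these relations suffice to propagate from paths to arbitrary $\mm$. For the refined quantities above, we do not expect the full modular law to survive, because the constraint ``largest entry in column $k$'' (in $E_\mm$) and the constraint $T(1,1) = 1$ (in $S_\mm$) break some of the classical symmetries. I would therefore isolate a weaker family of relations — the restricted modular law — that these constraints do preserve, and prove the corresponding uniqueness theorem by an inductive reduction argument that pushes any $\mm$ down to a disjoint union of paths using only the allowed moves.

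Verifying the restricted modular law on each side is then the content of the proof. For $S_\mm$, the Schur-expansion bijections on $P$-tableaux going back to Gasharov and Shareshian--Wachs can be adapted, but only after carefully tracking which of them preserve the normalization $T(1,1) = 1$; the ones that do not will pinpoint exactly which classical modular moves must be excluded, thereby motivating the definition of the restricted modular law. For $G_\mm$, the recursion for $g_{\mm,k}(x;q)$ due to Abreu--Nigro should translate directly into the restricted modular law after multiplying by $e_k(x)$ and summing in $k$. For $E_\mm$, I would work with Hikita's probabilistic formula for $p_\mm(T;q)$: the $[k]_q$ denominator is designed to cancel against the $\prod_i [\lambda_i]_q!$ factor when $T$ has its largest entry in column $k$, and Hikita's random walk on Hessenberg functions should respect exactly the moves appearing in the restricted law. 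I expect this last step — matching Hikita's probabilistic identities against a combinatorially phrased restricted modular law — to be the main obstacle.

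Finally, on a disjoint union of paths the three functions are all explicit. The set $\PT'_\mm(\lambda)$ reduces to standard Young tableaux with $1$ in the corner (an automatic condition), so $S_\mm$ collapses to a tractable weighted sum of Schur functions; Abreu--Nigro's geometric setup makes $g_{\mm,k}(x;q)$ completely explicit on paths, giving $G_\mm$ in closed form; and Hikita's random walk becomes elementary when $\mm$ splits as a disjoint union of paths, pinning down $E_\mm$. A direct computation on paths then finishes the unrestricted argument, and the same computation carried out $k$ by $k$ establishes $E_{\mm,k}(x;q) = G_{\mm,k}(x;q)$, completing the plan.
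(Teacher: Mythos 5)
Your strategy is the paper's: establish that any function on Hessenberg functions satisfying the restricted modular law is determined by its values on disjoint unions of paths, verify the restricted modular law for \( E \), \( G \), \( S \), and match the three on paths; the \( k \)-graded identity \( E_{\mm,k}=G_{\mm,k} \) follows by running the same machinery with \( k \) held fixed. Your diagnosis of what must be excluded --- type~II moves at position \( i=1 \), forced by the \( T(1,1)=1 \) constraint in \( S_\mm \) --- is also the right one, and the paper isolates the same obstruction on the \( E \)-side: the projection onto \( \SYT_k(\lambda) \) annihilates \( K_{m,n} \) only for \( m<n-1 \), and the type~II relation at \( i=1 \) produces exactly \( K_{n-1,n} \).

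There is, however, a genuine gap in your base case for \( S \). For the path one has \( i <_{\pp_n} j \) iff \( j \ge i+2 \), so the elements of \( \PT'_{\pp_n}(\lambda) \) are fillings whose rows increase by at least two and whose columns may \emph{decrease} by one; these are not standard Young tableaux, and \( T(1,1)=1 \) is not automatic (the column \( 2,1,3 \) read top to bottom is a valid \( P \)-tableau of shape \( (1,1,1) \) with \( T(1,1)=2 \)). You appear to be conflating the path \( \pp_n=(2,3,\dots,n,n) \) with the edgeless Hessenberg function \( (1,2,\dots,n) \), whose \( P \)-tableaux really are SYT. Consequently \( S_{\pp_n}(x;q)=E_{\pp_n}(x;q) \) is not a tautology: the paper proves it by showing that \( L(\lambda):=\sum_{T\in\PT'_{\pp_n}(\lambda)} q^{\inv_{\pp_n}(T)} \) satisfies a vertical-strip recursion, via a nontrivial bijection on \( \PT'_{\pp_n}(\lambda) \), and then matches this recursion against one for the \( e \)-expansion coefficients. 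Separately, reducing from disjoint unions of paths to a single path requires proving the multiplicativity \( f_{\mm_1+\mm_2}=f_{\mm_1}\cdot X_{\mm_2} \) for each of \( E \), \( G \), \( S \) (note the second factor is \( X \), not \( f \), because the distinguished column or corner lives entirely in the first summand); your plan does not make this step explicit, and for \( S_\mm \) in particular it requires a skew-\( P \)-tableau argument that is not immediate.
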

Our strategy to prove \Cref{thm:E=G=S_intro} is to restrict the modular law from \cite{Abreu2021}  
and show that the three symmetric functions \( G_\mm(x;q) \), \( E_\mm(x;q) \), and \( S_\mm(x;q) \)  
satisfy the restricted modular law.  

Let us first recall the modular law.  
In \cite{Guay-Paquet2013}, Guay-Paquet introduced a modular law for the chromatic symmetric  
functions of \( \mathbf{(3+1)} \)-free posets, and used it to show that the chromatic symmetric  
function of a \( \mathbf{(3+1)} \)-free poset can be expressed as a positive linear combination of 
those of natural unit interval orders.
Later, Abreu and Nigro~\cite{Abreu2021} refined this modular law to characterize chromatic  
quasisymmetric functions.

More precisely, they defined modular triples of type~I and type~II as triples  
\( (\mm,\mm',\mm'')\in\HH^3 \) satisfying certain conditions,
where \( \HH \) is the set of Hessenberg functions.  
A function \( f:\HH\rightarrow A \), where \( A \) is a \( \QQ(q) \)-algebra,  
is said to satisfy the modular law if,  
for any modular triple \( (\mm,\mm',\mm'') \) of type~I or type~II,  
we have
\[
  (1+q) f(\mm') = q f(\mm) + f(\mm'').
\]  
Using this, Abreu and Nigro~\cite{Abreu2021} showed that a function
\( f:\HH\rightarrow A \) satisfying the modular law is uniquely
determined by its values on disjoint unions of complete graphs. This
fact plays a crucial role in Hikita's
work~\cite{Hikita2024}.

While the chromatic quasisymmetric function \( X_\mm(x;q) \) satisfies
the modular law, Abreu and Nigro's \( g \)-functions, as well as the
three variants \( E_\mm(x;q) \), \( G_\mm(x;q) \), and
\( S_\mm(x;q) \), do not satisfy the modular law. To address this
issue, we introduce the restricted modular law, which can be regarded
as a weaker version of the modular law (\Cref{def:5}). Quite
surprisingly, we show that the restricted modular law alone is
sufficient to determine the entire set of values of a function
\( f \). More precisely, we prove that a function \( f \) satisfying
the restricted modular law is uniquely determined by its values on
disjoint unions of paths (\Cref{thm: master}). We then show that the
three symmetric functions \( E_\mm(x;q) \), \( G_\mm(x;q) \), and
\( S_\mm(x;q) \) satisfy the restricted modular law and that their
values on disjoint unions of paths coincide, thereby establishing
\Cref{thm:E=G=S_intro}.

Abreu and Nigro \cite[Questions~4.1 and 4.2]{Abreu2023} posed the
question of whether there exists a modification of the modular law
that the symmetric function \( g_{\mm,k}(x;q) \) satisfies, as well as
how to compute the symmetric function using the modification. Hence,
the restricted modular law provides an answer to their questions.

As direct corollaries of Theorem \ref{thm:E=G=S_intro}, we establish
\Cref{conj:g_e-positive_intro} at \( q=1 \) and the polynomiality of
the fraction \( c_{\lambda,k}(\mm;q) / [k]_q \).

\begin{cor} \label{cor:g(1)_e-positive_intro}
  For a natural unit interval order \( \mm \) on \( [n] \) and \( 0 \le k < n \),
  the symmetric function \( g_{\mm,k}(x;1) \) is \( e \)-positive.
\end{cor}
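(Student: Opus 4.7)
The plan is to read off the corollary directly from the identity $E_{\mm,k}(x;q) = G_{\mm,k}(x;q)$ of \Cref{thm:E=G=S_intro}, specialized at $q=1$, combined with Hikita's nonnegativity of $p_\mm(T;1)$ from \Cref{thm:Hikita_intro} and a short cancellation in the ring of symmetric functions.

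First I would specialize the identity $E_{\mm,k}(x;q) = G_{\mm,k}(x;q)$ at $q=1$. Since $[k]_q$ specializes to $k \neq 0$, this gives
\[
  e_k(x)\, g_{\mm,n-k}(x;1) \;=\; \sum_{\lambda \vdash n} \frac{c_{\lambda,k}(\mm;1)}{k}\, e_\lambda(x).
\]
Each coefficient on the right is nonnegative: by definition it is a nonnegative rational multiple of $\sum_{T \in \SYT_k(\lambda)} p_\mm(T;1)$, and $p_\mm(T;1) \ge 0$ by Hikita's probabilistic interpretation. Hence $e_k(x)\, g_{\mm,n-k}(x;1)$ is $e$-positive.

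Second, I would descend from $e_k \cdot g_{\mm,n-k}(x;1)$ to $g_{\mm,n-k}(x;1)$ itself by a one-step cancellation. Multiplication by $e_k$ sends the $e$-basis into itself via the injection $e_\mu \mapsto e_{(k,\mu)^+}$, where $(k,\mu)^+$ denotes the partition obtained by inserting $k$ into $\mu$. So writing the $e$-expansion $g_{\mm,n-k}(x;1) = \sum_\mu b_\mu e_\mu$ and matching coefficients of $e_{(k,\mu)^+}$ on both sides of the displayed identity forces $b_\mu \ge 0$ for every $\mu$. Re-indexing $k \leftrightarrow n-k$ then yields the $e$-positivity of $g_{\mm,k}(x;1)$ for $0 \le k < n$.

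There is no substantive obstacle here; \Cref{thm:E=G=S_intro} already packages the deep content, and the cancellation step is purely formal. The one point meriting care is that $c_{\lambda,k}(\mm;q)/[k]_q$ should be a bona fide polynomial in $q$ so that the $q=1$ specialization is well defined, but this is automatic from \Cref{thm:E=G=S_intro} since the right-hand side $e_k(x)\, g_{\mm,n-k}(x;q)$ is visibly polynomial in $q$.
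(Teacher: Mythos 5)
Your proof is correct and takes exactly the route the paper intends when it calls this a ``direct corollary'' of \Cref{thm:E=G=S_intro}: specialize $E_{\mm,k}=G_{\mm,k}$ at $q=1$, use Hikita's nonnegativity of $p_\mm(T;1)$ to get $e$-positivity of $e_k(x)\,g_{\mm,n-k}(x;1)$, then read off nonnegativity of the $e$-coefficients of $g_{\mm,n-k}(x;1)$ itself via the injective map $e_\mu \mapsto e_{(k,\mu)^+}$ on the $e$-basis. The one small remark worth making is that the polynomiality of $c_{\lambda,k}(\mm;q)/[k]_q$ is not actually needed for this step, since $c_{\lambda,k}(\mm;q)$ and $[k]_q$ each specialize separately to nonnegative rationals at $q=1$ (no pole arises in $\psi_k^{(r)}$ there), and the identity of rational functions $E_{\mm,k}=G_{\mm,k}$ can then be evaluated directly.
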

\begin{cor} \label{cor:c_lambda_k_polynomial_intro}
  For a natural unit interval order \( \mm \) on \( [n] \), a partition \( \lambda\vdash n \),
  and \( 1\le k\le n \), the quotient \( c_{\lambda,k}(\mm;q) / [k]_q \) is a polynomial
  in \( q \).
\end{cor}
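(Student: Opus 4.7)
The plan is to deduce the corollary directly from the equality $E_{\mm,k}(x;q) = G_{\mm,k}(x;q)$ in \Cref{thm:E=G=S_intro}, by playing off the \emph{a priori} rational expression of $E_{\mm,k}$ against the manifestly polynomial expression of $G_{\mm,k}$.

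First, I would argue that $G_{\mm,k}(x;q) = e_k(x)\, g_{\mm,n-k}(x;q)$ is a polynomial in $q$ whose coefficients lie in the $\ZZ$-span of $\{e_\lambda : \lambda \vdash n\}$. Indeed, by the discussion following \eqref{eq:decomposition of H*(Hess)}, $g_{\mm,n-k}(x;q) = \omega(\ch L_{n-k})$ is the image under $\omega$ of the graded Frobenius characteristic of a genuine $\mathfrak{S}_{n-k}$-module, so its Schur expansion has coefficients in $\ZZ_{\ge 0}[q]$. Since the transition matrix from the Schur basis to the elementary basis has entries in $\ZZ$ (via Jacobi--Trudi), the elementary expansion also has coefficients in $\ZZ[q]$. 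Multiplication by $e_k(x)$ preserves this property, so we may write
\[
  G_{\mm,k}(x;q) = \sum_{\lambda \vdash n} d_{\lambda,k}(\mm;q)\, e_\lambda(x)
  \qquad \text{with } d_{\lambda,k}(\mm;q) \in \ZZ[q].
\]

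Next, comparing this expansion with the definition \eqref{eq:E_k_def_intro} of $E_{\mm,k}(x;q)$ and invoking the equality $E_{\mm,k}(x;q) = G_{\mm,k}(x;q)$ from \Cref{thm:E=G=S_intro}, the linear independence of $\{e_\lambda : \lambda \vdash n\}$ over $\QQ(q)$ forces
\[
  \frac{c_{\lambda,k}(\mm;q)}{[k]_q} = d_{\lambda,k}(\mm;q) \in \ZZ[q]
\]
for every $\lambda \vdash n$. In particular, the denominator $[k]_q$ divides the numerator $c_{\lambda,k}(\mm;q)$ in $\QQ[q]$, which is the desired polynomiality statement.

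There is no substantive obstacle here: the entire content of the corollary is packed into \Cref{thm:E=G=S_intro}, together with the elementary observation that $g_{\mm,n-k}(x;q)$ has coefficients in $\ZZ[q]$ in any integral basis of symmetric functions. The one point that might warrant a brief remark in writing is the passage from the Schur to the elementary basis, ensuring polynomiality is preserved; but this is immediate from standard facts about bases of $\Lambda$ and requires no case analysis.
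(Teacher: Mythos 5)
Your argument is correct, and since the paper itself does not spell out a proof (it merely records the statement as a ``direct corollary'' of \Cref{thm:E=G=S_intro}), what you write is essentially the intended deduction: compare the $e$-expansions on both sides of $E_{\mm,k}=G_{\mm,k}$ and use that $G_{\mm,k}$ has polynomial coefficients.

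One remark on your route to the polynomiality of $G_{\mm,k}$. You reach it through the representation-theoretic fact that $g_{\mm,n-k}(x;q)=\omega(\ch L_{n-k})$ is Schur-positive (with coefficients in $\ZZ_{\ge 0}[q]$), then change basis. This works, but it imports the geometric input from the BBD decomposition. A more self-contained argument uses the explicit combinatorial formula \eqref{eq:g_def}: the recursion $[n]_q h_n(x)=\sum_{i=1}^n h_{n-i}(x)\rho_i(x;q)$ shows by induction that each $\rho_i(x;q)$ has coefficients in $\ZZ[q]$ in any integral basis; the exponents $\wt_\mm(\sigma)$ are nonnegative integers; and the sum is over the finite set $\mathfrak{S}_{n,\mm}$. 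Hence $g_{\mm,n-k}(x;q)$, and therefore $G_{\mm,k}(x;q)=e_k(x)\,g_{\mm,n-k}(x;q)$, manifestly has $e$-coefficients in $\ZZ[q]$ with no appeal to Schur positivity or to the cohomology of Hessenberg varieties. Given that the paper labels the corollary ``direct,'' this purely combinatorial route is probably what the authors had in mind, but your version is equally valid and in fact gives the extra information that the coefficients $c_{\lambda,k}(\mm;q)/[k]_q$ lie in $\ZZ[q]$, not merely in $\QQ[q]$.
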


Tom and Vailaya independently showed
\Cref{cor:c_lambda_k_polynomial_intro} in their recent
paper~\cite{Tom2025}.

In addition, our main theorem gives a partial answer to \Cref{ques:g_schur}.
The equality
\begin{equation} \label{eq:G_schur_expansion}
  \sum_{k=1}^n e_k(x) g_{\mm,n-k}(x;q) =\sum_{\lambda \vdash n} \sum_{T\in\PT'_\mm(\lambda)} q^{\inv_\mm(T)} s_\lambda(x).
\end{equation}
gives the Schur expansion
of the sum of \( e_k(x) g_{\mm,n-k}(x;q) \).

Notice that the two symmetric functions \( E_\mm(x;q) \) and
\( G_\mm(x;q) \) are given by the sums of their refinements
\( E_{\mm,k}(x;q) \) and \( G_{\mm,k}(x;q) \). Hence finding an
appropriate refinement of \( S_\mm(x;q) \) would be an interesting
problem, giving a complete answer to \Cref{ques:g_schur}.
\begin{problem} \label{prob:find_S_k}
  Further refine the set \( \PT'_{\mm}(\lambda) \)
  into \( \PT'_{\mm}(\lambda) = \bigsqcup_{k\ge1}\PT'_{\mm,k}(\lambda) \)
  so that
 \[
 \sum_{\lambda\vdash n}   \sum_{T\in \PT'_{\mm,k}(\lambda)} q^{\inv_\mm(T)}s_{\lambda}(x) =
 E_{\mm,k}(x;q) = G_{\mm,k}(x;q).
 \] 
\end{problem}

\subsection{Organization}
This paper is organized as follows. In Section \ref{sec:preliminary},
we set up basic notations and go over a background on chromatic
quasisymmetric functions. In \Cref{sec:restr-modular-laws}, we
introduce the restricted modular law and present an algorithm that
reduces the proof of Theorem \ref{thm:E=G=S_intro} to the base case
(disjoint unions of path graphs). In Section \ref{sec: rm for EGS}, we
prove that our functions in consideration satisfy restricted modular
law. Then we complete the proof of Theorem \ref{thm:E=G=S_intro} for
the base case in Sections \ref{sec: multiplicativity} and \ref{sec:
  path case}. Section~\ref{sec: sink} presents an analog of Stanley's
sink theorem. In Appendix \ref{sec:comp-with-hikit}, we compare our
transition probability with Hikita's.

\section{Preliminaries} \label{sec:preliminary}

For an integer \( k\ge0 \), let \( [k]_q = 1 + \cdots + q^{k-1} \) and
\( [k]_q!=\prod_{i=1}^{k}[i]_q \). We denote by \( \mathfrak{S}_n \)
the set of permutations of \( [n] \).

\subsection{Compositions, partitions, and symmetric functions}

A \emph{composition} (respectively \emph{weak composition}) is a
sequence \( \alpha=(\alpha_1,\dots,\alpha_{\ell}) \) of positive
integers (respectively \emph{nonnegative integers}). We say that
\( \alpha \) is a composition of \( n \), denoted by
\( \alpha \vDash n \), if \( |\alpha|:=\sum_{i=1}^\ell \alpha_i=n \).
The \emph{length} of \( \alpha \) is \( \ell(\alpha) = \ell \).
For a (weak) composition \( \alpha \), we associate a diagram
\[
  D(\alpha) := \{(i,j)\in \mathbb{Z}_{\geq 1}\times \mathbb{Z}_{\geq1} : j\le \alpha_i\}.
\]
Each element of \( D(\alpha) \) is called a \emph{cell}.

A \emph{partition} is a weakly decreasing composition. For a partition
\( \lambda=(\lambda_1,\dots,\lambda_\ell) \), each \( \lambda_i \) is
called a \emph{part} of \( \lambda \). We say that \(\lambda\) is a
partition of \(n\), denoted by \(\lambda \vdash n\), if
\( |\lambda| = n \). We identify a partition \( \lambda \) with its
diagram. We use the English notation to display the diagrams of
partitions. For example, let \( \lambda=(4,3,1) \), then we depict the
diagram as follows:
\[
  \lambda=
  \vcenter{\hbox{
      \scalebox{0.8}{%
  \begin{ytableau}
    ~ & & &  \\
    ~ & & \\
    ~
  \end{ytableau}
}}}
\]
We denote the empty partition by \(\emptyset\).
The \emph{conjugate partition} of \(\lambda\), denoted by
\(\lambda'=(\lambda'_1,\lambda'_2,\dots)\), is the partition whose diagram is
\( \{(j,i)\in \mathbb{Z}_{\geq 1}\times \mathbb{Z}_{\geq 1} : j\le
\lambda_i\} \).

For two partitions \( \lambda \) and \( \mu \) with \( \mu\subseteq \lambda \) (as diagrams),
we denote by \( \lambda/\mu \) the difference \( \lambda\setminus \mu \) of their diagrams,
and \( |\lambda/\mu| := |\lambda| - |\mu| \).
The diagram \( \lambda/\mu \) is called a \emph{vertical strip} if each row of \( \lambda/\mu \)
consists of at most one cell.

Consider a filling \( T \) of a partition \( \lambda \) with positive integers.
For a cell \( (i,j)\in \lambda \), the corresponding integer of \( T \)
is denoted by \( T(i,j) \). We say that \( T \) is a
\emph{semistandard Young tableau} of shape \( \lambda \) if it is weakly increasing
along each row and strictly increasing along each column. If a semistandard Young tableau \( T \)
consists of \( 1,2,\dots,|\lambda| \), we say that \( T \) is a
\emph{standard Young tableau} of shape \( \lambda \). The set of standard Young tableaux
of shape \( \lambda \) is denoted by \( \SYT(\lambda) \) and we let
\( \SYT(n) \) to be \( \bigcup_{\lambda \vdash n}\SYT(\lambda) \).
Furthermore, for a partition \( \lambda\vdash n \) and \( 1\le k\le n \), we denote
by \( \SYT_k(\lambda) \) the set of standard Young tableaux
of shape \( \lambda \) such that the entry \( n \) is in the \( k \)-th column of \( T \).

For \( n\ge 0 \), let \( \Lambda_n \) be the space of symmetric functions of
homogeneous degree \( n \) in infinitely many variables \(x_1,x_2,\dots\)
over the ground field \(\QQ(q)\) where \( q \) is a formal parameter.
Let \(\Lambda=\bigoplus_{n\ge 0} \Lambda_n \) be the graded algebra of symmetric functions.
We use the following standard notations:
\(m_\lambda(x)\) for the \emph{monomial symmetric function},
\(h_\lambda(x)\) for the \emph{complete homogeneous symmetric function},
\(e_\lambda(x)\) for the \emph{elementary symmetric function},
\(s_\lambda(x)\) for the \emph{Schur function}.
The \emph{Hall inner product} on symmetric functions,
denoted by \(\langle -,-\rangle\), is defined as
\( \langle s_\lambda(x), s_\mu(x) \rangle = \delta_{\lambda,\mu}\).
It is well known that the monomial symmetric functions and the complete homogeneous
symmetric functions are dual to each other with respect to the inner product, i.e.,
\( \langle m_\lambda(x), h_\mu(x) \rangle = \delta_{\lambda,\mu} \).
The involution \( \omega \) on \( \Lambda \) is defined by
\( \omega(h_{\lambda}(x))=e_{\lambda}(x) \).

For a partition \( \lambda\vdash n \) and a weak composition
\( \alpha=(\alpha_1,\dots, \alpha_\ell) \) of \( n \), the \emph{Kostka
  number} \( K_{\lambda,\alpha} \) is the number of semistandard Young
tableaux \( T \) of shape \( \lambda \) such that for each
\( 1\le i\le \ell \), the number of appearances of the integer \( i \)
in \( T \) is \( \alpha_i \). Then the elementary symmetric function
can be written in terms of Schur functions as follows:
\begin{equation} \label{eq:Kostka}
  e_\alpha(x) := e_{\alpha_1}(x)\cdots e_{\alpha_\ell}(x)
    = \sum_{\lambda\vdash n} K_{\lambda',\alpha} s_\lambda(x).
\end{equation}

\subsection{Hessenberg functions and associated objects}

Let \( n\ge 1 \). A \emph{Hessenberg function} of length \( n \) is a weakly increasing
function \( \mm:[n]\to[n] \) such that \( i\le \mm(i) \le n \) for each \( i\in[n] \).
We also consider the Hessenberg function \( \mm \) as an \( n \)-tuple
\( (\mm(1),\dots,\mm(n)) \). We denote by \( \HH_n \) the set
of Hessenberg functions of length \( n \) and let \( \HH=\bigcup_{n\ge 1} \HH_n \).

There is a natural correspondence between Hessenberg functions of length \( n \) and
Dyck paths of length \( 2n \), where the \( i \)-th entry of a Hessenberg function corresponds
to the height of the \( i \)-th horizontal step of the associated Dyck path.
Thus, we often depict a Hessenberg function by its corresponding Dyck path. See \Cref{fig:lolli}.
In this sense, we define the \emph{area} of \( \mm \), denoted by \( \area(\mm) \), to be
\[
  \area(\mm) := \sum_{i=1}^n (\mm(i)-i).
\]
Regarding \( \mm \) as a Dyck path, the area is the number of unit cells between the main diagonal
and the path \( \mm \).

To each Hessenberg function \( \mm \), we associate a poset \( ([n], <_\mm) \) as follows:
for \( i, j \in [n] \), we set \( i <_\mm j \) if and only if \( \mm(i) < j \).
The posets arising in this way are precisely the \emph{natural unit interval orders}.
By abuse of notation, we also write \( \mm \) for the associated poset.

For a poset \( P \), the \emph{incomparability graph} of \( P \) is
the graph with vertex set \( P \) and edge set
\( \{(u,v) : \mbox{\( u<v \), \( u \) and \( v \) are incomparable in
  \( P \)}\} \). Hence, the incomparability graph of a natural unit interval order
\( \mm \) is
the graph whose vertex set is \( [n] \) such that for \( i<j \), \( i \)
and \( j \) are joined by an edge if and only if \( j \le \mm(i) \).
By abuse of notation, we denote the incomparability graph by
\( \mm \) when there is no possible confusion. See \Cref{fig:lolli}.

For \( \mm_1\in \HH_{n_1} \) and \( \mm_2\in \HH_{n_2} \),
we define \( \mm_1+\mm_2 \) to be the Hessenberg function given by
\[
  \mm_1+\mm_2 :=(\mm_1(1),\dots, \mm_1(n_1),\mm_2(1)+n_1,\dots,\mm_2(n_2)+n_1)
  \in \HH_{n_1+n_2}.
\]
Therefore, when we regard Hessenberg functions as graphs, the graph \( \mm_1+\mm_2 \) is the disjoint 
union of the graphs \( \mm_1 \) and \( \mm_2 \).

We denote by \( \pp_n\in \HH_n \) the Hessenberg function whose
associated incomparability graph is a path graph. Specifically it is
given as \( \pp_1=(1) \) and \( \pp_n=(2,3,\dots,n,n) \) if
\( n\geq2 \).

\begin{figure}
  \centering

%1----------------------------------------------------
\begin{tikzpicture}[scale=0.5]

  \foreach \i in {0,1,...,5}
  \draw[color=gray!70] (\i,0) -- (\i,5);

  \foreach \j in {0,1,...,5}
  \draw[color=gray!70]  (0,\j) -- (5,\j);

  \draw (0,0) -- (5,0);
  \draw (0,0) -- (0,5);
  \draw[color=gray!70] (0,0) -- (5,5);

  \draw[ultra thick]
  (0,0) -- (0,2) -- (1,2) -- (1,3) -- (2,3) -- (2,5) -- (5,5);
\end{tikzpicture}
\qquad \qquad
\begin{tikzpicture}[scale=1.1]
  \draw [thick](.15,.85)--(.85, .15)
  (1, 0.2)--(1, .85)
  (1.15, .85)--(1.85, .15)
  (2, .2)--(2, .85)
  (1.15, .2) -- (1.85,0.85);
  \node () at (0,1) {\( 3 \)};
  \node () at (1,0) {\( 1 \)};
  \node () at (1,1) {\( 4 \)};
  \node () at (2,0) {\( 2 \)};
  \node () at (2,1) {\( 5 \)};
  \end{tikzpicture}
\qquad \qquad
%2----------------------------------------------------
\begin{tikzpicture}[scale=0.9]
  \foreach \i in {1,...,5}
  \filldraw (\i,2) circle (1.5pt);
  \foreach \i in {1,...,5}
  \node at (\i,1.6) {\i}; 
  \draw[thick]
  (1,2) -- (5,2);
  \draw[thick]
  (5,2) arc [start angle=0, end angle=180, radius=1];
\end{tikzpicture}

\caption{The Dyck path, the Hasse diagram of the natural unit interval order, and the incomparability graph of
the natural unit interval order associated with \( \mm=(2,3,5,5,5)\in \HH_5 \).}
\label{fig:lolli}
\end{figure}
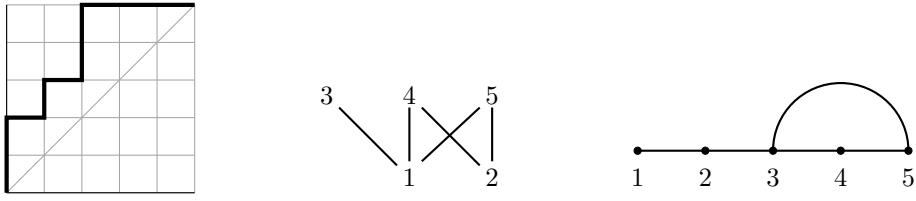 

\subsection{Chromatic symmetric functions}
Let us begin with recalling the definition of chromatic quasisymmetric functions.
While chromatic quasisymmetric functions are defined for arbitrary graphs on \( [n] \),
throughout this paper, we focus on those of the incomparability graphs of natural unit interval
orders.
For a Hessenberg function \( \mm\in\HH_n \), the \emph{chromatic quasisymmetric function}
\( X_\mm(x;q) \) of \( \mm \) is defined to be
\[
  X_\mm(x;q) = \sum_{\kappa\in\calC(\mm)} q^{\inv_\mm(\kappa)} x_\kappa,
\]
where \( x_\kappa = \prod_{i\in [n]} x_{\kappa(i)} \), and
\[
  \inv_\mm(\kappa) :=
    | \{(i,j)\in [n]\times[n] : \mbox{\( i<j\le \mm(i) \) and \( \kappa(i)>\kappa(j) \)}\} |.
\]

We note that the original definition of \( X_\mm(x;q) \) in \cite{Shareshian2016} uses the
ascent statistic on proper coloring instead of \( \inv_\mm(\kappa) \).
However our graph is the incomparability graph \( \mm \) so that \( X_\mm(x;q) \) is symmetric.
Hence, by \cite[Corolloary~2.7]{Shareshian2016}, the above definition agrees with
the original one.

The following notion is used for describing the Schur expansion of chromatic quasisymmetric functions
of natural unit interval orders \( \mm \), and also defining \( S_\mm(x;q) \).
\begin{defn}\label{def:inv}
  Let \( \mm \) be a Hessenberg function of length \( n \). A \emph{\( P \)-tableau} of
  shape \( \lambda \) (with respect to the poset \( \mm \)) is a bijection \( T:\lambda\to [n] \)
  such that \( T(i,j) <_\mm T(i,j+1) \) and \( T(i,j) \not>_\mm T(i+1,j) \) whenever
  the cells are in \( \lambda \). The poset \( \mm \) should always be clear from the context.
  We denote by \( \PT_\mm(\lambda) \) the set of \( P \)-tableaux of shape \( \lambda \).
  For a \( P \)-tableau \( T \),
  \[
    \inv_\mm(T) := | \{ (i,j) \in [n]\times[n] :
      \mbox{\( i<j\le\mm(i) \), and \( j \) lies above \( i \) in \( T \)}\} |.
  \]
  Furthermore, we denote by \( \PT'_\mm(\lambda) \) the set of \( P \)-tableaux with \( T(1,1)=1 \).
\end{defn}
In \cite{Kim2024c}, the third and last authors investigated \( P \)-tableaux subject to constraints on certain cells analogous to those in the above definition.

In \cite{Abreu2021b,Abreu2023}, Abreu and Nigro introduced novel symmetric functions \( \rho_k(x;q) \)
and \( g_{\mm,k}(x;q) \) for describing the chromatic quasisymmetric function of \( \mm \), and
the decomposition \eqref{eq:g_intro}.
For \( k\ge 1 \), the symmetric function \( \rho_k(x;q) \) is defined recursively by
\[
    [n]_q h_n(x) = \sum_{i=1}^{n} h_{n-i}(x) \rho_{i}(x;q),
\]
and by convention, we set \( \rho_0(x;q) = 1 \).
For a partition \( \lambda \), let \( \rho_\lambda(x;q) = \rho_{\lambda_1}(x;q)\cdots
\rho_{\lambda_{\ell(\lambda)}}(x;q) \).

For a permutation \( \sigma \in \mathfrak{S}_n \), we write
\( \sigma = \tau_1 \tau_2 \cdots \tau_k \) for its cycle decomposition,
where each cycle \( \tau_i \) is written with its
smallest element first, and the cycles are ordered from left to right
in increasing order of their minimal elements.
We denote by \( \sigma^c \) the permutation obtained by removing the parentheses in
the cycle decomposition of \( \sigma \). We further define
\[
    \wt_\mm(\sigma) := \inv_{\mm}((\sigma^c)^{-1}) = |\{ (i,j)\in [n]\times[n] : \mbox{\( i<j\le\mm(i) \), and $j$ precedes $i$ in \(\sigma^c\)} \}|.
\]
Abreu and Nigro's \( g \)-function, \( g_{\mm,k}(x;q) \), is then defined by
\begin{equation} \label{eq:g_def}
  g_{\mm, k}(x;q) := \sum_{\substack{\sigma = \tau_1 \cdots \tau_j \in \mathfrak{S}_{n,\mm}, j\ge1 \\ |\tau_1| \ge n - k}}
    (-1)^{|\tau_1| - n + k} q^{\wt_\mm(\sigma)} h_{|\tau_1| - n + k}(x) \, \omega(\rho_{(|\tau_2|,\dots,|\tau_j|)}(x;q))
\end{equation}
for \( 0\le k < n \).
Here, $\mathfrak{S}_{n,\mm}$ is the set of permutations \( \sigma \)
in $\mathfrak{S}_n$ such that $\sigma(i)\le \mm(i)$ for all \( i\in[n] \).

\subsection{Hikita's model}

Hikita \cite{Hikita2024} provided an interpretation of the
coefficients of the \( e \)-expansion of \( X_{\mm}(x;q) \) for any natural unit
interval orders \( \mm \).
We recall Hikita's transition probability using the English notation and
give a variation of it. We use a slight modification of Hikita's
model. See \Cref{sec:comp-with-hikit} for the connection between the
two models.

For \(T \in \SYT(n)\) and an integer \(0 \leq r \leq n \), we define
  \(\vec \delta^{(r)}(T)=\left(\delta_1, \ldots, \delta_{n}\right)\) by
  \[
    \delta_i=
    \begin{cases}
      1 & \mbox{if the \( i \)-th column of \( T \) contains an integer greater than \( r \),}\\
      0 & \mbox{otherwise.}
    \end{cases}
  \]
  Note that the sequence
  \(\vec \delta^{(r)}(T)=\left(\delta_1, \ldots, \delta_{n}\right)\)
  can be written uniquely as
\[
  \vec\delta^{(r)}(T)
  =\left(1^{b_0}, 0^{a_1}, 1^{b_1}, \ldots, 0^{a_l}, 1^{b_l}, 0^{a_{l+1}}\right),
\]
for some positive integers \( a_1,\dots,a_{l}, b_1,\dots,b_l \), and
nonnegative integers \( b_0 \) and \( a_{l+1} \). Here, \( i^j \)
denotes the sequence \( i,\dots,i \) of \( j \) \( i \)'s.

For an integer \( k \) with \(0 \leq k \leq l\), we define
\( f_k^{(r)}(T) \) to be the tableau obtained from \( T \) by adding a
cell with an entry \( n+1 \) at the end of column \( c \), where
\[
  c=\sum_{i=1}^k a_i+\sum_{i=0}^k b_i+1.
\]
Since \( \delta_c=0 \) and \( \delta_{c-1}=1 \) (or \( c=1 \)), one
can easily see that \( f_k^{(r)}(T)\in \SYT(n+1) \). The
\emph{transition probability} \( \psi_k^{(r)}(T ; q) \) from \( T \)
to \( f_k^{(r)}(T) \) is defined by
  \begin{equation}\label{eq:15}
   \psi_k^{(r)}(T ; q)= q^{\sum_{i=k+1}^l b_i} \prod_{i=1}^k
    \frac{\left[\sum_{j=i+1}^k a_j+\sum_{j=i}^k
      b_j\right]_q}{\left[\sum_{j=i}^k a_j+\sum_{j=i}^k
      b_j\right]_q} \prod_{i=k+1}^l \frac{\left[\sum_{j=k+1}^i
      a_j+\sum_{j=k+1}^{i-1} b_j\right]_q}{\left[\sum_{j=k+1}^i
      a_j+\sum_{j=k+1}^i b_j\right]_q} .
\end{equation}

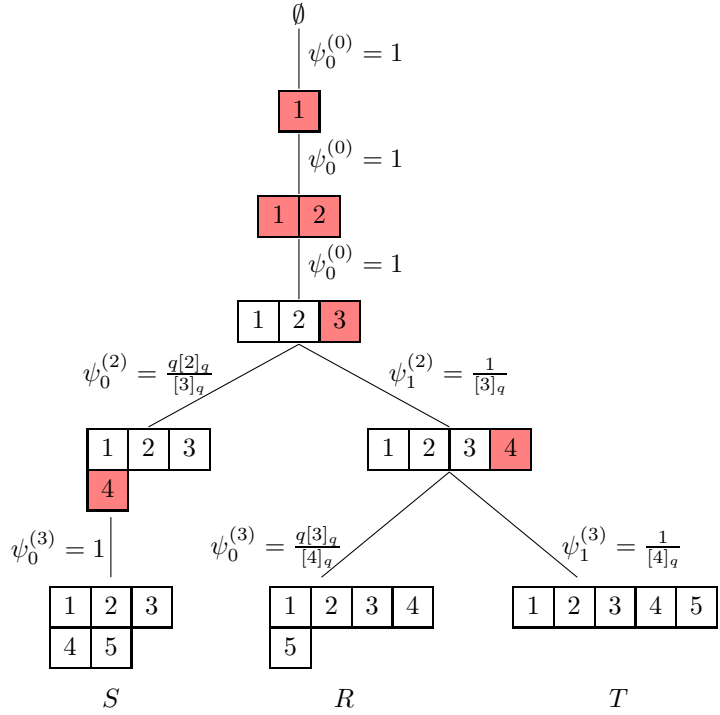
\begin{figure}
  \centering
\begin{tikzpicture}[scale=1]
  \node at (0,0) {\( \emptyset \)};
  \draw (0,-0.2) -- (0,-1);
  \node[right] at (0,-0.5) {\( \psi_0^{(0)}=1 \)};
  \node at (0,-1.3) {
    \begin{ytableau}
    *(red!50) 1
    \end{ytableau}
  };
  \draw (0,-1.6) -- (0,-2.4);
  \node[right] at (0,-1.9) {\( \psi_0^{(0)}=1 \)};
  \node at (0,-2.7) {
    \begin{ytableau}
    *(red!50) 1 & *(red!50) 2
    \end{ytableau}
  };
  \draw (0,-3) -- (0,-3.8);
  \node[right] at (0,-3.3) {\( \psi_0^{(0)}=1 \)};
  \draw (0,-4.4) -- (2,-5.5);
  \draw (0,-4.4) -- (-2,-5.5);
  \node at (-2,-4.8) {\( \psi_0^{(2)}=\frac{q[2]_q}{[3]_q} \)};
  \node at (2,-4.8) {\( \psi_1^{(2)}=\frac{1}{[3]_q} \)}; 
  \node at (0,-4.1) {
    \begin{ytableau}
    1 & 2& *(red!50) 3
    \end{ytableau}
  };
  \node[below] at (-2,-5.4) {
      \begin{ytableau}
        1 & 2 & 3 \\
        *(red!50) 4
      \end{ytableau}
    }; 
  \node[below] at (2,-5.4) {
    \begin{ytableau}
      1 & 2 & 3 & *(red!50) 4
    \end{ytableau}
    };
  \draw (-2.5,-6.7) -- (-2.5,-7.5);
  \draw (2,-6.1) -- (0.3,-7.5);
  \draw (2,-6.1) -- (3.7,-7.5);
  \node[below] at (-2.5,-7.5) {
      \begin{ytableau}
        1 & 2 & 3 \\
        4 & 5
      \end{ytableau}
    };
  \node at (-3.2,-7.1) {\( \psi_0^{(3)}=1 \)};
  \node at (-0.3,-7.1) {\( \psi_0^{(3)}=\frac{q[3]_q}{[4]_q} \)};
  \node at (4.3,-7.1) {\( \psi_1^{(3)}=\frac{1}{[4]_q} \)};
  \node[below] at (0.7,-7.5) {
      \begin{ytableau}
        1 & 2 & 3 & 4\\
        5
      \end{ytableau}
    };
  \node[below] at (4.2,-7.5) {
      \begin{ytableau}
        1 & 2 & 3 & 4 & 5
      \end{ytableau}
    };
    \node at (-2.5,-9.1) {\( S \)};
    \node at (0.6,-9.1) {\( R \)};
    \node at (4.25,-9.1) {\( T \)};
\end{tikzpicture}
  
\caption{The transition probabilities for \( \mm=(2,3,5,5,5) \). Here,
  \( p_\mm(S;q)=q[2]_q/[3]_q \), \( p_\mm(R;q)=q/[4]_q \), and
  \( p_\mm(T;q)=1/[3]_q[4]_q \). When computing \( \psi^{(r)}_k \),
  the cells containing an integer greater than \( r \) are colored
  red. }
\label{fig:exam_modified}
\end{figure}

\begin{remark}\label{rem:1}
  We note that our definition of \( \psi_k^{(r)}(T ; q) \) is a
  modification of Hikita's \( \varphi_k^{(r)}(T ; q) \), which is the
  same as \eqref{eq:15} except that the power \( \sum_{i=k+1}^l b_i \)
  of \( q \) is replaced by \( \sum_{i=1}^k a_i \).
  By making this modification, we obtain Theorem~\ref{thm:Hikita_m}, 
  a version of Hikita's formula without an adjustment by a \( q \)-factor.
\end{remark}

\begin{defn}\label{defn:psi_pT}
  Let \( \mm\in \HH_n \) and \( T\in \SYT(n) \). Let
  \( T'\in\SYT(n-1) \) be the tableau obtained from \( T \) by
  removing the entry \( n \). Let \( \mm'\in \HH_{n-1} \) be
  the Hessenberg function given by \( \mm'(i) = \mm(i+1)-1 \) for
  \( i\in [n-1] \) and let \( r= n-\mm(1) \). We define
  \( p_\mm(T;q) \) recursively by \( p_\emptyset(\emptyset;q)=1 \) for
  \( n=0 \) and
  \[
    p_\mm(T;q)=
    \begin{cases}
       \psi_{k}^{(r)}(T';q) p_{\mm'}(T';q)
      & \mbox{if \( T=f_{k}^{(r)}(T') \) for some \( 0\le k\le l \),}\\
      0 & \mbox{otherwise.}
    \end{cases}
  \]
\end{defn}

For example, see Figure~\ref{fig:exam_modified}.

\begin{thm}[{\cite[Theorem 3]{Hikita2024}}]
\label{thm:Hikita_m}
  Let \( \mm \) be a Hessenberg function of length \( n \). Then we have
  \begin{equation}\label{eq:7}
    X_{\mm}(x;q)=\sum_{\lambda \vdash n}\prod_{i=1}^{\ell(\lambda)} [\lambda_i]_q! \sum_{T\in\SYT(\lambda)}p_\mm(T;q) e_{\lambda}(x).  
  \end{equation}
\end{thm}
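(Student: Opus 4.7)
The plan is to derive \Cref{thm:Hikita_m} from Hikita's original theorem \cite[Theorem~3]{Hikita2024} by a direct comparison of the two transition probabilities. Denote Hikita's original probability by $\varphi_k^{(r)}(T;q)$; according to \Cref{rem:1}, it agrees with our $\psi_k^{(r)}(T;q)$ except that the prefactor $q^{\sum_{i=k+1}^l b_i}$ is replaced by $q^{\sum_{i=1}^k a_i}$. Let $\tilde p_\mm(T;q)$ be defined by the recursion of \Cref{defn:psi_pT} with $\varphi$ in place of $\psi$. Hikita's theorem then expresses $X_\mm(x;q)$ as a sum involving $\tilde p_\mm(T;q)$ with an explicit overall $q$-adjustment, and my task is to show that this adjustment is exactly what our modification absorbs.

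My first step is to compute the single-step ratio
\[
\frac{\psi_k^{(r)}(T';q)}{\varphi_k^{(r)}(T';q)} = q^{\sum_{i=k+1}^l b_i - \sum_{i=1}^k a_i}.
\]
Using that the number of entries of $T'$ exceeding $r = n - \mm(1)$ is $\mm(1) - 1 = \sum_{i=0}^l b_i$, and that the column $c$ into which the new entry $n$ is inserted to form $T = f_k^{(r)}(T')$ satisfies $c - 1 = \sum_{i=1}^k a_i + \sum_{i=0}^k b_i$, the exponent collapses to the compact local form $\mm(1) - c$. My second step is to iterate this along the chain $\emptyset = T_0 \subset T_1 \subset \cdots \subset T_n = T$ of successive single-entry additions, using the Hessenberg recursion $\mm^{(j-1)}(i) = \mm^{(j)}(i+1) - 1$. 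This yields
\[
\frac{p_\mm(T;q)}{\tilde p_\mm(T;q)} = q^{B(\mm) - C(\lambda)},
\]
where $B(\mm) := \sum_{j=1}^{n}\mm^{(j)}(1)$ depends only on $\mm$ and $C(\lambda) := \sum_{(i,c)\in\lambda} c$ depends only on the shape $\lambda$ of $T$, since for any $T\in\SYT(\lambda)$ the multiset of insertion columns coincides with the column multiset of $\lambda$.

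The remaining and main step is to match the pair $(B(\mm), C(\lambda))$ with the $q$-adjustment appearing in Hikita's original identity. Once this identification is made, substituting $p_\mm(T;q) = q^{B(\mm) - C(\lambda)}\,\tilde p_\mm(T;q)$ into Hikita's formula and absorbing $q^{-C(\lambda)}$ into each $e_\lambda$-term while canceling $q^{B(\mm)}$ against Hikita's overall adjustment yields exactly \eqref{eq:7}. The main obstacle is this matching, which is a concrete but delicate bookkeeping exercise comparing normalization conventions; I expect both $B(\mm)$ and $C(\lambda)$ to admit clean closed forms such as $B(\mm) = \sum_i \mm(i) - \binom{n}{2}$ and $C(\lambda) = \sum_i \binom{\lambda_i+1}{2}$, and the comparison with Hikita's factor to follow from standard identities; the precise computation would naturally be carried out in \Cref{sec:comp-with-hikit}.
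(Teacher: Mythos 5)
Your approach is essentially the same as the paper's: the paper also proves this as a corollary of Hikita's original Theorem~3 by computing the ratio between the modified transition probability $\psi_k^{(r)}$ and Hikita's $\varphi_k^{(r)}$, telescoping along the chain of single-entry insertions, and showing the accumulated $q$-power matches the explicit factor in Hikita's normalization. That comparison is precisely what the paper carries out in Appendix~\ref{sec:comp-with-hikit}, where Proposition~\ref{prop:modified_p} establishes $p_\mm(T;q)=q^{\area(\mm)-\sum_j\binom{\lambda_j}{2}}\,\overline{p}_\mm(T;q)$, which (after the shift by $n$) agrees with your $q^{B(\mm)-C(\lambda)}$ since $B(\mm)=\area(\mm)+n$ and $C(\lambda)=\sum_j\binom{\lambda_j}{2}+n$. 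Your closed forms $B(\mm)=\sum_i\mm(i)-\binom{n}{2}$ and $C(\lambda)=\sum_i\binom{\lambda_i+1}{2}$ are correct, and your "single-step ratio collapses to $q^{\mm(1)-c}$" is the right local computation.

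One point deserves more care than you give it. You assert that "the number of entries of $T'$ exceeding $r$ is $\mm(1)-1=\sum_{i=0}^l b_i$." But $\sum_{i=0}^l b_i$ counts \emph{columns} of $T'$ containing at least one entry greater than $r$, not the entries themselves. Equality of these two quantities requires that each column of $T'$ contains at most one entry greater than $r$, which holds only when $\overline{p}_\mm(T;q)\ne 0$ and is itself established by a separate induction (Lemma~\ref{lem:1}(1) in the paper); it is not automatic from the definition of $f_k^{(r)}$ at a single step. Without that lemma your telescoped formula $q^{\mm^{(j)}(1)-c_j}$ is not justified, so the "main obstacle" you defer is not just the bookkeeping of matching $B(\mm)$ and $C(\lambda)$ to Hikita's prefactor, but this structural statement about the tableaux arising in the recursion. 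Once that is supplied, the rest of your plan goes through and is the paper's proof in all but organization (you telescope explicitly; the paper runs an induction on $n$ in Proposition~\ref{prop:modified_p}).
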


\section{Algorithm for restricted modular law}
\label{sec:restr-modular-laws}
In this section, we introduce the restricted modular law, which is a slight modification  
of the modular law of Abreu and Nigro~\cite{Abreu2021}. Using this, we provide an explicit  
algorithm for computing the value of a function \( f \) on \( \HH \) that satisfies the  
restricted modular law (\Cref{Algorithm}). Specifically, we show how to reduce the
computation of \( f(\mm) \) for an arbitrary \( \mm \in \HH \) to evaluating \( f \) at
disjoint unions of paths.  
Consequently, this algorithm implies that a function satisfying the restricted modular law  
is uniquely determined by its values on disjoint unions of paths (\Cref{thm: master}).

\begin{defn}\label{def:5}
  Let
  \( (\mm, \mm', \mm'') \in \HH_n^3 \). We say that \( (\mm, \mm', \mm'') \) is a
  \emph{modular triple of type~I} if there exists \(i\in [n-1]\)
  satisfying the following conditions:
  \begin{enumerate}
  \item \(\mm(i)+1=\mm'(i)=\mm''(i)-1\) and \(\mm'(i-1)<\mm'(i)<\mm'(i+1)\), where \(\mm'(0)=0\).
  \item \(\mm(j)=\mm'(j)=\mm''(j)\) for all \(j\in [n]\setminus \{i\}\).
  \item \(\mm'(\mm'(i))=\mm'(\mm'(i)+1)\).
  \end{enumerate}

  We say that \( (\mm, \mm', \mm'') \) is a
  \emph{modular triple of type~II} if there exists \(i\in [n-1]\)
  satisfying the following conditions:
  \begin{enumerate}
  \item \(\mm'(i)+1=\mm'(i+1)\), \( \mm(i) = \mm'(i) = \mm''(i)-1 \), and
    \( \mm(i+1)+1 = \mm'(i+1) = \mm''(i+1) \).
  \item \(\mm(j)=\mm'(j)=\mm''(j)\) for all \(j\in [n]\setminus \{i,i+1\}\).
  \item \( (\mm')^{-1}(\{i\})=\emptyset \).
  \end{enumerate}
  If, in addition, \( i \neq 1 \), then
  \( (\mm, \mm', \mm'') \) is called a \emph{restricted modular triple of
    type~II}. See Figures~\ref{fig:image6} and \ref{fig:image7}.

Let $A$ be a $\QQ(q)$-algebra. We say that a function
$f:\HH\rightarrow A$ satisfies the \emph{modular law} if
\[
    (1+q)f(\mm') = qf(\mm) + f(\mm'')
\]
whenever the triple $(\mm,\mm',\mm'')$ is either a modular triple of
type I or a modular triple of type II. We say a function
$f:\HH\rightarrow A$ satisfies the \emph{restricted modular
  law} if
\begin{equation}\label{eq:6}
    (1+q)f(\mm') = qf(\mm) + f(\mm'')
\end{equation}
whenever the triple $(\mm,\mm',\mm'')$ is either a modular triple of type I or a restricted modular triple of type II. 
\end{defn}

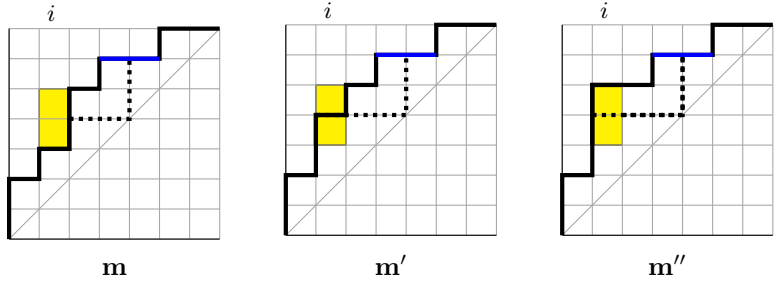
\begin{figure}
  \centering

 \begin{tikzpicture}[scale=0.4]
  \draw [fill=yellow] (1,3) rectangle (2,5);

  \foreach \i in {0,1,...,7}
  \draw[color=gray!70] (\i,0) -- (\i,7);

  \foreach \j in {0,1,...,7}
  \draw[color=gray!70]  (0,\j) -- (7,\j);

  \draw (0,0) -- (7,0);
  \draw (0,0) -- (0,7);
  \draw[color=gray!70] (0,0) -- (7,7);

  \draw[ultra thick]
  (0,0) -- (0,2) -- (1,2) -- (1,3) -- (2,3) -- (2,5) -- (3,5) -- (3,6) -- (5,6) -- (5,7) -- (7,7);
  \draw[ultra thick, dotted]
  (2,4) -- (4,4) -- (4,6);
   \draw[ultra thick]
  (1,3) -- (2,3) -- (2,5);
  \draw[ultra thick, color=blue]
  (3,6) -- (5,6);
  
  \node at (1.4, 7.5) {\small\( i \)};
  \node at (3.5,-1) {\( \mm \)};

\end{tikzpicture}
\qquad
\begin{tikzpicture}[scale=0.4]
  \draw [fill=yellow] (1,3) rectangle (2,5);

  \foreach \i in {0,1,...,7}
  \draw[color=gray!70] (\i,0) -- (\i,7);

  \foreach \j in {0,1,...,7}
  \draw[color=gray!70]  (0,\j) -- (7,\j);

  \draw (0,0) -- (7,0);
  \draw (0,0) -- (0,7);
  \draw[color=gray!70] (0,0) -- (7,7);

  \draw[ultra thick]
  (0,0) -- (0,2) -- (1,2) -- (1,4) -- (2,4) -- (2,5) -- (3,5) -- (3,6) -- (5,6) -- (5,7) -- (7,7);
  \draw[ultra thick, dotted]
  (1,4) -- (4,4) -- (4,6);
  \draw[ultra thick]
  (1,3) -- (1,4) -- (2,4) -- (2,5);
  \draw[ultra thick, color=blue]
  (3,6) -- (5,6);
  
  \node at (1.4, 7.5) {\small\( i \)};
  \node at (3.5,-1) {\( \mm' \)};
\end{tikzpicture}
\qquad
\begin{tikzpicture}[scale=0.4]
  \draw [fill=yellow] (1,3) rectangle (2,5);

  \foreach \i in {0,1,...,7}
  \draw[color=gray!70] (\i,0) -- (\i,7);

  \foreach \j in {0,1,...,7}
  \draw[color=gray!70]  (0,\j) -- (7,\j);

  \draw (0,0) -- (7,0);
  \draw (0,0) -- (0,7);
  \draw[color=gray!70] (0,0) -- (7,7);

  \draw[ultra thick]
  (0,0) -- (0,2) -- (1,2) -- (1,5) -- (3,5) -- (3,6) -- (5,6) -- (5,7) -- (7,7);
  \draw[ultra thick, dotted]
  (1,4) -- (4,4) -- (4,6);
  \draw[ultra thick, dotted]
  (2,4) -- (4,4) -- (4,6);
   \draw[ultra thick]
  (1,3) -- (1,5) -- (2,5);
  \draw[ultra thick, color=blue]
  (3,6) -- (5,6);
  
  \node at (1.4, 7.5) {\small\( i \)};
  \node at (3.5,-1) {\( \mm'' \)};
\end{tikzpicture}
 
\caption{A modular triple \( (\mm,\mm',\mm'') \) of type I. Note that
  \( \mm \), \( \mm' \), and \( \mm'' \) differ only in the
  highlighted region. }\label{fig:image6}
\end{figure} 

\begin{figure}
  \centering

 \begin{tikzpicture}[scale=0.4]
  \draw [fill=yellow] (2,5) rectangle (4,6);

  \foreach \i in {0,1,...,7}
  \draw[color=gray!70] (\i,0) -- (\i,7);

  \foreach \j in {0,1,...,7}
  \draw[color=gray!70]  (0,\j) -- (7,\j);

  \draw (0,0) -- (7,0);
  \draw (0,0) -- (0,7);
  \draw[color=gray!70] (0,0) -- (7,7);

  \draw[ultra thick]
  (0,0) -- (0,2) -- (1,2) -- (1,4) -- (2,4) -- (2,5) -- (4,5) -- (4,6) -- (5,6) -- (5,7) -- (7,7);
  \draw[ultra thick, dotted]
  (1,3) -- (3,3) -- (3,5);
  \draw[ultra thick]
  (2,5) -- (4,5) -- (4,6);
  \draw[ultra thick, color=blue]
  (1,2) -- (1,4);
  
  \node at (2.4, 7.5) {\small\( i \)};
  \node at (3.6, 7.5) {\small\( i+1 \)};
  \node at (3.5,-1) {\( \mm \)};
\end{tikzpicture}
\qquad
\begin{tikzpicture}[scale=0.4]
  \draw [fill=yellow] (2,5) rectangle (4,6);

  \foreach \i in {0,1,...,7}
  \draw[color=gray!70] (\i,0) -- (\i,7);

  \foreach \j in {0,1,...,7}
  \draw[color=gray!70]  (0,\j) -- (7,\j);

  \draw (0,0) -- (7,0);
  \draw (0,0) -- (0,7);
  \draw[color=gray!70] (0,0) -- (7,7);

  \draw[ultra thick]
  (0,0) -- (0,2) -- (1,2) -- (1,4) -- (2,4) -- (2,5) -- (3,5) -- (3,6) -- (5,6) -- (5,7) -- (7,7);
  \draw[ultra thick, dotted]
  (1,3) -- (3,3) -- (3,6);
  \draw[ultra thick]
  (2,5) -- (3,5) -- (3,6) -- (4,6);
  \draw[ultra thick, color=blue]
  (1,2) -- (1,4);
  
  \node at (2.4, 7.5) {\small\( i \)};
  \node at (3.6, 7.5) {\small\( i+1 \)};
  \node at (3.5,-1) {\( \mm' \)};
\end{tikzpicture}
\qquad
\begin{tikzpicture}[scale=0.4]
  \draw [fill=yellow] (2,5) rectangle (4,6);

  \foreach \i in {0,1,...,7}
  \draw[color=gray!70] (\i,0) -- (\i,7);

  \foreach \j in {0,1,...,7}
  \draw[color=gray!70]  (0,\j) -- (7,\j);

  \draw (0,0) -- (7,0);
  \draw (0,0) -- (0,7);
  \draw[color=gray!70] (0,0) -- (7,7);

  \draw[ultra thick]
  (0,0) -- (0,2) -- (1,2) -- (1,4) -- (2,4) -- (2,6) -- (5,6) -- (5,7) -- (7,7);
  \draw[ultra thick, dotted]
  (1,3) -- (3,3) -- (3,6);
  \draw[ultra thick]
  (2,5) -- (2,6) -- (4,6);
  \draw[ultra thick, color=blue]
  (1,2) -- (1,4);
  
  \node at (2.4, 7.5) {\small\( i \)};
  \node at (3.6, 7.5) {\small\( i+1 \)};
  \node at (3.5,-1) {\( \mm'' \)};
\end{tikzpicture}
\caption{A restricted modular triple \( (\mm,\mm',\mm'') \) of type
  II. Note that \( \mm \), \( \mm' \), and \( \mm'' \) differ only in
  the highlighted region.}\label{fig:image7}
\end{figure}
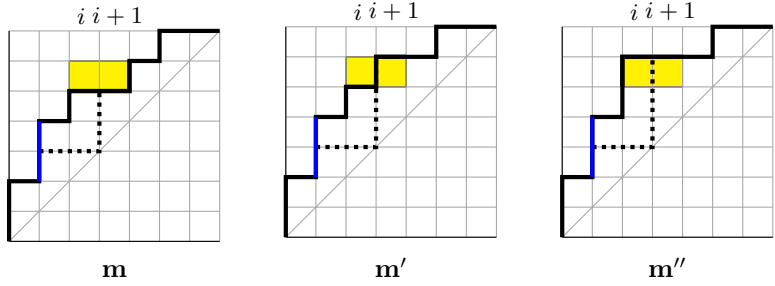 

Abreu and Nigro~\cite{Abreu2021} showed that a function satisfying the
modular law is uniquely determined by its values on disjoint unions of
complete graphs. Note that the restricted modular law imposes fewer
constraints than the modular law. Nevertheless, we show that the
restricted modular law still suffices to characterize a given function
using only its values on a small family of graphs (see
\Cref{Algorithm} and \Cref{thm: master}).

\begin{defn}
  Let \( \mm \in \HH_n \) be a Hessenberg function that is not a union
  of paths. Let \( \alpha \) be the largest integer in \( [n-1] \) such
  that \( \mm^{-1}(\{\alpha\}) = \emptyset \) and
   \( \mm^{-1}(\{\alpha +1\}) \neq \emptyset \) with
  \( \min \left( \mm^{-1}(\{\alpha+1\}) \right) < \alpha \). We say that
  \( \mm\in \HH_n \) is \emph{flat} (respectively \emph{non-flat}) if
  \( \mm(\alpha)=\mm(\alpha+1) \) (respectively \( \mm(\alpha)<\mm(\alpha+1) \)). We
  denote \( \alpha(\mm)=\alpha \) and
  \( \beta(\mm) = \min \left( \mm^{-1}(\{\alpha+1\}) \right) \).
\end{defn}

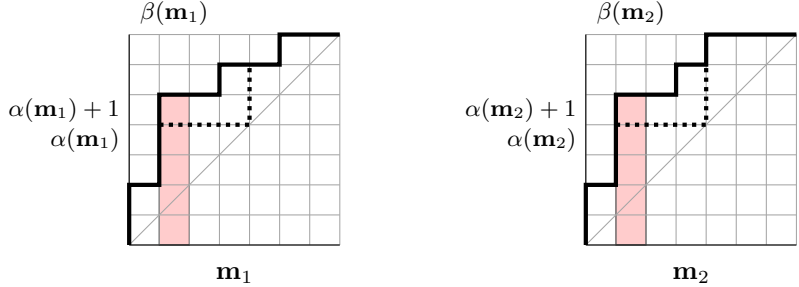
\begin{figure}
  \centering
%1----------------------------------------------------
 \begin{tikzpicture}[scale=0.4]
  \draw [fill=red!20] (1,0) rectangle (2,5);

  \foreach \i in {0,1,...,7}
  \draw[color=gray!70] (\i,0) -- (\i,7);

  \foreach \j in {0,1,...,7}
  \draw[color=gray!70]  (0,\j) -- (7,\j);

  \draw (0,0) -- (7,0);
  \draw (0,0) -- (0,7);
  \draw[color=gray!70] (0,0) -- (7,7);

  \draw[ultra thick]
  (0,0) -- (0,2) -- (1,2) -- (1,5) -- (3,5) -- (3,6) -- (5,6) -- (5,7) -- (7,7);
  \draw[ultra thick, dotted]
  (1,4) -- (4,4) -- (4,6);
  
  \node[left] at (0, 4.5) {\small\( \alpha(\mm_1)+1 \)};
  \node[left] at (0, 3.5) {\small\( \alpha(\mm_1) \)};
  \node[above] at (1.5, 7) {\small\( \beta(\mm_1) \)};
  \node at (3.5,-1) {\( \mm_1 \)};
\end{tikzpicture}
\qquad\qquad
%2----------------------------------------------------
 \begin{tikzpicture}[scale=0.4]
  \draw [fill=red!20] (1,0) rectangle (2,5);

  \foreach \i in {0,1,...,7}
  \draw[color=gray!70] (\i,0) -- (\i,7);

  \foreach \j in {0,1,...,7}
  \draw[color=gray!70]  (0,\j) -- (7,\j);

  \draw (0,0) -- (7,0);
  \draw (0,0) -- (0,7);
  \draw[color=gray!70] (0,0) -- (7,7);

  \draw[ultra thick]
  (0,0) -- (0,2) -- (1,2) -- (1,5) -- (3,5) -- (3,6) -- (4,6) -- (4,7) -- (7,7);
  \draw[ultra thick, dotted]
  (1,4) -- (4,4) -- (4,6);
  
  \node[left] at (0, 4.5) {\small\( \alpha(\mm_2)+1 \)};
  \node[left] at (0, 3.5) {\small\( \alpha(\mm_2) \)};
  \node[above] at (1.5, 7) {\small\( \beta(\mm_2) \)};
  \node at (3.5,-1) {\( \mm_2 \)};
\end{tikzpicture}
\caption{The Hessenberg function \( \mm_1 \) is flat,
  while \( \mm_2 \) is non-flat. The highlighted regions
  indicate the \( \beta(\mm_1) \)-th and \( \beta(\mm_2) \)-th
  columns of the grids.}
\label{fig:flat}
\end{figure} 

See \Cref{fig:flat}. Note that in the above definition, the assumption
that \( \mm \) is not a union of paths guarantees the existence of
\( \alpha(\mm) \). Moreover, the condition
\( \min \left( \mm^{-1}(\{\alpha+1\}) \right) < \alpha \) implies that
\( \alpha(\mm)>1 \). When we say \( \mm \) is flat or non-flat, we always
make this assumption.

\begin{lem}\label{lem: modular law for level}
  Suppose that \( \mm \in \HH_n \) is flat. Let \( \mm_0\in \HH_n \)
  and \( \mm_1\in \HH_n \) be given by
\[
    \mm_0(i) = \begin{cases}
        \mm(i) & \text{if } i \neq \beta(\mm), \\
        \mm(i) - 2 & \text{if } i = \beta(\mm),
    \end{cases}
    \qquad \text{and} \qquad
    \mm_1(i) = \begin{cases}
        \mm(i) & \text{if } i \neq \beta(\mm), \\
        \mm(i) - 1 & \text{if } i = \beta(\mm).
    \end{cases}
\]
See Figure~\ref{fig:image9}. Then, for any function
\( f : \HH \to A \) that satisfies the restricted modular law, we have
\begin{equation} \label{eq: modular law for level}
    f(\mm) = (1 + q) f(\mm_1) - q f(\mm_0).
\end{equation}
\end{lem}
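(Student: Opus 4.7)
The plan is to recognize that the identity to be proved is precisely a rearrangement of the restricted modular law applied to the triple $(\mm_0, \mm_1, \mm)$, viewed as a modular triple of type~I with distinguished index $i = \beta(\mm)$. Once this is established, \eqref{eq:6} at this triple reads $(1+q)f(\mm_1) = q f(\mm_0) + f(\mm)$, which rearranges to \eqref{eq: modular law for level}. So the whole task reduces to verifying that $(\mm_0, \mm_1, \mm)$ really is a type~I modular triple in the sense of \Cref{def:5}.

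To do this, I would set $\alpha = \alpha(\mm)$ and $\beta = \beta(\mm)$, so that $\mm(\beta) = \alpha + 1$ and, by the definition of $\alpha(\mm)$, $\beta < \alpha$. Conditions~(1) and~(2) at $i = \beta$ are then immediate from the definitions of $\mm_0$ and $\mm_1$: we have $\mm_0(\beta) + 1 = \mm_1(\beta) = \alpha = \mm(\beta) - 1$, and $\mm_0, \mm_1, \mm$ agree off position $\beta$. The strict inequalities in~(1) need a short check: $\mm(\beta - 1)$ can be neither $\alpha$ (since $\mm^{-1}(\{\alpha\}) = \emptyset$) nor $\alpha + 1$ (which would contradict the minimality of $\beta \in \mm^{-1}(\{\alpha+1\})$), so $\mm_1(\beta-1) = \mm(\beta-1) < \alpha = \mm_1(\beta)$; and $\mm_1(\beta+1) = \mm(\beta+1) \geq \mm(\beta) = \alpha + 1 > \mm_1(\beta)$ by weak monotonicity of~$\mm$.

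The heart of the argument is condition~(3), which demands $\mm_1(\mm_1(\beta)) = \mm_1(\mm_1(\beta) + 1)$, i.e.\ $\mm_1(\alpha) = \mm_1(\alpha + 1)$. Because $\beta < \alpha$, both indices $\alpha$ and $\alpha + 1$ are distinct from $\beta$, so $\mm_1$ coincides with $\mm$ at these positions, and the required equality reduces exactly to the flatness hypothesis $\mm(\alpha) = \mm(\alpha+1)$. I expect this to be the only spot where flatness enters, so the main obstacle is purely conceptual: spotting the correct modular triple and recognizing that flatness is precisely the ingredient that activates condition~(3). Note finally that since the argument uses only a type~I relation, the additional restriction $i \neq 1$ imposed on type~II triples in the restricted modular law plays no role here, so the conclusion holds even with the weaker hypothesis on $f$.
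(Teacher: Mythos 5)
Your proposal is correct and takes the same approach as the paper: the paper's proof is simply the one-line observation that $(\mm_0,\mm_1,\mm)$ is a modular triple of type~I by construction, and then invoking the restricted modular law. You spell out the verification of the type~I conditions in more detail (in particular, correctly identifying flatness as exactly what drives condition~(3) via $\mm_1(\alpha)=\mm(\alpha)=\mm(\alpha+1)=\mm_1(\alpha+1)$, using $\beta<\alpha$), but the underlying argument is identical.
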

\begin{proof}
  By construction, \((\mm_0,\mm_1,\mm)\) is a modular triple of type
  I. Thus, \eqref{eq: modular law for level} holds.
\end{proof}

\begin{figure}
  \centering
%1----------------------------------------------------
 \begin{tikzpicture}[scale=0.35]
  \draw [fill=yellow] (1,3) rectangle (2,5);

  \foreach \i in {0,1,...,7}
  \draw[color=gray!70] (\i,0) -- (\i,7);

  \foreach \j in {0,1,...,7}
  \draw[color=gray!70]  (0,\j) -- (7,\j);

  \draw (0,0) -- (7,0);
  \draw (0,0) -- (0,7);
  \draw[color=gray!70] (0,0) -- (7,7);

  \draw[ultra thick]
  (0,0) -- (0,2) -- (1,2) -- (1,3) -- (2,3) -- (2,5) -- (3,5) -- (3,6) -- (5,6) -- (5,7) -- (7,7);
  \draw[ultra thick, dotted]
  (2,4) -- (4,4) -- (4,6);
  
  \node[left] at (0, 4.5) {\small\( \alpha+1 \)};
  \node[left] at (0, 3.5) {\small\( \alpha \)};
  \node at (3.5,-1) {\( \mm_0 \)};
\end{tikzpicture}
\qquad
%2----------------------------------------------------
 \begin{tikzpicture}[scale=0.35]
  \draw [fill=yellow] (1,3) rectangle (2,5);

  \foreach \i in {0,1,...,7}
  \draw[color=gray!70] (\i,0) -- (\i,7);

  \foreach \j in {0,1,...,7}
  \draw[color=gray!70]  (0,\j) -- (7,\j);

  \draw (0,0) -- (7,0);
  \draw (0,0) -- (0,7);
  \draw[color=gray!70] (0,0) -- (7,7);

  \draw[ultra thick]
  (0,0) -- (0,2) -- (1,2) -- (1,4) -- (2,4) -- (2,5) -- (3,5) -- (3,6) -- (5,6) -- (5,7) -- (7,7);
  \draw[ultra thick, dotted]
  (1,4) -- (4,4) -- (4,6);
  
  \node[left] at (0, 4.5) {\small\( \alpha+1 \)};
  \node[left] at (0, 3.5) {\small\( \alpha \)};
  \node at (3.5,-1) {\( \mm_1 \)};
\end{tikzpicture}
\qquad
%3----------------------------------------------------
\begin{tikzpicture}[scale=0.35]
  \draw [fill=yellow] (1,3) rectangle (2,5);

  \foreach \i in {0,1,...,7}
  \draw[color=gray!70] (\i,0) -- (\i,7);

  \foreach \j in {0,1,...,7}
  \draw[color=gray!70]  (0,\j) -- (7,\j);

  \draw (0,0) -- (7,0);
  \draw (0,0) -- (0,7);
  \draw[color=gray!70] (0,0) -- (7,7);

  \draw[ultra thick]
  (0,0) -- (0,2) -- (1,2) -- (1,5) -- (3,5) -- (3,6) -- (5,6) -- (5,7) -- (7,7);
  \draw[ultra thick, dotted]
  (1,4) -- (4,4) -- (4,6);
  
  \node[left] at (0, 4.5) {\small\( \alpha+1 \)};
  \node[left] at (0, 3.5) {\small\( \alpha \)};
  \node at (3.5,-1) {\( \mm \)};
\end{tikzpicture}
\caption{The triple \( (\mm_0,\mm_1,\mm) \) in \Cref{lem: modular law for level},
where \( \alpha=\alpha(\mm) \).}
\label{fig:image9}
\end{figure} 

\begin{figure}
  \centering
%1----------------------------------------------------
  \begin{tikzpicture}[scale=0.35]
  \draw [fill=yellow] (3,5) rectangle (5,6);

  \foreach \i in {0,1,...,7}
  \draw[color=gray!70] (\i,0) -- (\i,7);

  \foreach \j in {0,1,...,7}
  \draw[color=gray!70]  (0,\j) -- (7,\j);

  \draw (0,0) -- (7,0);
  \draw (0,0) -- (0,7);
  \draw[color=gray!70] (0,0) -- (7,7);

  \draw[ultra thick]
  (0,0) -- (0,2) -- (1,2) -- (1,5) -- (3,5) -- (5,5) -- (5,6) -- (5,7) -- (7,7);
  \draw[ultra thick, dotted]
  (1,4) -- (4,4) -- (4,5);
  
  \node[left] at (0, 4.5) {\small\( \alpha+1 \)};
  \node[left] at (0, 3.5) {\small\( \alpha \)};
  \node at (3.5,-1) {\( \mm_0 \)};
\end{tikzpicture}
 \qquad
%2----------------------------------------------------
\begin{tikzpicture}[scale=0.35]
  \draw [fill=yellow] (3,5) rectangle (5,6);

  \foreach \i in {0,1,...,7}
  \draw[color=gray!70] (\i,0) -- (\i,7);

  \foreach \j in {0,1,...,7}
  \draw[color=gray!70]  (0,\j) -- (7,\j);

  \draw (0,0) -- (7,0);
  \draw (0,0) -- (0,7);
  \draw[color=gray!70] (0,0) -- (7,7);

  \draw[ultra thick]
  (0,0) -- (0,2) -- (1,2) -- (1,5) -- (3,5) -- (4,5) -- (4,6) -- (5,6) -- (5,7) -- (7,7);
  \draw[ultra thick, dotted]
  (1,4) -- (4,4) -- (4,5);
  
  \node[left] at (0, 4.5) {\small\( \alpha+1 \)};
  \node[left] at (0, 3.5) {\small\( \alpha \)};
  \node at (3.5,-1) {\( \mm \)};
\end{tikzpicture}
 \qquad
%3----------------------------------------------------
\begin{tikzpicture}[scale=0.35]
  \draw [fill=yellow] (3,5) rectangle (5,6);

  \foreach \i in {0,1,...,7}
  \draw[color=gray!70] (\i,0) -- (\i,7);

  \foreach \j in {0,1,...,7}
  \draw[color=gray!70]  (0,\j) -- (7,\j);

  \draw (0,0) -- (7,0);
  \draw (0,0) -- (0,7);
  \draw[color=gray!70] (0,0) -- (7,7);

  \draw[ultra thick]
  (0,0) -- (0,2) -- (1,2) -- (1,5) -- (3,5) -- (3,6) -- (5,6) -- (5,7) -- (7,7);
  \draw[ultra thick, dotted]
  (1,4) -- (4,4) -- (4,6);
  
  \node[left] at (0, 4.5) {\small\( \alpha+1 \)};
  \node[left] at (0, 3.5) {\small\( \alpha \)};
  \node at (3.5,-1) {\( \mm_2 \)};
\end{tikzpicture}
\caption{The triple \( (\mm_0,\mm_1,\mm) \) in \Cref{lem: modular law for level},
where \( \alpha=\alpha(\mm) \).}
\label{fig:image10}
\end{figure}

\begin{lem}\label{lem: application of modular law}
  Suppose that \( \mm \in \HH_n \) is non-flat. Let \( \mm_0\in \HH_n \)
  and \( \mm_2\in \HH_n \) be given by
    \[
        \mm_0(i) = \begin{cases}
            \mm(i) & \mbox{if \(i \neq \alpha(\mm)+1 \)},\\
            \mm(i)-1 & \mbox{if \(i = \alpha(\mm)+1\)},
        \end{cases} \qquad \text{and} \qquad \mm_2(i) = \begin{cases}
            \mm(i) & \mbox{if \(i \neq \alpha(\mm)\)},\\
            \mm(i)+1 & \mbox{if \(i = \alpha(\mm)\)}.
        \end{cases}
    \]
    See \Cref{fig:image10}. Furthermore, let \(\mm^{(1)}_0\) and \(\mm^{(1)}\) be given by
    \[
    \mm^{(1)}_0(i)=\begin{cases}
    \mm_2(i) & \mbox{if \(i \neq \beta(\mm)\)},\\
    \mm_2(i) - 2 & \mbox{if \(i = \beta(\mm)\)},
    \end{cases} \qquad \text{and} \qquad
    \mm^{(1)}(i)=\begin{cases}
    \mm_2(i) & \mbox{if \(i \neq \beta(\mm)\)},\\
    \mm_2(i) - 1 & \mbox{if \(i = \beta(\mm)\)}.
    \end{cases}
    \]
    See \Cref{fig:image12}. Then, for a function \(f:\HH\to A\) that
    satisfies the restricted modular law, we have
    \begin{equation}\label{eq: application of modular law}
    (1+q)f(\mm) = (1+q)f(\mm^{(1)}) + q f(\mm_0) - q f(\mm^{(1)}_0).    
    \end{equation}
\end{lem}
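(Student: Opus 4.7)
The plan is to derive the identity by combining two applications of the restricted modular law. First, I would verify that $(\mm_0,\mm,\mm_2)$ is a restricted modular triple of type~II at index $i=\alpha(\mm)$: the prescribed height relations at indices $\alpha(\mm)$ and $\alpha(\mm)+1$ are immediate from the definitions of $\mm_0$ and $\mm_2$; the emptiness $\mm^{-1}(\{\alpha(\mm)\})=\emptyset$ is the defining property of $\alpha(\mm)$; and $\alpha(\mm)\neq 1$ follows from $1\leq\beta(\mm)<\alpha(\mm)$. Applying \eqref{eq:6} then yields
\[
(1+q) f(\mm) = q f(\mm_0) + f(\mm_2).
\]

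I regard as the main technical obstacle the equality $\mm(\alpha(\mm)+1)=\mm(\alpha(\mm))+1$ required by the definition of a type~II triple. I would prove it by contradiction: set $\alpha=\alpha(\mm)$ and $s=\mm(\alpha)$, and suppose $t:=\mm(\alpha+1)\geq s+2$. Since $\mm$ is weakly increasing and jumps from $s$ to $t$ between indices $\alpha$ and $\alpha+1$, we have $\mm^{-1}(\{t-1\})=\emptyset$. Moreover, $\alpha<s$ because $\mm(\alpha)\geq \alpha$ and $\mm^{-1}(\{\alpha\})=\emptyset$ together preclude $\mm(\alpha)=\alpha$; hence $t-1\geq s+1>\alpha$ and $\min \mm^{-1}(\{t\})\leq\alpha+1<t-1$. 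Thus $\alpha':=t-1$ satisfies all defining conditions for $\alpha(\mm)$ while $\alpha'>\alpha$, contradicting the maximality of $\alpha(\mm)$.

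For the second step, I would first observe that $\mm_2(\alpha)=\mm(\alpha)+1=\mm(\alpha+1)=\mm_2(\alpha+1)$ by the previous paragraph, and then verify directly that $(\mm^{(1)}_0,\mm^{(1)},\mm_2)$ is a modular triple of type~I at index $\beta(\mm)$. The height conditions at $\beta(\mm)$ follow from the bound $\mm(j)\leq\alpha-1$ for $j<\beta(\mm)$, itself a consequence of $\mm^{-1}(\{\alpha\})=\emptyset$ and the choice of $\beta(\mm)=\min\mm^{-1}(\{\alpha+1\})$; and the nested-flatness condition $\mm^{(1)}(\mm^{(1)}(\beta(\mm)))=\mm^{(1)}(\mm^{(1)}(\beta(\mm))+1)$ reduces to the just-established identity $\mm_2(\alpha)=\mm_2(\alpha+1)$. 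Morally this amounts to applying \Cref{lem: modular law for level} to $\mm_2$, but I prefer a direct verification, since $\alpha(\mm_2)$ need not equal $\alpha(\mm)$ (it can jump up to $s$ when $\alpha$ is the unique preimage of $s$ under $\mm$). The resulting identity
\[
(1+q) f(\mm^{(1)}) = q f(\mm^{(1)}_0) + f(\mm_2)
\]
can then be substituted into the first relation to yield \eqref{eq: application of modular law}.
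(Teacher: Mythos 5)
Your proof is correct and follows the same two-step structure as the paper: first apply the restricted modular law to the restricted type~II triple $(\mm_0,\mm,\mm_2)$ at index $\alpha(\mm)$, then to the type~I triple $(\mm^{(1)}_0,\mm^{(1)},\mm_2)$ at index $\beta(\mm)$, and combine. The paper's proof is terser (it simply asserts that $\mm(\alpha(\mm)+1)=\mm(\alpha(\mm))+1$ "because otherwise it would violate the maximality of $\alpha(\mm)$," and that $(\mm^{(1)}_0,\mm^{(1)},\mm_2)$ is type~I "by construction"); you supply the missing details, most valuably the contradiction argument via $\alpha'=t-1$ for the equality $\mm(\alpha+1)=\mm(\alpha)+1$, and the caveat that $\alpha(\mm_2)$ need not equal $\alpha(\mm)$, which justifies your direct verification of the type~I conditions rather than invoking \Cref{lem: modular law for level} for $\mm_2$.
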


\begin{figure}
  \centering

  \begin{tikzpicture}[scale=0.35]
  \draw [fill=yellow] (1,3) rectangle (2,5);

  \foreach \i in {0,1,...,7}
  \draw[color=gray!70] (\i,0) -- (\i,7);

  \foreach \j in {0,1,...,7}
  \draw[color=gray!70]  (0,\j) -- (7,\j);

  \draw (0,0) -- (7,0);
  \draw (0,0) -- (0,7);
  \draw[color=gray!70] (0,0) -- (7,7);

  \draw[ultra thick]
  (0,0) -- (0,2) -- (1,2) -- (1,3) --(2,3) -- (2,5) -- (3,5) -- (3,6) -- (5,6) -- (5,7) -- (7,7);
  \draw[ultra thick, dotted]
  (2,4) -- (4,4) -- (4,6);
  
  \node[left] at (0, 4.5) {\small\( \alpha+1 \)};
  \node[left] at (0, 3.5) {\small\( \alpha \)};
  \node at (3.5,-1) {\( \mm_0^{(1)} \)};
\end{tikzpicture}
\qquad \qquad 
\begin{tikzpicture}[scale=0.35]
  \draw [fill=yellow] (1,3) rectangle (2,5);

  \foreach \i in {0,1,...,7}
  \draw[color=gray!70] (\i,0) -- (\i,7);

  \foreach \j in {0,1,...,7}
  \draw[color=gray!70]  (0,\j) -- (7,\j);

  \draw (0,0) -- (7,0);
  \draw (0,0) -- (0,7);
  \draw[color=gray!70] (0,0) -- (7,7);

  \draw[ultra thick]
  (0,0) -- (0,2) -- (1,2) -- (1,4) --(2,4) -- (2,5) -- (3,5) -- (3,6) -- (5,6) -- (5,7) -- (7,7);
  \draw[ultra thick, dotted]
  (1,4) -- (4,4) -- (4,6);
  
  \node[left] at (0, 4.5) {\small\( \alpha+1 \)};
  \node[left] at (0, 3.5) {\small\( \alpha \)};
  \node at (3.5,-1) {\( \mm^{(1)} \)};
\end{tikzpicture}
\caption{The Hessenberg functions $\mm^{(1)}_0$ and $\mm^{(1)}$ in
  \Cref{lem: application of modular law}, where \( \alpha=\alpha(\mm) \).}
\label{fig:image12}
\end{figure} 

\begin{proof}
  We must have \( \mm(\alpha(\mm)+1)=\mm(\alpha(\mm))+1 \), because otherwise it
  would violate the maximality of \( \alpha(\mm) \). Hence,
  \( (\mm_0,\mm,\mm_2) \) is a modular triple of type~II. Moreover,
  since \( \alpha(\mm)>1 \), it is a restricted modular triple of type~II.
  Therefore, by \eqref{eq:6}, we have
\begin{equation}\label{eq: 61}
    (1+q)f(\mm)=qf(\mm_0) + f(\mm_2).
\end{equation}
Again, by construction, \((\mm^{(1)}_0,\mm^{(1)},\mm_2)\) is a modular
triple of type~I. Then, by \eqref{eq:6}, we have
\begin{equation}\label{eq: 62}
    (1+q)f(\mm^{(1)})=qf(\mm^{(1)}_0) + f(\mm_2).
\end{equation}
Combining \eqref{eq: 61} and \eqref{eq: 62} concludes the proof.
\end{proof}

\begin{algo}\label{Algorithm}
  Suppose that \( f:\HH\rightarrow A \) is a function that satisfies
  the restricted modular law. For \( \mm\in\HH_n \), we define the
  following algorithm for representing \( f(\mm) \) as a linear
  combination of \(f(\pp_{n_1}+\cdots+\pp_{n_d})\)'s.
\begin{description} 
\item[Case 1]\label{algorithm step 1} If
  \(\mm=\pp_{n_1}+\cdots+\pp_{n_d}\) for some positive integers
  \(n_1,\dots,n_d\), then there is nothing to do, and the algorithm
  terminates.
\item[Case 2]\label{algorithm step 2} If \(\mm\) is flat, find
  \(\mm_0\) and \(\mm_1\) as in \Cref{lem: modular law for level}. Use
  \eqref{eq: modular law for level} to express \( f(\mm) \) as a
  linear combination of \( f(\mm_0) \) and \( f(\mm_1) \). Repeat the
  algorithm for each of \(\mm_0\) and \(\mm_1\).
\item[Case 3]\label{algorithm step 3} If \(\mm\) is non-flat, find
  \(\mm_0\), \( \mm^{(1)}_0 \), and \(\mm^{(1)}\) as in \Cref{lem:
    application of modular law}. Use \eqref{eq: application of modular
    law} to express \( f(\mm) \) as a linear combination of
  \( f(\mm_0) \), \( f(\mm^{(1)}_0) \), and \( f(\mm^{(1)}) \). Repeat
  the algorithm for each of \(\mm_0\), \( \mm^{(1)}_0 \), and
  \(\mm^{(1)}\).
\end{description}
\end{algo}

\begin{figure}
  \centering

%1----------------------------------------------------
\begin{tikzpicture}[scale=0.3] 
  %1
  \board{1}{12}
  \draw[line width=1pt, ->] (2,11.7) -- (-1,11.3);
  \draw[line width=1pt, ->](5,11.7) -- (15,5.5);
  \draw[ultra thick]
  (1,12) -- (1,15) -- (2,15) -- (2,16) -- (4,16) -- (4,17) -- (6,17);
  %2
  \board{-13}6
  \draw[line width=1pt, ->] (-12,5.7) -- (-13,5.3);
  \draw[line width=1pt, ->](-11,5.7) -- (-11,-0.7);
  \draw[ultra thick]
  (-13,6) -- (-13,8) -- (-12,8) -- (-12,9) -- (-11,9) -- (-11,11) -- (-8,11);
  \board{-6}6
  \draw[line width=1pt,->, color=red] (-6.5,8.5) -- (-7.5,8.5);
  \draw[line width=1pt,->, color=red] (-5,5.7) -- (-6,5.3);
  \draw[line width=1pt,->, color=red] (-2,5.7) -- (-1,5.3);
  \draw[ultra thick]
  (-6,6) -- (-6,8) -- (-5,8) -- (-5,10) -- (-3,10) -- (-3,11) -- (-1,11);
  %3
  \board{-17}0
  \draw[ultra thick]
  (-17,0) -- (-17,2) -- (-16,2) -- (-16,3) -- (-15,3) -- (-15,4) -- (-14,4)
  -- (-14,5) -- (-12,5);
  \board{-10}0
  \draw[line width=1pt, ->] (-6,-0.3) -- (-5,-0.7);
  \draw[line width=1pt, ->](-10,-0.3) -- (-10,-6.7);
  \draw[ultra thick]
  (-10,0) -- (-10,2) -- (-8,2) -- (-8,5) -- (-5,5);
  \board{-3}0
  \draw[line width=1pt, ->] (1,-0.3) -- (2,-0.7);
  \draw[line width=1pt, ->](-3,-0.3) -- (-3,-6.7);
  \draw[ultra thick]
  (-3,0) -- (-3,2) -- (-2,2) -- (-2,4) -- (1,4) -- (1,5) -- (2,5);
  \board{8}0
  \draw[line width=1pt, ->] (9,-0.3) -- (8,-0.7);
  \draw[line width=1pt, ->](11,-0.3) -- (11,-6.7);
  \draw[ultra thick]
  (8,0) -- (8,1) -- (9,1) -- (9,3) -- (10,3) -- (10,5) -- (13,5);
  \board{15}0
  \draw[line width=1pt,->, color=red] (14.5,2.5) -- (13.5,2.5);
  \draw[line width=1pt,->, color=red] (16,-0.3) -- (15,-0.7);
  \draw[line width=1pt,->, color=red] (19,-0.3) -- (20,-0.7);
  \draw[ultra thick]
  (15,0) -- (15,1) -- (16,1) -- (16,4) -- (18,4) -- (18,5) -- (20,5);
  %4
  \board{-16}{-6}
  \draw[ultra thick]
  (-16,-6) -- (-16,-4) -- (-15,-4) -- (-15,-3) -- (-13,-3) -- (-13,-1) -- (-11,-1);
  \board{-9}{-6}
  \draw[ultra thick]
  (-9,-6) -- (-9,-4) -- (-7,-4) -- (-7,-2) -- (-6,-2) -- (-6,-1) -- (-4,-1);
  \board{-2}{-6}
  \draw[ultra thick]
  (-2,-6) -- (-2,-4) -- (-1,-4) -- (-1,-3) -- (0,-3) -- (0,-2) -- (2,-2) --
  (2,-1) -- (3,-1);
  \board{5}{-6}
  \draw[ultra thick]
  (5,-6) -- (5,-5) -- (6,-5) -- (6,-3) -- (7,-3) -- (7,-2) -- (8,-2) --
  (8,-1) -- (10,-1);
  \board{12}{-6}
  \draw[line width=1pt, ->](12,-6.3) -- (12,-12.7);
  \draw[line width=1pt, ->] (16,-6.3) -- (17,-6.7);
  \draw[ultra thick]
  (12,-6) -- (12,-5) -- (13,-5) -- (13,-4) -- (14,-4) -- (14,-1) -- (17,-1);
  \board{19}{-6}
  \draw[line width=1pt, ->](19,-6.3) -- (19,-12.7);
  \draw[line width=1pt, ->] (23,-6.3) -- (24,-6.7);
  \draw[ultra thick]
  (19,-6) -- (19,-5) -- (20,-5) -- (20,-2) -- (23,-2) -- (23,-1) -- (24,-1);
  %5
  \board{-13}{-12}
  \draw[ultra thick]
  (-13,-12) -- (-13,-10) -- (-11,-10) -- (-11,-9) -- (-10,-9) -- (-10,-7) -- (-8,-7);
  \board{-6}{-12}
  \draw[ultra thick]
  (-6,-12) -- (-6,-10) -- (-4,-10) -- (-4,-8) -- (-2,-8) -- (-2,-7) -- (-1,-7);
  \board{6}{-12}
  \draw[ultra thick]
  (6,-12) -- (6,-11) -- (7,-11) -- (7,-9) -- (9,-9) -- (9,-7) -- (11,-7);
  \board{13}{-12}
  \draw[ultra thick]
  (13,-12) -- (13,-11) -- (14,-11) -- (14,-10) -- (15,-10) -- (15,-8) -- (16,-8)
  -- (16,-7) -- (18,-7);
  \board{20}{-12}
  \draw[ultra thick]
  (20,-12) -- (20,-11) -- (21,-11) -- (21,-9) -- (22,-9) -- (22,-8) -- (24,-8)
  -- (24,-7) -- (25,-7);
  %6
  \board{8}{-18}
  \draw[ultra thick]
  (8,-18) -- (8,-17) -- (9,-17) -- (9,-16) -- (10,-16) -- (10,-15) -- (11,-15)
  -- (11,-13) -- (13,-13);
  \board{15}{-18}
  \draw[ultra thick]
  (15,-18) -- (15,-17) -- (16,-17) -- (16,-16) -- (17,-16) -- (17,-14) -- (19,-14)
  -- (19,-13) -- (20,-13);
\end{tikzpicture}
\qquad

\caption{An illustration of the process in \Cref{Algorithm} for
  \( \mm=(3,4,4,5,5)\in \HH_5 \). Here, each \( \mm_i \) can be
  written as a linear combination of those to which it points. The red
  arrows indicate that Case 3 of \Cref{Algorithm} is being
  used.}\label{fig:tree}
\end{figure}
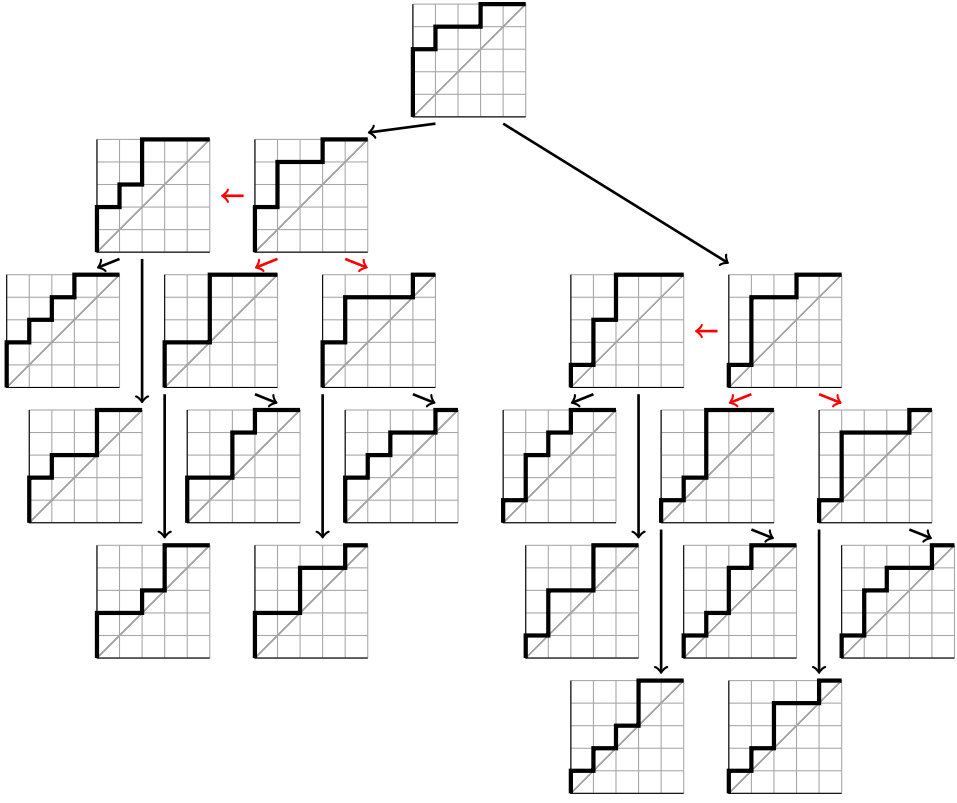 

See \Cref{fig:tree} for an example of \Cref{Algorithm}. 

\begin{lem}\label{lem: algorithm terminates}
  \Cref{Algorithm} always terminates.
\end{lem}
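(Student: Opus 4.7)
The plan is to exhibit a well-founded ordering on $\HH_n$ and verify that every recursive call produced by \Cref{Algorithm} replaces the current Hessenberg function $\mm \in \HH_n$ by strictly smaller ones. Since $\HH_n \subseteq [n]^n$ is finite, any such descending chain is finite, so each branch of the algorithm must reach Case~1 in finitely many steps.

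The ordering I will use is the lexicographic order on the $n$-tuple $(\mm(1), \mm(2), \dots, \mm(n))$, where smaller means smaller at the leftmost position of disagreement. I check each recursive case. In Case~2, both $\mm_0$ and $\mm_1$ are obtained from $\mm$ by reducing only the entry at position $\beta(\mm)$ (by $2$ and $1$, respectively), so both are lex-smaller than $\mm$. In Case~3, the auxiliary function $\mm_0$ differs from $\mm$ only at position $\alpha(\mm)+1$, where its value is smaller by $1$, so it is also lex-smaller. For $\mm^{(1)}$ and $\mm^{(1)}_0$, the entries differ from those of $\mm$ only at the two positions $\beta(\mm)$ and $\alpha(\mm)$; at the earlier position $\beta(\mm) < \alpha(\mm)$ they take values $\mm(\beta(\mm))-1$ and $\mm(\beta(\mm))-2$ respectively, both strictly smaller than $\mm(\beta(\mm))$. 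Hence the leftmost disagreement with $\mm$ occurs at $\beta(\mm)$, and at that position the new function has a smaller value, so both are lex-smaller than $\mm$.

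The only subtlety is $\mm^{(1)}$, whose entry at $\alpha(\mm)$ is \emph{larger} than $\mm(\alpha(\mm))$; a naive monotone area or total-sum argument would fail to decrease. What makes the lexicographic ordering succeed is precisely that $\beta(\mm) < \alpha(\mm)$, so the earlier position $\beta(\mm)$ is compared first and dictates the outcome. With this observation the lemma follows immediately.
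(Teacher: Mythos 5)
Your proof is correct, and it uses a genuinely different and arguably cleaner argument than the paper's. The paper runs a strong induction on $\area(\mm)$, which works outright for Case~2 and for the branches $\mm_0$, $\mm^{(1)}_0$ of Case~3, but stalls on $\mm^{(1)}$ because $\area(\mm^{(1)}) = \area(\mm)$; the paper then handles this by iterating Case~3 and observing that each iteration moves a unit square strictly to the right, which can only happen finitely often. Your lexicographic order collapses this two-level argument into a single monovariant: the key observation, correctly identified, is that $\beta(\mm) < \alpha(\mm)$ (this is built into the definition of $\alpha(\mm)$ via the requirement $\min(\mm^{-1}(\{\alpha+1\})) < \alpha$), so for $\mm^{(1)}$ and $\mm^{(1)}_0$ the earlier changed coordinate $\beta(\mm)$ dominates the lex comparison and decreases, even though the later coordinate $\alpha(\mm)$ increases. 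Combined with the finiteness of $\HH_n$ and the bounded branching of the recursion, termination follows directly. What the paper's approach buys is that it makes visible the geometric picture of squares migrating rightward; what yours buys is a uniform, shorter well-foundedness argument with no case-specific secondary induction.
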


\begin{proof}
  We use strong induction on the area of \( \mm \) to prove the claim
  that \Cref{Algorithm} terminates. Let \( a\ge0 \) and suppose that
  the claim holds for any \( \mm \in \HH_n\) with \( \area(\mm) <a \).
  Let \( \mm \in \HH_n\) with \( \area(\mm) = a \).

  If \(\mm=\pp_{n_1}+\cdots+\pp_{n_d}\) for some positive integers
  \(n_1,\dots,n_d\), then it falls into Case 1 of the algorithm, which
  then terminates. Otherwise, \( \mm \) is flat or non-flat.

  If \( \mm \) is flat, then, by Case 2 of the algorithm, we can
  express \( f(\mm) \) as a linear combination of \( f(\mm_0) \) and
  \( f(\mm_1) \), where \( \area(\mm_0), \area(\mm_1)<a \). Thus, the
  claim holds by the inductive hypothesis.

  Now assume that \( \mm \) is non-flat. By Case 3 of the algorithm,
  we can express \( f(\mm) \) as a linear combination of
  \( f(\mm_0) \), \( f(\mm^{(1)}_0) \), and \( f(\mm^{(1)}) \), where
  \( \area(\mm_0), \area(\mm^{(1)}_0)<a \) and
  \( \area(\mm^{(1)})= a\). Hence, by the inductive hypothesis, it
  suffices to prove the claim for \( \mm^{(1)} \).

  We continue the process of \Cref{Algorithm} for \( \mm^{(1)} \). If
  it falls into Case 1 or Case 2, then we are done by the above
  arguments. Therefore, we assume that \( \mm^{(1)} \) goes to Case 3.
  This produces another \( \mm^{(2)} \), for which it suffices to
  prove the claim. Continuing in this way, we can define a sequence
  \( \mm^{(1)},\mm^{(2)},\dots \), assuming that each
  \( \mm^{(\ell)} \) falls into Case 3.

  In this process, each \( \mm^{(\ell+1)} \) is obtained from
  \( \mm^{(\ell)} \) by moving a square from column \( i \) to column
  \( j \), where \( i < j \), as illustrated in \Cref{fig:image13}.
  Therefore, the sequence \( \mm^{(1)},\mm^{(2)},\dots \) eventually
  terminates, as at each step a square is moved strictly to the right.
  We conclude that there exists \( r \) such that \( \mm^{(r)} \)
  belongs to Case 1 or Case 2. Then the algorithm terminates,
  completing the proof.
\end{proof}

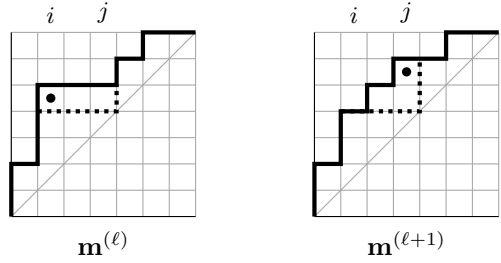
\begin{figure}
  \centering

%1----------------------------------------------------
\begin{tikzpicture}[scale=0.35]

  \foreach \i in {0,1,...,7}
  \draw[color=gray!70] (\i,0) -- (\i,7);

  \foreach \j in {0,1,...,7}
  \draw[color=gray!70]  (0,\j) -- (7,\j);

  \draw (0,0) -- (7,0);
  \draw (0,0) -- (0,7);
  \draw[color=gray!70] (0,0) -- (7,7);

  \draw[ultra thick]
  (0,0) -- (0,2) -- (1,2) -- (1,5) -- (3,5) -- (4,5) -- (4,6) -- (5,6) -- (5,7) -- (7,7);
  \draw[ultra thick, dotted]
  (1,4) -- (4,4) -- (4,5);
  
  \node[above] at (1.5, 7) {\small\( i \)};
  \node[above] at (3.5, 7) {\small\( j \)};
  \node at (3.5,-1) {\( \mm^{(\ell)} \)};
  \fill (1.5,4.5) circle (5pt);
\end{tikzpicture}
\qquad \qquad 
%2----------------------------------------------------
\begin{tikzpicture}[scale=0.35]

  \foreach \i in {0,1,...,7}
  \draw[color=gray!70] (\i,0) -- (\i,7);

  \foreach \j in {0,1,...,7}
  \draw[color=gray!70]  (0,\j) -- (7,\j);

  \draw (0,0) -- (7,0);
  \draw (0,0) -- (0,7);
  \draw[color=gray!70] (0,0) -- (7,7);

  \draw[ultra thick]
  (0,0) -- (0,2) -- (1,2) -- (1,4) --(2,4) -- (2,5) -- (3,5) -- (3,6) -- (5,6) -- (5,7) -- (7,7);
  \draw[ultra thick, dotted]
  (1,4) -- (4,4) -- (4,6);
  
  \node[above] at (1.5, 7) {\small\( i \)};
  \node[above] at (3.5, 7) {\small\( j \)};
  \node at (3.5,-1) {\( \mm^{(\ell+1)} \)};
  \fill (3.5,5.5) circle (5pt);
\end{tikzpicture}
\caption{The Hessenberg function \( \mm^{(\ell+1)} \) is obtained from
  \( \mm^{(\ell)} \) by moving the square (\( \bullet \)) 
  from column \( i \) to column \( j \) with \( i<j \).}
\label{fig:image13}
\end{figure} 

\begin{thm}\label{thm: master}
  Let \(f:\HH\to A\) be a function that satisfies the
  restricted modular law, as in \Cref{def:5}. Then \(f\) is determined
  by its values \(f(\pp_{n_1}+\cdots+\pp_{n_d})\) at the disjoint
  unions of the paths.
\end{thm}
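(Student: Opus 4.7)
The plan is to invoke \Cref{Algorithm} directly: given any Hessenberg function $\mm \in \HH$, we run the algorithm starting at $\mm$, and by \Cref{lem: algorithm terminates} the process terminates after finitely many steps. At each step, an intermediate value $f(\mm')$ is replaced by an $A$-linear combination of values of $f$ at other Hessenberg functions, and the algorithm only halts at Case 1 Hessenberg functions, namely disjoint unions of paths $\pp_{n_1}+\cdots+\pp_{n_d}$. Hence the end result is an expression of $f(\mm)$ as an $A$-linear combination of values of $f$ at disjoint unions of paths, which is exactly what the theorem claims.

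To make this rigorous I would argue by strong induction on the same well-founded order that drives the proof of \Cref{lem: algorithm terminates}. Concretely, the primary statistic is $\area(\mm)$, and for ties (which occur in Case 3) the secondary statistic is the horizontal position of the active square, which strictly increases under Case 3 by the analysis illustrated in \Cref{fig:image13}. The base case is Case 1, where $\mm$ is already a disjoint union of paths and there is nothing to prove. In Case 2, \eqref{eq: modular law for level} gives
\[
  f(\mm) = (1+q)f(\mm_1) - q f(\mm_0),
\]
and both $\mm_0$ and $\mm_1$ have strictly smaller area than $\mm$, so the inductive hypothesis applies to both. In Case 3, the identity \eqref{eq: application of modular law} writes
\[
  (1+q) f(\mm) = (1+q) f(\mm^{(1)}) + q f(\mm_0) - q f(\mm^{(1)}_0),
\]
where $\mm_0$ and $\mm^{(1)}_0$ have strictly smaller area, and $\mm^{(1)}$ has the same area as $\mm$ but a strictly larger secondary statistic. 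Thus the inductive hypothesis applies to each term on the right and, after dividing by $1+q$ in $\QQ(q) \subseteq A$, we obtain the desired expression for $f(\mm)$.

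I do not expect a real obstacle here; the conceptual content has already been absorbed into \Cref{Algorithm} and its termination lemma. The only care required is to set up the correct well-founded induction, so that the Case 3 subcase in which the area is preserved is handled by the secondary (rightward-movement) statistic. With that in place, \Cref{thm: master} follows as a clean corollary, and in fact the algorithm provides an \emph{explicit} procedure for writing $f(\mm)$ as a linear combination of the $f(\pp_{n_1}+\cdots+\pp_{n_d})$.
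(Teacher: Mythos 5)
Your proposal is correct and matches the paper's proof exactly: both invoke \Cref{Algorithm} together with \Cref{lem: algorithm terminates} to express \( f(\mm) \) as a linear combination of values at disjoint unions of paths. The additional detail you supply about the well-founded order (area as the primary statistic, rightward movement of the moved square in Case~3 as the secondary statistic) is precisely what is established inside the proof of \Cref{lem: algorithm terminates}, so you have simply inlined that argument rather than changed the route.
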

\begin{proof}
  By \Cref{lem: algorithm terminates}, for any \( \mm\in\HH \), we can
  use \Cref{Algorithm} to express \( f(\mm) \) as a linear combination
  of \(f(\pp_{n_1}+\cdots+\pp_{n_d})\)'s. Thus \( f(\mm) \) is
  uniquely determined.
\end{proof}

\begin{proof}[Proof of \Cref{thm:E=G=S_intro}]
  In Proposition \ref{prop:E=G=S_at_path} we show that \Cref{thm:E=G=S_intro} is true when \( \mm=\pp_n \). Together with a multiplicative property proved in Propositions~\ref{prop:E_multiplicative},
  \ref{prop:G_multiplicative} and \ref{prop:S_multiplicative}, we deduce that if \( \mm = \pp_{n_1} + \cdots + \pp_{n_d} \) for some
  \( n_1,\dots,n_d\), we have 
  \begin{equation}\label{eq:25} 
    E_\mm(x;q) = G_\mm(x;q) = S_\mm(x;q),
  \end{equation}
  and
  \begin{equation}\label{eq:26}
    E_{\mm,k}(x;q) = G_{\mm,k}(x;q)
  \end{equation}
  for \( 1\le k \le n \). We also have that any function
  \( f:\HH\to A \) given by \( f(\mm)=E_\mm(x;q) \), \( G_\mm(x;q) \),
  \( S_\mm(x;q) \), \( E_{\mm,k}(x;q) \), or \( G_{\mm,k}(x;q) \)
  satisfies the restricted modular law from
  Propositions~\ref{prop:restricted_modular_law_for_E},
  \ref{prop:restricted_modular_law_for_G} and \ref{prop:restricted_modular_law_for_S}.
  Consequently, \Cref{thm: master} gives the desired identities.
\end{proof}

\section{Restricted modular law for \( E \), \( G\), and \( S \)}\label{sec: rm for EGS}
As a function satisfying the modular law is uniquely determined by its values on
disjoint unions of complete graphs, we showed in \Cref{thm: master} that a function satisfying
the restricted modular law is uniquely determined by its values on disjoint unions of paths.
In this section, we prove that the symmetric functions \( E_{\mm,k}(x;q) \), \( G_{\mm,k}(x;q) \),
and \( S_\mm(x;q) \) satisfy the restricted modular law, which is a main ingredient in
the proof of \Cref{thm:E=G=S_intro}.

\subsection{Restricted modular law for \( E \)}

Recall the definition of \( E_{\mm,k}(x;q) \): for \( \mm\in\HH_n \) and \( 1\le k\le n \),
\[
  E_{\mm, k}(x;q) = \sum_{\lambda \vdash n} \frac{c_{\lambda,k}(\mm;q)}{[k]_q} e_\lambda(x).
\]

In this subsection, we demonstrate that both \( E_{\mm,k}(x;q) \) and \( E_{\mm}(x;q) \)
satisfy the restricted modular law by establishing that \( c_{\lambda,k}(\mm;q) \)
satisfies it. Our approach is inspired by the proof of \Cref{thm:Hikita_intro}
given in \cite{Hikita2024}. 
We begin by introducing some terminology and key results from \cite{Hikita2024}.

Let \( V_n \) be a \( \QQ(q) \)-vector space with a formal
basis indexed by the standard Young tableaux of size \( n \). In other
words, an element \( f\in V_n \) can be represented as
\( f=\sum_{T\in \SYT(n)}c_TT \), where \( c_T\in \QQ(q) \).
Note that \( V_0\) is the one-dimensional vector space with a basis
\( \{\emptyset\} \), where \( \emptyset \) denotes an empty tableau.
Let \( V= \bigoplus_{n\ge0}V_n \).

For a nonnegative integer \( r \), we define the linear map
\( \Omega_r: V\rightarrow V \) by
\begin{equation*}
\Omega_r(T)=\sum_{k\geq 0} \psi_k^{(r)}(T;q)f_{k}^{(r)}(T).
\end{equation*}
 For a sequence
\( \alpha=(\alpha_1,\alpha_2,\dots,\alpha_{\ell}) \) of nonnegative
integers, we define the linear map  \( \Omega_\alpha: V\rightarrow V \) by
\[
  \Omega_{\alpha}=\Omega_{\alpha_{\ell}}\cdots\Omega_{\alpha_2}\Omega_{\alpha_1}.
\]
For \( \mm\in \HH_n \), we associate the sequence
\[
  \mm^{c}:=(n-\mm(n),n-\mm(n-1),\dots,n-\mm(1)).
\]
Then, by \Cref{defn:psi_pT}, we have
\begin{equation}\label{eq:18}
\Omega_{\mm^{c}}(\emptyset)=\sum_{T\in \SYT(n)}p_\mm(T;q)T.
\end{equation}

For a standard Young tableau \( T \) of size \( n \) and a positive
integer \( m<n \), we define \( \tau_m(T) \) to be the tableau
obtained by switching \( m \) and \( m+1 \). Note that \( \tau_m(T) \)
may not be a standard Young tableau. Lastly, we define \( K_{m,n} \)
to be the subspace of \( V_m\) generated by \( T-\tau_m(T) \) for
all \( T\in \SYT(n) \) such that \( \tau_m(T)\in \SYT(n) \).

We need the following results due to Hikita.

\begin{lem}[{\cite[Page 16]{Hikita2024}}]
\label{lem:3}
For a modular triple
\( (\mm,\mm',\mm'') \in \HH_n^3 \) of type I such that \(\mm(i)+1=\mm'(i)=\mm''(i)-1\)
and \(\mm'(i-1)<\mm'(i)<\mm'(i+1)\), where \( i\in [n-1] \), we have
\begin{equation}\label{eq:19}
  \left( (1+q)\Omega_{(\mm')^{c}}-q\Omega_{\mm^{c}}-\Omega_{(\mm'')^{c}}\right)(\emptyset)\in K_{n-1-\mm(i),n}.
\end{equation}
\end{lem}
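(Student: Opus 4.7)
The plan is to localize the identity by exploiting that the three sequences $\mm^c$, $(\mm')^c$, and $(\mm'')^c$ differ only at their $(n-i+1)$-st entries, where they take the values $r+1$, $r$, and $r-1$ respectively, for $r = n-1-\mm(i)$. Writing $B = \Omega_{n-\mm(i+1)}\circ\cdots\circ\Omega_{n-\mm(n)}$ and $A = \Omega_{n-\mm(1)}\circ\cdots\circ\Omega_{n-\mm(i-1)}$, we factor the three compositions as $A \circ \Omega_{r+1} \circ B$, $A \circ \Omega_r \circ B$, and $A \circ \Omega_{r-1} \circ B$, respectively. The lemma thus reduces to showing
\[
  A\bigl([(1+q)\Omega_r - q\Omega_{r+1} - \Omega_{r-1}](U)\bigr) \in K_{r,n},
\]
where $U = B(\emptyset) \in V_{n-i}$. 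Because the operators add entries in the order $1,2,\dots,n$, the entries $r$ and $r+1$ (which satisfy $r+1 \leq n-i$) are already present in $U$, while the middle operator adds the new entry $n-i+1$.

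Next, I would establish a local identity. For each $T \in \SYT(n-i)$, I would expand $\psi_k^{(r-1)}(T;q)$, $\psi_k^{(r)}(T;q)$, and $\psi_k^{(r+1)}(T;q)$ using \eqref{eq:15} and compute $[(1+q)\Omega_r - q\Omega_{r+1} - \Omega_{r-1}](T)$. A case analysis on the positions of entries $r$ and $r+1$ in $T$ (which determine precisely where the three sequences $\vec\delta^{(r-1)}$, $\vec\delta^{(r)}$, $\vec\delta^{(r+1)}$ differ) should regroup the terms into a sum $\sum_U c_U(U - \tau_r(U))$, where each pair $(U,\tau_r(U))$ is a valid swap of entries $r$ and $r+1$ in the resulting size-$(n-i+1)$ tableau. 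Condition~(3) of the modular triple, $\mm'(\mm'(i)) = \mm'(\mm'(i)+1)$, translates to $\mm(\mm(i)+1) = \mm(\mm(i)+2)$, forcing the entries $r$ and $r+1$ to have been introduced in $B$ by two consecutive applications of the \emph{same} operator $\Omega_s$. This symmetry constrains the support of $U = B(\emptyset)$ sufficiently for the targeted cancellation to go through and for both $U$ and $\tau_r(U)$ to be standard.

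Finally, I would show that $A$ preserves the swap structure. Since $\mm$ is weakly increasing and $j<i$ for every operator $\Omega_{n-\mm(j)}$ in $A$, we have $n-\mm(j) \geq n-\mm(i) = r+1$. For any $s \geq r+1$, the sequence $\vec\delta^{(s)}(T)$ detects only entries strictly greater than $s$, hence is invariant under swapping $r$ and $r+1$. It follows that $\Omega_s \circ \tau_r = \tau_r \circ \Omega_s$ on tableaux where the swap remains valid, so $\Omega_s(U - \tau_r(U)) \in K_{r,\cdot}$. Iterating through the factors of $A$ completes the reduction to $K_{r,n}$. The main obstacle is the local identity in the second paragraph: verifying that the combination $(1+q)\psi_k^{(r)} - q\psi_k^{(r+1)} - \psi_k^{(r-1)}$ telescopes into a clean swap demands precise, if mechanical, manipulation of the $q$-binomial-like ratios in \eqref{eq:15}, and identifying the correct pairing of terms across the three expansions so that condition~(3) delivers exactly the coincidence needed is the delicate part.
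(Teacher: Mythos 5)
The paper does not actually prove this lemma — it cites Hikita (page 16) for it — so there is no internal proof to compare against. Your outline is the expected shape of such an argument: factor the composition $\Omega_{\mm^c} = A\circ\Omega_{r+1}\circ B$ (and likewise for $\mm', \mm''$) at the one position where the three functions differ, reduce to a local statement about $\left[(1+q)\Omega_r - q\Omega_{r+1} - \Omega_{r-1}\right]$ applied to $U=B(\emptyset)$, and then propagate the swap relation through $A$. Your propagation step is correct: for $s\geq r+1$, $\vec\delta^{(s)}$ detects only entries $>s\geq r+1$ and so is $\tau_r$-invariant, hence $\Omega_s$ sends $K_{r,m}$ into $K_{r,m+1}$; and indeed every operator in $A$ satisfies $n-\mm(j)\geq n-\mm(i)=r+1$ since $\mm$ is weakly increasing. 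Your translation of condition (3) to $\mm(\mm(i)+1)=\mm(\mm(i)+2)$ is also correct, as is the observation that this forces entries $r$ and $r+1$ to be placed by two consecutive applications of the same $\Omega_s$ in $B$.

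The gap is the local identity itself. You say a case analysis \emph{should} regroup the terms into a swap relation, and you call this ``the delicate part,'' but you give no verification, and this is where essentially all the content lives. Showing $\left[(1+q)\Omega_r-q\Omega_{r+1}-\Omega_{r-1}\right](U)\in K_{r,\,n-i+1}$ requires (a) a careful tracking of how $\vec\delta^{(r-1)}$, $\vec\delta^{(r)}$, $\vec\delta^{(r+1)}$ differ (they can differ only at the columns containing $r$ and $r+1$, and whether they do depends on whether those columns hold larger entries), and (b) establishing that $U$ has exactly the $\tau_r$-symmetry needed so that the $\Omega_r$-contributions — the one middle operator that genuinely fails to commute with $\tau_r$ — from $T$ and $\tau_r(T)$ pair off. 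That symmetry is not automatic: it is precisely the content that must be extracted from $r$ and $r+1$ being placed by the same $\Omega_s$ twice in a row, and your proposal asserts it rather than deriving it. Until that computation is carried out, what you have is a plausible reduction scheme rather than a proof.
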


\begin{lem}[{\cite[Page 19]{Hikita2024}}]
\label{lem:7}
For a modular triple
\( (\mm,\mm',\mm'') \in \HH_n^3 \) of type II such that \(\mm'(i)+1=\mm'(i+1)\),
\( \mm(i) = \mm'(i) = \mm''(i)-1 \), and
\( \mm(i+1)+1 = \mm'(i+1) = \mm''(i+1) \), where \(i\in [n-1] \), we
have
\begin{equation}\label{eq:20}
\left( (1+q)\Omega_{(\mm')^{c}}-q\Omega_{\mm^{c}}-\Omega_{(\mm'')^{c}} \right)(\emptyset)\in K_{n-i,n}.
\end{equation}
\end{lem}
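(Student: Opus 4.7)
The plan is to isolate the two positions of $\mm^c$, $(\mm')^c$, and $(\mm'')^c$ at which the three sequences differ, verify a local identity there, and then propagate it through the remaining operators using a label-swap symmetry. Setting $a := \mm(i) = \mm'(i)$, the defining relations of a type II triple force the three sequences to agree in all coordinates except positions $n-i$ and $n-i+1$, where their values are respectively $(n-a, n-a)$, $(n-a-1, n-a)$, and $(n-a-1, n-a-1)$. Accordingly, I would factor each operator as $\Omega_{\bullet^c} = B \circ M_\bullet \circ A$, where $A = \Omega_{(\mm^c)_{n-i-1}} \cdots \Omega_{(\mm^c)_1}$ and $B = \Omega_{(\mm^c)_n} \cdots \Omega_{(\mm^c)_{n-i+2}}$ are common to all three, while $M_\bullet$ collects the two differing operators. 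Writing $v := A(\emptyset) \in V_{n-i-1}$, the lemma reduces to showing $B(w) \in K_{n-i,n}$, where
\[
w := \bigl((1+q)\Omega_{n-a}\Omega_{n-a-1} - q\Omega_{n-a}\Omega_{n-a} - \Omega_{n-a-1}\Omega_{n-a-1}\bigr)(v) \in V_{n-i+1}.
\]

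For the propagation step, I would show that $B$ carries $K_{n-i,n-i+1}$ into $K_{n-i,n}$. Each factor $\Omega_r$ appearing in $B$ adds a cell carrying a label strictly greater than $n-i+1$, and the transition probabilities $\psi_k^{(r)}$ depend only on the underlying shape, not on the particular labels $n-i$ and $n-i+1$. Hence $\Omega_r$ commutes formally with the label-swap $\tau_{n-i}$: one has $\Omega_r(T - \tau_{n-i}T) = \Omega_r(T) - \tau_{n-i}(\Omega_r(T))$ whenever both sides are defined, and the swap-compatibility condition is preserved because the newly inserted cell is unaffected by $\tau_{n-i}$. Iterating through the factors of $B$ then yields $B(K_{n-i,n-i+1}) \subseteq K_{n-i,n}$, reducing the lemma to the purely local claim $w \in K_{n-i,n-i+1}$.

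The remaining task is the local identity. By linearity it suffices to fix a single $T \in \SYT(n-i-1)$ in the support of $v$ and expand the two-step applications of $\Omega_{n-a-1}$ and $\Omega_{n-a}$ via \eqref{eq:15}. Each resulting tableau $S \in \SYT(n-i+1)$ then carries a coefficient that is a product of $q$-integers read off from the profiles $\vec\delta^{(n-a-1)}(T)$ and $\vec\delta^{(n-a)}(T)$. The verification splits according to the relative position of the two newly added cells: when they lie in the same row or column, so that $\tau_{n-i}(S) \notin \SYT(n-i+1)$, I would show that the combined coefficient of $S$ vanishes identically; when instead $\tau_{n-i}(S) \in \SYT(n-i+1)$, I would show that the coefficients of $S$ and of $\tau_{n-i}(S)$ coincide, so that their combined contribution is automatically a multiple of $S - \tau_{n-i}(S)$ and hence lies in $K_{n-i,n-i+1}$. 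Both verifications collapse to $q$-integer identities of the shape
\[
(1+q)[k_1]_q[k_2]_q = q[k_3]_q[k_4]_q + [k_5]_q[k_6]_q,
\]
where the indices encode the numbers of open columns between the two added cells at each of the two thresholds.

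The main obstacle is this last case analysis, since the two factors of $M_{\mm'}$ carry \emph{different} thresholds $n-a$ and $n-a-1$ inside a single product. Unlike the type I setting of \Cref{lem:3}, where the two relevant operators share a common threshold, the mismatch here introduces extra boundary cases arising from columns of $T$ whose topmost entry equals precisely $n-a$. The type II hypothesis $(\mm')^{-1}(\{i\}) = \emptyset$ enters at exactly this point: translated through $A$, it constrains the profile of $v$ near the boundary between the two thresholds and rules out degenerate configurations that would otherwise obstruct the required $q$-integer identity. Once those cases are ruled out, the identity follows from bookkeeping analogous to Hikita's type I computation.
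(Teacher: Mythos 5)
The paper cites this lemma directly from Hikita and gives no proof, so there is no in-paper argument to compare against; I will evaluate your attempt on its own terms. Your decomposition $\Omega_{\bullet^c} = B\circ M_\bullet\circ A$ at positions $n-i$, $n-i+1$, with $M_{\mm'}=\Omega_{n-a}\Omega_{n-a-1}$, $M_{\mm}=\Omega_{n-a}^2$, $M_{\mm''}=\Omega_{n-a-1}^2$ (where $a=\mm'(i)$), is correct, and reducing the lemma to a local claim $w\in K_{n-i,\,n-i+1}$ plus a propagation $B(K_{n-i,\,n-i+1})\subseteq K_{n-i,\,n}$ is the right strategy. One small slip: in the case $\tau_{n-i}(S)\in\SYT(n-i+1)$ you need the coefficients to be \emph{negatives} of each other, $c_S=-c_{\tau_{n-i}(S)}$; if they ``coincide'' then $c(S+\tau_{n-i}(S))$ is not a multiple of $S-\tau_{n-i}(S)$.

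The more serious problem is where you locate the hypothesis $(\mm')^{-1}(\{i\})=\emptyset$. You claim it constrains the profile of $v=A(\emptyset)$ and rules out degeneracies in the local $q$-integer identities. But the thresholds used in $A$ are $n-\mm(j)$ for $j\ge i+2$, and those in $M_\bullet$ are $n-a-1$ and $n-a$; since $\mm(i+1)=a\ge i+1$ (as $\mm$ is a Hessenberg function) and $\mm(j)\ge a$ for $j\ge i+1$, all of these thresholds are strictly less than $n-i$, independently of the hypothesis. The hypothesis is in fact what rescues your \emph{propagation} step, which as written has a gap: you assert that $\psi_k^{(r)}$ ``depends only on the underlying shape, not on the particular labels $n-i$ and $n-i+1$,'' but $\psi_k^{(r)}$ is a function of $\vec\delta^{(r)}(T)$, which records which columns contain a label $>r$. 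That sequence is $\tau_{n-i}$-invariant for $r<n-i$ and for $r>n-i$, but \emph{not} for $r=n-i$, since exactly at that threshold $n-i+1>r$ while $n-i\le r$, and swapping the two can change which columns contain a label $>r$; correspondingly $f_k^{(r)}$ need not commute with $\tau_{n-i}$. The thresholds in $B$ are $n-\mm(j)=n-\mm'(j)$ for $j<i$, and $(\mm')^{-1}(\{i\})=\emptyset$ is precisely the statement that none of these equals $n-i$. Without that observation, $\Omega_r$ need not carry $K_{n-i,\,m}$ into $K_{n-i,\,m+1}$ and your reduction does not go through; with it, the propagation is sound, and the local identity is a universal statement about the two-step operators $M_\bullet$ (valid for any input in $V_{n-i-1}$) that does not see the hypothesis at all.
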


We are now ready to prove that \( c_{\lambda,k}(\mm;q) \) satisfies
the restricted modular law. Recall that
  \[
    c_{\lambda,k}(\mm;q) = \prod_{i=1}^{\ell(\lambda)} [\lambda_i]_q!
    \sum_{T \in \SYT_k(\lambda)} p_\mm(T;q).
  \]

\begin{prop}\label{prop:restricted_modular_law_for_E}
  For a fixed partition \( \lambda \) and an integer \( k\ge1 \), the
  function \( f:\HH\rightarrow A \) defined by
  \( f(\mm) = c_{\lambda,k}(\mm;q) \) satisfies the restricted modular
  law.
  Consequently, for a fixed integer \( k\ge1 \), the function
  \( f:\HH\rightarrow A \) defined by
  \( f(\mm) = E_{\mm,k}(x;q) \) satisfies the restricted modular law.
\end{prop}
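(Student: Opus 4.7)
The plan is to mimic Hikita's reduction of chromatic $e$-positivity to the statements \eqref{eq:19} and \eqref{eq:20}, but applied to a linear functional that sees only tableaux in $\SYT_k(\lambda)$. Define $\phi_{\lambda,k}: V_n \to \QQ(q)$ by extending linearly the rule $\phi_{\lambda,k}(T) = 1$ if $T\in\SYT_k(\lambda)$ and $0$ otherwise. Combining the definition of $c_{\lambda,k}(\mm;q)$ with \eqref{eq:18}, we get
\[
  c_{\lambda,k}(\mm;q) = \prod_{i=1}^{\ell(\lambda)}[\lambda_i]_q! \cdot \phi_{\lambda,k}\bigl(\Omega_{\mm^c}(\emptyset)\bigr).
\]
Therefore it suffices to show that
\[
  \phi_{\lambda,k}\Bigl( (1+q)\Omega_{(\mm')^c}(\emptyset) - q\Omega_{\mm^c}(\emptyset) - \Omega_{(\mm'')^c}(\emptyset) \Bigr) = 0
\]
for every modular triple of type~I and every \emph{restricted} modular triple of type~II.

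By \Cref{lem:3} and \Cref{lem:7}, the expression inside $\phi_{\lambda,k}$ lies in $K_{m,n}$, where $m = n-1-\mm(i)$ in the type~I case and $m = n-i$ in the type~II case. So the main technical step is the following claim: the functional $\phi_{\lambda,k}$ annihilates $K_{m,n}$ whenever $m \le n-2$. Indeed, $K_{m,n}$ is spanned by differences $T - \tau_m(T)$ where both tableaux are standard, and in this regime the transposition $\tau_m$ swaps the entries $m$ and $m+1$, neither of which equals $n$. Hence the position of $n$ is preserved, so $T\in\SYT_k(\lambda)$ if and only if $\tau_m(T)\in\SYT_k(\lambda)$, giving $\phi_{\lambda,k}(T-\tau_m(T))=0$.

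It remains to check the inequality $m\le n-2$ in each case. For type~I we have $m = n-1-\mm(i)$, and since $\mm(i)\ge i \ge 1$ this gives $m\le n-2$ automatically. For type~II we have $m = n-i$, and the restriction $i\ne 1$ built into the definition of a \emph{restricted} modular triple of type~II is exactly what yields $m\le n-2$. This pinpoints precisely why the full modular law fails for $c_{\lambda,k}$ while the restricted version holds: the only obstruction is the boundary case $i=1$ of type~II, which corresponds to $m = n-1$ and would allow $\tau_{n-1}$ to move the entry $n$ out of column $k$.

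Once the restricted modular law for $c_{\lambda,k}(\mm;q)$ is established, the consequence for $E_{\mm,k}(x;q)$ is immediate: the map $\mm \mapsto E_{\mm,k}(x;q)$ is a $\QQ(q)$-linear combination of the maps $\mm \mapsto c_{\lambda,k}(\mm;q)/[k]_q$ with coefficients $e_\lambda(x)$ independent of $\mm$, and the restricted modular law is preserved under $\QQ(q)$-linear combinations. The only step requiring genuine effort is the vanishing argument above, and that is essentially a one-line observation once the role of $\SYT_k$ is unwound; the heavy lifting (namely \Cref{lem:3} and \Cref{lem:7}) has already been done in \cite{Hikita2024}.
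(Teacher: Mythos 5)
Your proof is correct and takes essentially the same route as the paper: the paper defines the identical projection functional (there called $\pr_{\lambda,k}$), invokes the same Lemmas~\ref{lem:3} and \ref{lem:7}, and verifies the same inequalities $n-1-\mm(i) < n-1$ and $n-i < n-1$ to conclude. Your explicit justification of why the functional annihilates $K_{m,n}$ for $m\le n-2$ (the transposition $\tau_m$ fixes the entry $n$, hence preserves its column) is the unstated ``easy to check'' step in the paper.
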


\begin{proof}
  Let \( \pr_{\lambda,k}:V\to \QQ(q) \) be the linear map given by
\begin{equation}\label{eq:21}
  \pr_{\lambda,k}(T)=
  \begin{cases}1 \qquad \text{if \( T\in \SYT_k(\lambda) \)}, \\
    0 \qquad \text{otherwise}.
  \end{cases}
\end{equation}
It is easy to check that if \( m<n-1 \), then \( \pr_{\lambda,k} \) maps
any element of \( K_{m,n} \) to zero.

Suppose that
\( (\mm,\mm',\mm'') \in \HH_n^3 \) is a modular triple of type I such that
\(\mm(i)+1=\mm'(i)=\mm''(i)-1\) and \(\mm'(i-1)<\mm'(i)<\mm'(i+1)\),
where \(\mm'(0)=0\). Note that \( n-1-\mm(i)<n-1 \). Therefore, by
applying the map \( \pr_{\lambda,k} \) to \eqref{eq:19} and using
\eqref{eq:18}, we obtain
\begin{equation}\label{eq:30}
  (1+q)c_{\lambda,k} (\mm';q) - qc_{\lambda,k} (\mm;q)- c_{\lambda,k}
  (\mm'';q) = 0.
\end{equation}

Suppose now that
\( (\mm,\mm',\mm'') \in \HH_n^3 \) is a restricted modular triple
of type II such that \(\mm'(i)+1=\mm'(i+1)\),
\( \mm(i) = \mm'(i) = \mm''(i)-1 \), and
\( \mm(i+1)+1 = \mm'(i+1) = \mm''(i+1) \), where \( 2\le i\le n-1 \).
Note that, since \( i\geq 2 \), we have \( n-i<n-1 \). Therefore, by
applying the map \( \pr_{\lambda,k} \) to \eqref{eq:20} and using
\eqref{eq:18}, we also obtain \eqref{eq:30}, which completes the
proof.
\end{proof}

\subsection{Restricted modular law for \( G \)}

Recall the definition of \( G_{\mm,k}(x;q) \): for \( \mm\in\HH_n \) and \( 1\le k\le n \),
\[
  G_{\mm,k}(x;q) = e_k(x) g_{\mm,n-k}(x;q).
\]
In this subsection, we show that Abreu and Nigro's $g$-functions satisfy
the restricted modular law, which implies that \( G_{\mm,k}(x;q) \)
also satisfies the restricted modular law. 

\begin{prop}\label{prop:restricted_modular_law_for_G}
  For a fixed integer \( k\ge0 \), the function
  \( f:\HH\rightarrow A \) defined by
  \( f(\mm) = g_{\mm,k}(x;q) \) satisfies the restricted modular law.
  Consequently, for a fixed integer \( k\ge1 \), the function
  \( f:\HH\rightarrow A \) defined by
  \( f(\mm) = G_{\mm,k}(x;q) \) satisfies the restricted modular law.
\end{prop}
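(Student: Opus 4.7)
My plan is to establish the restricted modular law for $g_{\mm,k}$ directly from the explicit formula~\eqref{eq:g_def}, since the corresponding statement for $G_{\mm,k} = e_k \cdot g_{\mm, n-k}$ then follows immediately: multiplication by the fixed symmetric function $e_k(x)$ is a $\QQ(q)$-linear operation that preserves any linear identity. For a modular triple $(\mm,\mm',\mm'')$ (of type~I, or restricted of type~II), the underlying permutation sets are nested, $\mathfrak{S}_{n,\mm} \subseteq \mathfrak{S}_{n,\mm'} \subseteq \mathfrak{S}_{n,\mm''}$, and the weight change is controlled: for $\sigma \in \mathfrak{S}_{n,\mm}$ one has $\wt_{\mm'}(\sigma) - \wt_{\mm}(\sigma)$ equal to the indicator of whether the ``new'' edge of $\mm'$ appears as an inversion of $\sigma^c$. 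Crucially, the summand $(-1)^{|\tau_1|-n+k} h_{|\tau_1|-n+k}(x)\,\omega(\rho_{(|\tau_2|,\dots)}(x;q))$ depends only on the cycle type of $\sigma$, not on $\mm$, so $\mm$-dependence is confined to the factor $q^{\wt_\mm(\sigma)}$.

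For a Type~I triple at position $i$, I would partition $\mathfrak{S}_{n,\mm''}$ (subject to $|\tau_1|\ge n-k$) as $\mathfrak{S}_{n,\mm} \sqcup A \sqcup B$, where $A=\{\sigma:\sigma(i)=\mm'(i)\}$ and $B=\{\sigma:\sigma(i)=\mm''(i)\}$, and compute the three pieces of $(1+q)g_{\mm',k}-qg_{\mm,k}-g_{\mm'',k}$ separately. The contribution from $\mathfrak{S}_{n,\mm}$ vanishes termwise by splitting on whether the newly created edges are inversions of $\sigma^c$: depending on the configuration, the coefficient collapses to either $(1+q)-q-1=0$ or $(1+q)q-q-q^2=0$. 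The remaining contributions from $A$ and $B$ should cancel via an involution $\iota:A\to B$ obtained by swapping the values $\mm'(i)$ and $\mm''(i)$ in a controlled way and tracking how $\tau_1,\dots,\tau_j$ are recombined; the key check is that $\iota$ multiplies the signed weight by exactly $-q$, balancing the coefficients $(1+q)$ on $A$ and $-1$ on $A\sqcup B$.

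For a restricted Type~II triple at positions $i,i+1$ with $i>1$, an analogous strategy partitions $\mathfrak{S}_{n,\mm''}$ according to the pair $(\sigma(i),\sigma(i+1))$ and uses an involution that toggles $\sigma$ between the new edges present in $\mm'$ and $\mm''$ but not $\mm$. The hypothesis $(\mm')^{-1}(\{i\})=\emptyset$ (part of the definition) is used to guarantee that the involution stays within $\mathfrak{S}_{n,\mm''}$, and $i\ne 1$ is used to guarantee that $1$ is untouched so that the cycle $\tau_1$ (which, by convention, is the one containing $1$) is preserved. Preservation of $\tau_1$ is essential because the summation is truncated by $|\tau_1|\ge n-k$ and the combinatorial weight $h_{|\tau_1|-n+k}(x)$ depends on $|\tau_1|$; this is precisely the structural reason that the full modular law fails for $g_{\mm,k}$ while the restricted one holds.

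The main obstacle I anticipate is constructing and verifying the involution: one must show that it maps valid truncated permutations (those with $|\tau_1|\ge n-k$) to valid ones, that it preserves the cycle-size multiset $(|\tau_1|,|\tau_2|,\dots)$ (so that the symmetric-function factor $h_{|\tau_1|-n+k}\,\omega(\rho_{(|\tau_2|,\dots)})$ matches), and that its net effect on $\wt_\mm(\sigma)$ is precisely an additive shift of $\pm1$ producing the required factor of $\pm q$. Once this bookkeeping is in place, summing the equalities over all orbits of $\iota$ yields $(1+q)g_{\mm',k}-qg_{\mm,k}-g_{\mm'',k}=0$, completing the proof for $g_{\mm,k}$ and hence for $G_{\mm,k}$.
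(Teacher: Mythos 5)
Your overall strategy is close to the paper's: both reduce to proving the law for $g_{\mm,k}$ (multiplying by $e_k$ is harmless), both exploit the nesting $\mathfrak{S}_{n,\mm} \subseteq \mathfrak{S}_{n,\mm'} \subseteq \mathfrak{S}_{n,\mm''}$ together with the fact that the symmetric-function factor depends only on cycle type, and both correctly identify preservation of the cycle containing $1$ (and hence the role of $i \ne 1$) as the structural reason the \emph{restricted} modular law holds while the full one fails. However, there is a concrete error in your proposed cancellation scheme.

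You claim that for $\sigma \in \mathfrak{S}_{n,\mm}$ the contribution to $(1+q)g_{\mm',k} - q\,g_{\mm,k} - g_{\mm'',k}$ vanishes termwise, with the coefficient collapsing to either $(1+q)-q-1$ or $(1+q)q - q - q^2$. This is not correct. Writing $a$ for the indicator that $j$ precedes $i$ in $\sigma^c$ and $b$ for the indicator that $j+1$ precedes $i$ (with $j=\mm'(i)$), the contribution is $q^{\wt_\mm(\sigma)}\bigl[(1+q)q^a - q - q^{a+b}\bigr]$. You implicitly assume $a=b$, but $a$ and $b$ can differ: for $\pat(\sigma^c|_{i,j,j+1}) = 213$ one gets $(a,b)=(1,0)$, giving $q^{\wt_\mm(\sigma)}(q^2-q)\ne 0$, and for $\pat = 312$ one gets $(a,b)=(0,1)$, giving $q^{\wt_\mm(\sigma)}(1-q)\ne 0$. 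These residual terms do not vanish and must be cancelled against each other. The paper handles this by making the bijection $\phi$ act nontrivially on $\mathfrak{S}_{n,\mm}$ as well: whenever $\pat(\sigma^c|_{i,j,j+1}) \in \{213,312\}$, one sets $\phi(\sigma) = (j,j+1)\sigma(j,j+1)$, which toggles the pattern between $213$ and $312$ and shifts $\wt_{\mm''}$ by $\pm 1$ so that the two residuals cancel. Your picture — $\mathfrak{S}_{n,\mm}$ drops out trivially and only an $A\leftrightarrow B$ involution is needed — is therefore missing a crucial internal involution on part of $\mathfrak{S}_{n,\mm}$; without it the argument fails. The same issue recurs in the type~II case, where the patterns $132$ and $231$ of $\sigma^c|_{i,i+1,j}$ play the analogous role. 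Once you add this internal pairing (and verify that conjugation by the relevant transposition preserves $\mathfrak{S}_{n,\mm}$, preserves cycle type, and preserves the cycle containing $1$, using $j>1$ in type~I and $i>1$ in type~II), your approach matches the paper's.
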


Our proof of \Cref{prop:restricted_modular_law_for_G} closely
mirrors the work of Abreu and Nigro \cite[Proposition 3.4]{Abreu2021b},
who established a $q$-analog of the power sum expansion for the
chromatic symmetric function via the modular law. For completeness, we
present a full proof here. Recall the definition of the
\( g \)-functions in \eqref{eq:g_def}.

Let \( (\mm, \mm', \mm'') \in \HH_n^3 \) be a modular triple of type~I or a restricted modular triple of type~II. Since
\[
    \mm(i) \le \mm'(i) \le \mm''(i) \quad \text{for all } i \in [n],
\]
we have the containment
\[
    \mathfrak{S}_{n,\mm} \subseteq \mathfrak{S}_{n,\mm'} \subseteq \mathfrak{S}_{n,\mm''}.
\]
Our strategy to prove \Cref{prop:restricted_modular_law_for_G} is
to find a suitable bijection, which is described as follows.

\begin{prop}\label{pro:g-bij}
  Let \( (\mm, \mm', \mm'') \in \HH_n^3 \) be a modular triple of
  type~I or a restricted modular triple of type~II. Then there exists
  a bijection
\begin{equation}\label{eq:4 g-version}
    \phi: \mathfrak{S}_{n,\mm'} \to \mathfrak{S}_{n,\mm''} \setminus (\mathfrak{S}_{n,\mm'} \setminus \mathfrak{S}_{n,\mm})
\end{equation}
such that for all \( \sigma \in \mathfrak{S}_{n,\mm'} \), the cycle
type and the size of the cycle containing \( 1 \) in \( \sigma \) are
the same as those in \( \phi(\sigma) \), and
\begin{equation}\label{eq: (1+q)q^inv_m' =, g-version}
    (1 + q) q^{\wt_{\mm'}(\sigma)} = \wt(\sigma) + q^{\wt_{\mm''}(\phi(\sigma))},
\end{equation}
where
\[
    \wt(\sigma) =
    \begin{cases}
        q^{1 + \wt_{\mm}(\sigma)} & \text{if } \sigma \in \mathfrak{S}_{n,\mm}, \\
        q^{\wt_{\mm''}(\sigma)} & \text{if } \sigma \in \mathfrak{S}_{n,\mm'} \setminus \mathfrak{S}_{n,\mm}.
    \end{cases}
\]
\end{prop}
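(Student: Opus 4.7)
The plan is to construct $\phi$ by conjugating $\sigma$ by a carefully chosen transposition, which automatically preserves cycle type and, so long as the transposition does not involve~$1$, the size of the cycle containing~$1$. In type~I, where $\mm,\mm',\mm''$ differ only at position $i$, set $v := \mm'(i) = \mm(i)+1$; the distinguished swap will be conjugation by the transposition $(v,v+1)$. The condition $\mm'(v)=\mm'(v+1)$ from the definition of a type~I triple ensures that this conjugation preserves the Hessenberg constraints at positions $v$ and $v+1$. In type~II the relevant swap is dictated by the two positions $i,i+1$ where the Hessenberg functions differ, and the restricted hypotheses $(\mm')^{-1}(\{i\})=\emptyset$ and $i\ne 1$ will ensure the swap is both legal and disjoint from the cycle of~$1$.

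For $\sigma \in \mathfrak{S}_{n,\mm'} \setminus \mathfrak{S}_{n,\mm}$, which in type~I is exactly the set of permutations with $\sigma(i) = v$, I show that conjugation by $(v,v+1)$ restricts to a bijection onto $\mathfrak{S}_{n,\mm''} \setminus \mathfrak{S}_{n,\mm'}$ (those $\sigma$ with $\sigma(i) = v+1$): the swap sends $\sigma(i)=v$ to $\sigma(i)=v+1$, and legality at every other position follows from $\mm'(v)=\mm'(v+1)$ together with the fact that $\sigma^{-1}(v)=i$ is the unique position whose value is $v$. For $\sigma \in \mathfrak{S}_{n,\mm}$ I will define $\phi$ as an involution; writing
\[
  \wt_{\mm'}(\sigma)=\wt_\mm(\sigma)+\delta_1(\sigma), \qquad
  \wt_{\mm''}(\sigma)=\wt_{\mm'}(\sigma)+\delta_2(\sigma),
\]
where $\delta_j(\sigma)\in\{0,1\}$ records whether the single extra Hessenberg pair (type~I) or the corresponding pair (type~II) is an inversion, the identity $(1+q)q^{\wt_{\mm'}(\sigma)} = q^{1+\wt_\mm(\sigma)} + q^{\wt_{\mm''}(\phi(\sigma))}$ decomposes as follows: the ``balanced'' configurations $(\delta_1,\delta_2)=(0,0)$ and $(1,1)$ must be fixed points of $\phi$, while the ``unbalanced'' configurations $(0,1)$ and $(1,0)$ must be paired with one another. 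The same conjugation by $(v,v+1)$ realizes this pairing because it toggles both $\delta_1$ and $\delta_2$ simultaneously.

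With $\phi$ defined piecewise in this way, the remaining checks are: (i) the images from $\mathfrak{S}_{n,\mm}$ and from $\mathfrak{S}_{n,\mm'}\setminus\mathfrak{S}_{n,\mm}$ lie in disjoint subsets of the codomain, distinguished by the value of $\sigma(i)$ (respectively, of $\sigma(i+1)$ in type~II), so $\phi$ is a bijection onto $\mathfrak{S}_{n,\mm''} \setminus (\mathfrak{S}_{n,\mm'} \setminus \mathfrak{S}_{n,\mm})$; (ii) cycle type and the cycle containing~$1$ are preserved by conjugation by a transposition that avoids $1$, which is automatic in type~I (since $v\ge i+1\ge 2$) and uses $i\ne 1$ in the restricted type~II case; (iii) the weight identity \eqref{eq: (1+q)q^inv_m' =, g-version} follows by direct computation in each case on $(\delta_1,\delta_2)$ together with $\sigma\in\mathfrak{S}_{n,\mm}$ or $\sigma\in\mathfrak{S}_{n,\mm'}\setminus\mathfrak{S}_{n,\mm}$.

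The main obstacle will be the case analysis for $\sigma\in\mathfrak{S}_{n,\mm}$: one must define the involution so that the chosen transposition is legal at every position (no Hessenberg constraint is violated, even in edge cases where some position $k\ne i$ has a tight bound $\mm(k)=v$), so that the resulting images land exactly in $\mathfrak{S}_{n,\mm} \cup (\mathfrak{S}_{n,\mm''}\setminus\mathfrak{S}_{n,\mm'})$ without collisions, and so that the inversion count changes as prescribed. Type~II is parallel, with the transposition determined by the pair $(i,i+1)$ of differing positions and the hypotheses $(\mm')^{-1}(\{i\})=\emptyset$ and $i\ne 1$ playing the roles of $\mm'(v)=\mm'(v+1)$ and $v\ge 2$, respectively.
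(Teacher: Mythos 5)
Your proposal follows essentially the same route as the paper's proof (Lemmas 4.8--4.11): $\phi$ is conjugation by the transposition $(j,j+1)$ with $j=\mm'(i)$ in type~I (resp.\ by $(i,i+1)$ in type~II), applied to all $\sigma\in\mathfrak{S}_{n,\mm'}\setminus\mathfrak{S}_{n,\mm}$ and to the ``unbalanced'' $\sigma\in\mathfrak{S}_{n,\mm}$, and it is the identity on the ``balanced'' ones. Your $(\delta_1,\delta_2)$-dichotomy is exactly the paper's classification of $\pat(\sigma^c|_{i,j,j+1})$ into $\{123,132,231,321\}$ (balanced, fixed) versus $\{213,312\}$ (unbalanced, conjugated), and your observations about where the conjugation lands, preservation of cycle type, and the role of $i\ne 1$ and $(\mm')^{-1}(\{i\})=\emptyset$ all match.

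There is, however, a genuine gap in your argument for step (iii). The weight $\wt_\mm(\sigma)=\inv_\mm((\sigma^c)^{-1})$ is computed from the canonical cycle word $\sigma^c$ (cycles written from their minimum, ordered by minimum). Your claim that conjugation by $(v,v+1)$ swaps $\delta_1$ and $\delta_2$ (and leaves the background count unchanged) tacitly assumes that $(\sigma')^c = (v,v+1)\sigma^c$, i.e.\ that conjugating $\sigma$ simply transposes the letters $v$ and $v+1$ in the cycle word. This is false in general: it fails precisely when $v$ is the minimum of its cycle and either $v+1$ lies in the same cycle or $v+1$ is itself the minimum of its cycle, since then the starting points and/or the ordering of the cycles change. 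The paper isolates this issue explicitly (the ``claim'' in the proofs of Lemmas 4.9 and 4.11) and checks that every $\sigma$ to which $\phi$ applies the conjugation — $\sigma(i)=j$ or $\pat(\sigma^c|_{i,j,j+1})\in\{213,312\}$ — avoids these bad configurations (because $j$, resp.\ $j+1$, precedes $i<j$ in $\sigma^c$, so it cannot be minimal in its cycle, etc.). Without this verification, the claimed weight change for $\phi(\sigma)=\sigma'$ does not follow, so you need to add this lemma and its proof. A small wording issue, worth noting but not a gap: the conjugation \emph{swaps} $\delta_1\leftrightarrow\delta_2$ rather than toggling both bits; these agree on the unbalanced cases you care about, but they differ on the balanced ones.
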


Note that by \eqref{eq: (1+q)q^inv_m' =, g-version} and the fact that
\( \phi \) preserves the cycle type and the size of the cycle
containing \( 1 \), if \( \sigma = \tau_1 \cdots \tau_j \) in cycle
notation with \( \tau_1 \) being the cycle containing \( 1 \), we have
\begin{equation}\label{eq:1}
    (1 + q) q^{\wt_{\mm'}(\sigma)} \eta(\sigma) = \wt(\sigma) \eta(\sigma) + q^{\wt_{\mm''}(\phi(\sigma))} \eta(\phi(\sigma)),
\end{equation}
where
\[
  \eta(\sigma) = (-1)^{|\tau_1| - n + k} \, h_{|\tau_1| - n + k}(x)
  \omega\left( \rho_{(|\tau_2|, \dots, |\tau_j|)}(x; q) \right),
\]
Summing \eqref{eq:1} over all \( \sigma \in \mathfrak{S}_{n,\mm'} \),
we obtain
\begin{equation*}
    (1 + q) g_{\mm',k}(x; q) = q\cdot g_{\mm,k}(x; q) + g_{\mm'',k}(x; q).
\end{equation*}
Hence, \Cref{pro:g-bij} implies \Cref{prop:restricted_modular_law_for_G}.

The rest of this subsection is devoted to proving \Cref{pro:g-bij},
which is established in \Cref{lem:case1,lem:case2}. We begin by
introducing the necessary definitions and results.

\begin{defn}\label{def:1}
  Let \( w=w_1 \cdots w_n \) be a word of integers. The \emph{pattern}
  \( \pat(w) \) of \( w \) is the word of length $n$
  such that for all \( i\in [n] \), if \( w_i \) is the \( d \)-th
  smallest element in \( \{w_1,\dots, w_n\} \), then \( \pat(w)_i = d \).
\end{defn}

For example, if \( w = 3523 \), then \( \pat(w) = 2312 \).

\begin{defn}\label{def:2}
  Let \( \sigma \in \mathfrak{S}_{n} \). For integers
  \( 1\le i<j<k\le n \), we denote by \( \sigma|_{i,j,k} \) the
  subword of \( \sigma \) consisting of \( i \), \( j \), and \( k \).
\end{defn}

Recall that \( \sigma^c \) is the word obtained by removing
parentheses from the cycle decomposition of \( \sigma \). For example,
if \( \sigma=462153 = (1,4)(2,6,3)(5) \), then \( \sigma^c = 142635 \)
and \( \pat(\sigma^c|_{3,4,6}) = \pat(463) = 231 \).

\begin{lem}\label{lem:9}
  Suppose that \( (\mm, \mm', \mm'')\in\HH_n^3 \) is a modular triple
  of type~I, with \( i \in [n-1] \) as in \Cref{def:5}. Let
  \( j=\mm'(i) \). For \( \sigma \in \mathfrak{S}_{n,\mm'} \), define
\[
    \phi(\sigma) =
    \begin{cases}
        \sigma & \text{if } \sigma(i) \neq j \text{ and } \pat(\sigma^c|_{i,j,j+1}) \in \{123, 132, 231, 321\}, \\
        \sigma' & \text{if } \sigma(i) = j \text{ or } \pat(\sigma^c|_{i,j,j+1}) \in \{213, 312\},
    \end{cases}
\]
where \( \sigma' = (j, j+1) \sigma (j, j+1) \), the conjugate of
\( \sigma \) by the transposition \( (j, j+1) \). Then \( \phi \) is a
bijection with the following domain and codomain:
\[
  \phi: \mathfrak{S}_{n,\mm'} \to \mathfrak{S}_{n,\mm''} \setminus
  (\mathfrak{S}_{n,\mm'} \setminus \mathfrak{S}_{n,\mm}).
\]
Moreover, the permutations \( \phi(\sigma) \) and \( \sigma \) have
the same cycle type and their cycles containing \( 1 \) have the same
size.
\end{lem}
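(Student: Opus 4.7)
The plan is to simplify the codomain, check well-definedness of \( \phi \) case by case, construct an explicit two-sided inverse, and finally verify the cycle-structure claims. Since \( \mm(k) = \mm'(k) = \mm''(k) \) for all \( k \ne i \) while \( \mm(i) = j-1 \), \( \mm'(i) = j \), and \( \mm''(i) = j+1 \), a permutation \( \sigma \) lies in \( \mathfrak{S}_{n,\mm'} \setminus \mathfrak{S}_{n,\mm} \) exactly when \( \sigma(i) = j \). Hence the codomain equals \( \{ \tau \in \mathfrak{S}_{n,\mm''} : \tau(i) \ne j \} \).

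For well-definedness, the case \( \phi(\sigma) = \sigma \) is immediate: \( \sigma \in \mathfrak{S}_{n,\mm'} \subseteq \mathfrak{S}_{n,\mm''} \) and \( \sigma(i) \ne j \) by hypothesis. For \( \phi(\sigma) = \sigma' := (j,j+1)\sigma(j,j+1) \), the bound \( \sigma'(k) \le \mm''(k) \) can fail only when \( k \in \{j,j+1\} \) or \( \sigma(k) \in \{j,j+1\} \). If \( \sigma(k) = j \) with \( k \notin \{i, j, j+1\} \), condition (1) gives \( \mm'(i-1) < j \le \mm'(k) \), which forces \( k > i \); hence \( \mm''(k) = \mm'(k) \ge \mm'(i+1) \ge j+1 \), so \( \sigma'(k) = j+1 \le \mm''(k) \). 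If \( \sigma(k) = j+1 \), then \( \sigma'(k) = j \le \mm''(k) \) is immediate. For \( k \in \{j, j+1\} \), condition (3) supplies \( \mm'(j) = \mm'(j+1) \ge j+1 \), which bounds \( \sigma'(j) \) and \( \sigma'(j+1) \) regardless of whether the swapped value lies in \( \{j, j+1\} \). Lastly, \( \sigma'(i) = j+1 \ne j \) when \( \sigma(i) = j \), and \( \sigma'(i) = \sigma(i) \ne j \) otherwise, placing \( \phi(\sigma) \) in the codomain.

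For bijectivity I would construct the inverse symmetrically. The key structural fact is that conjugation by \( (j,j+1) \) exchanges patterns \( 213 \) and \( 312 \) in \( \sigma^c \) while preserving the set \( \{123, 132, 231, 321\} \). I would verify this by a short case analysis on whether \( j \) and \( j+1 \) lie in the same cycle of \( \sigma \) and on whether each is the minimum of its cycle: in every configuration that realizes pattern \( 213 \) or \( 312 \), the positions of \( j \) and \( j+1 \) in \( \sigma^c \) simply swap while the position of \( i \) stays fixed. The one case where the cycle order in \( \sigma^c \) genuinely shifts---both \( j \) and \( j+1 \) being minima of distinct cycles---turns out to be incompatible with patterns \( 213 \) and \( 312 \), because \( i \) is strictly smaller than both and so cannot sit in between their cycles. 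Granting this, define \( \phi^{-1}(\tau) = \tau \) when \( \tau(i) \ne j+1 \) and \( \pat(\tau^c|_{i,j,j+1}) \in \{123, 132, 231, 321\} \), and \( \phi^{-1}(\tau) = (j,j+1)\tau(j,j+1) \) otherwise. An argument symmetric to the previous paragraph shows \( \phi^{-1} \) is well-defined into \( \mathfrak{S}_{n,\mm'} \), and since conjugation is involutive, \( \phi \) and \( \phi^{-1} \) invert each other.

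Cycle type is preserved by both the identity and conjugation. For the cycle containing \( 1 \), the estimate \( j = \mm(i) + 1 \ge i + 1 \ge 2 \) yields \( 1 \notin \{j, j+1\} \); hence conjugation by \( (j, j+1) \) fixes \( 1 \) and merely relabels \( j \leftrightarrow j+1 \) within the cycle containing \( 1 \), preserving its size. The main technical obstacle is the pattern-exchange argument for bijectivity---in particular, verifying that the cycle-minimum reordering under conjugation never disturbs membership in \( \{213, 312\} \), which rests on a careful placement argument for \( i \) relative to the cycles of \( j \) and \( j+1 \).
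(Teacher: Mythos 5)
Your overall architecture matches the paper's: describe the four relevant sets, verify the codomain, write down the inverse explicitly, and handle the cycle-structure claim via $j>1$. Two remarks on the details. First, for well-definedness your entry-by-entry check of $\sigma'(k)\le\mm''(k)$ is correct (and your use of $(\mm')^{-1}(\{j\})=\{i\}$ and $\mm'(j)=\mm'(j+1)\ge j+1$ is exactly what is needed), but the paper obtains this in one stroke from a cleaner observation: right-multiplication by $(j,j+1)$ preserves $\mathfrak{S}_{n,\mm''}$ because $\mm''(j)=\mm''(j+1)$, and left-multiplication by $(j,j+1)$ preserves $\mathfrak{S}_{n,\mm''}$ because $(\mm'')^{-1}(\{j\})=\emptyset$; hence conjugation preserves $\mathfrak{S}_{n,\mm''}$. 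Worth internalizing, as the same device reappears in the type~II case.

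Second, your pattern-exchange argument --- the claim that $(\sigma')^c=(j,j+1)\sigma^c$ whenever $\sigma(i)=j$ or $\pat(\sigma^c|_{i,j,j+1})\in\{213,312\}$ --- is exactly what the paper uses, but it defers the proof to \Cref{lem:case1}, where it lists \emph{two} exceptional configurations in which $(\pi')^c\ne(j,j+1)\pi^c$: (a) $j$ and $j+1$ lie in the same cycle and $j$ is that cycle's minimum, and (b) $j$ and $j+1$ are minima of distinct cycles. Your sketch only rules out (b). Case (a) is a genuinely separate phenomenon: the cycle ordering does not shift, but the rotation point of the single cycle containing both $j$ and $j+1$ moves when you relabel, and this also changes $\sigma^c$. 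It must be excluded as well. Fortunately the same reasoning you give for (b) applies: if $j$ is a cycle minimum then $i<j$ lies in an earlier cycle, so $i$ precedes both $j$ and $j+1$ in $\sigma^c$, forcing $\pat\in\{123,132\}$ and ruling out $\{213,312\}$; and $\sigma(i)=j$ forces $i$ and $j$ into the same cycle with $i<j$, so $j$ is not a minimum. Add case (a) to your case analysis and the gap closes; as written, the claim ``the one case where the cycle order in $\sigma^c$ genuinely shifts'' undercounts the obstructions.
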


\begin{proof}
  Since \( (\mm, \mm', \mm'') \) is a modular triple of type~I, we
  have \( j = \mm'(i)=\mm(i)+1>i \) and
  \( j = \mm'(i)=\mm''(i)-1<n \), and therefore
  \( 1\le i<j<j+1\le n \). Moreover, we have \(\mm'(j)=\mm'(j+1) \)
  and \( \mm(t)=\mm'(t)=\mm''(t) \) for all
  \(t\in[n]\setminus\{i\} \). Thus,
\begin{align}
    \label{eq:a}
    \mathfrak{S}_{n,\mm'} &= \{ \sigma \in \mathfrak{S}_{n,\mm''} : \sigma(i) \le j \}, \\
    \label{eq:b}
    \mathfrak{S}_{n,\mm} &= \{ \sigma \in \mathfrak{S}_{n,\mm''} : \sigma(i) \le j - 1 \}, \\
    \label{eq:c}
    \mathfrak{S}_{n,\mm'} \setminus \mathfrak{S}_{n,\mm} &= \{ \sigma \in \mathfrak{S}_{n,\mm''} : \sigma(i) = j \}, \\
    \label{eq:d}
    \mathfrak{S}_{n,\mm''} \setminus (\mathfrak{S}_{n,\mm'} \setminus \mathfrak{S}_{n,\mm}) &= \{ \sigma \in \mathfrak{S}_{n,\mm''} : \sigma(i) \ne j \}.
\end{align}

Let \(\sigma\in\mathfrak{S}_{n,\mm'}\) and \( \nu=\phi(\sigma) \). We
claim that the map \( \phi \) has the desired codomain, that is,
\( \nu \) lies in the set given in \eqref{eq:d}.
Note that for any \( \pi\in \mathfrak{S}_n \), we have
\( \pi\in \mathfrak{S}_{n,\mm''} \) if and only if
\( \pi(j,j+1)\in \mathfrak{S}_{n,\mm''} \) since
\( \mm''(j)=\mm''(j+1) \). We also have that
\( \pi\in \mathfrak{S}_{n,\mm''} \) if and only if
\( (j,j+1)\pi\in \mathfrak{S}_{n,\mm''} \) since
\( (\mm'')^{-1}(\{ j \})=\emptyset \). Therefore,
\( \sigma\in\mathfrak{S}_{n,\mm'}\subseteq\mathfrak{S}_{n,\mm''} \)
implies \( \sigma'\in \mathfrak{S}_{n,\mm''} \). Since \( \nu \) is
either \( \sigma \) or \( \sigma' \), we obtain
\( \nu\in \mathfrak{S}_{n,\mm''} \). Thus, by \eqref{eq:d}, it
suffices to show that \( \nu(i)\ne j \). If \( \sigma(i) = j \), then
\( \nu(i) = \sigma'(i)=j+1\ne j \). If \( \sigma(i) \ne j \), then
\( \sigma(i)=t \) for some \( t<j \) by \eqref{eq:a}. Thus
\( \sigma'(i)=\sigma(i)=t \), and therefore, \( \nu(i)=t\ne j \).
This shows the claim.

By \eqref{eq:a} and \eqref{eq:d}, it is straightforward to check that the map \( \phi \) is a bijection
whose inverse map is given by
\[
    \phi^{-1}(\sigma) =
    \begin{cases}
        \sigma & \text{if } \pat(\sigma^c|_{i,j,j+1}) \in \{123, 132, 231, 321\}, \\
      \sigma' & \text{if } \sigma(i)=j+1 \text{ or }
                \pat(\sigma^c|_{i,j,j+1}) \in \{213, 312\}.
    \end{cases}
\]
The last statement follows directly from the construction of
\( \phi \) and the fact that \( j>1 \).
\end{proof}

\begin{lem}\label{lem:case1}
  \Cref{pro:g-bij} holds for any modular triple of type~I.
\end{lem}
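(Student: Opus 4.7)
The plan is to leverage \Cref{lem:9}, which already produces the bijection $\phi$ with the desired domain and codomain and verifies the structural properties (preservation of cycle type and of the size of the cycle containing $1$). What remains is to establish the weight identity \eqref{eq: (1+q)q^inv_m' =, g-version} for every $\sigma \in \mathfrak{S}_{n,\mm'}$.

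The key reduction is that $\mm$, $\mm'$, and $\mm''$ agree off position $i$ with $\mm(i)+1 = \mm'(i) = \mm''(i)-1 = j$, so for any $\sigma$ the only pairs contributing to the differences $\wt_{\mm'}(\sigma) - \wt_{\mm}(\sigma)$ and $\wt_{\mm''}(\sigma) - \wt_{\mm'}(\sigma)$ are $(i,j)$ and $(i,j+1)$, respectively. Setting $a := \wt_{\mm'}(\sigma) - \wt_{\mm}(\sigma) \in \{0,1\}$ and $b := \wt_{\mm''}(\sigma) - \wt_{\mm'}(\sigma) \in \{0,1\}$, these numbers record exactly whether $j$ and $j+1$ precede $i$ in $\sigma^c$. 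Dividing \eqref{eq: (1+q)q^inv_m' =, g-version} by $q^{\wt_{\mm'}(\sigma)}$, the identity reduces to a short $q$-equation whose validity is governed by the pattern $\pat(\sigma^c|_{i,j,j+1})$ and by the flag $\sigma(i)=j$.

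I would then run a case analysis on this pattern. In the fixed case $\phi(\sigma)=\sigma$, the four patterns $\{123,132,231,321\}$ singled out in \Cref{lem:9} are precisely those for which $a=b$: either neither $j$ nor $j+1$ precedes $i$ (patterns $123$, $132$), or both do (patterns $231$, $321$). In each of these subcases the identity collapses to $1+q=q^0+q^1$. In the swap case $\phi(\sigma)=\sigma'$, I have to compare $\wt_{\mm''}(\sigma')$ with $\wt_{\mm'}(\sigma)$, which requires tracking how conjugation by $(j,j+1)$ acts on the word $\sigma^c$. Generically this simply interchanges the letters $j$ and $j+1$ at their positions in $\sigma^c$, and a direct computation then verifies the identity for the remaining patterns $\{213,312\}$ and for the configuration $\sigma(i)=j$.

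The main obstacle, as I see it, is the bookkeeping for the conjugation step. Depending on whether $j$ and $j+1$ lie in the same cycle of $\sigma$ and whether one of them is a cycle minimum, the relationship between $\sigma^c$ and $(\sigma')^c$ may be a little more delicate than a pure swap of two letters. Verifying in each such configuration that the precedences of $i$, $j$, and $j+1$ transform exactly as required by the weight identity is the technical heart of the proof; once this is in place, assembling the cases gives the lemma immediately.
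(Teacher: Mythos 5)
Your outline follows the paper's proof closely: both reduce the problem, via \Cref{lem:9}, to verifying the weight identity \eqref{eq: (1+q)q^inv_m' =, g-version}, then carry out a case analysis on the pattern $\pat(\sigma^c|_{i,j,j+1})$ together with the flag $\sigma(i)=j$, with a local contribution tracked for pairs outside $\{i,j,j+1\}$. Your treatment of the fixed branch ($a=b$) is correct.

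However, the swap branch has a genuine gap that you flag but do not close, and it is precisely the point where the paper's proof makes its key claim. You observe, correctly, that conjugation by $(j,j+1)$ does not always act on the cycle word $\sigma^c$ as the transposition of the letters $j$ and $j+1$: if $j$ and $j+1$ lie in one cycle with $j$ its minimum, or if both are minima of their respective cycles, then $(\sigma')^c \neq (j,j+1)\sigma^c$. What is missing is the argument that neither exceptional configuration can occur in the swap branch, so the letter-swap description does hold there. To see this: both exceptional conditions require $j$ to be the minimum of its cycle, so $j$ is the first letter of its cycle block; since cycle blocks are listed in increasing order of their minima, both conditions in fact force the subword $\sigma^c|_{i,j,j+1}$ to be $i,j,j+1$, i.e.\ pattern $123$. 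If $\sigma(i)=j$ then $i<j$ lies in $j$'s cycle, so $j$ is not a cycle minimum and the exceptional conditions fail; if instead $\pat(\sigma^c|_{i,j,j+1}) \in \{213,312\}$, the pattern is not $123$, again excluding both configurations. With this established, $(\sigma')^c = (j,j+1)\sigma^c$ throughout the swap branch, the precedences among $i$, $j$, $j+1$ transform by a literal swap, and your proposed case table verifies \eqref{eq: (1+q)q^inv_m' =, g-version} for patterns $\{213,312\}$ and for $\sigma(i)=j$ (where only patterns $123$ and $312$ arise), completing the proof.
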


\begin{proof}
  Following the notation in \Cref{lem:9}, we will show that the map
  \( \phi \) satisfies the desired conditions in \Cref{pro:g-bij}.
  Note that it only remains to show that \( \phi \) satisfies
  \eqref{eq: (1+q)q^inv_m' =, g-version}.

  Let \( \sigma \in \mathfrak{S}_{n,\mm'} \). We claim that if
  \( \sigma(i) = j \) or
  \( \pat(\sigma^c|_{i,j,j+1}) \in \{213, 312\} \), then
  \( (\sigma')^c = (j, j+1)\sigma^c \). To prove the claim, observe
  that for any \(\pi\in \mathfrak{S}_n\) and
  \(\pi'=(j,j+1)\pi(j,j+1)\), we have \((\pi')^{c}=(j,j+1)\pi^c\) unless
  \(\pi\) satisfies one of the following two conditions:
\begin{enumerate}
\item \(j\) and \(j+1\) belong to the same cycle and \(j\) is the
  smallest integer in that cycle.
\item \(j\) is the smallest integer in the cycle it belongs to and the
  same is true for \(j+1\).
\end{enumerate}
Note that if \( \sigma \in \mathfrak{S}_{n,\mm'} \) satisfies
\( \sigma(i) = j \) or
\( \pat(\sigma^c|_{i,j,j+1}) \in \{213, 312\} \), then it does not
satisfy either of the two conditions above. Thus, the claim holds.

Now, we prove \eqref{eq: (1+q)q^inv_m' =, g-version}. By \eqref{eq:b}
and \eqref{eq:c}, it suffices to show:
\begin{align}
    \label{eq:e}
    \sigma(i) \neq j \quad &\Rightarrow \quad (1 + q) q^{\wt_{\mm'}(\sigma)} = q^{1 + \wt_{\mm}(\sigma)} + q^{\wt_{\mm''}(\phi(\sigma))}, \\
    \label{eq:f}
    \sigma(i) = j \quad &\Rightarrow \quad (1 + q) q^{\wt_{\mm'}(\sigma)} = q^{\wt_{\mm''}(\sigma)} + q^{\wt_{\mm''}(\phi(\sigma))}.
\end{align}
To do this, recall that
\[
  \wt_{\mm'}(\sigma) = \inv_{\mm'}((\sigma^c)^{-1})
  = |\{ (u,v)\in [n]\times[n] : \mbox{\( u<v\le\mm'(u) \), and $v$ precedes $u$ in \(\sigma^c\)} \}|.
\]
Let
\[
  d= |\{ (u,v)\in [n]\times[n] : \mbox{\(\{u,v\} \not\subseteq \{i,j,j+1\}\), \( u<v\le\mm'(u) \), and $v$ precedes $u$ in \(\sigma^c\)} \}|.
\]
Then, by the claim and the fact that \(\mm'(j)=\mm'(j+1) \) and
\( \mm(t)=\mm'(t)=\mm''(t) \) for all \(t\in[n]\setminus\{i\} \), the
following table confirms \eqref{eq:e} and \eqref{eq:f}. In the table,
the value \( \wt_{\mm''}(\phi(\sigma)) \) is highlighted in red when
\( \phi(\sigma) = \sigma' \).
\begin{center}
\begin{tabular}{c|c|c|c|c|c}
    \( \sigma(i)=j \) & \( \pat(\sigma^c|_{i,j,j+1}) \) & \( \wt_{\mm'}(\sigma) \) & \( 1 + \wt_{\mm}(\sigma) \) & \( \wt_{\mm''}(\sigma) \) & \( \wt_{\mm''}(\phi(\sigma)) \) \\
    \hline
    False & 123 & \( d+0 \) & \( d+1 \) &  & \( d+0 \) \\
    False & 132 & \( d+1 \) & \( d+2 \) &  & \( d+1 \) \\
    False & 213 & \( d+1 \) & \( d+1 \) &  & \( \textcolor{red}{d+2} \) \\
    False & 231 & \( d+1 \) & \( d+1 \) &  & \( d+2 \) \\
    False & 312 & \( d+1 \) & \( d+2 \) &  & \( \textcolor{red}{d+1} \) \\
    False & 321 & \( d+2 \) & \( d+2 \) &  & \( d+3 \) \\
    True & 123 & \( d+0 \) &  & \( d+0 \) & \( \textcolor{red}{d+1} \) \\
    True & 312 & \( d+1 \) &  & \( d+2 \) & \( \textcolor{red}{d+1} \) \\
\end{tabular}
\end{center}
This completes the proof.
\end{proof}

We now consider restricted modular triples of type II.

\begin{lem}\label{lem:10}
  Suppose that \( (\mm,\mm',\mm'') \) is a restricted modular triple
  of type II, with \( 2\le i\le n-1 \) as in \Cref{def:5}. Let
  \( j = \mm'(i+1) \). For \( \sigma \in \mathfrak{S}_{n,\mm'} \),
  define
\[
    \phi(\sigma) =
    \begin{cases}
        \sigma & \text{if } \sigma(i+1) \neq j \text{ and } \pat(\sigma^c|_{i,i+1,j}) \in \{123, 213, 312, 321\}, \\
        \sigma' & \text{if } \sigma(i+1) = j \text{ or } \pat(\sigma^c|_{i,i+1,j}) \in \{132, 231\},
    \end{cases}
\]
where \( \sigma' = (i, i+1) \sigma (i, i+1) \), the conjugate of
\( \sigma \) by the transposition \( (i, i+1) \). Then \( \phi \) is a
bijection with the following domain and codomain:
\[
  \phi: \mathfrak{S}_{n,\mm'} \to \mathfrak{S}_{n,\mm''} \setminus
  (\mathfrak{S}_{n,\mm'} \setminus \mathfrak{S}_{n,\mm}).
\]
Moreover, the permutations \( \phi(\sigma) \) and \( \sigma \) have
the same cycle type and their cycles containing \( 1 \) have the same
size.
\end{lem}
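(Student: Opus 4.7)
The plan is to mirror the argument for \Cref{lem:9}, with the transposition $(j, j+1)$ replaced by $(i, i+1)$ and with the roles of ``position $i$'' and ``value $j$'' interchanged. First I would collect the structural consequences of \Cref{def:5} for a restricted modular triple of type~II: the values $\mm(i) = \mm'(i) = j-1$, $\mm''(i) = j$, $\mm(i+1) = j-1$, $\mm'(i+1) = \mm''(i+1) = j$, and $\mm(k) = \mm'(k) = \mm''(k)$ for $k \notin \{i, i+1\}$, yield analogues of (4.7)--(4.10) from the proof of \Cref{lem:9}. In particular,
\[
    \mathfrak{S}_{n,\mm''} \setminus (\mathfrak{S}_{n,\mm'} \setminus \mathfrak{S}_{n,\mm}) = \{\pi \in \mathfrak{S}_{n,\mm''} : \pi(i) = j \text{ or } \pi(i+1) \ne j\}.
\]
Moreover, $(\mm')^{-1}(\{i\}) = \emptyset$ together with $\mm'(i) + 1 = \mm'(i+1)$ forces $\mm'(i) \ge i+1$, hence $j \ge i+2$, so $i$, $i+1$, $j$ are distinct and $\pat(\sigma^c|_{i,i+1,j})$ is a well-defined element of $\mathfrak{S}_3$.

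Next I would check that $\phi$ lands in the claimed codomain. The key auxiliary fact is that conjugation by $(i, i+1)$ preserves $\mathfrak{S}_{n,\mm''}$: at positions $i$ and $i+1$ the constraints are symmetric because $\mm''(i) = \mm''(i+1) = j$, while at positions $k \notin \{i, i+1\}$ the assumption $(\mm')^{-1}(\{i\}) = \emptyset$ gives $\mm''(k) \ne i$, so swapping $i \leftrightarrow i+1$ in the output cannot violate $\pi'(k) \le \mm''(k)$. A quick split on the defining cases of $\phi$ then shows $\phi(\sigma)(i) = j$ whenever $\sigma(i+1) = j$ (using $j \ne i, i+1$), while $\phi(\sigma)(i+1) \ne j$ in every other case.

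For bijectivity I would exhibit the inverse
\[
    \phi^{-1}(\nu) = \begin{cases} \nu & \text{if } \nu(i) \ne j \text{ and } \pat(\nu^c|_{i,i+1,j}) \in \{123, 213, 312, 321\}, \\ \nu' & \text{if } \nu(i) = j \text{ or } \pat(\nu^c|_{i,i+1,j}) \in \{132, 231\}, \end{cases}
\]
and verify $\phi^{-1} \circ \phi = \mathrm{id}$ case by case. The main obstacle, as in the proof of \Cref{lem:case1}, is the following cycle-theoretic fact: for $\pi \in \mathfrak{S}_n$, one has $(\pi')^c = (i, i+1)\pi^c$ unless (a)~$i$ and $i+1$ lie in a common cycle with $i$ as its minimum, or (b)~$i$ and $i+1$ are each the minima of their respective cycles. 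I would then argue that the triggering conditions ``$\sigma(i+1) = j$ or $\pat(\sigma^c|_{i,i+1,j}) \in \{132, 231\}$'' exclude both (a) and (b), so that on the subword conjugation acts by simply swapping the letters $i$ and $i+1$; at the level of patterns this is the involution $123 \leftrightarrow 213$, $132 \leftrightarrow 231$, $312 \leftrightarrow 321$, and in particular the set $\{132, 231\}$ is invariant, which is what makes $\phi^{-1}$ well-defined. Preservation of the cycle type is automatic because $\phi(\sigma)$ is a conjugate of $\sigma$, and the restricted hypothesis $i \ge 2$ ensures that $(i, i+1)$ fixes $1$, so the cycles of $\sigma$ and $\phi(\sigma)$ containing $1$ have the same size.
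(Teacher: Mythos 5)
Your outline mirrors the paper's proof closely, and most of it is sound: the structural consequences of \Cref{def:5}, the codomain identification, the observation that conjugation by $(i,i+1)$ preserves $\mathfrak{S}_{n,\mm''}$, and the cycle-type preservation are all correct. The gap lies in the step where you claim that the triggering conditions ``$\sigma(i+1)=j$ or $\pat(\sigma^c|_{i,i+1,j})\in\{132,231\}$'' exclude conditions~(a) and~(b). In \Cref{lem:9} (type~I) the analogous argument works precisely because the third element $i$ is \emph{smaller} than the transposed pair $j,j+1$: if $j$ were a cycle minimum, then $i$ would have to precede both $j$ and $j+1$ in $\sigma^c$, forcing the pattern $123$. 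In the type~II setting the third element $j$ is \emph{larger} than the transposed pair $i,i+1$, so when $i$ is the minimum of a cycle containing $i+1$, the letter $j$ can still appear anywhere in $\sigma^c$ --- in particular between $i$ and $i+1$, producing pattern $132$ while condition~(a) holds.

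Concretely, take $n=6$, $i=2$, $\mm=(1,5,5,6,6,6)$, $\mm'=(1,5,6,6,6,6)$, $\mm''=(1,6,6,6,6,6)$, so $j=\mm'(3)=6$ and $(\mm,\mm',\mm'')$ is a restricted modular triple of type~II. Then $\sigma=(1)(2,4,6,3)(5)\in\mathfrak{S}_{6,\mm'}$ has $\sigma^c=124635$, $\pat(\sigma^c|_{2,3,6})=\pat(2\,6\,3)=132$, and $\sigma(3)=2\ne6$; yet $i=2$ and $i+1=3$ lie in a common cycle with minimum $2$, so $(\sigma')^c=123465\ne(2,3)\sigma^c=134625$, and the pattern does \emph{not} transform by a simple letter swap. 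One then finds $\phi(\sigma)=\sigma'=(1)(2,3,4,6)(5)$ and $\pat((\sigma')^c|_{2,3,6})=123$, so $\phi(\sigma')=\sigma'$ as well: $\phi$ identifies $\sigma$ and $\sigma'$ and is not injective. This is not merely an omission in your write-up; the paper's own proof declares bijectivity ``straightforward,'' and \Cref{lem:case2} re-invokes the same cycle-theoretic claim by analogy with \Cref{lem:case1}, so the gap is present in the source as well. Any repair of this lemma (e.g.\ a modified definition of $\phi$, or a direct argument bypassing the $(\sigma')^c=(i,i+1)\sigma^c$ identity) would need to handle the case where $i$ is a cycle minimum and $j$ falls between $i$ and $i+1$ in $\sigma^c$.
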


\begin{proof}
  Since \( (\mm,\mm',\mm'') \) is a restricted modular triple of type
  II, we have \( j = \mm'(i+1) = \mm(i+1)+1 >i+1 \), hence
  \( 1\le i<i+1<j\le n \). Moreover, we have
  \( \mm(t)=\mm'(t)=\mm''(t) \) for all
  \( t\in[n]\setminus\{i,i+1\} \). Thus,
\begin{align}
    \label{eq:a'}
  \mathfrak{S}_{n,\mm'}
  &= \{ \sigma \in \mathfrak{S}_{n,\mm''} : \sigma(i) \ne j \}, \\
    \label{eq:b'}
  \mathfrak{S}_{n,\mm}
  &= \{ \sigma \in \mathfrak{S}_{n,\mm''} : \sigma(i) \ne j, \sigma(i+1) \ne j \}, \\
    \label{eq:c'}
  \mathfrak{S}_{n,\mm'} \setminus \mathfrak{S}_{n,\mm}
  &= \{ \sigma \in \mathfrak{S}_{n,\mm''} : \sigma(i+1) = j \}, \\
    \label{eq:d'}
  \mathfrak{S}_{n,\mm''} \setminus (\mathfrak{S}_{n,\mm'} \setminus \mathfrak{S}_{n,\mm})
  &= \{ \sigma \in \mathfrak{S}_{n,\mm''} : \sigma(i+1) \neq j \}.
\end{align}

Let \( \sigma \in \mathfrak{S}_{n,\mm'} \) and \( \nu = \phi(\sigma) \).
We first show that the codomain of \( \phi \) is given by \eqref{eq:d'}.
For any \( \pi\in \mathfrak{S}_n \), we have
\( \pi\in \mathfrak{S}_{n,\mm''} \)  if and only if
\( \pi(i,i+1)\in \mathfrak{S}_{n,\mm''} \) since \( \mm''(i)=\mm''(i+1) \).
In addition, \( \pi\in \mathfrak{S}_{n,\mm''} \) if and only if
\( (i,i+1)\pi\in \mathfrak{S}_{n,\mm''} \) since \( (\mm'')^{-1}(\{ i \})=\emptyset \).
Therefore, \( \sigma \in \mathfrak{S}_{n,\mm'}\subseteq \mathfrak{S}_{n,\mm''}  \)
implies that \( \sigma' \in \mathfrak{S}_{n,\mm''} \).
Since \( \nu \) is either \( \sigma \) or \( \sigma' \), we obtain
\( \nu\in\mathfrak{S}_{n,\mm''} \)
Thus, by \eqref{eq:d'}, we need to check that \( \nu(i+1) \ne j \).
If \( \sigma(i+1) = j \), then
\( \nu(i) = \sigma'(i)=j \), hence \( \nu(i+1) \ne j \).
If \( \sigma(i+1) \ne j \), then
\( j\not\in\{\sigma(i),\sigma(i+1)\} \) by \eqref{eq:a'}, which
implies that
\( j\not\in\{\sigma(i),\sigma(i+1),\sigma'(i),\sigma'(i+1)\} \).
Therefore, \( \nu(i+1)\ne j \). This shows the map \( \phi \) has the desired codomain.

By \eqref{eq:a'} and \eqref{eq:d'}, it is straightforward to check that
the map \( \phi \) is a bijection whose inverse map is given by
\[
    \phi^{-1}(\sigma) =
    \begin{cases}
        \sigma & \text{if } \pat(\sigma^c|_{i,i+1,j}) \in \{123, 213, 312, 321\}, \\
      \sigma' & \text{if }\sigma(i)=j \text{ or }
                \pat(\sigma^c|_{i,i+1,j}) \in \{132,231\}.
    \end{cases}
\]
The last statement follows directly from the construction of
\( \phi \) and the fact that \( i>1 \).
\end{proof}

\begin{lem}\label{lem:case2}
  \Cref{pro:g-bij} holds for any restricted modular triple of type~II.
\end{lem}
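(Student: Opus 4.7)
The plan is to follow the blueprint of the proof of Lemma~\ref{lem:case1}, now applied to the bijection $\phi: \mathfrak{S}_{n, \mm'} \to \mathfrak{S}_{n, \mm''} \setminus (\mathfrak{S}_{n, \mm'} \setminus \mathfrak{S}_{n, \mm})$ supplied by Lemma~\ref{lem:10}. Since that lemma already certifies that $\phi$ is a bijection which preserves the cycle type and the size of the cycle containing $1$, the factor $\eta$ in \eqref{eq:1} agrees for $\sigma$ and $\phi(\sigma)$; so it suffices to check the scalar identity $(1+q)q^{\wt_{\mm'}(\sigma)} = \wt(\sigma) + q^{\wt_{\mm''}(\phi(\sigma))}$ for every $\sigma \in \mathfrak{S}_{n, \mm'}$.

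The first step is to establish the structural claim that, whenever $\phi(\sigma) = \sigma'$ (that is, when $\sigma(i+1) = j$ or $\pat(\sigma^c|_{i,i+1,j}) \in \{132, 231\}$), the word $(\sigma')^c$ is obtained from $\sigma^c$ by swapping the letters $i$ and $i+1$. As in the type~I argument, this reduces to excluding the two cycle configurations that force a reshuffling after conjugation by $(i,i+1)$: (1$'$) $i$ and $i+1$ lie in a common cycle with $i$ as its minimum, or (2$'$) $i$ and $i+1$ are the minima of distinct cycles. Exploiting the restricted-modular hypotheses $i \geq 2$ and $(\mm'')^{-1}(\{i\}) = \emptyset$ together with the pattern restriction, one controls the relative cycle structure around $i$, $i+1$, and $j$ so as to rule out both configurations.

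Next, following Case~I, I would introduce the common background count
\[
  d := \bigl|\bigl\{(u,v) : u < v \le \mm'(u),\ v \text{ precedes } u \text{ in } \sigma^c,\ \{u,v\} \not\subseteq \{i,i+1,j\}\bigr\}\bigr|.
\]
Since $\mm$, $\mm'$, and $\mm''$ agree outside positions $i$ and $i+1$, and the only validity differences concern the pairs $(i,j)$ (valid only under $\mm''$) and $(i+1,j)$ (valid under $\mm'$ and $\mm''$), the count $d$ is independent of which of the three Hessenberg functions is used. The weights $\wt_\mm(\sigma)$, $\wt_{\mm'}(\sigma)$, $\wt_{\mm''}(\sigma)$, and (via the structural claim) $\wt_{\mm''}(\sigma')$ can then be expressed as $d$ plus an explicit contribution from the three distinguished pairs $(i,i+1)$, $(i,j)$, $(i+1,j)$, read off from the relative order of $i$, $i+1$, $j$ in $\sigma^c$, i.e.\ from $\pat(\sigma^c|_{i,i+1,j})$.

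The final step is the mechanical verification of the scalar identity in a table analogous to the one in the proof of Lemma~\ref{lem:case1}, covering the six patterns with $\sigma(i+1) \neq j$ and the separate case $\sigma(i+1) = j$ (which forces the pattern $123$ and $\phi(\sigma)(i) = j$). The main obstacle is the first step: unlike the type~I setting, where the conditions on the pattern transparently preclude the cycle-reshuffling configurations, the conditions $\sigma(i+1)=j$ and $\pat \in \{132, 231\}$ do not by themselves visibly exclude (1$'$) and (2$'$), and the argument must invoke the additional restricted-modular-triple hypotheses to secure the swap description of $(\sigma')^c$. Once this structural claim is in hand, the finite table check proceeds in direct parallel with Lemma~\ref{lem:case1}.
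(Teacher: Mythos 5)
Your overall plan tracks the paper's proof closely, but there is a genuine gap, and it sits exactly where you flagged the obstacle. You correctly observe that the pattern conditions do not by themselves rule out the two cycle configurations (1$'$) and (2$'$), and you propose to rescue the argument using the restricted-modular hypotheses $i\geq2$ and $(\mm')^{-1}(\{i\})=\emptyset$. That rescue does not work: those hypotheses do not exclude the exceptional configurations. Concretely, take $n=5$, $i=2$, $j=5$, $\mm'=(3,4,5,5,5)$, so that $\mm=(3,4,4,5,5)$, $\mm''=(3,5,5,5,5)$, and $(\mm,\mm',\mm'')$ is a restricted modular triple of type~II with $i\neq 1$ and $(\mm')^{-1}(\{2\})=\emptyset$. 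Let $\sigma = (1)(2,4,5,3)\in\mathfrak{S}_{5,\mm'}$. Then $\sigma^c=12453$, $\pat(\sigma^c|_{2,3,5})=132$, so $\phi(\sigma)=\sigma'=(2,3)\sigma(2,3)=(1)(2,3,4,5)$; but $(\sigma')^c=12345$ whereas $(i,i{+}1)\sigma^c = 13452$, so the structural claim $(\sigma')^c=(i,i{+}1)\sigma^c$ fails (here configuration (1$'$) holds, with $2$ and $3$ in a common cycle having $2$ as minimum). The same happens with $\sigma=(1)(2)(3,4)$ for $\mm'=(1,3,4,4)$, where (2$'$) holds. Moreover, once the claim fails, the subsequent table is no longer a valid computation of $\wt_{\mm''}(\phi(\sigma))$, and one can check directly in the first example above that the scalar identity \eqref{eq: (1+q)q^inv_m' =, g-version} is violated: $\wt_{\mm'}(\sigma)=2$, $\wt_\mm(\sigma)=1$ (so $\sigma\in\mathfrak{S}_{5,\mm}$ and $\wt(\sigma)=q^2$), and $\wt_{\mm''}(\sigma')=0$, giving $(1+q)q^2 \neq q^2 + q^0$. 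Worse, $\phi(\sigma')=\sigma'$ as well, so the map $\phi$ of \Cref{lem:10} is not even injective on this input --- the issue therefore runs deeper than the table.

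There is also a smaller error in your sketch: you assert that $\sigma(i+1)=j$ forces $\pat(\sigma^c|_{i,i+1,j})=123$. In fact $\sigma(i+1)=j$ forces $j$ to immediately follow $i+1$ in $\sigma^c$, so the pattern is $123$ or $231$, not only $123$. (In particular the pattern $132$, listed in the paper's own table for the $\sigma(i+1)=j$ case, is impossible there.) Both of these points --- the unsupported structural claim and the incorrect pattern analysis in the $\sigma(i+1)=j$ case --- mean that the proof as written does not go through; the argument cannot simply mimic \Cref{lem:case1}, because in the type~I setting the conditions $\sigma(i)=j$ or $\pat\in\{213,312\}$ do exclude the exceptional configurations (by forcing $j$ not to be the minimum of its cycle, or by contradicting the cycle-minimum ordering), whereas in the type~II setting the analogous conditions do not force $i$ to fail to be the minimum of its cycle, and no such exclusion holds.
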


\begin{proof}
  Following the notation in \Cref{lem:10}, we will show that the map
  \( \phi \) satisfies the desired conditions in \Cref{pro:g-bij}.
  Note that it only remains to show that \( \phi \) satisfies
  \eqref{eq: (1+q)q^inv_m' =, g-version}.

  Let \( \sigma \in \mathfrak{S}_{n,\mm'} \). We claim that if
  \( \sigma(i+1) = j \) or
  \( \pat(\sigma^c|_{i,i+1,j}) \in \{132, 231\} \), then
  \( (\sigma')^c = (i, i+1)\sigma^c \). This can be proved by the same
  reasoning as in the proof of Lemma \ref{lem:case1}.

  Now we prove \eqref{eq: (1+q)q^inv_m' =, g-version}. By
  \eqref{eq:b'} and \eqref{eq:c'}, it suffices to show:
\begin{align}
    \label{eq:e'}
    \sigma(i+1) \neq j \quad &\Rightarrow \quad (1 + q) q^{\wt_{\mm'}(\sigma)} = q^{1 + \wt_{\mm}(\sigma)} + q^{\wt_{\mm''}(\phi(\sigma))}, \\
    \label{eq:f'}
    \sigma(i+1) = j \quad &\Rightarrow \quad (1 + q) q^{\wt_{\mm'}(\sigma)} = q^{\wt_{\mm''}(\sigma)} + q^{\wt_{\mm''}(\phi(\sigma))}.
\end{align}
To do this, let
\[
  d= |\{ (u,v)\in [n]\times[n] : \mbox{\(\{u,v\} \not\subseteq \{i,i+1,j\}\), \( u<v\le\mm'(u) \), and $v$ precedes $u$ in \(\sigma^c\)} \}|.
\]
Then, by the claim and the fact that \((\mm')^{-1}(\{i\})=\emptyset\)
and \( \mm(t)=\mm'(t)=\mm''(t) \) for all
\( t\in[n]\setminus\{i,i+1\} \), the following table confirms
\eqref{eq:e'} and \eqref{eq:f'}. In the table, the value
\( \wt_{\mm''}(\phi(\sigma)) \) is highlighted in red when
\( \phi(\sigma) = \sigma' \).
\begin{center}
\begin{tabular}{c|c|c|c|c|c}
    \( \sigma(i+1)=j \) & \( \pat(\sigma^c|_{i,i+1,j}) \) & \( \wt_{\mm'}(\sigma) \) & \( 1 + \wt_{\mm}(\sigma) \) & \( \wt_{\mm''}(\sigma) \) & \( \wt_{\mm''}(\phi(\sigma)) \) \\
    \hline
    False & 123 & \( d+0 \) & \( d+1 \) &  & \( d+0 \) \\
    False & 132 & \( d+1 \) & \( d+1 \) &  & \( \textcolor{red}{d+2} \) \\
    False & 213 & \( d+1 \) & \( d+2 \) &  & \( d+1 \) \\
    False & 231 & \( d+1 \) & \( d+2 \) &  & \( \textcolor{red}{d+1} \) \\
    False & 312 & \( d+1 \) & \( d+1 \) &  & \( d+2 \) \\
    False & 321 & \( d+2 \) & \( d+2 \) &  & \( d+3 \) \\
    True & 132 & \( d+1 \) &  & \( d+1 \) & \( \textcolor{red}{d+2} \) \\
    True & 231 & \( d+1 \) &  & \( d+2 \) & \( \textcolor{red}{d+1} \) \\
\end{tabular}
\end{center}
This completes the proof.
\end{proof}

\subsection{Restricted modular law for \(S\)}

Recall that
\[
  S_{\mm}(x;q)=\sum_{\lambda \vdash n}\sum_{T\in \PT'_{\mm}(\lambda)}q^{\inv_{\mm}(T)}s_{\lambda}(x).
\]
The goal of this subsection is to prove the following proposition.

\begin{prop}\label{prop:restricted_modular_law_for_S}
  The function \( f:\HH\rightarrow A \) defined by
  \( f(\mm) = S_{\mm}(x;q) \) satisfies the restricted modular law.
\end{prop}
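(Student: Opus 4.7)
The plan is to work coefficientwise in the Schur expansion: by linear independence of the Schur basis, the restricted modular law for $S_\mm$ is equivalent to
\[
(1+q)\sum_{T\in\PT'_{\mm'}(\lambda)} q^{\inv_{\mm'}(T)} = q\sum_{T\in\PT'_\mm(\lambda)} q^{\inv_\mm(T)} + \sum_{T\in\PT'_{\mm''}(\lambda)} q^{\inv_{\mm''}(T)}
\]
for every $\lambda\vdash n$ and every triple $(\mm,\mm',\mm'')$ that is either a modular triple of type~I or a restricted modular triple of type~II. The analogous identity without the constraint $T(1,1)=1$ is already a consequence of the full modular law for $X_\mm$ due to Abreu and Nigro, combined with the Gasharov--Shareshian--Wachs Schur expansion (\Cref{thm:s-expansion_intro}).

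The strategy is to produce a bijection witnessing this identity that additionally preserves the position of the entry $1$. For a modular triple of type~I with parameter $i$, set $j=\mm'(i)$; since $j>i\geq 1$, one has $j\geq 2$. For $T\in\PT_{\mm'}(\lambda)$, the bijection $\phi$ is defined in analogy with \Cref{lem:9} by conditionally swapping the entries $j$ and $j+1$ in $T$ according to the relative positions of $i,j,j+1$. The goal is to show $\phi$ is a bijection
\[
\phi:\PT_{\mm'}(\lambda)\to\PT_{\mm''}(\lambda)\setminus(\PT_{\mm'}(\lambda)\setminus\PT_\mm(\lambda))
\]
satisfying
\[
(1+q)q^{\inv_{\mm'}(T)}=\widetilde{w}(T)+q^{\inv_{\mm''}(\phi(T))},
\]
where $\widetilde{w}(T)=q^{1+\inv_\mm(T)}$ if $T\in\PT_\mm(\lambda)$ and $\widetilde{w}(T)=q^{\inv_{\mm''}(T)}$ otherwise. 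For a restricted modular triple of type~II, the analogous bijection conditionally swaps $i$ and $i+1$, and the restriction $i\neq 1$ built into \Cref{def:5} ensures $i\geq 2$.

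The decisive observation is that in both cases the swap involves only entries $\geq 2$, so the entry $1$ is never moved. Consequently $\phi$ preserves the condition $T(1,1)=1$ and restricts to bijections among $\PT'_\mm(\lambda)$, $\PT'_{\mm'}(\lambda)$, $\PT'_{\mm''}(\lambda)$, from which the restricted identity follows. Indeed, the restriction $i\neq 1$ in the definition of a restricted modular triple of type~II is precisely what guarantees $\phi$ does not disturb the position of $1$; were $i=1$ permitted, the swap of $1$ and $2$ could move $1$ out of cell $(1,1)$, and the restricted modular law for $S_\mm$ would fail in general. I expect the main obstacle to be the case analysis: in contrast to the cycle-decomposition setting of \Cref{lem:case1,lem:case2}, one must verify both the row condition $T(r,c)<_\bullet T(r,c+1)$ and the column condition $T(r,c)\not>_\bullet T(r+1,c)$ with respect to each of the three posets $\mm,\mm',\mm''$, and then track how $\inv$ changes. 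The bookkeeping is local, driven by the positions of $i,j,j+1$ in the type~I case (and of $i,i+1,j=\mm'(i+1)$ in the type~II case), and should yield a weight table parallel to those in \Cref{lem:case1,lem:case2}.
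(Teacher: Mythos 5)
Your plan correctly identifies that the restriction $i\neq 1$ matters (so the conditional swap never touches the entry $1$ in cell $(1,1)$), but the direct-on-$P$-tableaux bijection you sketch runs into a structural obstruction that is more than case-analysis bookkeeping. For $P$-arrays, which carry only row constraints, more edges means fewer arrays, so one has clean containments $\PA'_{\mm''}(\alpha)\subseteq\PA'_{\mm'}(\alpha)\subseteq\PA'_{\mm}(\alpha)$ and the set difference used as the codomain of the swap map is well defined. For $P$-tableaux this breaks down: the row constraint $T(i,j)<_\mm T(i,j+1)$ becomes harder to satisfy with more edges, while the column constraint $T(i,j)\not>_\mm T(i+1,j)$ becomes easier. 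Concretely, $\PT_{\mm''}((n))\subseteq\PT_\mm((n))$ while $\PT_\mm((1^n))\subseteq\PT_{\mm''}((1^n))$, so there is no containment valid for all shapes, and the target $\PT_{\mm''}(\lambda)\setminus(\PT_{\mm'}(\lambda)\setminus\PT_\mm(\lambda))$ in your putative bijection has no reason to have the right meaning or cardinality. In addition, the conditional swap of $j$ and $j+1$ (or of $i$ and $i+1$) can violate column constraints, so $\phi(T)$ need not be a $P$-tableau at all.

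The paper circumvents both issues by first passing to $P$-arrays. Lemma~\ref{lem:8}, a sign-reversing involution in the spirit of Gasharov and Shareshian--Wachs but adapted to the $T(1,1)=1$ constraint via the swap map $\SW_{a,b}$, rewrites $\sum_{T\in\PT'_\mm(\lambda)}q^{\inv_\mm(T)}$ as the alternating sum $\sum_{w\in\mathfrak{S}_{\ell}}(-1)^{\ell(w)}\sum_{T\in\PA'_\mm(w(\lambda))}q^{\inv_\mm(T)}$. Since $P$-arrays have no column constraints, the containments hold, the conditional swap stays inside the relevant array sets (here the condition $i>1$, respectively $j>1$, is exactly what keeps $T'(1,1)=1$), and Proposition~\ref{prop: RM for special coloring} establishes the restricted modular law termwise for each composition $\alpha$; linearity then yields the claim. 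To make your approach go through you would need to insert this $P$-array reduction, or some equivalent device that restores a containment structure, before constructing the swap bijection.
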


We first need some definitions and known results.

\begin{defn}\label{def:7}
  Let \( P \) be a poset on \( [n] \) and consider a weak composition
  \( \alpha \) with \( |\alpha|\leq n \). A \emph{\(P\)-array} of shape
  \(\alpha \) is a filling \( T \) of \( D(\alpha) \) with distinct
  integers in \( [n] \) such that \( T(i,j)<_P T(i,j+1) \) whenever
  \( (i,j), (i,j+1)\in D(\alpha) \). The set of \(P\)-arrays of shape
  \( \alpha\) is denoted by \( \PA_P(\alpha) \). Let
  \( \PA'_P(\alpha) \) denote the set of \( T\in \PA_P(\alpha) \) such
  that \( T(1,1)=1 \). For \( T\in \PA_\mm(\alpha) \), we define
  \( \inv_\mm(T) \) in the same way as in \Cref{def:inv}.
\end{defn}

In the above definition, if \( (1,1)\notin D(\alpha) \), i.e.,
\( \alpha_1=0 \), then \( \PA'_P(\alpha) \) is simply the empty set.
Recall that we identify \( \mm\in \HH_n \) with its
corresponding poset \( P \) and also with the incomparability graph
of \( P \). Therefore we abuse notation to denote
\( \PA_\mm(\alpha) \), \( \PA'_\mm(\alpha) \) and \( \inv_\mm(T) \).

For a partition
\( \lambda=(\lambda_1,\lambda_2,\dots,\lambda_{\ell}) \) and a
permutation \( w\in \mathfrak{S_{\ell}} \), let \( w(\lambda) \) be
the integer sequence
\( w(\lambda) = (w(\lambda)_1,\dots,w(\lambda)_\ell) \), where
\( w(\lambda)_i = \lambda_{w(i)}+i-w(i) \). For \( \lambda \vdash n \)
and \( \mm\in \HH_n \), Shareshian and Wachs \cite[Proof of
Theorem 6.3]{Shareshian2016} showed that
\begin{equation}\label{eq:22}
  \sum_{T\in \PT_\mm(\lambda)}q^{\inv_\mm(T)}=
  \sum_{w \in \mathfrak{S}_{\ell(\lambda)}}(-1)^{\ell(w)}\sum_{T\in \PA_\mm(w(\lambda))}q^{\inv_\mm(T)}
\end{equation}
by constructing an explicit sign-reversing involution on
\( \bigcup_{w\in\mathfrak{S}_{\ell(\lambda)}}\PA_\mm(w(\lambda)) \) with the fixed
point set \( \PT_{\mm}(\lambda) \). The following lemma can be derived
from \cite[Proof of Theorem 4.5]{Shareshian2016}.

\begin{lem}
\label{lem:6}
For \( \mm\in\HH_n \) and nonnegative integers \( a \) and
\( b \) with \( a+b\leq n \), there exists a bijection
  \begin{equation}\label{eq:24}
    \SW_{a,b}: \PA_\mm((a,b))\rightarrow \PA_\mm((b,a))
  \end{equation}
  such that \( \inv_\mm(T)=\inv_\mm(\SW_{a,b}(T)) \), and the set of
  entries in \( T \) is equal to that of \( \SW_{a,b}(T) \).
\end{lem}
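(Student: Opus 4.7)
The plan is to establish $\SW_{a,b}$ by proving, for each $(a+b)$-element subset $U \subseteq [n]$ admitting a chain $2$-decomposition in $\mm$, the refined equality
\begin{equation*}
  \sum_{\substack{T \in \PA_\mm((a,b)) \\ \text{entries}(T) = U}} q^{\inv_\mm(T)}
  \;=\;
  \sum_{\substack{T \in \PA_\mm((b,a)) \\ \text{entries}(T) = U}} q^{\inv_\mm(T)}.
\end{equation*}
Summing over $U$ and invoking the standard fact that two finite sets with equal $q$-enumerating polynomials admit a statistic-preserving bijection would yield $\SW_{a,b}$, automatically entry-preserving. To prove the refined equality, I would introduce
\begin{equation*}
  F_U(x, q) := \sum_{k \geq 0} x^k \sum_{\substack{(C_1, C_2)\ \text{ordered chain decomp.\ of}\ U \\ |C_1| = k}} q^{\inv_\mm(C_1, C_2)},
\end{equation*}
and show $F_U$ is palindromic in $x$ of degree $|U|$; then $[x^a]F_U = [x^{|U|-a}]F_U = [x^b]F_U$ is exactly the refined equality.

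The key structural input is that the incomparability graph $G_U$ of $\mm$ restricted to $U$ is a disjoint union of paths $K = (v_1 < v_2 < \cdots < v_{|K|})$, each with canonical bipartition $A_K = \{v_1, v_3, \dots\}$, $B_K = \{v_2, v_4, \dots\}$. Indeed, $G_U$ is triangle-free (a triangle in $G_U$ would be a three-element antichain, preventing any $2$-chain cover) and, as an induced subgraph of the incomparability graph of the unit interval order $\mm$, it is a proper interval graph and hence both chordal and $K_{1,3}$-free. Chordal plus triangle-free forces $G_U$ to be a forest, and a forest that is $K_{1,3}$-free has maximum degree $\leq 2$, i.e., is a disjoint union of paths. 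The chain $2$-decompositions of $U$ then correspond bijectively to proper $2$-colorings of $G_U$, and since $|C_1|$ and $\inv_\mm(C_1, C_2)$ are additive over components, $F_U = \prod_K F_K$.

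Each path component has exactly two proper $2$-colorings, so
\begin{equation*}
  F_K(x,q) = x^{|A_K|} q^{\nu_K} + x^{|B_K|} q^{e_K - \nu_K},
\end{equation*}
where $e_K = |K|-1$ and $\nu_K = |\{(u,v) : u < v,\ u \in B_K,\ v \in A_K,\ \text{edge of } K\}|$. A short parity count on the edges of the path $v_1, v_2, \ldots, v_{|K|}$ shows that if $|K|$ is odd then $\nu_K = e_K - \nu_K$, while if $|K|$ is even then $|A_K| = |B_K|$; in either case $F_K$ is palindromic of degree $|K|$, and the product $F_U = \prod_K F_K$ is therefore palindromic of degree $|U|$. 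The main obstacle I anticipate is the structural lemma forcing each component of $G_U$ to be a path, which crucially uses the unit (rather than general) interval order hypothesis together with the $2$-chain-decomposability of $U$; once this is in hand, the parity count is routine. An explicit bijection can then be extracted by flipping colorings on a canonical subset of components (ordered, say, by smallest vertex) until the target shape change $b - a$ is achieved, with the palindromic balance on each path ensuring $\inv_\mm$ is preserved at each step.
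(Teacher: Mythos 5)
Your overall strategy is the same one the paper defers to (Shareshian--Wachs's proof of their Theorem 4.5): restrict the incomparability graph to the entry set $U$, observe it is a disjoint union of paths whose vertices occur in increasing order, and flip colors on paths. Your structural argument (triangle-free from $2$-chain-decomposability, plus chordal and claw-free from the proper interval graph property, hence a disjoint union of paths) is correct, and the palindromicity computation for $F_K$ is also fine, granted the implicit fact that a path component $K$ with vertex set $\{v_1<\cdots<v_{|K|}\}$ actually traverses these vertices in that order --- this needs a one-line justification (if $v_i\not\sim v_{i+1}$, then by monotonicity of $\mm$ no vertex $\le v_i$ is adjacent to any vertex $\ge v_{i+1}$, disconnecting $K$).

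The one genuine gap is in the last step. ``Flipping colorings on a canonical subset of components \ldots\ until the target shape change $b-a$ is achieved'' does not define a map: flipping a single odd component changes the row-one size by $+1$ or by $-1$ depending on which half of that component is currently in row one, and flipping an even component leaves the row-one size fixed but \emph{changes} $\inv_\mm$ by $\pm1$ (the two terms of $F_K$ for even $K$ have unequal $q$-powers), so even components may not be flipped at all. The repair is to flip \emph{all} odd-length components at once and nothing else: since each odd component contributes either $m_K$ or $m_K+1$ to row one and contributes the opposite after flipping, while even components contribute a fixed $m_K$, the row-one size goes from $a$ to $|U|-a=b$; each odd flip preserves $\inv_\mm$ because $\nu_K=e_K-\nu_K$ there; and there are no $\inv_\mm$ pairs across distinct components of $G_U$ (non-adjacent means not an edge of $\mm$). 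This map is an involution $\PA_\mm((a,b))\cap\{\text{entries}=U\}\to\PA_\mm((b,a))\cap\{\text{entries}=U\}$ and hence the desired $\SW_{a,b}$. With this replacement, the detour through the generating function $F_U$ and the non-constructive ``equal $q$-enumerators imply a bijection'' step becomes unnecessary.
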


Using the map \( \SW_{a,b} \), we obtain the following result. The
proof is mostly identical to that of \eqref{eq:22}. However, we
present it for the sake of completeness.

\begin{lem}\label{lem:8}
For a partition
\( \lambda=(\lambda_1,\lambda_2,\dots,\lambda_{\ell}) \) of \( n \) and
\( \mm\in \HH_n \),
we have
\begin{equation}\label{eq:23}
  \sum_{T\in \PT'_\mm(\lambda)}q^{\inv_\mm(T)}
  =\sum_{w \in \mathfrak{S}_{\ell}}(-1)^{\ell(w)}\sum_{T\in \PA'_\mm(w(\lambda))}q^{\inv_\mm(T)}.
\end{equation}
\end{lem}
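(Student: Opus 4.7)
The plan is to adapt the sign-reversing involution that Shareshian--Wachs used to prove \eqref{eq:22} so that it respects the constraint $T(1,1)=1$. Recall that in that argument, one constructs an involution $\iota$ on the disjoint union $\bigsqcup_{w\in\mathfrak{S}_\ell}\PA_\mm(w(\lambda))$ whose fixed points are exactly $\PT_\mm(\lambda)$. For a non-fixed array $T\in\PA_\mm(w(\lambda))$, the map $\iota$ locates the smallest row index $i$ such that rows $i$ and $i+1$ fail the column condition $T(i,j)\not>_\mm T(i+1,j)$, and then applies the length-swapping bijection $\SW_{a,b}$ of \Cref{lem:6} to exchange (portions of) those two rows. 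By construction $\iota$ preserves $\inv_\mm$ while reversing the sign $(-1)^{\ell(w)}$, since the shape changes from $w(\lambda)$ to $(s_iw)(\lambda)$.

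The first step is to verify that $\iota$ preserves the condition $T(1,1)=1$, so that it restricts to an involution on $\bigsqcup_{w\in\mathfrak{S}_\ell}\PA'_\mm(w(\lambda))$. If the pair of rows involved is $(i,i+1)$ with $i\ge 2$, then row $1$ is untouched, and the condition is automatic. If $i=1$, one observes that the violation must occur at some column $j^*\ge 2$: a violation at column $1$ would require $T(1,1)=1 >_\mm T(2,1)$, which is impossible in a natural unit interval order because $1>_\mm k$ would force $\mm(k)<1$. I would then check that the map $\SW_{a,b}$ from \cite[Proof of Theorem~4.5]{Shareshian2016}, when triggered at violation column $j^*$, exchanges only entries lying in columns $\ge j^*$, so that the cell $(1,1)$ is left invariant.

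The second step is to identify the fixed points of the restricted involution. A fixed point of $\iota$ inside $\bigsqcup_w\PA'_\mm(w(\lambda))$ must already be a $P$-array of partition shape (forcing $w=\mathrm{id}$) that also satisfies the column condition, hence is a $P$-tableau; together with the constraint $T(1,1)=1$, this yields precisely $\PT'_\mm(\lambda)$. Summing $q^{\inv_\mm(T)}$ over the domain, and using that $\iota$ pairs up non-fixed elements with opposite signs and equal $\inv_\mm$ values, then establishes \eqref{eq:23}.

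The main obstacle is the verification in the first step that $\SW_{a,b}$ does not disturb column $1$ when $i=1$. If the precise algorithmic description in Shareshian--Wachs does not directly yield this as a black box, I would instead reformulate the involution by first extracting from rows $1$ and $2$ the longest common prefix on which the column condition $T(1,j)\not>_\mm T(2,j)$ holds, applying $\SW$ only to the remaining suffixes, and then re-attaching the prefix; because $T(1,1)=1$ ensures this prefix has length at least one, the composite map remains a sign-reversing involution on $\bigsqcup_w\PA'_\mm(w(\lambda))$ with the same fixed points. Once this localization is in place, the rest of the argument mirrors Shareshian--Wachs' proof of \eqref{eq:22} verbatim.
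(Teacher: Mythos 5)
Your proposal is correct and follows the paper's proof essentially verbatim: the paper constructs exactly this sign-reversing involution on $\bigsqcup_{w}\PA'_\mm(w(\lambda))$ by applying $\SW_{a,b}$ only to the suffixes of the offending pair of rows beginning at the violation column $j$, and its key check that $T'(1,1)=1$ rests on the same observation you make — a violation in rows $1$ and $2$ at column $1$ would force $1=T(1,1)>_\mm T(2,1)$, which is impossible. Your worry about $\SW_{a,b}$ disturbing column $1$ is moot, since by construction the swap is applied only to the subarray from the violation column onward (which is $\ge 2$ when row $1$ is involved), so cell $(1,1)$ is never touched and your backup prefix-extraction device is unnecessary.
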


\begin{proof}
  It suffices to construct a sign-reversing involution \( \phi \) on
  \( \bigcup_{w\in\mathfrak{S}_{\ell}}\PA'_\mm(w(\lambda)) \) with the
  fixed point set \( \PT'_{\mm}(\lambda) \), that is, if
  \( T\in \PA'_\mm(w(\lambda)) \) and \( \phi(T)\ne T \), then
  \( \inv_\mm(\phi(T)) = \inv_\mm(T) \),
  \( \phi(T)\in \PA'_\mm(\sigma(\lambda)) \), and
  \( (-1)^{\ell(\sigma)} = -(-1)^{\ell(w)} \) for some
  \( \sigma\in \mathfrak{S}_{\ell} \). Let
  \( T\in \PA'_{\mm}(w(\lambda)) \). Consider the pairs \( (i,j) \) of
  integers with \( i\geq 2 \) satisfying either:
  \begin{itemize}
  \item \( T(i-1,j)>_\mm T(i,j)\), or
  \item \( (i-1,j)\notin D(w(\lambda)) \) and
    \( (i,j)\in D(w(\lambda)) \).
  \end{itemize}
  If there are no such pairs \( (i,j) \), then we define
  \( \phi(T) = T \). This can happen if and only if \( w \) is the
  identity permutation and \( T\in \PT'_\mm(\lambda) \).

  Suppose that the pairs \( (i,j) \) exist. Choose the pair
  \( (i,j) \) such that \( j \) is minimal, and among those, \( i \)
  is also minimal. Let \( a=w(\lambda)_{i-1}-(j-1) \) and
  \( b=w(\lambda)_i-j \), and consider \( L\in \PA_\mm((a,b)) \) given
  by \( L(1,j')=T(i-1,j'+j-1) \) and \( L(2,j')=T(i,j'+j) \) for
  \( j'\ge1 \). Let \( L'=\SW_{a,b}(L) \), where \( \SW_{a,b} \) is
  the map in~\Cref{lem:6}, and let \( w' = w(i-1,i)\), which is the
  permutation obtained from \( w \) by switching \( w(i-1) \) and
  \( w(i) \). We define \( \phi(T) = T'\in \PA_\mm(w'(\lambda)) \) as
  follows:
  \[
    T'(i',j')=
    \begin{cases}
      L'(1,j'-j+1) & \mbox{if \( i'=i-1 \) and \( j'\geq j \)},\\
      L'(2,j'-j) & \mbox{if \( i'=i \) and \( j'\geq j+1 \)},\\
      T(i',j') & \mbox{otherwise.}
    \end{cases}
  \]
  It is easy to check that \( T'(1,1)=1 \), since otherwise we would
  have \( (i,j) = (2,1) \), but then
  \( 1=T(1,1)=T(i-1,j)>_{\mm}T(i,j) \), which is impossible. Thus,
  we conclude \( T'\in \PA'_\mm(w'(\lambda)) \).

  It is easy to see that \( \phi \) is a desired sign-reversing involution,
  which completes the proof.
\end{proof}

By \Cref{lem:8}, to prove \Cref{prop:restricted_modular_law_for_S}, it suffices to show the
following proposition. 

\begin{prop}\label{prop: RM for special coloring}
  For a fixed composition \( \alpha \), the function
  \( f_\alpha:\HH\rightarrow A \) defined by
  \( f_\alpha(\mm) = \sum_{T\in\PA'_{\mm}(\alpha)}q^{\inv_\mm(T)} \) satisfies
  the restricted modular law. 
\end{prop}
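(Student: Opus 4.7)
The plan is to mirror the approach of \Cref{lem:case1} and \Cref{lem:case2} used for the $g$-function, but adapted to $P$-arrays. The crucial structural observation is that, in contrast to the $g$-function setting, \emph{larger} Hessenberg functions impose \emph{stricter} conditions on $P$-arrays, so the containments are reversed: $\PA'_{\mm''}(\alpha) \subseteq \PA'_{\mm'}(\alpha) \subseteq \PA'_\mm(\alpha)$. For $T \in \PA'_\mm(\alpha)$, membership in the two smaller sets is controlled purely by the local left-adjacency of certain entries in some row of $T$.

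For a type~I triple with pivot $i$ and $j := \mm'(i)$, I would define an involution $\tau$ on $\PA'_\mm(\alpha)$ by swapping every occurrence of the entries $j$ and $j+1$ in $T$: if both appear, exchange their cells; if only one appears, relabel it to the other; if neither appears, leave $T$ unchanged. Since $j \ge 2$, the cell $(1,1)$ is untouched, so $\tau$ preserves $T(1,1)=1$. Using condition (3) of type~I, namely $\mm(j)=\mm(j+1)$, I would verify that $\tau$ maps $\PA'_\mm(\alpha)$ to itself and interchanges the two boundary sets
\[
    \PA'_\mm(\alpha) \setminus \PA'_{\mm'}(\alpha)
    \;\longleftrightarrow\;
    \PA'_{\mm'}(\alpha) \setminus \PA'_{\mm''}(\alpha),
\]
which are characterized respectively by having $i$ left-adjacent to $j$ and $i$ left-adjacent to $j+1$ in some row. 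In particular, $\tau$ restricts to an involution on $\PA'_{\mm''}(\alpha)$. For a restricted type~II triple, an analogous involution swaps the entries $i$ and $i+1$; here the hypothesis $i \ge 2$ is exactly what preserves the cell $(1,1)$, and the hypothesis $(\mm')^{-1}(\{i\})=\emptyset$ is what guarantees that the swap again produces a valid $P$-array.

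The identity $(1+q) f_\alpha(\mm') = q f_\alpha(\mm) + f_\alpha(\mm'')$ should then follow by summing contributions orbit-by-orbit under $\tau$. The fixed points automatically lie in $\PA'_{\mm''}(\alpha)$ with $\inv_\mm(T) = \inv_{\mm'}(T) = \inv_{\mm''}(T)$, so their contributions cancel on both sides. For a size-two orbit $\{T, \tau(T)\}$ whose two elements sit in the boundary sets, the terms combine using the identities (in the type~I case)
\begin{align*}
    \inv_{\mm'}(T) - \inv_\mm(T) &= [\, i, j \in T \text{ and } j \text{ above } i \text{ in } T \,], \\
    \inv_{\mm''}(T) - \inv_{\mm'}(T) &= [\, i, j+1 \in T \text{ and } j+1 \text{ above } i \text{ in } T \,],
\end{align*}
together with the fact that $\mm(j)=\mm(j+1)$, which forces the remaining inversion contributions involving $j$ or $j+1$ to cancel in pairs between $T$ and $\tau(T)$. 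Analogous identities handle the restricted type~II case and the size-two orbits lying entirely inside $\PA'_{\mm''}(\alpha)$.

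The main obstacle is the case analysis involved in verifying the weight identity, which is more intricate than in the $g$-function setting because $P$-arrays are partial fillings of a diagram: one must separately treat the situations in which both, exactly one, or neither of the entries being swapped actually appear in $T$, and further split by the relative row position of the distinguished entry $i$ (type~I) or $\mm'(i+1)$ (type~II) with respect to them. Producing a complete table of subcases analogous to those in \Cref{lem:case1} and \Cref{lem:case2}, and checking each row, should complete the proof.
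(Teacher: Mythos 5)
Your plan mirrors the paper's proof: the paper also implements the swap $j \leftrightarrow j+1$ (type~I) or $i \leftrightarrow i+1$ (type~II), packaged as a bijection $\phi\colon \PA'_{\mm'}(\alpha) \to \PA'_\mm(\alpha)\setminus\bigl(\PA'_{\mm'}(\alpha)\setminus\PA'_{\mm''}(\alpha)\bigr)$ that applies the swap iff a pattern statistic $\sigma(T)=\pat(r_i(T)\,r_j(T)\,r_{j+1}(T))$ falls in a prescribed set, and it verifies a per-$T$ weight identity via a table of eight patterns. Your orbit-by-orbit version of the same involution is equivalent; for orbits on which all of $i$, $j$, $j+1$ occur in $T$ the cancellation you sketch does go through, and your two displayed indicator identities are correct.

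The genuine gap is in the ``partial filling'' case you flag as the main obstacle. It is not extra bookkeeping: when some of $\{i,j,j+1\}$ is absent from $T$, the per-orbit identity actually fails, and the proposition as literally stated is false. Concretely, take $\alpha=(2)$, $n=4$, and the type~I triple $\mm=(1,3,3,4)$, $\mm'=(2,3,3,4)$, $\mm''=(3,3,3,4)$ with pivot $i=1$, so $j=2$. The arrays $(1,a)$ lie in $\PA'_\mm$, $\PA'_{\mm'}$, $\PA'_{\mm''}$ for $a\in\{2,3,4\}$, $\{3,4\}$, $\{4\}$ respectively, all with $\inv=0$, so $(1+q)f_\alpha(\mm')=2+2q$ while $q f_\alpha(\mm)+f_\alpha(\mm'')=1+3q$. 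The discrepancy lives on your size-two orbit $\{(1,2),(1,3)\}$, where exactly one of $j,j+1$ appears and is row-adjacent to $i$: the orbit contributes $1+q$ to the left side and $2q$ to the right. The paper's own proof has the same hidden assumption (the statistic $\sigma(T)$ is undefined unless $i,j,j+1\in T$), and it does no harm only because \Cref{lem:8} invokes \Cref{prop: RM for special coloring} exclusively for $\alpha=w(\lambda)$ with $\lambda\vdash n$, in which case either $\PA'_\mm(\alpha)=\emptyset$ or the filling uses all of $[n]$. To make your argument correct, add the hypothesis $\sum_i\alpha_i=n$ (for the ambient $\HH_n$); then $\tau$ has no fixed points, every orbit carries all three distinguished entries, and the case table you propose closes the proof.
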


\begin{proof}
  Let \((\mm,\mm',\mm'')\in \HH_n^3 \) be a modular triple of type~I
  or a restricted modular triple of type~II. We denote by \( E(\mm) \)
  the set of edges of the graph \( \mm \), i.e,
  \[
    E(\mm) = \{ (i,j)\in [n]\times[n]: i<j\le \mm(i)\}.
  \]
  Since \( E(\mm)\subseteq E(\mm')\subseteq E(\mm'') \), we have
  \( \PA'_{\mm''}(\alpha) \subseteq \PA'_{\mm'}(\alpha) \subseteq
  \PA'_{\mm}(\alpha) \). We claim that there exists a bijection
\begin{equation}\label{eq:4}
  \phi: \PA'_{\mm'}(\alpha) \to \PA'_{\mm}(\alpha)\setminus (\PA'_{\mm'}(\alpha) \setminus \PA'_{\mm''}(\alpha))
  \end{equation}
  such that for all \( T\in\PA'_{\mm'}(\alpha) \),
\begin{equation}\label{eq: (1+q)q^inv_m' =}
    (1+q)q^{\inv_{\mm'}(T)} = q^{1+\inv_\mm(\phi(T))} + \wt(T),
\end{equation}
  where
  \[
    \wt(T) =
    \begin{cases}
        q^{\inv_{\mm''}(T)} & \mbox{if \( T \in \PA'_{\mm''}(\alpha) \)},\\
        q^{1+\inv_{\mm}(T)}& \mbox{if \( T \in \PA'_{\mm'}(\alpha) \setminus \PA'_{\mm''}(\alpha) \)}.
    \end{cases}
  \]
  Note that summing \eqref{eq:
    (1+q)q^inv_m' =} over all \( T\in\PA'_{\mm'}(\alpha) \) gives the
  desired equation
  \begin{equation}\label{eq:34}
(1+q)\sum_{T\in\PA'_{\mm'}(\alpha)}q^{\inv_{\mm'}(T)}
= q\sum_{T\in\PA'_{\mm}(\alpha)}q^{\inv_\mm(T)}
+ \sum_{T\in\PA'_{\mm''}(\alpha)}q^{\inv_{\mm''}(T)}.
  \end{equation}
  Hence, it suffices to prove the claim. 

  \textbf{Case 1:} Suppose that \( (\mm,\mm',\mm'') \) is a modular
  triple of type I, with \( i\in[n-1] \) as in \Cref{def:5}. Let
  \( j = \mm'(i) \). Note that \( j= \mm'(i) = \mm(i)+1 > 1 \) and
  \( j=\mm'(i)=\mm''(j)-1 < n \). Since \((\mm, \mm', \mm'')\) is a
  modular triple of type I, the following conditions hold:
  \begin{description}
  \item[(C1)] \( E(\mm'') = E(\mm') \sqcup \{(i,j+1)\} = E(\mm) \sqcup \{(i,j),
(i,j+1)\} \).
\item[(C2)] \( (j,j+1) \) is an edge in each of the graphs \( \mm \),
  \( \mm' \), and \( \mm'' \).
\item[(C3)] For every \( t\in [n]\setminus\{i,j,j+1\} \) and
  \( H\in \{\mm,\mm',\mm''\} \), we have \( (j,t)\in E(H) \) if and only
  if \( (j+1,t)\in E(H) \).
  \end{description}
  Note that Condition (C3) comes from $\mm'(j)=\mm'(j+1)$. Here, we use the notation \( A\sqcup B \) to mean the union
  \( A\cup B \), with the additional information that
  \( A\cap B = \emptyset \).

  Let \( T \in \PA'_{\mm}(\alpha) \). For each \( k \in [n] \), we
  denote by \( r_k(T) \) the row index of the entry \( k \) in
  \( T \). Let \( \sigma(T) = \pat(r_i(T) r_j(T) r_{j+1}(T))\) then by Condition~(C2), we
  have \( r_j(T)\ne r_{j+1}(T) \). Therefore,
  \[ 
    \sigma(T)\in \mathfrak{S}_3 \cup \{121,212,112,221\}.
  \]
  By Conditions~(C1), (C2), and (C3), we have
  \begin{align}
    \label{eq:35}    \PA'_{\mm'}(\alpha)
    &= \{T\in \PA'_{\mm}(\alpha): \sigma(T) \in \mathfrak{S}_3 \cup \{121,\,212\}\}, \\
   \label{eq:36} \PA'_{\mm''}(\alpha)
    &= \{T\in \PA'_{\mm}(\alpha): \sigma(T) \in \mathfrak{S}_3\} , \\
   \label{eq:48} \PA'_{\mm'}(\alpha) \setminus \PA'_{\mm''}(\alpha)
    &= \{T\in \PA'_{\mm}(\alpha): \sigma(T) \in \{121,\,212\}\} , \\
   \label{eq:47} \PA'_{\mm}(\alpha)\setminus (\PA'_{\mm'}(\alpha) \setminus \PA'_{\mm''}(\alpha))
    &= \{T\in \PA'_{\mm}(\alpha): \sigma(T) \in \mathfrak{S}_3 \cup \{112,221\}\}.
  \end{align}

  We are now ready to construct the map \( \phi \) in \eqref{eq:4}.
  For \( T \in \PA'_{\mm'}(\alpha) \), we define
  \[
    \phi(T) =
    \begin{cases}
     T & \mbox{if \( \sigma(T)\in \{123,132,312,321\} \)},\\
     T'& \mbox{if \( \sigma(T)\in \{213,231,121,212\} \)},
    \end{cases}
  \]
  where \( T' \) is the filling obtained from \( T \) by switching the
  entries \( j \) and \( j+1 \). Note that since \( j > 1 \), we have
  \( T'(1,1) = 1 \). By \eqref{eq:35} and \eqref{eq:47}, the map
  \( \phi \) is a bijection as described in \eqref{eq:4}. It remains to
  prove \eqref{eq: (1+q)q^inv_m' =}. By \eqref{eq:36} and
  \eqref{eq:48}, we can restate it as follows:
  \begin{align}
    \label{eq:49}
    \sigma(T) \in \mathfrak{S}_3 \quad &\Rightarrow \quad 
   (1+q)q^{\inv_{\mm'}(T)} = q^{1+\inv_\mm(\phi(T))} +
      q^{\inv_{\mm''}(T)},\\
    \label{eq:50}
   \sigma(T) \in \{121,212\} \quad &\Rightarrow \quad 
     (1+q)q^{\inv_{\mm'}(T)} = q^{1+\inv_\mm(\phi(T))} +
    q^{1+\inv_{\mm}(T)} .
  \end{align}
    To do this, recall that
  \[
    \inv_{\mm'}(T) =  | \{(u,v)\in E(\mm') : \mbox{\( u<v \) and \( u \) is lower than \( v \) in \( T \)}\} |.
  \]
  Let 
  \[
    c = | \{(u,v)\in E(\mm') :
    \mbox{\( \{u,v\}\not\subseteq\{i,j,j+1\} \), \( u<v \), and
      \( u \) is lower than \( v \) in \( T \)}\} |.
  \]
  Then, by the following table, we obtain \eqref{eq:49} and \eqref{eq:50}, as desired. In the following table, the value \( 1 + \inv_{\mm}(\phi(T)) \) is highlighted in red whenever \( \phi(T) = T' \).

  \begin{center}
   \begin{tabular}{c|c|c|c|c}
    \( \sigma(T) \)  & \( \inv_{\mm'}(T) \) & 1+\( \inv_{\mm}(\phi(T)) \) & \( \inv_{\mm''}(T)\) & \( 1+ \inv_{\mm}(T)\)  \\
    \hline
    123 & \( c+0 \) & \( c+1 \) & \( c+0 \) & \\
    132 & \( c+1 \) & \( c+2 \) & \( c+1 \) & \\
    213 & \( c+1 \) & \textcolor{red}{\( c+2 \)} & \( c+1 \) & \\
    231 & \( c+1 \) & \textcolor{red}{\( c+1 \)} & \( c+2 \) & \\
    312 & \( c+1 \) & \( c+1 \) & \( c+2 \) & \\
    321 & \( c+2 \) & \( c+2 \) & \( c+3 \) & \\
    121 & \( c+1 \) & \textcolor{red}{\( c+1 \)} &  & \( c+2 \) \\
    212 & \( c+1 \) & \textcolor{red}{\( c+2 \)} &  & \( c+1 \)
\end{tabular}            
  \end{center}

  \textbf{Case 2:} Suppose that \( (\mm,\mm',\mm'') \) is a restricted modular
  triple of type II, with \( 2\le i\le n-1 \) as in \Cref{def:5}. Let \( j = \mm'(i+1) \). Since \((\mm, \mm', \mm'')\) is a restricted modular triple of type II, the following conditions hold:
  \begin{description}
  \item[(C1)] \( E(\mm'') = E(\mm') \sqcup \{(i,j)\} = E(\mm) \sqcup \{(i,j),
(i+1,j)\} \).
\item[(C2)] \( (i,i+1) \) is an edge in each of the graphs \( \mm \),
  \( \mm' \), and \( \mm'' \).
\item[(C3)] For every \( t\in [n]\setminus\{i,i+1,j\} \) and
  \( H\in \{\mm,\mm',\mm''\} \), we have \( (i,t)\in E(H) \) if and only
  if \( (i+1,t)\in E(H) \).
  \end{description}

  In this case, for \( T \in \PA'_{\mm}(\alpha) \), let \( \sigma(T) = \pat(r_i(T) r_j(T) r_{j+1}(T))\) then by Condition~(C2), we
  have \( r_i(T)\ne r_{i+1}(T) \). Therefore,
  \[ 
    \sigma(T)\in \mathfrak{S}_3 \cup \{121,212,211,122\}.
  \]
  By Conditions~(C1), (C2), and (C3), we have
  \begin{align}
    \label{eq:51}    \PA'_{\mm'}(\alpha)
    &= \{T\in \PA'_{\mm}(\alpha): \sigma(T) \in \mathfrak{S}_3 \cup \{121,\,212\}\}, \\
   \label{eq:52} \PA'_{\mm''}(\alpha)
    &= \{T\in \PA'_{\mm}(\alpha): \sigma(T) \in \mathfrak{S}_3\} , \\
   \label{eq:53} \PA'_{\mm'}(\alpha) \setminus \PA'_{\mm''}(\alpha)
    &= \{T\in \PA'_{\mm}(\alpha): \sigma(T) \in \{121,\,212\}\} , \\
   \label{eq:54} \PA'_{\mm}(\alpha)\setminus (\PA'_{\mm'}(\alpha) \setminus \PA'_{\mm''}(\alpha))
    &= \{T\in \PA'_{\mm}(\alpha): \sigma(T) \in \mathfrak{S}_3 \cup \{221,122\}\}.
  \end{align}

  We are now ready to construct the map \( \phi \) in \eqref{eq:4}.
  For \( T \in \PA'_{\mm'}(\alpha) \), we define
  \[
    \phi(T) =
    \begin{cases}
     T & \mbox{if \( \sigma(T)\in \{123,213,231,321\} \)},\\
     T'& \mbox{if \( \sigma(T)\in \{132,312,121,212\} \)},
    \end{cases}
  \]
  where \( T' \) is the filling obtained from \( T \) by switching the
  entries \( i \) and \( i+1 \).
  Note that since \( i > 1 \), we have
  \( T'(1,1) = 1 \). By \eqref{eq:51} and \eqref{eq:54}, the map
  \( \phi \) is a bijection as described in \eqref{eq:4}. It remains to
  prove \eqref{eq: (1+q)q^inv_m' =}. By \eqref{eq:52} and
  \eqref{eq:53}, we can restate it as follows:
  \begin{align}
    \label{eq:55}
    \sigma(T) \in \mathfrak{S}_3 \quad &\Rightarrow \quad 
   (1+q)q^{\inv_{\mm'}(T)} = q^{1+\inv_\mm(\phi(T))} +
      q^{\inv_{\mm''}(T)},\\
    \label{eq:56}
   \sigma(T) \in \{121,212\} \quad &\Rightarrow \quad 
     (1+q)q^{\inv_{\mm'}(T)} = q^{1+\inv_\mm(\phi(T))} +
    q^{1+\inv_{\mm}(T)} .
  \end{align}
  Let 
  \[
    c = | \{(u,v)\in E(\mm) :
    \mbox{\( \{u,v\}\not\subseteq\{i,i+1,j\} \), \( u<v \), and
      \( u \) is lower than \( v \) in \( T \)}\} |.
  \]
  Then, by the following table, we obtain \eqref{eq:55} and \eqref{eq:56}, as desired. In the following table, the value \( 1 + \inv_{\mm}(\phi(T)) \) is highlighted in red whenever \( \phi(T) = T' \).

  \begin{center}
   \begin{tabular}{c|c|c|c|c}
    \( \sigma(T) \)  & \( \inv_{\mm'}(T) \) & 1+\( \inv_{\mm}(\phi(T)) \) & \( \inv_{\mm''}(T)\) & \( 1+ \inv_{\mm}(T)\)  \\
    \hline
    123 & \( c+0 \) & \( c+1 \) & \( c+0 \) & \\
        132 & \( c+1 \) & \textcolor{red}{\( c+2 \)} & \( c+1 \) & \\
    213 & \( c+1 \) & \( c+2 \) & \( c+1 \) & \\
    231 & \( c+1 \) & \( c+1 \) & \( c+2 \) & \\
    312 & \( c+1 \) & \textcolor{red}{\( c+1 \)} & \( c+2 \) & \\
    321 & \( c+2 \) & \( c+2 \) & \( c+3 \) & \\
    121 & \( c+1 \) & \textcolor{red}{\( c+2 \)} &  & \( c+1 \) \\
    212 & \( c+1 \) & \textcolor{red}{\( c+1 \)} &  & \( c+2 \)
\end{tabular}            
  \end{center}
This completes the proof.
\end{proof}

\section{Multiplicativity for \( E \), \( G \) and \( S \)}
\label{sec: multiplicativity}

Let \( \mm \) be a Hessenberg function given by \( \mm=\mm_1+\mm_2 \) for some Hessenberg
functions \( \mm_1 \) and \( \mm_2 \). Then it is straightforward to check that the chromatic
quasisymmetric functions satisfy the following multiplicativity property:
\[
  X_{\mm}(x;q) = X_{\mm_1}(x;q) X_{\mm_2}(x;q).
\]
In this section, we show that \( E_{\mm,k}(x;q) \), \( G_{\mm,k}(x;q) \), and \( S_\mm(x;q) \)
also have a similar multiplicative property in Propositions~\ref{prop:E_multiplicative},
\ref{prop:G_multiplicative}, and \ref{prop:S_multiplicative}, respectively.

\subsection{Multiplicativity for \( E \)}
Recall the definition of \( E_{\mm,k}(x;q) \): for
\( \mm\in \HH_n \) and \( 1\le k \le n \),
\[
  E_{\mm,k}(x;q) = \sum_{\lambda\vdash n} \frac{c_{\lambda,k}(\mm;q)}{[k]_q} e_\lambda(x).
\]
\begin{prop}\label{prop:E_multiplicative}
  Let \( \mm \in \HH_n\)
  such that \( \mm = \mm_1 + \mm_2 \) for some Hessenberg functions
  \( \mm_1 \) and \( \mm_2 \).
  For \( 1\leq k \leq n \), we have
  \[
    E_{\mm,k}(x;q) = E_{\mm_1,k}(x;q) X_{\mm_2}(x;q).
  \]
  Consequently,
  \[
    E_{\mm}(x;q) = E_{\mm_1}(x;q) X_{\mm_2}(x;q).
  \]
\end{prop}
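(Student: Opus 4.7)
The plan is to exploit the operator model from Section~\ref{sec: rm for EGS}, where the linear operator $\Omega_\alpha$ on $V$ satisfies $\Omega_{\mm^c}(\emptyset) = \sum_{T\in\SYT(n)} p_\mm(T;q)\, T$. Starting from the definition of $\mm^c$ and $\mm = \mm_1 + \mm_2$, a direct computation gives the concatenation
\[
  \mm^c = \mm_2^c \oplus (n_2 + \mm_1^c),
\]
where $n_2+\mm_1^c$ denotes entrywise addition by $n_2$. Consequently $\Omega_{\mm^c} = \Omega_{n_2+\mm_1^c} \circ \Omega_{\mm_2^c}$, and applying this identity to $\emptyset$ yields
\[
  \sum_{T} p_\mm(T;q)\, T \;=\; \sum_{T_2 \in \SYT(n_2)} p_{\mm_2}(T_2;q)\, \Omega_{n_2+\mm_1^c}(T_2).
\]

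The crux of the proof is the following factorization lemma, which I would prove by induction on $n_1$: for every $T_2 \in \SYT(\mu)$ with $\mu \vdash n_2$,
\[
  \Omega_{n_2+\mm_1^c}(T_2) \;=\; \sum_{T_1 \in \SYT(n_1)} p_{\mm_1}(T_1;q) \cdot (T_2 \ast T_1),
\]
where $T_2 \ast T_1$ denotes the SYT of shape $\lambda$ with $\lambda' = \mu' + \nu'$ (for $\nu$ the shape of $T_1$) obtained by stacking $T_2$ above $T_1$ column by column, after shifting each entry of $T_1$ by $n_2$. For the inductive step I would decompose $\Omega_{n_2+\mm_1^c} = \Omega_{n-\mm_1(1)} \circ \Omega_{n_2+(\mm_1')^c}$ with $\mm_1'(i) = \mm_1(i+1)-1$, and invoke the crucial observation that every entry of $T_2$ is at most $n_2 \le r$ for each $r$ occurring in $n_2+\mm_1^c$; hence $\vec\delta^{(r)}(T_2 \ast T_1')$ coincides with $\vec\delta^{(r-n_2)}(T_1')$ up to trailing zeros. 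Since the formulas for $\psi_k^{(r)}$ and for the column $c = \sum_{i=1}^k a_i + \sum_{i=0}^k b_i + 1$ depend only on the blocks $b_0, a_1, b_1, \ldots, a_l, b_l$ (and not on the trailing $a_{l+1}$), and since adding a cell at the bottom of column $c$ in $T_2 \ast T_1'$ is tautologically the stacking of the enlarged $T_1$ on $T_2$, the inductive step reduces to the recursive definition of $p_{\mm_1}$. Validity of $T_2 \ast T_1$ as an SYT follows from $\mu' + \nu'$ being a partition (sum of weakly decreasing sequences) and from the chain $T_1(s_1, c_1) < T_1(s_1, c_2) \le T_1(s_2, c_2)$ for $c_1 < c_2$ and $s_1 \le s_2$, with the intermediate cell $(s_1, c_2)$ existing by the partition structure of $\nu$.

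Once the lemma is established, combining it with the operator identity gives $p_\mm(T;q) = p_{\mm_1}(T_1;q)\, p_{\mm_2}(T_2;q)$ whenever $T = T_2 \ast T_1$, and $p_\mm(T;q) = 0$ otherwise; the components $T_2$ and $T_1$ are uniquely recovered as the sub-SYT of entries $\le n_2$ and its complement. Since the largest entry $n$ of $T_2 \ast T_1$ lies in the same column as $n_1$ in $T_1$, the condition $T \in \SYT_k(\lambda)$ is equivalent to $T_1 \in \SYT_k(\nu)$. The shape identity $\lambda' = \mu' + \nu'$ is equivalent to the multiset identity $\lambda = \mu \cup \nu$, which gives both $e_\lambda = e_\mu e_\nu$ and $\prod_i [\lambda_i]_q! = \prod_i [\mu_i]_q! \prod_j [\nu_j]_q!$. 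Substituting into $c_{\lambda,k}(\mm;q) = \prod_i [\lambda_i]_q! \sum_{T \in \SYT_k(\lambda)} p_\mm(T;q)$ and collecting terms by the pair $(\mu, \nu)$ with $\mu \cup \nu = \lambda$ reproduces exactly the product expansion of $E_{\mm_1, k}(x;q)\, X_{\mm_2}(x;q)$ in the elementary basis; summing over $k$ yields the statement for $E_\mm$.

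The main obstacle will be the inductive bookkeeping inside the factorization lemma, in particular carefully tracking how $\vec\delta^{(r)}(T_2 \ast T_1')$ compares block by block with $\vec\delta^{(r-n_2)}(T_1')$ and verifying that $f_k^{(r)}$ acts compatibly under stacking. Once this structural correspondence is pinned down, the remainder of the argument is a routine collection of coefficients.
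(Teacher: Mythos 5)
Your proposal is correct and essentially reproduces the paper's own argument. The paper also proves the identity by the column-stacking bijection $\phi(S,R)=T$ and verifies $p_\mm(T;q)=p_{\mm_1}(S;q)\,p_{\mm_2}(R;q)$ via exactly the observation you isolate — that $\vec\delta^{(n-\mm(n+1-i))}(T_{\le i-1})$ agrees with $\vec\delta^{(n_1-\mm_1(n_1+1-(i-n_2)))}(S_{\le i-n_2-1})$ except for $n_2$ extra trailing zeros, and that $\psi_k^{(r)}$ and the target column $c$ are insensitive to the trailing block $a_{l+1}$; the only difference is that you packaged the step-by-step product of transition probabilities as a factorization lemma for the operator $\Omega_{n_2+\mm_1^c}$, proved by induction on $n_1$, which is a notational variant of the same computation.
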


\begin{proof}
  Let \( \mm_1\in \HH_{n_1} \) and \( \mm_2 \in \HH_{n_2} \).
  The identity we
  need to show is
   \[
     \sum_{\lambda \vdash n} \frac{c_{\lambda,k}(\mm;q)}{[k]_q}e_{\lambda}(x)
     = \sum_{\mu\vdash n_1}\frac{c_{\mu,k}(\mm_1;q)}{[k]_q} e_{\mu}(x) 
        \sum_{\nu \vdash n_2} c_{\nu}(\mm_2;q) e_{\nu}(x),
   \]
   where \( c_\nu(\mm_2;q)=\sum_{i\ge 1}c_{\nu,i}(\mm_2;q) \).
   Hence, it suffices to show that for \( \lambda\vdash n \),
   \begin{equation}\label{eq:5}
     c_{\lambda,k}(\mm;q)
     =\sum_{\substack{\mu\vdash n_1, \nu \vdash n_2\\
         \Sort(\mu \cup \nu)=\lambda}} c_{\mu,k}(\mm_1;q)c_{\nu}(\mm_2;q),
   \end{equation} 
   where \( \Sort(\mu\cup\nu) \) denotes the partition obtained by sorting
   \( (\mu_1,\dots,\mu_{\ell(\mu)},\nu_1,\nu_2,\dots, \nu_{\ell(\nu)})
   \) in weakly decreasing order. Note that if \( \lambda=\Sort(\mu\cup\nu) \),
   then
   \( \prod_{i\ge1}[\lambda_i]_q! = \prod_{i\ge1}[\mu_i]_q!
   \prod_{i\ge1}[\nu_i]_q! \). Therefore, \eqref{eq:5} is equivalent
   to
  \begin{equation}\label{eq:43}
    \sum_{T\in \SYT_k(\lambda)}p_\mm(T;q)
    =
\sum_{\substack{\mu\vdash n_1, \nu \vdash n_2 \\ 
         \Sort(\mu \cup \nu)=\lambda}}
    \sum_{S\in \SYT_k(\mu)}
    p_{\mm_1}(S;q) 
     \sum_{R \in \SYT(\nu)}p_{\mm_2}(R;q). 
  \end{equation}
 
  Now let \( \mathcal{A} \) be the set of pairs
  \( (S,R)\in \SYT_k(\mu)\times \SYT(\nu) \) for some
  \( \mu\vdash n_1 \) and \( \nu\vdash n_2 \) such that
  \( \Sort(\mu\cup\nu) = \lambda \), \( p_{\mm_1}(S;q)\neq 0 \) and
  \( p_{\mm_2}(R;q)\neq 0 \). Let \( \mathcal{B} \) be the set of
  \( T\in\SYT_k(\lambda) \) such that \( p_\mm(T;q)\neq 0 \).
  In order to prove \eqref{eq:43}, it suffices to find a bijection
  \( \phi : \mathcal{A} \rightarrow \mathcal{B} \) such that if \( \phi(S,R)=T \),
  then \( p_\mm(T;q)=p_{\mm_1}(S;q)p_{\mm_2}(R;q) \).

  For \( (S,R)\in \mathcal{A} \), we define \( \phi(S,R)=T \), where the
  \( j \)-th column of \( T \) is obtained by placing the \( j \)-th
  column of \( S \) under the \( j \)-th column of \( R \) for each
  \( j \). More precisely, \( T\in \SYT(\Sort(\mu\cup \nu)) \) is given by
  \[
    T(i,j):=
    \begin{cases}
      R(i,j) & \mbox{if \( 1\le i\le \nu'_j \)},\\
      S(i-\nu'_j,j)+n_2 & \mbox{if \( \nu'_j+1\le i\le \nu'_j+\mu'_j \)}.
    \end{cases}
  \]
  For example, see Figure~\ref{fig:exam_multi}. Let \( T_{\le i} \) denote
  the tableau obtained from \( T \) by removing all entries greater than
  \( i \). By definition, we have \( T_{\le n_2}=R \), and \( T \) must be in \( \SYT_k(\lambda) \) as \( S\in \SYT_k(\mu) \).

  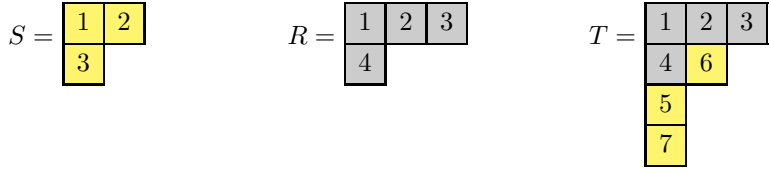
\begin{figure}
  \centering
  \begin{tikzpicture}[scale=1]
   \node[below] at (0,0) { \( S= \)
      \begin{ytableau}
        *(yellow!70) 1 & *(yellow!70) 2\\
        *(yellow!70) 3
      \end{ytableau}
    };
  \node[below] at (4,0) { \( R= \)
      \begin{ytableau}
        *(gray!40) 1 & *(gray!40) 2 & *(gray!40) 3\\
        *(gray!40) 4
      \end{ytableau}
    };
   \node[below] at (8,0) { \( T= \)
      \begin{ytableau}
        *(gray!40) 1 & *(gray!40) 2 & *(gray!40) 3\\
        *(gray!40) 4 & *(yellow!70) 6\\
        *(yellow!70) 5\\
        *(yellow!70) 7
      \end{ytableau}
    };  
  \end{tikzpicture}
  
  \caption{The map \( \phi(S,R)=T \) for \( \mm_1=(2,3,3) \) and \( \mm_2=(3,4,4,4) \).}
  \label{fig:exam_multi}
  \end{figure} 
  By the construction, we have
  \begin{equation}\label{eq:32}
    p_{\mm_2}(R;q)
    =\prod_{i=1}^{n_2}\psi_{k_i}^{(n_2-\mm_2(n_2+1-i))}(R_{\le i-1};q)
    =\prod_{i=1}^{n_2}\psi_{k_i}^{(n-\mm(n+1-i))}(T_{\le i-1};q),
  \end{equation}
  where \( k_i \) is the integer such that \( T_{\le i}=f_{k_i}^{(n-\mm(n+1-i))}(T_{\le i-1}) \).
  Furthermore, for each \( n_2+1\le i\le n_1+n_2 \), since \( n-\mm(n+1-i)\geq n_2 \),
  by the definition of \( \vec\delta^{(r)}(T) \),
  we have
  \begin{equation}\label{eq:12}
    \boldsymbol{\delta}^{(n-\mm(n+1-i))}(T_{\le i-1};q) 
    =\left(1^{b_0}, 0^{a_1}, 1^{b_1}, \ldots, 0^{a_l}, 1^{b_l}, 0^{a_{l+1}+n_2}\right)
  \end{equation}
  where
  \begin{equation}\label{eq:13}
    \boldsymbol{\delta}^{(n_1-\mm_1(n_1+1-(i-n_2)))}(S_{\le i-n_2-1};q)
    =\left(1^{b_0}, 0^{a_1}, 1^{b_1}, \ldots, 0^{a_l}, 1^{b_l}, 0^{a_{l+1}}\right).
  \end{equation}
  Thus,
  \begin{equation}\label{eq:33}
    p_{\mm_1}(S;q)
    =\prod_{i=n_2+1}^{n}\psi_{k_{i}}^{(n-\mm(n+1-i))}(T_{\le i-1};q),
  \end{equation}
  and \( n_2+i \) is in column \( j \) in \( T \) if and only if \( i \)
  in column \( j \) is in \( S \).

  By \eqref{eq:32} and \eqref{eq:33}, we have
  \( p_\mm(T;q)=p_{\mm_1}(S;q)p_{\mm_2}(R;q) \). By \eqref{eq:12} and
  \eqref{eq:13}, \( \phi:\mathcal{A} \to \mathcal{B} \) is a desired
  bijection, which completes the proof.
\end{proof}

\subsection{Multiplicativity for \( G \)}
Recall the definition of \( G_{\mm,k}(x;q) \): for
\( \mm\in \HH_n \) and \( 1\le k \le n \),
\[
  G_{\mm,k}(x;q) = e_k(x) g_{\mm,n-k}(x;q),
\]
where
\[
  g_{\mm,n-k}(x;q) := \sum_{\substack{\sigma=\tau_1\cdots\tau_j\in \mathfrak{S}_{n,\mm} \\ |\tau_1|\ge k}}
    (-1)^{|\tau_1|-k} q^{\wt_\mm(\sigma)} h_{|\tau_1|-k}(x) \omega(\rho_{|\tau_2|}(x;q)\cdots\rho_{|\tau_j|}(x;q)),
\]
and \( \mathfrak{S}_{n,\mm} \) is the set of permutations \( \sigma\in \mathfrak{S}_n \)
such that \( \sigma(i) \le \mm(i) \) for each \( i\in[n] \).

\begin{prop}\label{prop:G_multiplicative}
  Let \( \mm \in \HH_n\) such that \( \mm = \mm_1 + \mm_2 \) for some Hessenberg functions
  \( \mm_1 \) and \( \mm_2 \).
  For \( 1\leq k \leq n \), we have
  \[
    G_{\mm,k}(x;q) = G_{\mm_1,k}(x;q) X_{\mm_2}(x;q).
  \]
  Consequently, we have
  \[
    G_{\mm}(x;q) = G_{\mm_1}(x;q) X_{\mm_2}(x;q).
  \]
\end{prop}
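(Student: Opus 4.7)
The plan is to peel off the common factor $e_k(x)$ from both sides, reducing the claim to the multiplicativity of $g$:
\[
  g_{\mm,n-k}(x;q) = g_{\mm_1, n_1-k}(x;q) \cdot X_{\mm_2}(x;q),
\]
where $n = n_1 + n_2$, and then to exploit the direct-sum structure of $\mathfrak{S}_{n,\mm}$. Since $\mm = \mm_1 + \mm_2$ forces $\mm(i) \le n_1$ for $i \le n_1$ and $\mm(i) > n_1$ otherwise, every $\sigma \in \mathfrak{S}_{n,\mm}$ factors uniquely as $\sigma = \sigma_1 \sigma_2$, where $\sigma_1 \in \mathfrak{S}_{n_1,\mm_1}$ and $\sigma_2$ is a permutation of $\{n_1+1, \ldots, n\}$ naturally identified (by shifting indices) with an element of $\mathfrak{S}_{n_2,\mm_2}$. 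Cycles of $\sigma_1$ have minima in $[n_1]$ while cycles of $\sigma_2$ have minima in $\{n_1+1,\ldots,n\}$, so the canonical cycle listing of $\sigma$ is the concatenation of those of $\sigma_1$ and $\sigma_2$; in particular the cycle $\tau_1(\sigma)$ containing $1$ equals $\tau_1(\sigma_1)$, and the constraint $|\tau_1(\sigma)| \ge k$ is equivalent to $|\tau_1(\sigma_1)| \ge k$.

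Next I would verify the additivity $\wt_\mm(\sigma) = \wt_{\mm_1}(\sigma_1) + \wt_{\mm_2}(\sigma_2)$, which holds because any edge $(u,v)$ with $u < v \le \mm(u)$ lies entirely within one of the two blocks, and the relative order of such $u,v$ in $\sigma^c$ equals their relative order in the appropriate $\sigma_i^c$. Combined with the multiplicative formula $\rho_\alpha = \prod_i \rho_{\alpha_i}$ applied to the concatenated tuple of non-leading cycle lengths, the sum defining $g_{\mm,n-k}$ splits, yielding
\[
  g_{\mm,n-k}(x;q) = g_{\mm_1,n_1-k}(x;q) \cdot \sum_{\sigma \in \mathfrak{S}_{n_2,\mm_2}} q^{\wt_{\mm_2}(\sigma)} \omega(\rho_{\lambda(\sigma)}(x;q)),
\]
where $\lambda(\sigma)$ denotes the cycle type of $\sigma$.

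The main remaining step---and the one I expect to be the chief obstacle---is identifying the bracketed sum above with $X_{\mm_2}(x;q)$. This is exactly a $q$-analog of the power-sum expansion of the chromatic quasisymmetric function, originally due to Abreu--Nigro \cite{Abreu2021b}. A self-contained derivation goes as follows: starting from the decomposition $X_{\mm_2}(x;q) = \sum_{k=1}^{n_2}[k]_q e_k(x) g_{\mm_2, n_2-k}(x;q)$ provided by \Cref{thm:X=g_intro}, expand each $g_{\mm_2, n_2-k}$ via its defining formula and apply the identity
\[
  \omega(\rho_m(x;q)) = \sum_{k=1}^m [k]_q(-1)^{m-k} e_k(x) h_{m-k}(x),
\]
which follows by a short generating-function computation from the recursion $[m]_q e_m = \sum_{i=1}^m e_{m-i} \omega(\rho_i)$ (itself obtained by applying $\omega$ to the defining recursion for $\rho$). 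Once the $q$-power-sum expansion is established, multiplying back by $e_k(x)$ yields $G_{\mm,k}(x;q) = G_{\mm_1,k}(x;q) X_{\mm_2}(x;q)$, and summing over $k$ gives the unrefined statement. In the boundary range $k > n_1$ both sides vanish since $|\tau_1(\sigma_1)| \le n_1 < k$, so the identity holds vacuously there as well.
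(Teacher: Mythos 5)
Your argument matches the paper's: both factor $\mathfrak{S}_{n,\mm} \cong \mathfrak{S}_{n_1,\mm_1}\times\mathfrak{S}_{n_2,\mm_2}$ using the block structure of $\mm = \mm_1 + \mm_2$, observe that cycles split accordingly with the cycle containing $1$ coming from $\sigma_1$, verify $\wt_\mm(\sigma)=\wt_{\mm_1}(\sigma_1)+\wt_{\mm_2}(\sigma_2)$, and thereby reduce to showing $X_{\mm_2}(x;q)=\sum_{\sigma\in\mathfrak{S}_{n_2,\mm_2}}q^{\wt_{\mm_2}(\sigma)}\,\omega(\rho_{\lambda(\sigma)}(x;q))$. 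The only divergence is in how that last identity is handled: the paper simply cites \cite[Theorem~1.2]{Abreu2021b}, whereas you derive it from \Cref{thm:X=g_intro} together with the identity $\omega(\rho_m(x;q))=\sum_{k=1}^m[k]_q(-1)^{m-k}e_k(x)h_{m-k}(x)$, which is correct (the generating function for both sides is $\tfrac{1-H(-t)E(qt)}{1-q}$). This makes the step more self-contained at the cost of reproving a known result; either way the logic is sound, and your remark about the vacuous range $k>n_1$ is a harmless but correct observation that the paper leaves implicit.
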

\begin{proof}
  Let \( \mm_1\in\HH_{n_1} \) and \( \mm_2\in\HH_{n_2} \), and let
  \( \sigma \) be a permutation in \( \mathfrak{S}_{n,\mm} \). Since
  \( \mm=\mm_1+\mm_2 \), we have \( \sigma(i)\le n_1 \) and
  \( \sigma(j) > n_1 \) for \( i\in[n_1] \) and
  \( j\in \{ n_1+1,\dots,n_1+n_2\} \). Using this fact, we define
  \( \sigma_1\in\mathfrak{S}_{n_1,\mm_1} \) and
  \( \sigma_2\in\mathfrak{S} _{n_2,\mm_2} \) by
  \[
    \sigma_1(i) := \sigma(i) \qand \sigma_2(j) := \sigma(j+n_1)-n_1
  \]
  for \( i\in [n_1] \) and \( j\in [n_2] \). It is obvious that \( \sigma\mapsto (\sigma_1,\sigma_2)
  \) is a bijection between \( \mathfrak{S}_{n,\mm} \) and \( \mathfrak{S}_{n_1,\mm_1}\times\mathfrak{S}
  _{n_2,\mm_2} \), and each cycle of \( \sigma \) naturally corresponds to either a cycle of
  \( \sigma_1 \) or a cycle of \( \sigma_2 \). Furthermore, we have
  \begin{align*}
    \wt_\mm(\sigma)&=|\{(i,j)\in [n]\times [n] : i<j\leq \mm(i),
                    ~j \text{ precedes \( i \) in } \sigma^c \}|\\
                  &=\wt_{\mm_1}(\sigma_1)+
                    |\{(i,j) : n_1+1\leq i<j\leq n_1+\mm_2(i-n_1),
                    ~j \text{ precedes \( i \) in } \sigma^c \}|\\
                  &=\wt_{\mm_1}(\sigma_1)+\wt_{\mm_2}(\sigma_2).
\end{align*}
  Therefore \( G_{\mm,k}(x;q) \) can be written as
  \begin{align*}
    G_{\mm,k}(x;q)
      &=e_k(x)g_{\mm,n-k}(x;q)\\   
      &= e_k(x)
        \sum_{\substack{\sigma_1=\tau_1\cdots\tau_r\in \mathfrak{S}_{n_1,\mm_1}\\ |\tau_1|\ge k}}
    (-1)^{|\tau_1|-k} q^{\wt_{\mm_1}(\sigma_1)} h_{|\tau_1|-k}(x)
    \omega(\rho_{|\tau_2|}(x;q)\cdots\rho_{|\tau_r|}(x;q)) \\
    &\qquad\quad \times \sum_{\sigma_2=\tau'_1\cdots\tau'_s\in \mathfrak{S}_{n_2,\mm_2}} q^{\wt_{\mm_2}(\sigma_2)} \omega(\rho_{|\tau'_1|}(x;q)\cdots\rho_{|\tau'_s|}(x;q)) \\
      &= G_{\mm_1,k}(x;q) \sum_{\sigma_2=\tau'_1\cdots\tau'_s\in \mathfrak{S}_{n_2,\mm_2}} q^{\wt_{\mm_2}(\sigma_2)} \omega(\rho_{|\tau'_1|}(x;q)\cdots\rho_{|\tau'_s|}(x;q)).
  \end{align*}
  Then, it suffices to show that
  \[
    X_{\mm_2}(x;q) = \sum_{\sigma=\tau'_1\cdots\tau'_s\in \mathfrak{S}_{n_2,\mm_2}} q^{\wt_{\mm_2}(\sigma)} \omega(\rho_{|\tau'_1|}(x;q)\cdots\rho_{|\tau'_s|}(x;q)),
  \]
  which is proved in \cite[Theorem~1.2]{Abreu2021b}.
\end{proof}

\subsection{Multiplicativity for \( S \)}
We first extend the definition of \( P \)-tableaux to a skew shape. Let \( \mm\in\HH_n \) and
consider partitions \( \lambda \) and \( \mu \) with \( \mu\subseteq\lambda \) and
\( |\lambda/\mu|=n \). Then a filling \( T \) of \( \lambda/\mu \) with (distinct) integers
in \( [n] \) is a \emph{\( P \)-tableau} with respect to \( \mm \) if \( T(i,j) <_{\mm} T(i,j+1) \)
and \(T(i-1,j) \not >_{\mm} T(i,j)\) whenever the cells are in \( \lambda/\mu \). We denote the
set of \( P \)-tableaux on \( \lambda/\mu \) by \( \PT_{\mm}(\lambda/\mu) \), and
\( \inv_\mm(T) \) for \( T\in \PT_{\mm}(\lambda/\mu) \) is defined in the same way
as in \Cref{def:inv}.

\begin{lem}\label{lem:skew_P_tab}
  Let \( \mm \in \HH_n\) and let \( \mu \) be a partition.
  Then we have
  \[
    s_\mu(x) X_\mm(x;q)
      =\sum_{\substack{\lambda\supseteq\mu \\ |\lambda/\mu|=n}}
        \sum_{T\in\PT_{\mm}(\lambda/\mu)} q^{\inv_{\mm}(T)} s_{\lambda}(x).
  \]
\end{lem}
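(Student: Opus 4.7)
The plan is to reduce the lemma to the known straight-shape Schur expansion (Theorem~\ref{thm:s-expansion_intro}) via the Littlewood--Richardson rule, and then to prove the resulting skew identity by a sign-reversing involution in the spirit of Shareshian--Wachs. First, applying Theorem~\ref{thm:s-expansion_intro} one writes $X_{\mm}(x;q) = \sum_{\nu} Y_{\nu}(\mm;q) s_{\nu}(x)$ with $Y_{\nu}(\mm;q) := \sum_{T \in \PT_{\mm}(\nu)} q^{\inv_{\mm}(T)}$. Multiplying by $s_{\mu}$ and using $s_{\mu} s_{\nu} = \sum_{\lambda} c^{\lambda}_{\mu\nu} s_{\lambda}$ gives
\[
s_{\mu}(x) X_{\mm}(x;q) = \sum_{\lambda} s_{\lambda}(x) \sum_{\nu} c^{\lambda}_{\mu\nu} Y_{\nu}(\mm;q).
\]
Comparing coefficients of $s_{\lambda}$, the lemma reduces to the identity
\begin{equation*}
\sum_{T \in \PT_{\mm}(\lambda/\mu)} q^{\inv_{\mm}(T)} = \sum_{\nu} c^{\lambda}_{\mu\nu} \sum_{T \in \PT_{\mm}(\nu)} q^{\inv_{\mm}(T)} \tag{$\ast$}
\end{equation*}
for every $\lambda \supseteq \mu$ with $|\lambda/\mu| = n$, a $q$-weighted, $P$-tableau analog of the classical jeu-de-taquin identity $|\PT_{\mm}(\lambda/\mu)| = \sum_{\nu} c^{\lambda}_{\mu\nu} |\PT_{\mm}(\nu)|$.

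To prove $(\ast)$ I would mimic the Shareshian--Wachs argument behind Lemma~\ref{lem:8} (with $\PT$ in place of $\PT'$). By the skew Jacobi--Trudi identity
\[
s_{\lambda/\mu} = \sum_{w \in \mathfrak{S}_{\ell}} (-1)^{\ell(w)} \prod_{i=1}^{\ell} h_{\lambda_i - \mu_{w(i)} + w(i) - i},
\]
combined with the $P$-array expansion $X_{\mm}(x;q) = \sum_{\alpha} \bigl( \sum_{T \in \PA_{\mm}(\alpha)} q^{\inv_{\mm}(T)} \bigr) x^{\alpha}$ and the duality $\langle m_{\alpha}, h_{\beta} \rangle = \delta_{\alpha\beta}$, the right-hand side of $(\ast)$ becomes
\[
\sum_{w \in \mathfrak{S}_{\ell}} (-1)^{\ell(w)} \sum_{T \in \PA_{\mm}(\beta_w)} q^{\inv_{\mm}(T)}, \qquad \beta_{w,i} := \lambda_i - \mu_{w(i)} + w(i) - i.
\]
One then constructs an involution on $\bigsqcup_{w} \PA_{\mm}(\beta_w)$ that preserves $\inv_{\mm}$, reverses the sign of $w$, and whose fixed-point set is exactly $\PT_{\mm}(\lambda/\mu)$, by swapping two adjacent ``rows'' of a $P$-array whenever the skew column condition $T(i,j) \not>_{\mm} T(i+1,j)$ is violated at the first such place.

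The main obstacle will be carrying out this row-swap in the skew setting: because the rows of $\PA_{\mm}(\beta_w)$ start in different columns determined by the $\mu$-entries, the Gasharov--Shareshian--Wachs swap must be extended carefully so that it remains $\inv_{\mm}$-preserving and produces precisely skew $P$-tableaux as fixed points. An alternative route, closer in spirit to the modular-law machinery of the paper, is to observe that both sides of the lemma satisfy the full modular law in $\mm$: the LHS because $X_\mm$ does, and the RHS by an argument parallel to Proposition~\ref{prop:restricted_modular_law_for_S}, noting that when $\mu \neq \emptyset$ the cell $(1,1) \in \mu$ lies outside $\lambda/\mu$, so the $i \neq 1$ restriction in type~II triples can be dropped (the case $\mu = \emptyset$ is Theorem~\ref{thm:s-expansion_intro} itself). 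Abreu--Nigro's theorem then reduces the verification to disjoint unions of complete graphs, where $X_{\mm}(x;q) = \prod_i [n_i]_q! e_{n_i}(x)$ and iterated dual Pieri on the LHS matches, on the RHS, the count $\prod_i [n_i]_q!$ times the number of column-strict labelings of $\lambda/\mu$ with content $(n_1,\ldots,n_d)$.
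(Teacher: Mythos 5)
Your primary route---expanding $s_{\lambda/\mu}$ via the skew Jacobi--Trudi determinant and concluding with the Shareshian--Wachs sign-reversing involution on $P$-arrays---is precisely the paper's proof; the Littlewood--Richardson reformulation $(\ast)$ at the start is a cosmetic detour, since $\sum_\nu c^{\lambda}_{\mu\nu}\sum_{T\in\PT_{\mm}(\nu)} q^{\inv_{\mm}(T)} = \langle X_{\mm}(x;q), s_{\lambda/\mu}(x)\rangle$, which the paper obtains directly from the skewing operator $s_\mu^\perp$. The row-alignment obstacle you flag for the skew case is real but is handled by the same argument as in the straight-shape case, which is exactly what the paper invokes by citing the proof of Shareshian--Wachs' Theorem~6.3.
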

\begin{proof}
  Using the skewing
  operator \( s_\mu^\perp \) and the Jacobi--Trudi formula for skew
  Schur functions, we have that for a partition \( \lambda \),
  \begin{align*}
    \langle s_\mu(x) X_\mm(x;q), s_\lambda(x) \rangle
      &=  \langle X_\mm(x;q), s_{\lambda/\mu}(x) \rangle \\
      &= \left\langle X_\mm(x;q), \sum_{w\in S_n} (-1)^{\ell(w)} h_{w(\lambda,\mu)}(x) \right\rangle,
  \end{align*}
  where \( w(\lambda,\mu) \in \ZZ^n \) is given by
  \[
    w(\lambda,\mu)_i := \lambda_{w(i)}-\mu_i-w(i)+i,
  \]
  and \( h_\alpha(x) := h_{\alpha_1}(x)\cdots h_{\alpha_n}(x) \) for \( \alpha\in\ZZ^n \). Note that if \( \alpha \) contains a negative entry then we regard \( h_{\alpha}(x)=0 \). 
  Then it is enough to show that
  \[
    \left\langle X_\mm(x;q), \sum_{w\in S_n} (-1)^{\ell(w)} h_{w(\lambda,\mu)}(x) \right\rangle
    = \sum_{T\in\PT_{\mm}(\lambda/\mu)} q^{\inv_{\mm}(T)},
  \]
  which follows from a similar argument used in the proof of \cite[Theorem~6.3]{Shareshian2016}.
\end{proof}

\begin{prop}\label{prop:S_multiplicative}
   Let \( \mm \in \HH_n\)
  such that \( \mm = \mm_1 + \mm_2 \) for some Hessenberg functions
  \( \mm_1 \) and \( \mm_2 \).
  Then we have
  \begin{equation}\label{eq:Z'=Z'Z}
    S_\mm(x;q) = S_{\mm_1}(x;q) X_{\mm_2}(x;q).
  \end{equation}
\end{prop}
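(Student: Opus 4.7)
The plan is to exhibit a weight-preserving bijection that splits a $P$-tableau $T \in \PT'_\mm(\lambda)$ into a pair $(T_1, T_2)$ where $T_1 \in \PT'_{\mm_1}(\mu)$ and $T_2 \in \PT_{\mm_2}(\lambda/\mu)$ for some sub-partition $\mu \subseteq \lambda$ with $|\mu| = n_1$, and then appeal to \Cref{lem:skew_P_tab}. Here $n_1$ denotes the length of $\mm_1$ and $n_2$ that of $\mm_2$, so $n = n_1 + n_2$.

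The starting observation is that since $\mm = \mm_1 + \mm_2$, for every $i \in [n_1]$ we have $\mm(i) = \mm_1(i) \le n_1$, so any $j > n_1$ satisfies $\mm(i) < j$, i.e.\ $i <_\mm j$. Thus the poset $\mm$ is the ordinal sum of the posets $\mm_1$ and $\mm_2$ (after shifting). Given $T \in \PT'_\mm(\lambda)$, let $D \subseteq \lambda$ be the set of cells whose entries lie in $[n_1]$. The first step is to show that $D = \mu$ for a partition $\mu \subseteq \lambda$ with $|\mu|=n_1$. Within each row the entries of $[n_1]$ must precede those of $\{n_1+1,\dots,n\}$ because rows are increasing in $<_\mm$ and small entries are smaller in $<_\mm$; so each row of $D$ is an initial segment. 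To verify the row-lengths of $D$ are weakly decreasing, suppose row $i$ contains fewer entries of $[n_1]$ than row $i+1$: then in some column $c$ we have $T(i,c) \in \{n_1+1,\dots,n\}$ and $T(i+1,c) \in [n_1]$, forcing $T(i,c) >_\mm T(i+1,c)$, contradicting the column condition for a $P$-tableau.

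Writing $T_1 = T|_\mu$ (with entries in $[n_1]$) and $T_2 = T|_{\lambda/\mu}$ (with entries in $\{n_1+1,\dots,n\}$ shifted down by $n_1$), I would verify that $T_1 \in \PT'_{\mm_1}(\mu)$ and $T_2 \in \PT_{\mm_2}(\lambda/\mu)$; the row and column conditions on each piece follow directly because the poset relations between elements in the same block coincide with those of $\mm_1$ or $\mm_2$, and cross-block comparisons are already handled by step~1. The condition $T_1(1,1)=1$ holds since $T(1,1)=1 \in [n_1]$. For the weight, one checks that every pair $(i,j)$ with $i < j \le \mm(i)$ must have both coordinates in $[n_1]$ or both in $\{n_1+1,\dots,n\}$ (since $\mm(i) \le n_1$ when $i \le n_1$), so inversions split as $\inv_\mm(T) = \inv_{\mm_1}(T_1) + \inv_{\mm_2}(T_2)$. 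Conversely, any pair $(T_1, T_2) \in \PT'_{\mm_1}(\mu) \times \PT_{\mm_2}(\lambda/\mu)$ glues back to a well-defined $T \in \PT'_\mm(\lambda)$ by the same analysis, so the map is a bijection.

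Combining these bijections, we obtain
\[
S_\mm(x;q) = \sum_{\mu} \sum_{T_1 \in \PT'_{\mm_1}(\mu)} q^{\inv_{\mm_1}(T_1)} \sum_{\substack{\lambda \supseteq \mu \\ |\lambda/\mu| = n_2}} \sum_{T_2 \in \PT_{\mm_2}(\lambda/\mu)} q^{\inv_{\mm_2}(T_2)} s_\lambda(x).
\]
By \Cref{lem:skew_P_tab}, the inner double sum equals $s_\mu(x) X_{\mm_2}(x;q)$, and factoring this out of the outer sum gives $S_{\mm_1}(x;q) X_{\mm_2}(x;q)$, establishing \eqref{eq:Z'=Z'Z}.

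The only delicate step is verifying that the cells with entries in $[n_1]$ form a genuine (non-skew) partition $\mu$; everything else is bookkeeping. This shape claim is precisely what requires the column-wise $P$-tableau condition $T(i,j) \not >_\mm T(i+1,j)$ together with the fact that the poset $\mm$ is the ordinal sum of $\mm_1$ and $\mm_2$, and it is also what makes the condition $T(1,1) = 1$ transfer cleanly to $T_1(1,1) = 1$ (forcing $T_1 \in \PT'_{\mm_1}$ rather than merely $\PT_{\mm_1}$), which is essential for the factor to be $S_{\mm_1}$ rather than $X_{\mm_1}$.
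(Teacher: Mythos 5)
Your proof is correct and follows essentially the same route as the paper's: split $T$ into the subtableau $T_1$ on cells with entries in $[n_1]$ and the skew subtableau $T_2$ on the remaining cells, check that $T_1$ has straight shape $\mu$, verify the inversion statistic is additive, and then invoke \Cref{lem:skew_P_tab} to recognize the inner sum as $s_\mu(x)X_{\mm_2}(x;q)$. You spell out a couple of steps the paper leaves to the reader, namely the argument that the row lengths of $\mu$ are weakly decreasing (via the column condition $T(i,c)\not>_\mm T(i+1,c)$) and the observation that no inversion pair straddles the two blocks, but the underlying decomposition and use of the skew Schur expansion lemma are identical.
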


\begin{proof}
  Let \(\mm_1\in \HH_{ n_1} \) and \(\mm_2 \in \HH_{ n_2} \). For \( \lambda \vdash n\) and \( T\in\PT'_\mm(\lambda) \), we associate \( T_1 \) given as the collection of cells of \( T \) whose entries belong to
  \( [n_1] \). Since \( i <_\mm j \) for all \( i\in [n_1] \) and
  \( n_1+1\le j\le n_1+n_2 \), we have \( T_1\in\PT'_\mm(\mu) \) for
  some partition \( \mu\subseteq\lambda \). Let \( T_2 \) be the
  tableau obtained from \( T \) by removing \( T_1 \) and subtracting
  each remaining entry by \( n_1 \) so that
  \( T_2\in\PT_{\mm_2}(\lambda/\mu) \). Furthermore, it is elementary to check that
  \( \inv_\mm(T) = \inv_{\mm_1}(T_1)+\inv_{\mm_2}(T_2) \). Therefore,
  we can write \( S_\mm(x;q) \) as follows:
  \begin{align*}
    S_\mm(x;q)
      &= \sum_{\lambda\vdash n_1+n_2}
        \sum_{\substack{\mu\vdash n_1 \\ \mu\subseteq\lambda}} \sum_{T_1\in\PT'_{\mm_1}(\mu)} q^{\inv_{\mm_1}(T_1)}
        \sum_{T_2\in\PT_{\mm_2}(\lambda/\mu)} q^{\inv_{\mm_2}(T_2)} s_\lambda(x) \\
      &= \sum_{\mu\vdash n_1} \sum_{T_1\in\PT'_{\mm_1}(\mu)} q^{\inv_{\mm_1}(T_1)}
        \sum_{\substack{\lambda\vdash n_1+n_2 \\ \lambda\supseteq\mu}} \sum_{T_2\in\PT_{\mm_2}(\lambda/\mu)} q^{\inv_{\mm_2}(T_2)} s_\lambda(x).
  \end{align*}
  Applying Lemma~\ref{lem:skew_P_tab}, we conclude
  \[
    S_\mm(x;q)
      = \sum_{\mu\vdash n_1} \sum_{T_1\in\PT'_{\mm_1}(\mu)} q^{\inv_{\mm_1}(T_1)} s_\mu(x;q) X_{\mm_2}(x;q)
      = S_{\mm_1}(x;q) X_{\mm_2}(x;q),
  \]
  as desired.
\end{proof}

\section{The path case}\label{sec: path case}

The goal of this section is to show the following:

\begin{prop}\label{prop:E=G=S_at_path}
  For \( n\ge 1 \), we have
    \begin{equation}\label{eq:E=G=S_at_path}
      E_{\pp_n}(x;q)=G_{\pp_n}(x;q)=S_{\pp_n}(x;q).
    \end{equation}
    Furthermore, for \( 1\le k \le n \), we have
    \begin{equation}\label{eq:E_k=G_k_at_path}
      E_{\pp_n,k}(x;q) = G_{\pp_n,k}(x;q).
    \end{equation}
\end{prop}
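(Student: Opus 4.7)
The plan is to verify \Cref{prop:E=G=S_at_path} by giving closed-form or inductive descriptions of each of $E_{\pp_n,k}(x;q)$, $G_{\pp_n,k}(x;q)$, and $S_{\pp_n}(x;q)$ and then matching them. The path case has enough structure to make direct computation feasible: the poset $\pp_n$ has only the relations $i<_{\pp_n}j \Leftrightarrow j\ge i+2$, and the graph $\pp_n$ has exactly the edges $\{(i,i+1):1\le i<n\}$. A small-case check at $n=3$ (where both sides equal $q s_{(2,1)}+(1+q)s_{(1,1,1)}$) already shows that the three quantities agree and suggests an inductive approach in $n$.

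For the $G$-side I would analyze $g_{\pp_n,n-k}(x;q)$ from the cycle-sum definition \eqref{eq:g_def}. The set $\mathfrak{S}_{n,\pp_n}$ consists of the permutations with $\sigma(i)\le i+1$, and $\wt_{\pp_n}(\sigma)$ only counts inversions of $\sigma^c$ at adjacent pairs. Splitting the sum according to the cycle containing $n$ (and the relative position of $n$ within that cycle) should produce a recursion expressing $g_{\pp_n,\cdot}$ in terms of $g_{\pp_{n-1},\cdot}$ and products of $\rho_m$-functions. The numerous cancellations visible at small $n$ (for instance $g_{\pp_n,1}=0$ for $n\ge 2$) should be accounted for by a sign-reversing involution that swaps $n$ with its immediate neighbour in the relevant cycle.

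For the $E$-side, I would compute $c_{\lambda,k}(\pp_n;q)$ by iterating Hikita's transition probabilities from \Cref{defn:psi_pT}. Because $\pp_n(i)=i+1$ for $i<n$, the successive parameters $r=n-\pp_n(n+1-i)=i-1$ grow linearly in $i$, so the ratios of $q$-integers in \eqref{eq:15} telescope along any insertion path. I expect a clean recursion for $p_{\pp_n}(T;q)$ in terms of $p_{\pp_{n-1}}(T';q)$ and simple $q$-integer factors depending on the column of $n$ in $T$, which then yields an explicit formula for the coefficients of $E_{\pp_n,k}$.

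For the $S$-side, the $P$-tableau conditions collapse to: row entries differ by at least $2$, and consecutive column entries satisfy $T(i+1,j)\ge T(i,j)-1$. Together with $T(1,1)=1$ this forces strong constraints on the first column and admits an inductive description of $\PT'_{\pp_n}(\lambda)$ (for example, by peeling off the entry $n$ from a removable corner). Assembling the three sides, I would verify that $G_{\pp_n,k}$ and $E_{\pp_n,k}$ satisfy the same recursion in $n$, establishing \eqref{eq:E_k=G_k_at_path}, and that $G_{\pp_n}$ and $S_{\pp_n}$ agree termwise in the Schur basis, establishing \eqref{eq:E=G=S_at_path} via the multiplicativity results of Section~\ref{sec: multiplicativity} if needed; the base case $n=1$ is trivial since all three equal $s_{(1)}(x)$. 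The main obstacle will be the algebraic bridge between the Hikita side (involving $\psi_k^{(r)}$ and the $q$-factorials $\prod_i[\lambda_i]_q!$) and the Abreu--Nigro side (involving $\rho_m$ and cycle weights); the cleanest route may be to identify a single recursion in $n$ that all three quantities satisfy and verify it on each side, rather than attempting to compare closed forms directly.
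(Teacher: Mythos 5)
Your high-level plan (find a common closed form or recursion that all three sides satisfy) is the right spirit, but you are underselling what is already available on the \(G\)-side and underestimating the work needed on the \(S\)-side. The paper does not touch the cycle-sum definition~\eqref{eq:g_def} at all in the path case: it cites Abreu--Nigro's Corollary~3.11, which already gives the closed formula
\[
  g_{\pp_n,n-k}(x;q) = \sum_{\alpha\vDash n-k} e_\alpha(x)\prod_{i}([\alpha_i]_q-1),
\]
so no sign-reversing involution on permutations is needed. Rediscovering this from \(\mathfrak{S}_{n,\pp_n}\) would be a nontrivial extra project (and the observation \(g_{\pp_n,1}=0\) for \(n\ge 2\) is visible directly from the factor \([1]_q-1=0\) in that formula, not via cancellation). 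On the \(E\)-side your instinct matches the paper: Lemma~\ref{lem:path_attach} extracts exactly the recursion you anticipate via \(\pp_{n+1}=(2,2)\vee\pp_n\) and the telescoping in \(\psi_k^{(r)}\), and Proposition~\ref{prop:path_formula} then yields the same closed form as the \(G\)-side.

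The gap is in your \(S\)-side sketch. The paper proves \(S_{\pp_n}=E_{\pp_n}\), not \(S_{\pp_n}=G_{\pp_n}\), by expanding \(E_{\pp_n}\) in the Schur basis via Kostka numbers~\eqref{eq:Kostka} and reducing to the identity~\eqref{eq:2}, which is then established by showing both sides obey the recursion in Proposition~\ref{pro:1} over vertical strips \(\lambda/\mu\). The bijection there does not simply ``peel off \(n\)''; it peels off an entire maximal vertical strip of consecutive integers \(n,n-1,\dots,\ell\), keeps track of an auxiliary integer \(j\) (the number of descents inside that strip), and has to handle the special base case \(\lambda=(1^n)\) where the extra \(+1\) appears. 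The constraint \(T(1,1)=1\) also forces \(\ell\ge 2\), which is needed for well-definedness. As written, ``removable corner'' peeling will not produce a recursion that lines up with the \(\prod([\alpha_i]_q-1)\) structure on the other side, so you would need to replace it with something closer to the vertical-strip bijection.
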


We first prove \eqref{eq:E_k=G_k_at_path} by conducting an explicit
computation for \( E_{\pp_n,k}(x;q) \) in \Cref{sec:path-case-1}. Note
that \eqref{eq:E_k=G_k_at_path} implies
\( E_{\pp_n}(x;q)=G_{\pp_n}(x;q) \). In \Cref{sec:path-case-2}, we
complete the proof of \eqref{eq:E=G=S_at_path} by providing a
combinatorial argument to show \( G_{\pp_n}(x;q)=S_{\pp_n}(x;q) \).

\begin{remark}\label{rem:2}
We note that Abreu and Nigro \cite[Proposition 3.15]{Abreu2023} showed
that the coefficient of \( m_\lambda \) in \( g_{\pp_n,k}(x;q) \) is
the generating function for multi-derangments.
\end{remark}

\subsection{The path case for \( E \) and \( G \)}
\label{sec:path-case-1}

In this subsection, we prove \eqref{eq:E_k=G_k_at_path}. For each
\( 1 \leq k \leq n \), it was shown in
\cite[Corollary~3.11]{Abreu2023} that
\begin{equation}\label{eq:g_k = F|_Lambda^k}
    G_{\mm,k}(x;q) = e_k(x) g_{\mm,n-k}(x;q)
= e_k(x) \sum_{\alpha\vDash n-k} e_\alpha(x) \prod_{i=1}^{\ell(\alpha)}\left([\alpha_i]_q - 1\right).
\end{equation}
We show that \( E_{\pp_n,k}(x;q) \) has the same formula. We start by
defining some terminology.

For \( \mm_1 \in \HH_{n_1} \) and \( \mm_2\in \HH_{n_2}\), we define
\( \mm_1 \vee \mm_2 \) to be the Hessenberg function given by
  \[ 
    \mm_1\vee \mm_2
    :=(\mm_1(1), \dots, \mm_1(n_1-1), \mm_2(1)+n_1-1, \dots, \mm_2(n_2)+n_1-1)
    \in \HH_{n_1+n_2-1}.
  \]
  Note that the graph \( \mm_1 \vee \mm_2 \) is obtained by appending the
  graph \( \mm_1 \) to the beginning of the graph \( \mm_2 \).
  We specifically consider the case when \( \mm_1=(2,2)\in \HH_2\), which corresponds to the operation
  of adding a vertex and joining it to the vertex \( 1 \) of the graph \( \mm_2 \) by an edge.
  In particular, we have \( \pp_{n+1}=(2,2)\vee\pp_{n} \).

  For a partition \( \lambda \), we write \( k\in \lambda \) if
  \( k \) is a part of \( \lambda \), and \( k\not\in \lambda \)
  otherwise.

\begin{lem}\label{lem:path_attach}
  Let \( \mm\in \HH_n \).
  Suppose that \( \lambda \vdash n+1\) is a partition with
  \( k\in \lambda \). Let \( \mu \) be the partition obtained from the
  partition \( \lambda \) by removing a part \( k \) and adding a part
  \( k-1 \). Then
  \begin{equation}\label{eq:path_j}
    \frac{1}{[k]_q} c_{\lambda,k}((2,2)\vee \mm;q)=
    \begin{cases}
      \sum_{j=2}^{n}\frac{[j]_q-1}{[j]_q}c_{\mu,j}(\mm;q)
      & \mbox{if \( k=1 \)},\\
      \frac{1}{[k-1]_q} c_{\mu,k-1}(\mm;q)
      & \mbox{if \( k>1 \)}.
    \end{cases}
  \end{equation}
\end{lem}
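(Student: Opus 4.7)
The plan is to unfold the recursive definition of $p_{\mm^*}$ directly, where $\mm^* := (2,2)\vee\mm$. Since $\mm^*(1)=2$ and $\mm^*(i+1)=\mm(i)+1$ for $i\in[n]$, \Cref{defn:psi_pT} gives $(\mm^*)'=\mm$ and $r := (n+1)-\mm^*(1) = n-1$. Consequently, for any $T\in\SYT_k(\lambda)$ with $\lambda\vdash n+1$, writing $T'$ for the tableau obtained by deleting $n+1$ and $\mu$ for its shape, one has
\[
p_{\mm^*}(T;q) = \psi_j^{(n-1)}(T';q)\,p_\mm(T';q)
\]
for the unique $j$ with $T=f_j^{(n-1)}(T')$.

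The key observation is that, since $|T'|=n$ and $r=n-1$, the sequence $\vec\delta^{(n-1)}(T')$ contains exactly one $1$, located at the column $c^*$ housing the entry $n$ in $T'$. If $c^*=1$ (so $l=0$), the unique transition $j=0$ places $n+1$ in column~$2$ with weight $\psi_0^{(n-1)}(T';q)=1$. If $c^*\ge 2$ (so $l=1$), the two transitions place $n+1$ in column~$1$ with weight $q[c^*-1]_q/[c^*]_q$ ($j=0$) and in column~$c^*+1$ with weight $1/[c^*]_q$ ($j=1$). Inverting this, the bijection $T\mapsto T'$ identifies $\SYT_2(\lambda)$ with $\SYT_1(\mu)$ (weight $1$), $\SYT_k(\lambda)$ with $\SYT_{k-1}(\mu)$ for $k\ge 3$ (weight $1/[k-1]_q$), and $\SYT_1(\lambda)$ with $\bigsqcup_{c^*\ge 2}\SYT_{c^*}(\mu)$ (weight $q[c^*-1]_q/[c^*]_q$ on $\SYT_{c^*}(\mu)$).

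To finish, I would combine these weights with the ratio
\[
  \prod_{i\ge 1}[\lambda_i]_q!\,\bigg/\,\prod_{i\ge 1}[\mu_i]_q!
  = \begin{cases} [k]_q & \text{if } k\ge 2, \\ 1 & \text{if } k=1, \end{cases}
\]
which holds because $\mu$ is obtained from $\lambda$ by either replacing a part $k$ with $k-1$ or removing a part~$1$. For $k\ge 2$, this produces $c_{\lambda,k}(\mm^*;q)/[k]_q = c_{\mu,k-1}(\mm;q)/[k-1]_q$ immediately. For $k=1$, it yields $c_{\lambda,1}(\mm^*;q) = \sum_{c^*\ge 2} \frac{q[c^*-1]_q}{[c^*]_q}\,c_{\mu,c^*}(\mm;q)$, and the elementary identity $q[j-1]_q=[j]_q-1$ converts this to the stated form (with the sum truncated at $n$ since $c_{\mu,j}(\mm;q)=0$ for $j>n$). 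The main technical care lies in handling Hikita's transition formula in the degenerate case $l=0$, where several summation indices collapse; once that edge case is verified, the remainder is routine algebraic bookkeeping.
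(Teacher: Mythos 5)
Your proposal is correct and follows essentially the same route as the paper's proof: identify $(\mm^*)'=\mm$ and $r=n-1$, observe that $\vec\delta^{(n-1)}(T')$ has exactly one $1$ so that the transition weights simplify dramatically, match columns of $n+1$ in $T$ against columns of $n$ in $T'$, and finish with the ratio $\prod[\lambda_i]_q!/\prod[\mu_i]_q!$. If anything, your treatment is slightly more careful than the paper's at the edge case $k=2$: the paper asserts that for $k>1$ the transition index is $t=1$, but when $c^*=1$ one has $l=0$ and only $t=0$ is available; you correctly isolate this degenerate case (weight $1 = 1/[1]_q$, so the final formula is unaffected). One small wording issue: the maps you call ``bijections'' $\SYT_k(\lambda)\to\SYT_{k-1}(\mu)$ (for $k\ge 2$) and $\SYT_1(\lambda)\to\bigsqcup_{c^*\ge 2}\SYT_{c^*}(\mu)$ are genuine bijections only after restricting the domain to those $T$ with $p_{\mm^*}(T;q)\ne 0$ (equivalently, those $T$ realized as $f_t^{(r)}(T')$); the remaining tableaux contribute zero, so the weighted sums are still correct, but it would be cleaner to say so explicitly, as the paper does by restricting attention from the start to $T$ with nonzero weight.
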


\begin{proof} 
  From Theorem~\ref{thm:Hikita_m}, we have
  \[
    c_{\lambda,k}((2,2)\vee \mm;q)
    = \prod_{i\ge1}[\lambda_i]_q!
    \sum_{T \in \SYT_k(\lambda)} p_{(2,2)\vee \mm}(T;q).
  \]
  Consider \( T\in \SYT_k(\lambda) \) such that
  \( p_{(2,2)\vee \mm}(T;q)\ne 0 \). Let
  \( T' \) be the tableau obtained from \( T \) by removing \( n+1 \).
  Suppose that \( n \) is in column \( j \) of \( T' \) so that
  \( \boldsymbol{\delta}^{(n-1)}(T')=(0^{j-1},1,0^{n-j}) \). Let
  \( r= (n+1)-((2,2)\vee \mm)(1) = n-1 \). Since
  \( p_{(2,2)\vee \mm}(T;q)\ne 0 \), we have \( T = f^{(r)}_t(T') \) for
  some \( t\in \{0,1\} \) and
  \begin{equation}\label{eq:44}
    p_{(2,2)\vee \mm}(T;q) = \psi^{(r)}_t(T';q) p_{\mm}(T';q).
  \end{equation}
  Moreover, since \( n+1 \) is in column \( k \), recalling
  \eqref{eq:15}, if \( k=1 \), then \( t=0 \) and
  \( \psi^{(r)}_0(T';q) = \frac{q[j-1]_q}{[j]_q} = \frac{[j]_q-1}{[j]_q} \); whereas if
  \( k>1 \), then \( t=1 \), \( j=k-1 \), and
  \( \psi^{(r)}_1(T';q) = \frac{1}{[k-1]_q} \). Therefore, summing
  \eqref{eq:44} over all such \( T \)'s yields \eqref{eq:path_j}.
\end{proof}

Using \Cref{lem:path_attach}, we prove the following formula for
\( E_{\pp_n,k}(x;q) \). Comparing it with~\eqref{eq:g_k =
  F|_Lambda^k}, we complete the proof of \eqref{eq:E_k=G_k_at_path}.

\begin{prop}\label{prop:path_formula}
  For \( 1\leq k \leq n \), we have
  \begin{equation}\label{eq:3}
    E_{\pp_n,k}(x;q)
    =e_k(x)  \sum_{\alpha \vDash n-k} e_\alpha(x) 
    \prod_{i=1}^{\ell(\alpha)}\left(\left[\alpha_i\right]_q-1\right).
  \end{equation}
\end{prop}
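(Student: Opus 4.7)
The plan is to induct on \( n \), using the decomposition \( \pp_{n+1} = (2,2) \vee \pp_n \) together with \Cref{lem:path_attach} as the key recursive tool. The base case \( n = 1 \) is the direct computation \( E_{\pp_1, 1}(x;q) = e_1(x) \), which matches the right-hand side since only the empty composition of \(0\) contributes, with empty product equal to \(1\).

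For the inductive step I will treat \( k=1 \) and \( k \geq 2 \) separately. When \( k = 1 \), since \( [1]_q = 1 \), \Cref{lem:path_attach} expresses \( c_{\lambda, 1}(\pp_{n+1}; q) \) as a weighted sum of \( c_{\mu, j}(\pp_n; q) \) for \( j = 2, \dots, n \), where \( \mu = \lambda \setminus \{1\} \). The map \( \lambda \mapsto \mu \) is a bijection from partitions of \( n+1 \) containing a part \(1\) onto all partitions of \(n\), and \( e_\lambda = e_1 \cdot e_\mu \); pulling out \( e_1(x) \) and reassembling gives
\[
  E_{\pp_{n+1}, 1}(x;q) = e_1(x) \sum_{j=2}^{n} ([j]_q - 1)\, E_{\pp_n, j}(x;q).
\]
After applying the inductive hypothesis to each \( E_{\pp_n, j} \) and re-indexing the result as a sum over compositions \( \alpha = (j, \beta_1, \dots) \vDash n \) with \( \alpha_1 \geq 2 \), I extend the sum to all \( \alpha \vDash n \) at no cost, since any composition with a part equal to \(1\) contributes a factor \( [1]_q - 1 = 0 \).

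When \( k \geq 2 \), \Cref{lem:path_attach} gives
\[
  \frac{c_{\lambda, k}(\pp_{n+1}; q)}{[k]_q} = \frac{c_{\mu, k-1}(\pp_n; q)}{[k-1]_q}
\]
with \( \mu = (\lambda \setminus \{k\}) \cup \{k-1\} \). Parameterizing by \( \mu' := \lambda \setminus \{k\} = \mu \setminus \{k-1\} \), which ranges over all partitions of \( n+1-k \), I have \( e_\lambda = e_k \cdot e_{\mu'} \) and \( e_\mu = e_{k-1} \cdot e_{\mu'} \). Summing over \( \mu' \) yields the formal identity \( e_{k-1}(x) \cdot E_{\pp_{n+1}, k}(x;q) = e_k(x) \cdot E_{\pp_n, k-1}(x;q) \). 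Since \( \Lambda \) is a polynomial ring in \( e_1, e_2, \dots \), the element \( e_{k-1}(x) \) is a non-zero-divisor, so after substituting the inductive formula for \( E_{\pp_n, k-1}(x;q) \) and cancelling \( e_{k-1}(x) \), I read off the target.

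I expect the main obstacle to be the \( k = 1 \) case: aligning the \( j \)-indexed sum arising from the factor \( \frac{[j]_q - 1}{[j]_q} \) with the composition-indexed sum in the target, and recognizing that the apparent restriction \( \alpha_1 \geq 2 \) is vacuous by the vanishing of \( [1]_q - 1 \). The \( k \geq 2 \) case is, by contrast, a clean one-line bijective manipulation once the recursion of \Cref{lem:path_attach} is in hand.
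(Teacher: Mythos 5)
Your proof is correct and takes essentially the same approach as the paper: both proceed by induction on \( n \) via \( \pp_{n+1} = (2,2)\vee\pp_n \) and \Cref{lem:path_attach}, with the identical casing on \( k=1 \) versus \( k\geq 2 \) and the same observation that the constraint \( \alpha_1 \geq 2 \) is absorbed by the vanishing of \( [1]_q - 1 \). The only cosmetic difference is that the paper reduces to the coefficient identity \eqref{eq:path_coeff} and inducts there, whereas you carry out the same manipulations at the level of the generating functions \( E_{\pp_n,k} \) (with an extra multiplication-and-cancellation by \( e_{k-1} \) in the \( k\geq2 \) case that the paper avoids by re-indexing coefficients directly).
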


\begin{proof}
  It suffices to show that for
  \( \lambda \vdash n \),
  \begin{equation}\label{eq:path_coeff}
    \frac{c_{\lambda,k}(\pp_n;q)}{[k]_q}=
    \sum_{\substack{\alpha \vDash n-k\\ \Sort(\alpha\cup (k))=\lambda}}
    \prod_{i=1}^{\ell(\alpha)}\left( [\alpha_i]_q-1 \right),
  \end{equation}
  where \( \Sort(\alpha\cup (k)) \) is the partition obtained by
  rearranging \( (\alpha_1, \dots, \alpha_{\ell(\alpha)}, k) \) in
  weakly decreasing order. To show \eqref{eq:path_coeff}, we use
  induction on \( n \). If \( n=1 \), then both sides of
  \eqref{eq:path_coeff} are equal to \( 1 \).

  Let \( n\ge2 \) and assume that \eqref{eq:path_coeff} holds for
  \( n-1 \). Recall that  \( \pp_n=(2,2)\vee \pp_{n-1} \).
  Let \( \lambda \vdash n \) and \( 1\le k\le n \).
  Note that if \( k \not\in \lambda \), then both
  sides of \eqref{eq:path_coeff} are equal to \( 0 \). Thus, we assume
  that \( k\in \lambda \), and let \( \mu \) be the partition obtained
  from \( \lambda \) by removing a part \( k \) and adding a part
  \( k-1 \).

  If \( k=1 \), then by \Cref{lem:path_attach} and the
  induction hypothesis, we have
  \begin{align*}
    c_{\lambda,1}(\pp_n;q)
    &=\sum_{j=2}^{n-1}\frac{[j]_q-1}{[j]_q}c_{\mu,j}(\pp_{n-1};q)\\
    &=\sum_{j=2}^{n-1}([j]_q-1)\sum_{\substack{\alpha\vDash (n-1)-j\\
        \Sort(\alpha \cup (j) )=\mu}}\prod_{i=1}^{\ell(\alpha)}([\alpha_i]_q-1)
    =\sum_{\substack{\alpha\vDash n-1\\ \Sort(\alpha \cup (1))=\lambda}}
    \prod_{i=1}^{\ell(\alpha)}([\alpha_i]_q-1),
  \end{align*}
  as desired. If \( 1<k\leq n \), then, again, by
  \Cref{lem:path_attach} and the induction hypothesis, we have
  \[
        \frac{c_{\lambda,k}(\pp_n;q)}{[k]_q}=\frac{c_{\mu,k-1}(\pp_{n-1};q)}{[k-1]_q}
                                      =\sum_{\substack{\alpha\vDash (n-1)-(k-1)\\ \Sort(\alpha\cup (k-1))=\mu}}
    \prod_{i=1}^{\ell(\alpha)}([\alpha_i]_q-1)
                                      =\sum_{\substack{\alpha\vDash n-k\\ \Sort(\alpha\cup (k))=\lambda}}
    \prod_{i=1}^{\ell(\alpha)}([\alpha_i]_q-1).
  \]
  Therefore, \eqref{eq:path_coeff} holds for all \( n\geq 1 \).
\end{proof}

\subsection{The path case for \( S \)}
\label{sec:path-case-2}

In this subsection, we show the following proposition.

\begin{prop}\label{pro:2}
  We have
\[
  S_{\pp_n}(x;q)=E_{\pp_n}(x;q).
\]
\end{prop}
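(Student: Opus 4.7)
The plan is to prove the proposition by showing that both $S_{\pp_n}(x;q)$ and $E_{\pp_n}(x;q)$ coincide with the common quantity
\[
    I_n(x;q) := \sum_{k=1}^n (-1)^{k-1}\, s_{(k)}(x)\, X_{\pp_{n-k}}(x;q),
\]
establishing the two equalities separately.

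For $S_{\pp_n} = I_n$, I apply \Cref{lem:skew_P_tab} with $\mu=(k)$ and $\mm=\pp_{n-k}$ to expand each $s_{(k)} X_{\pp_{n-k}}$ as a sum over skew $P$-tableaux of shape $\lambda/(k)$; this reduces the equality to exhibiting a sign-reversing, $\inv$-preserving involution $\phi$ on the signed disjoint union
$\bigsqcup_{k\ge 1}\{(k,T):T\in\PT_{\pp_{n-k}}(\lambda/(k))\}$ whose fixed points correspond to $\PT'_{\pp_n}(\lambda)$. The rule is: (i) if $(1,k+1)\in\lambda$ and $T(1,k+1)=1$, set $\phi(k,T)=(k+1,T^+)$, where $T^+$ deletes the cell $(1,k+1)$ and decrements every remaining entry by one; (ii) otherwise, if $k\ge 2$, set $\phi(k,T)=(k-1,T^-)$, where $T^-$ increments every entry by one and places the value $1$ at cell $(1,k)$. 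The row condition in $\pp_{n-k}$ forces $T(1,k+2)\ge 3$ whenever $T(1,k+1)=1$, which guarantees $\phi^2=\mathrm{id}$. The unpaired elements are precisely $(1,T)$ with $T(1,2)\ne 1$ or $(1,2)\notin\lambda$, and these biject to $\PT'_{\pp_n}(\lambda)$ by reinserting the cell $(1,1)$ with value $1$ and incrementing all other entries. The bijection preserves $\inv$ since the pair $(1,2)$ never contributes to $\inv_{\pp_n}$ (the value $2$ cannot lie strictly above the value $1$ at $(1,1)$), while every other pair $(i,i+1)$ corresponds to a pair $(i-1,i)$ in the original skew tableau with the same relative position.

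For $E_{\pp_n} = I_n$, I pass to generating functions in a formal variable $t$. Writing $\tilde E(t) = \sum_{k\ge 1} e_k t^k$, $E(t)=1+\tilde E(t)$, $E_q(t) = \sum_{k\ge 1}[k]_q e_k t^k$, and $Q(t) = \sum_{j\ge 2}([j]_q-1) e_j t^j$, \Cref{prop:path_formula} and \eqref{eq:X=g_intro} yield
\[
    \sum_{n\ge 0} E_{\pp_n}(x;q)\, t^n = \frac{\tilde E(t)}{1-Q(t)}, \qquad \sum_{m\ge 0} X_{\pp_m}(x;q)\, t^m = 1 + \frac{E_q(t)}{1-Q(t)}.
\]
The key algebraic identity $1 - Q(t) + E_q(t) = E(t)$, immediate from $[k]_q = 1 + ([k]_q - 1)$ after separating the $k=1$ term, simplifies the second expression above to $E(t)/(1-Q(t))$. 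Combining this with the classical $\sum_{k\ge 1}(-1)^{k-1} h_k t^k = \tilde E(t)/E(t)$ (equivalent to $E(t)H(-t) = 1$), I obtain
\[
    \sum_n I_n(x;q)\, t^n = \frac{\tilde E(t)}{E(t)}\cdot \frac{E(t)}{1-Q(t)} = \frac{\tilde E(t)}{1-Q(t)} = \sum_n E_{\pp_n}(x;q)\, t^n,
\]
and extracting coefficients of $t^n$ gives $E_{\pp_n} = I_n$.

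The main obstacle is rigorously verifying that $\phi$ is a well-defined involution: both the forward and backward operations must produce valid skew $P$-tableaux for the appropriate Hessenberg function, and the two rules must be mutually inverse. This reduces to a careful case analysis of the row and column conditions near the cells $(1,k)$ and $(1,k+1)$, using the fact that in $\pp_m$ any row must increase by at least two and any column can decrease by at most one per step.
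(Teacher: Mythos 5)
Your proposal is correct, and it takes a genuinely different route from the paper. The paper proves $E_{\pp_n}=S_{\pp_n}$ by first summing Proposition~\ref{prop:path_formula} over $k$, then expanding the $e$-expansion in Schur functions via the Kostka numbers \eqref{eq:Kostka}, and finally matching the Schur coefficient $\sum_{T\in\PT'_{\pp_n}(\lambda)}q^{\inv_{\pp_n}(T)}$ against the resulting weighted Kostka sum by showing both satisfy the same recursion indexed by vertical strips $\lambda/\mu$ (Proposition~\ref{pro:1}), the key step being a bijection from $\PT'_{\pp_n}(\lambda)\setminus\{S\}$ to pairs $(T',j)$. You instead introduce the intermediate $I_n=\sum_{k\ge1}(-1)^{k-1}h_k X_{\pp_{n-k}}$ and split the task: the equality $E_{\pp_n}=I_n$ becomes a two-line generating-function manipulation combining Proposition~\ref{prop:path_formula}, the path formula for $g_{\pp_n,\cdot}$ from \eqref{eq:g_k = F|_Lambda^k}, and the classical $E(t)H(-t)=1$; the equality $S_{\pp_n}=I_n$ becomes a sign-reversing involution on pairs $(k,T)$ with $T\in\PT_{\pp_{n-k}}(\lambda/(k))$, enabled by Lemma~\ref{lem:skew_P_tab} (which the paper proves in Section~5 but does not reuse in Section~6). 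Your involution, which slides the entry $1$ between the end of the horizontal hook and the top of its column while shifting the label set, is substantially simpler than the paper's vertical-strip bijection, and I have checked that the case analysis you sketch closes up: the $\pp$-row condition $T(1,k+1)<_{\pp}T(1,k+2)$ forces $T(1,k+2)\ge3$ when $T(1,k+1)=1$, the $\inv$ statistic is untouched because the entry $1$ always sits in the top row so the pair $(1,2)$ never contributes, and the fixed points $(1,T)$ with $T(1,2)\ne1$ biject with $\PT'_{\pp_n}(\lambda)$ by the insertion of $(1,1)\mapsto1$. The trade-off is that the paper's route yields the explicit recursion \eqref{eq:9} for $L(\lambda)$ as a byproduct, while yours is shorter and isolates the $q$-algebra from the tableau combinatorics.
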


Summing \eqref{eq:3} over all \( k\in[n] \) gives
  \[
    E_{\pp_n}(x;q)
    =\sum_{\alpha \vDash n} e_\alpha(x)
    \prod_{i=2}^{\ell(\alpha)}\left(\left[\alpha_i\right]_q-1\right).
  \]
  By \eqref{eq:Kostka},
  we have
  \[
    E_{\pp_n}(x;q)
    = \sum_{\lambda\vdash n}  \left( \sum_{\alpha \vDash n} K_{\lambda',\alpha} 
    \prod_{i=2}^{\ell(\alpha)}\left(\left[\alpha_i\right]_q-1\right) \right)
    s_\lambda(x).
  \]
  Hence, by the definition of \( S_{\pp_n}(x;q) \) in
  \eqref{eq:S_def_intro}, to prove \Cref{pro:2}, it suffices to show
  that for all partitions \( \lambda \), we have
  \begin{equation}\label{eq:2}
    \sum_{T\in \PT'_{\pp_{|\lambda|}}(\lambda)} q^{\inv_{\pp_{|\lambda|}}(T)}
    =  \sum_{\alpha \vDash |\lambda|} K_{\lambda',\alpha} 
    \prod_{i=2}^{\ell(\alpha)}\left(\left[\alpha_i\right]_q-1\right).
  \end{equation}
  Let \( L(\lambda) \) and \( R(\lambda) \) denote the left-hand side and
  the right-hand side of \eqref{eq:2}, respectively.

  By the definition of \( R(\lambda) \), we have
  \begin{equation}\label{eq:9}
    R(\lambda) =
    \begin{cases}
      1+\sum_{\lambda/\mu:  \text{ nonempty vertical strip}} ([|\lambda/\mu|]_q-1) R(\mu)
      & \mbox{if \( \lambda=(1^n) \) for some \( n\ge0 \)},\\
      \sum_{\lambda/\mu: \text{ nonempty vertical strip}} ([|\lambda/\mu|]_q-1) R(\mu)
      & \mbox{otherwise.}
    \end{cases}
  \end{equation}
  Note that \eqref{eq:9} completely determines \( R(\lambda) \) for
  all partitions \( \lambda \).
  Hence, it suffices to show that
  \( L(\lambda) \) also satisfies \eqref{eq:9} with \( R \) replaced
  by \( L \). Hence, it remains to show the following proposition.

  \begin{prop}\label{pro:1}
  For any partition \( \lambda \),
  \[
    L(\lambda) =
    \begin{cases}
      1+\sum_{\lambda/\mu: \text{ nonempty vertical strip}} ([|\lambda/\mu|]_q-1) L(\mu)
      & \mbox{if \( \lambda=(1^n) \) for some \( n\ge0 \)},\\
      \sum_{\lambda/\mu: \text{ nonempty vertical strip}} ([|\lambda/\mu|]_q-1) L(\mu)
      & \mbox{otherwise.}
    \end{cases}
  \]
  \end{prop}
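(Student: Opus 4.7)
My plan is to prove Proposition~\ref{pro:1} by establishing a canonical peeling procedure on $\PT'_{\pp_n}(\lambda)$ that realizes the stated recursion, in direct analogy with an argument that establishes the same recursion for $R$. Recall that for $R$ the recursion arises naturally from the SSYT interpretation: given a semistandard Young tableau of shape $\lambda'$ with content $\alpha$, the cells of the maximal entry $\ell(\alpha)$ form a horizontal strip of $\lambda'$, which corresponds to a vertical strip $\lambda/\mu$ in $\lambda$; peeling this strip off separates out the factor $[|\lambda/\mu|]_q - 1$ (the last factor of $\prod_{i\ge 2}([\alpha_i]_q - 1)$) and leaves a smaller SSYT contributing $R(\mu)$. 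The $+1$ in the $\lambda = (1^n)$ case corresponds to the content $\alpha = (n)$, for which $\ell(\alpha) = 1$ so the product is empty.

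To mirror this combinatorially on $P$-tableaux, I would, for each $T \in \PT'_{\pp_n}(\lambda)$ with $\lambda \ne \emptyset$, identify a canonical sub-partition $\mu(T) \subsetneq \lambda$ satisfying: (i) the cells of $\lambda/\mu(T)$ carry precisely the top entries $\{n-k+1,\ldots,n\}$, (ii) $\lambda/\mu(T)$ is a nonempty vertical strip, and (iii) the restriction $T|_{\mu(T)}$, after relabeling entries by $1,\ldots,n-k$, lies in $\PT'_{\pp_{n-k}}(\mu(T))$, where $k = |\lambda/\mu(T)|$. The structural rigidity of $P$-tableaux for $\pp_n$ is crucial: the row constraint (entries increase by at least $2$ along rows) and the column constraint (entries decrease by at most $1$ down columns) together force that whenever entry $n$ sits at cell $(r,c)$, one has $c = \lambda_r$ and the cells $(r+1,c), (r+2,c), \ldots$ in column $c$ (if present) are forced to carry $n-1, n-2, \ldots$ in order. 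This rigidity drastically limits where consecutive top blocks can sit and makes a canonical choice of $\mu(T)$ plausible.

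The crucial step is then to verify the weight identity
\[
\sum_{T \mapsto (\mu, T|_\mu)} q^{\inv_{\pp_n}(T) - \inv_{\pp_{n-k}}(T|_\mu)} \;=\; [k]_q - 1
\]
for every fiber $(\mu, T|_\mu)$ of the decomposition, together with the observation that the unique fully-\emph{identity} tableau $T(i,1) = i$ (realizable only when $\lambda = (1^n)$, since it is the only way to place $1,\ldots,n$ so that no pair $(j,j+1)$ has $j+1$ above $j$) is the single leftover accounting for the $+1$ term. The $-1$ inside $[k]_q - 1$ reflects that a specific ``identity'' strip arrangement in each fiber is not counted here but is instead reassigned to a deeper peeling with strictly smaller $\mu' \subsetneq \mu$, preventing double-counting.

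The main obstacle will be pinning down the canonical rule for $\mu(T)$ and proving the weight identity precisely. A given tableau typically admits several sub-partitions $\mu$ with $\lambda/\mu$ a vertical strip whose cells contain the top entries, and the rule must assign each $T$ to exactly one fiber so that the strip arrangements contributing to a fixed $(\mu, T|_\mu)$ have $q$-weight generating function exactly $q + q^2 + \cdots + q^{k-1}$. I expect this will require a careful case analysis based on the placement of the smallest strip entry $n-k+1$, together with how strip-internal inversions combine with crossings against the neighboring $\mu$-cells (in particular the $\mu$-cells immediately to the left of, and above, each strip cell) to produce the precise generating function $[k]_q - 1$.
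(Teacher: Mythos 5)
Your overall strategy—peeling off a canonical vertical strip carrying the top entries and matching the resulting recursion, with the identity tableau $S(i,1)=i$ supplying the $+1$—is exactly the paper's approach. However, your proposal explicitly stops short of the two steps that constitute the actual content of the proof, so there is a genuine gap: you never specify the canonical rule for $\mu(T)$, and you never prove the weight identity. These are not cleanup items; they are the proof.

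Concretely, the paper resolves the non-uniqueness you worry about as follows. Given $T\ne S$, let $m$ be the largest entry with $m-1$ below $m$ (this exists since $T\ne S$). By the structural lemma you cite, the entries $n,n-1,\dots,m+1$ already form a vertical strip from bottom to top. One then takes the \emph{smallest} $\ell\le m$ such that $n,\dots,m+1,\ell,\ell+1,\dots,m$ also form a vertical strip $\alpha$ (read bottom to top); one checks $\ell\ge2$, removes $\alpha$ to get $T'$ of shape $\mu$, and records the statistic $j=|\{i\in\{\ell,\dots,m\}: i-1 \text{ is below } i\}|$. The pair $(T',j)$ determines $T$, the maximality of $m$ and minimality of $\ell$ force $1\le j\le|\alpha|-1$, and the inverse map is constructed explicitly by filling $\alpha$ bottom-up and then top-down around a breakpoint determined by $j$. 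This bijection onto pairs $(T',j)$ with $j\in\{1,\dots,|\alpha|-1\}$ is what produces $[k]_q-1 = q+q^2+\cdots+q^{k-1}$.

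One misconception in your sketch worth correcting: you anticipate that "strip-internal inversions combine with crossings against the neighboring $\mu$-cells" in a complicated way. For the path $\pp_n$, $\inv_{\pp_n}(T)$ counts only consecutive pairs $(i,i+1)$ with $i+1$ above $i$, and the strip entries $\{\ell,\dots,n\}$ are a contiguous block of top values. The inversion count therefore splits cleanly as $\inv_{\pp_n}(T)=\inv_{\pp_{|\mu|}}(T')+j$ with no cross-terms; the only "boundary" pair $(\ell-1,\ell)$ is already absorbed into $j$. So the weight identity is immediate from the construction, not a subtle accounting of crossings. The real work is in the existence and minimality argument for $\ell$ and in verifying the bijection is well-defined both ways, neither of which your proposal addresses.
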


  To prove \Cref{pro:1}, we need some lemmas. Recall \Cref{def:inv}.
  Note that for \( i,j\in [n] \) with \( i\ne j \), we have
  \( i<_{\pp_n}j \) if and only if \( j\ge i+2 \), or equivalently,
  \( j\not<_{\pp_n}i \) if and only if \( j\ge i-1 \). Thus, for
  \( T\in \PT'_{\pp_n}(\lambda) \), any consecutive entries in each
  row increase by at least \( 2 \), and any consecutive entries in
  each column either decrease by \( 1 \) or increase. For example, see
  \Cref{fig:2,fig:1} (ignore the colors at this moment).

\begin{lem}\label{lem:PT-prop1}
  Let \( \lambda\vdash n \) and \( T\in \PT'_{\pp_n}(\lambda) \). If
  \( T(i,j)>T(i+d,j) \) for some \( d>0 \), then for all
  \( t\in [d] \), we have \( T(i+t,j) = T(i+t-1,j)-1 \).
\end{lem}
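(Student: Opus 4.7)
The plan is to argue by contradiction by tracking the sequence $a_s := T(i+s,j)$ for $s = 0, 1, \dots, d$ along column $j$. As noted in the paragraph preceding the lemma, the column rule for a $P$-tableau with respect to $\pp_n$ says that each consecutive pair falls into exactly one of two cases: ``type D,'' meaning $a_s = a_{s-1} - 1$, or ``type U,'' meaning $a_s > a_{s-1}$. The goal is to show that under the hypothesis $a_0 > a_d$, every step in $[d]$ is of type D.

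Suppose otherwise and let $t \in [d]$ be the smallest index for which the step $a_{t-1} \to a_t$ is of type U. By minimality, $a_s = a_0 - s$ for $s = 0, 1, \dots, t-1$, so these entries already occupy the whole integer interval $\{a_0 - (t-1), \dots, a_0\}$. Combined with $a_t > a_{t-1} = a_0 - (t-1)$ and distinctness of entries within a column, this forces $a_t \ge a_0 + 1$.

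The heart of the argument is then an induction on $s \in \{t, t+1, \dots, d\}$ showing $a_s \ge a_0 + 1$. For the inductive step, if $a_{s-1} \ge a_0 + 2$, then both a type D step ($a_s = a_{s-1} - 1 \ge a_0 + 1$) and a type U step ($a_s > a_{s-1} \ge a_0 + 2$) preserve the inequality. The only delicate case is $a_{s-1} = a_0 + 1$: here the type D option would give $a_s = a_0$, which is forbidden because $a_0$ already appears at $(i,j)$ in the same column, so the step must be of type U and we get $a_s \ge a_0 + 2$. Applied at $s = d$, this gives $a_d \ge a_0 + 1 > a_0$, contradicting the hypothesis. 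Hence no type U step exists in $[d]$, which is exactly the claim.

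There is no real obstacle here beyond organizing the case analysis; the argument hinges on the single observation that once one ``jumps up'' in a $\pp_n$-column, the already-occupied interval $\{a_0 - (t-1), \dots, a_0\}$ blocks the walk from ever descending back down to or below $a_0$.
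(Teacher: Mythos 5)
Your proof is correct. Since the paper dispenses with the proof in one line (``This can be proved easily by induction on $d$''), there is no detailed argument in the paper to compare against, but your reasoning is sound and in the same elementary spirit. The contradiction setup is valid: you isolate the first upward step at index $t$, use the distinctness of the column entries $a_0, \dots, a_{t-1}$ (which by minimality fill the interval $\{a_0-(t-1),\dots,a_0\}$) to force $a_t \ge a_0 + 1$, and then the invariant $a_s \ge a_0 + 1$ for $s \ge t$ is maintained because a type~D step from $a_0+1$ would land on the occupied value $a_0$. This traps the walk strictly above $a_0$, contradicting $a_d < a_0$. The only stylistic remark is that a direct induction on $d$ (as the paper suggests) is slightly shorter: show $T(i,j) > T(i+1,j)$ by contradiction — if instead $T(i+1,j) > T(i,j) > T(i+d,j)$, the inductive hypothesis applied to rows $i+1,\dots,i+d$ forces a run of consecutive decrements whose range of values contains $T(i,j)$, violating distinctness — then $T(i+1,j) = T(i,j)-1$ and the hypothesis again passes down to rows $i+1,\dots,i+d$. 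Both proofs hinge on the same ``already-occupied interval'' observation, so the difference is organizational rather than conceptual.
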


\begin{proof}
This can be proved easily by induction on \( d \).
\end{proof}

\begin{lem}\label{lem:PT-prop2}
  Let \( \lambda\vdash n \) and \( T\in \PT'_{\pp_n}(\lambda) \).
  There are no integers \( r,s\in T \) such that \( r<s \) and \( s \)
  is weakly above \( r \) and strictly to the left of \( r \).
\end{lem}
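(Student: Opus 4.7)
The plan is to argue by contradiction: assume we have entries $r<s$ in $T$ with $s$ at position $(i_s,j_s)$ and $r$ at $(i_r,j_r)$ satisfying $i_s\le i_r$ and $j_s<j_r$, and derive a contradiction. The easy case $i_s=i_r$ is immediate: since consecutive row entries increase by at least $2$, the entry $s$ lying strictly left of $r$ in the same row forces $s<r$, contradicting $s>r$. So the real content is the case $i_s<i_r$, which I would handle by extracting the structure of columns $j_s$ and $j_r$ between rows $i_s$ and $i_r$ using \Cref{lem:PT-prop1}.

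Set $D:=i_r-i_s\ge 1$ and $C:=j_r-j_s\ge 1$. The row condition $T(i,j_r)\ge T(i,j_s)+2C$ applied at $i=i_s$ gives $T(i_s,j_r)\ge s+2C>s>r=T(i_r,j_r)$, so by \Cref{lem:PT-prop1} the column-$j_r$ entries from row $i_s$ to row $i_r$ form a run decreasing by $1$, namely $r+D,r+D-1,\dots,r+1,r$. Symmetrically, the row condition at $i=i_r$ gives $T(i_r,j_s)\le r-2C<r<s=T(i_s,j_s)$, so the column-$j_s$ entries from row $i_s$ to row $i_r$ are $s,s-1,\dots,s-D$. (All cells in these ranges belong to $\lambda$ since $\lambda$ is a partition and its $j_r$-th column has length at least $i_r$.)

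The contradiction then comes from comparing two inequalities. On one hand, since the top entry of the column-$j_r$ run is $T(i_s,j_r)=r+D$ and the row condition forces $T(i_s,j_r)\ge s+2C\ge s+2$, we get $D\ge s-r+2$. On the other hand, $s$ lies in column $j_s$, so by distinctness of the entries $s$ cannot appear anywhere in column $j_r$; hence $s\notin\{r,r+1,\dots,r+D\}$, and since $s>r$ this forces $s>r+D$, i.e., $D<s-r$. These two bounds on $D$ are incompatible, which is the desired contradiction.

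I do not expect a serious obstacle: the whole argument is essentially a clean application of \Cref{lem:PT-prop1} to the two columns containing $r$ and $s$, together with the row-growth bound of $2$. The only thing to be mindful of is making sure each invoked cell lies in the diagram $\lambda$, but this is automatic from $j_r\le \lambda_{i_r}$ and the nesting of columns in a partition. Once the two columns are pinned down as arithmetic progressions, the ``pigeonhole'' step forcing $s>r+D$ is immediate, and the contradiction with the row-growth inequality $r+D\ge s+2$ closes the proof.
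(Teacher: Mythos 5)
Your proof is correct and uses essentially the same approach as the paper: apply \Cref{lem:PT-prop1} to the column containing \(r\) to pin down the entries in rows \(i_s,\dots,i_r\) as the consecutive run \(r+D, r+D-1,\dots,r\), then derive a contradiction from the fact that \(s\) lies strictly between \(r\) and the top of this run (so it would have to appear in that column, impossible by distinctness). One small remark: the ``symmetrically'' paragraph, which applies \Cref{lem:PT-prop1} to column \(j_s\), is never used in your final contradiction and can be deleted.
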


\begin{proof}
 Suppose that there exist such \( r \) and \( s \). Then \( s \) must
 be strictly above \( r \) because all entries in the row containing
 \( r \) and to the left of \( r \) are smaller than \( r \). Thus,
 \( T(a,b)=s \) and \( T(a',b')=r \) for some \( a<a' \) and
 \( b<b' \). Let \( t=T(a,b') \). Since \( r<s=T(a,b)<T(a,b')=t \), we have
 \( T(a,b')=t>r=T(a',b') \). Then, by \Cref{lem:PT-prop1},
 we have
 \[
   (T(a,b'),T(a+1,b'),\dots,T(a',b')) = (t,t-1,\dots,r).
 \]
This implies that \( s=T(j,b') \) for some \( a\le j\le a' \), which
 is impossible because \( T(a,b)=s \). Therefore, there are no such
 \( r \) and \( s \).
\end{proof}

Now, we are ready to show \Cref{pro:1}.

\begin{proof}[Proof of \Cref{pro:1}]
  Suppose \( \lambda\vdash n \). Let \( S\in \PT'_{\pp_n}((1^n)) \) be
  the tableau given by \( S(i,1)=i \) for \( i\in [n] \). Let
  \( \mathcal{A} \) be the set of pairs \( (T',j) \) of
  \( T'\in \PT'_{\pp_{|\mu|}}(\mu) \) and
  \( 1\le j\le |\lambda/\mu|-1 \) for some partition \( \mu \) such
  that \( \lambda/\mu \) is a vertical strip. We will construct a
  bijection
  \( \phi:\PT'_{\pp_n}(\lambda)\setminus\{ S \} \to \mathcal{A} \)
  such that if \( \phi(T) = (T',j) \), then
  \begin{equation}\label{eq:11}
    \inv_{\pp_n}(T) = \inv_{\pp_{|\mu|}}(T') +j,
  \end{equation}
 where \( \mu \)
  is the shape of \( T' \). Since \( S\in \PT'_{\pp_n}(\lambda) \) if
  and only if \( \lambda=(1^n) \), and \( q^{\inv_{\pp_n}(S)}=1 \), it
  suffices to construct such a map.

  Let \( T\in \PT'_{\pp_n}(\lambda)\setminus\{ S \} \). Let \( m \) be
  the largest entry of \( T \) such that \( m-1 \) is below \( m \).
  Such \( m \) always exists, by the definition of
  \( \PT'_{\pp_n}(\lambda) \) and the fact that \( T\neq S \). By
  \Cref{lem:PT-prop2}, the entries \( n,n-1,\dots,m+1 \) form a
  vertical strip in this order from bottom to top, and \( m \) is
  strictly above and weakly to the right of \( m+1 \).

  We claim that there exists an integer \( 2\le \ell\le m \) such that
  \begin{equation}\label{eq:10}
    n,n-1,\dots,m+2,m+1, \ell,\ell+1,\dots,m-1,m
  \end{equation}
  form a vertical strip, say \( \alpha \), in this order from bottom
  to top. In other words, \( \alpha \) satisfies the two conditions:
  no two cells of \( \alpha \) lie in the same row and
  \( \lambda/\alpha \) is a partition.

  If \( m \) and \( m+1 \) are in different columns, then
  \( n,n-1,\dots,m+1,m \) form a vertical strip, hence \( \ell=m \)
  satisfies the conditions. Suppose that \( m \) and \( m+1 \) are in
  the same column, say \( T(a,b)=m \) and \( T(a+t,b)=m+1 \). Since
  \( T(a+i,b)<T(a,b) \) for \( i\in [t-1] \), by applying
  \Cref{lem:PT-prop1} to the cells
  \( (a,b),(a+1,b),\dots,(a+t-1,b) \), we obtain \( T(a+i,b)=m-i \)
  for \( i\in [t-1] \). Since the entries in each row of \( T \) are
  strictly increasing, there are no cells to the right of any integer
  in \( \alpha \) in the same row. Moreover, by \Cref{lem:PT-prop1},
  there are no cells in \( \lambda\setminus\alpha \) that is below any
  cell of \( \alpha \) in the same column. Hence, \( \alpha \) is a
  vertical strip. Thus, \( \ell=m-t+1 \) satisfies the conditions, and
  the claim holds. See \Cref{fig:2}. Note that since \( T(1,1)=1 \),
  we must have \( \ell\ge2 \).

 \begin{figure}
    \centering
\begin{ytableau}
1 & 3 & 8 & *(gray!30)11\\
2 & 4 & 10\\
5 & 7 & *(gray!30)12\\
6 & 9\\
*(gray!30)13
\end{ytableau}
\qquad \qquad 
\begin{ytableau}
1 & 3 & *(gray!30)11\\
2 & 4 & 10\\
5 & 7 & 9\\
6 & 8 & *(gray!30)12\\
*(gray!30)13
\end{ytableau}
\caption{A \( P \)-tableau \( T_1\in \PT'_{\pp_n}((4,3,3,2,1)) \) on
  the left and \( T_2\in \PT'_{\pp_n}((3,3,3,3,1)) \) on the right,
  where \( n=13 \). For both \( T_1 \) and \( T_2 \), we have \( m=11 \),
  and the cells with entries \( n,n-1,\dots,m \) are colored gray. In
  \( T_1 \), the entries \( m \) and \( m+1 \) are in different
  columns, hence \( n,n-1,\dots,m \) form a vertical strip. In
  \( T_2 \), the entries \( m \) and \( m+1 \) are in the column, and
  \( n,n-1,\dots,m \) do not form a vertical strip. However,
  \( 13=n,n-1,\dots,m-t=9 \) form a vertical strip.}
  \label{fig:2}
\end{figure}

  Now, let \( \ell \) be the smallest integer such that \eqref{eq:10}
  is a vertical strip, say \( \alpha \). We define
  \( \phi(T) = (T',j) \), where \( T' \) is the tableau obtained from
  \( T \) by removing the cells containing the integers in
  \eqref{eq:10} and
  \[
    j = |\{i\in \{ \ell, \dots, m \}: i-1 \text{ is below } i \text{ in } T\}|
    = \begin{cases}
    m-\ell+1  & \mbox{if \( \ell-1 \) is below \( \ell \) in \( T \)},\\
    m-\ell  & \mbox{otherwise.}
    \end{cases}
  \]
  Note that by construction, \eqref{eq:11} holds. See \Cref{fig:1}.

  \begin{figure}
    \centering
\begin{ytableau}
1 & 4 & 7 & 12 & *(red!20)18\\
3 & 5 & 10 & 14 & *(red!20)17\\
2 & 9 & 13\\
6 & 8 & *(red!20)16\\
11 & \textbf{15} & *(yellow!70)19\\
*(yellow!70)20\\
*(yellow!70)21
\end{ytableau} \qquad \qquad 
\begin{ytableau}
1 & 4 & 7 & 12 & *(red!20)18\\
3 & 5 & 10 & 14 & *(red!20)17\\
2 & 9 & \textbf{15}\\
6 & 8 & *(yellow!70)16\\
11 & 13 & *(yellow!70)19\\
*(yellow!70)20\\
*(yellow!70)21
\end{ytableau}
\caption{A \( P \)-tableau \( T_1\in \PT'_{\pp_n}(\lambda) \) on the
  left and \( T_2\in \PT'_{\pp_n}(\lambda) \) on the
  right, where \( n=21 \) and \( \lambda=(5,5,3,3,3,1,1) \). For both
  \( T_1 \) and \( T_2 \), we have \( m=18 \) and \( \ell=16 \). The
  cells with an entry \( \ell \le i\le n \) are colored red or
  yellow, depending on whether \( i-1 \) is below \( i \) or not. The
  integer \( j \) is the number of red cells, which is \( 3 \) for
  \( T_1 \) and \( 2 \) for \( T_2 \).}
    \label{fig:1}
  \end{figure}

  We first show that \( (T',j)\in \mathcal{A} \), that is,
  \( \lambda\setminus \alpha \) is a partition and
  \( 1\le j \le |\alpha|-1 \).

  Since \( m-1 \) is below \( m \), we have \( j\ge1 \). By
  the definition of \( j \), we have
  \( j\le m-\ell+1 \le n-\ell+1 = |\alpha| \). Thus, it suffices to
  show that \( j\ne |\alpha| \). If \( j=|\alpha| \), then
  \( \ell-1,\ell, \ell+1,\dots,n \) form a vertical strip in this
  order from bottom to top, which is a contradiction to the minimality
  of \( \ell \). Thus, \( 1\le j\le |\alpha|-1 \), and we obtain
  \( (T',j)\in \mathcal{A} \).

  It remains to show that \( \phi \) is a bijection. To do this,
  consider an arbitrary element \( (T',j)\in \mathcal{A} \). Let
  \( \mu \) be the shape of \( T' \) and \( \alpha= \lambda/\mu \).
  Let \( c \) be the lowest cell of \( \alpha \) that is in or above
  the row containing the integer \( |\mu| \) in \( T' \). If there is
  no such cell, we set \( c \) to be the topmost cell of
  \( \lambda \). We label the cells in \( \alpha\setminus\{c\} \) with
  \( 1,2,\dots,|\alpha|-1 \) from top to bottom. Then we define
  \( T \) as the tableau obtained from \( T' \) by filling the cells
  in \( \alpha \) with \( |\mu|+1,|\mu|+2, \ldots, n \) in this order
  as follows: fill the cell labeled \( j \) and all cells above it
  from bottom to top, and then fill the remaining cells from top to
  bottom. It is easy to check that the map \( (T',j)\mapsto T \) is
  the inverse of \( \phi \), which completes the proof.
  \end{proof}

\section{Sink theorem}\label{sec: sink}
In this section, we give an analog of Stanley's sink
theorem~\cite[Theorem~3.3]{Stanley1995} for \( S_\mm(x;q) \).

For a graph \( G \) on \( [n] \), an \emph{acyclic orientation} of
\( G \) is an assignment of a direction to each edge such that the
orientation induces no directed cycles. We denote by \( \AO_G \) the
set of all acyclic orientations of \( G \). For an acyclic orientation
\( \theta \) of \( G \), a vertex \( v \) of \( G \) is called a
\emph{sink} of \( \theta \) if the direction of each edge incident
with \( v \) is toward \( v \). We let \( \sink(\theta) \) be the
number of sinks of \( \theta \). Since the graph \( G \) is finite, we
have \( \sink(\theta)\ge 1 \) (we assume \( n\ge1 \)). In addition, we
define
\[
  \asc_G(\theta) = |\{ (i,j)\in E(G) : \mbox{\( i<j \) and the direction in \( \theta \) is toward
  \( j \)} \} |.
\]

In \cite{Stanley1995}, Stanley provides a relationship between the chromatic symmetric function
of a graph \( G \) and acyclic orientations of \( G \). Moreover, Shareshian and Wachs generalized
the connection to the chromatic quasisymmetric functions in \cite{Shareshian2016}.
\begin{thm}[\cite{Stanley1995,Shareshian2016}]\label{thm:sink_theorem}
  For a Hessenberg function \( \mm\in \HH_n \), let
  \[
    X_\mm(x;q) = \sum_{\lambda\vdash n} c_\lambda(q) e_\lambda(x).
  \]
  Then we have that for \( \ell\ge 1 \),
  \[
    \sum_{\ell(\lambda)=\ell} c_\lambda(q)
      = \sum_{\substack{\theta\in\AO_\mm \\ \sink(\theta)=\ell}} q^{\asc_\mm(\theta)}.
  \]
\end{thm}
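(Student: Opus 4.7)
The plan is to adapt Stanley's original proof from \cite{Stanley1995} to the \(q\)-refined setting of Shareshian--Wachs \cite{Shareshian2016}. The key idea is to decompose proper colorings according to the acyclic orientations they induce, and then pass to a principal specialization that separates contributions by the statistics \(\ell(\lambda)\) and \(\sink(\theta)\).

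First, I would group proper colorings by induced acyclic orientation. Each \(\kappa \in \calC(\mm)\) determines an orientation \(\theta_\kappa \in \AO_\mm\) by directing each edge \(\{i,j\}\) with \(i<j\) from the endpoint of smaller color to that of larger color, and one checks directly that \(\asc_\mm(\kappa) = \asc_\mm(\theta_\kappa)\). This yields
\[
X_\mm(x;q) = \sum_{\theta \in \AO_\mm} q^{\asc_\mm(\theta)} K_\theta(x),
\]
where \(K_\theta(x) := \sum_{\kappa : \theta_\kappa = \theta} x_\kappa\) is the generating function for strict \(P_\theta\)-partitions of the poset \(P_\theta\) obtained as the transitive closure of \(\theta\). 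Applying \(\omega\) and invoking \(P\)-partition duality, \(\omega K_\theta(x)\) becomes the generating function for weak \(P_\theta\)-partitions. Next, I would apply the specialization \(x_1 = \cdots = x_j = 1,\ x_i = 0\) for \(i > j\). On the left,
\[
\omega X_\mm(1^j;q) = \sum_{\lambda \vdash n} c_\lambda(q) h_\lambda(1^j) = \sum_{\lambda \vdash n} c_\lambda(q) \prod_i \binom{j + \lambda_i - 1}{\lambda_i},
\]
a polynomial in \(j\) of degree \(n\). On the right, the specialization becomes \(\sum_{\theta \in \AO_\mm} q^{\asc_\mm(\theta)} \Omega_{P_\theta}(j)\), where \(\Omega_{P_\theta}\) is the order polynomial of \(P_\theta\).

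Finally, I would compare coefficients of \(j^\ell\) on both sides. The crux is that each sink \(v\) of \(\theta\) is a maximal element of \(P_\theta\), and in the weak \(P_\theta\)-partition generating function these maximal elements contribute independent free variables, so that at leading order \(\Omega_{P_\theta}(j)\) carries a clean factor of \(j^{\sink(\theta)}\); after the basis change from \(\{h_\lambda(1^j)\}_\lambda\) to \(\{j^\ell\}\), the coefficient of \(j^\ell\) on the right is \(\sum_{\sink(\theta) = \ell} q^{\asc_\mm(\theta)}\), while on the left it is \(\sum_{\ell(\lambda) = \ell} c_\lambda(q)\). The main obstacle is precisely this basis-change step: showing that when \(\Omega_{P_\theta}(j)\) is expanded as a polynomial in \(j\), the degree structure matches \(\sink(\theta)\) in the required way. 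I would handle this by induction on \(n\), peeling off one sink at a time and using the multiplicativity of order polynomials for filters, which is essentially Stanley's classical argument. The novelty in the \(q\)-refined setting is minimal: the statistic \(\asc_\mm(\theta)\) depends only on \(\theta\) and is inert under both the decomposition and the specialization, so it is carried through transparently to yield the claimed identity.
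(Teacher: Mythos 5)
The plan breaks down at the coefficient-extraction step, and the student's own attempted patch rests on a false claim. Specifically, writing
\[
\omega X_\mm(1^j;q)=\sum_{\lambda\vdash n} c_\lambda(q)\,h_\lambda(1^j)
  =\sum_{\lambda\vdash n} c_\lambda(q)\prod_i\binom{j+\lambda_i-1}{\lambda_i},
\]
each summand is a polynomial in \(j\) of degree \(|\lambda|=n\), not \(\ell(\lambda)\). The coefficient of \(j^\ell\) therefore mixes contributions from partitions of many different lengths; it is not \(\sum_{\ell(\lambda)=\ell}c_\lambda(q)\). Indeed, all the polynomials \(h_\lambda(1^j)\) with \(\lambda\vdash n\) live in the \(n\)-dimensional space of degree-\(\le n\) polynomials vanishing at \(j=0\), while there are \(p(n)\) partitions, so these polynomials are linearly dependent for \(n\ge 5\); there is simply no linear functional on polynomials that recovers \(\sum_{\ell(\lambda)=\ell}c_\lambda\) from \(\sum_\lambda c_\lambda h_\lambda(1^j)\). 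On the other side, the claim that ``\(\Omega_{P_\theta}(j)\) carries a clean factor of \(j^{\sink(\theta)}\)'' is also false: take \(G\) to be a path \(a-b-c\) with the orientation \(b\to a\), \(b\to c\), so \(\sink(\theta)=2\); then \(P_\theta\) is the ``V''-poset and \(\Omega_{P_\theta}(j)=j(j+1)(2j+1)/6\), which has only a simple zero at \(j=0\). So the leading/lowest-term matching you rely on simply does not hold, and the inductive ``peel off a sink'' repair cannot be carried out via multiplicativity of order polynomials, since removing a sink from \(P_\theta\) is not a disjoint-union decomposition.

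The route the paper actually follows (for the analogous Theorem~\ref{thm:sink'_theorem}, explicitly modeled on Chow's proof of Theorem~\ref{thm:sink_theorem}) sidesteps these issues entirely. Rather than specialize \(x_i\mapsto 1\), it applies the algebra homomorphism \(\zeta:\Lambda\to\QQ(q)[t]\), \(e_i\mapsto t\), so that \(\zeta(f)=\sum_\ell\bigl(\sum_{\ell(\lambda)=\ell}c_\lambda\bigr)t^\ell\) for \(f=\sum c_\lambda e_\lambda\) exactly, with no degree mixing. Since \(\zeta(s_\lambda)=0\) unless \(\lambda\) is a hook and \(\zeta(s_{(k,1^{n-k})})=\sum_{i=1}^k(-1)^{k-i}\binom{k-1}{i-1}t^i\) (\Cref{lem:sum_c_eq_sum_a}), one only needs the hook coefficients of the Schur expansion, which Gasharov's \(P\)-tableau formula provides. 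Grouping hook-shape \(P\)-tableaux by the acyclic orientation they induce and counting them as in \Cref{lem:acyclic_orientation_PTab} (for the unprimed version one gets \(\binom{\sink(\theta)}{k}\) rather than \(\binom{\sink(\theta)-1}{k-1}\)) produces a binomial alternating sum that telescopes to \(\delta_{\sink(\theta),\ell}\). Note also that once you decompose \(X_\mm\) by orientations, the pieces \(K_\theta\) are only quasi-symmetric, so \(\zeta\) cannot be applied to them; this is a structural reason why the orientation decomposition does not interact well with the \(e\)-coefficient extraction you want, and why the Schur/\(P\)-tableau route is the more tractable one. If you want to salvage a specialization-based proof you would need a genuinely different linear functional than ``coefficient of \(j^\ell\),'' together with a correct identity for \(\Omega_{P_\theta}(j)\); as written, the proposal does not contain one.
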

In fact, Stanley proved the theorem for an arbitrary graph \( G \) when \( q=1 \).
However, the chromatic quasisymmetric function of \( G \) is generally not symmetric, and thus
the \( e \)-expansion of \( X_G(x; q) \) cannot be considered in general.
For this reason, Shareshian and Wachs focused only on natural unit interval graphs.

We will give an analog of \Cref{thm:sink_theorem} for \( S_\mm(x;q) \).
For \( \mm\in \HH_n \), let \( \AO'_\mm \) be the set of acyclic orientations
\( \theta \) of the graph \( \mm \) such that the vertex \( 1 \) is a sink of \( \theta \).
\begin{thm}\label{thm:sink'_theorem}
  For a Hessenberg function \( \mm\in \HH_n \), let
  \[
    S_\mm(x;q) = \sum_{\lambda\vdash n} c'_\lambda(q) e_\lambda(x).
  \]
  Then for \( \ell\ge 1 \), we have 
  \[
    \sum_{\ell(\lambda)=\ell} c'_\lambda(q)
      = \sum_{\substack{\theta\in\AO'_\mm \\ \sink(\theta)=\ell}} q^{\asc_\mm(\theta)}.
  \]
\end{thm}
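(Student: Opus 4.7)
The plan is to follow the master strategy of the paper: show that both sides, viewed as functions of $\mm \in \HH$, satisfy the restricted modular law, and agree on disjoint unions of path graphs; \Cref{thm: master} then concludes. Set $F_\ell(\mm) := \sum_{\ell(\lambda)=\ell} c'_\lambda(q)$ and $H_\ell(\mm) := \sum_{\theta \in \AO'_\mm,\,\sink(\theta)=\ell} q^{\asc_\mm(\theta)}$. Since extracting the coefficient $[e_\lambda] S_\mm(x;q)$ and summing over partitions of a fixed length is a linear functional on symmetric functions, \Cref{prop:restricted_modular_law_for_S} immediately implies that $F_\ell$ satisfies the restricted modular law. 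The substantive tasks are therefore to verify the restricted modular law for $H_\ell$ and to handle the base case on paths.

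For the restricted modular law for $H_\ell$, I would construct, for each modular triple $(\mm, \mm', \mm'')$ of type I or restricted type II, an explicit sink-preserving bijection on acyclic orientations that witnesses the identity $(1+q)\,H_\ell(\mm') = q\,H_\ell(\mm) + H_\ell(\mm'')$, in direct analogy with the proof of \Cref{prop: RM for special coloring}. For a type I triple at index $i \ge 2$ with $j = \mm'(i)$, the orientations of the distinguishing edges on the triangle $\{i, j, j+1\}$ decompose $\AO'_{\mm'}$ into six patterns, and the bijection either fixes $\theta$ or flips certain triangle edges; sink preservation follows by a direct in-degree bookkeeping at the affected vertices. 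For a restricted type II triple, the hypothesis $i \ge 2$ keeps vertex $1$ disjoint from the modified edges, so the ``$1$ is a sink'' restriction is automatic, and an analogous six-case bijection on the triangle $\{i, i+1, j\}$ works. The type I subcase $i = 1$ is the most delicate: the distinguishing edges incident to vertex $1$ must all be oriented into $1$ by the $\AO'$ restriction, contributing zero to $\asc$, and the restricted modular law reduces to the auxiliary identity $C_\ell = q\,B_\ell$, where $B_\ell$ and $C_\ell$ respectively weight orientations in $\AO'_\mm$ with $\sink = \ell$ in which $j$ or $j+1$ is an additional sink. This identity follows from a direct sink-swapping bijection that uses the modular-triple hypothesis $\mm(j) = \mm(j+1)$.

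For the base case, since vertex $1$ lies in $\mm_1$, the factorization $\AO'_{\mm_1 + \mm_2} \cong \AO'_{\mm_1} \times \AO_{\mm_2}$ together with additivity of $\asc$ and $\sink$ yields
\[
  H_\ell(\mm_1 + \mm_2; q) = \sum_{\ell_1 + \ell_2 = \ell} H_{\ell_1}(\mm_1; q) \cdot H^{\mathrm{chr}}_{\ell_2}(\mm_2; q),
\]
where $H^{\mathrm{chr}}_\ell$ is the Shareshian--Wachs sink generating function. By \Cref{thm:sink_theorem}, $H^{\mathrm{chr}}_{\ell_2}(\mm_2; q)$ matches the length-$\ell_2$ $e$-coefficient sum of $X_{\mm_2}(x;q)$; combined with the multiplicativity \Cref{prop:S_multiplicative} for $S_\mm$, one obtains the parallel convolution for $F_\ell$, and induction reduces the base case to the single-path identity $F_\ell(\pp_n) = H_\ell(\pp_n)$. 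Using \Cref{prop:path_formula} one computes $F_\ell(\pp_n;q) = q^{\ell-1}[x^{n-2\ell+1}](1-x)^{-\ell}(1-qx)^{-(\ell-1)}$; encoding an acyclic orientation in $\AO'_{\pp_n}$ as an $\{L, R\}$-word of edge directions starting with $L$ (which forces vertex $1$ to be a sink), with $\sink$ counting $RL$-transitions in the extended word $R\,(\cdot)\,L$, a direct generating-function computation yields the same closed form for $H_\ell(\pp_n;q)$. \Cref{thm: master} then delivers the theorem.

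The hard part will be the bijective proof of the restricted modular law for $H_\ell$, especially the type I subcase $i = 1$, where the standard six-case triangle bijection degenerates because vertex $1$ itself lies on the modified edges; in this case the argument must be replaced by the auxiliary sink-swapping identity $C_\ell = q\,B_\ell$ on the neighbors of the distinguishing edge $(j, j+1)$, whose proof requires extracting the structural information from the modular-triple hypotheses $\mm(1) + 1 = j$ and $\mm(j) = \mm(j+1)$.
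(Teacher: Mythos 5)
Your approach is genuinely different from the paper's and does not rely on the paper's two key lemmas. The paper deduces \Cref{thm:sink'_theorem} directly from the already-established Schur expansion of \( S_\mm \) (i.e., \Cref{thm:E=G=S_intro}) via two ingredients: a simple algebra lemma (\Cref{lem:sum_c_eq_sum_a}, built on the homomorphism \( e_i \mapsto t \) and the dual Jacobi--Trudi formula) that converts sums of \( e \)-coefficients of a fixed length into an alternating sum of hook-shape Schur coefficients, and a counting lemma (\Cref{lem:acyclic_orientation_PTab}) that, for a fixed \( \theta\in\AO'_\mm \) with \( \ell \) sinks, gives \( \binom{\ell-1}{i-1} \) for the number of hook \( P \)-tableaux mapping to it under \( T\mapsto\theta(T) \). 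A binomial inversion then finishes. Your plan instead re-runs the entire modular-law machine on the sink generating function \( H_\ell \): establish the restricted modular law for \( H_\ell \) by a bijection on orientations, establish multiplicativity, compute the path case, and invoke \Cref{thm: master}. The advantage of your route is self-containedness and symmetry with the rest of the paper; the cost is a substantially heavier verification burden, since the paper's route sidesteps any new bijections on acyclic orientations entirely. I checked the two most concrete parts of your outline: the \( i=1 \) type I subcase reduces, as you say, to \( C_\ell = q\,B_\ell \), which is indeed established by the vertex-swap \( j\leftrightarrow j+1 \) (a genuine graph automorphism of \( \mm \) in this case since neither \( (1,j) \) nor \( (1,j+1) \) lies in \( E(\mm) \), and the swap shifts \( \asc \) by exactly \( +1 \)); and the path computation matches, with both generating functions equal to \( q^{\ell-1}x^{2\ell-1}(1-x)^{-\ell}(1-qx)^{-(\ell-1)} \).

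The remaining gap is the core of your argument: the sink-preserving bijection proving the restricted modular law for \( H_\ell \) in the general type I (\( i\geq 2 \)) and type II cases. Your proposal asserts that the six acyclic patterns on the triangle give the bijection ``in direct analogy with \Cref{prop: RM for special coloring},'' but this analogy is not as direct as it looks. In \Cref{prop: RM for special coloring} the three ground sets are nested, \( \PA'_{\mm''}\subseteq\PA'_{\mm'}\subseteq\PA'_{\mm} \), which is what lets the paper build a single bijection from \( \PA'_{\mm'} \) to \( \PA'_{\mm}\setminus(\PA'_{\mm'}\setminus\PA'_{\mm''}) \). For orientations the containments point the other way (the restriction maps \( \AO'_{\mm''}\to\AO'_{\mm'}\to\AO'_{\mm} \) are not injective and lose information), so the three \( H_\ell \) values live on genuinely different sets and you need a different combinatorial setup — e.g., parametrizing by an orientation of \( \mm \) together with the orientations of the two extra edges, and then tracking both acyclicity and the change in the sink count at \( j \) (resp.\ \( j+1 \)) under edge insertion. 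This is doable but is not a one-line analogy to the \( P \)-array argument, and the sink-count bookkeeping is the most delicate part precisely because inserting an edge into a sink destroys that sink. Until this bijection is written out and its sink-preservation verified for each of the orientation patterns (including the degenerate ones where neither orientation of a new edge is acyclic), the proposal is an outline rather than a proof.
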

Our proof of the theorem follows the idea of Chow's proof of \Cref{thm:sink_theorem} in
\cite{Chow0}.
First, we present a lemma dealing with the \( e \)-coefficients and \( s \)-coefficients of a given
symmetric function.
\begin{lem} \label{lem:sum_c_eq_sum_a}
  Let \( f(x) \) be a symmetric function in \( \Lambda_n \).
  Suppose that
  \[
    f(x) = \sum_{\lambda\vdash n} c_\lambda e_\lambda(x)
      = \sum_{\lambda\vdash n} a_\lambda s_\lambda(x).
  \]
  Then for \( \ell\ge 1 \), we have
  \[
    \sum_{\ell(\lambda)=\ell} c_\lambda
      = \sum_{k\ge \ell} (-1)^{k-\ell} \binom{k-1}{\ell-1} a_{(k, 1^{n-k})}.
  \]
\end{lem}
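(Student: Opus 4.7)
The plan is to apply a single specialization that converts the $e$-expansion of $f$ into a polynomial in one variable whose coefficients are exactly the length-graded sums $\sum_{\ell(\lambda)=\ell} c_\lambda$, and then evaluate the same specialization on Schur functions to obtain the right-hand side.

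Define an algebra homomorphism $L : \Lambda \to \QQ(q)[y]$ by setting $L(e_i) = y$ for every $i \ge 1$; this is well-defined because $\Lambda$ is freely generated as an algebra by $e_1, e_2, \ldots$. Since $e_\lambda = e_{\lambda_1} \cdots e_{\lambda_{\ell(\lambda)}}$, we immediately get $L(e_\lambda) = y^{\ell(\lambda)}$, and consequently
\[
L(f) = \sum_{\lambda \vdash n} c_\lambda\, y^{\ell(\lambda)}, \qquad [y^\ell] L(f) = \sum_{\ell(\lambda) = \ell} c_\lambda.
\]

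The main computation is then $L(s_\lambda)$, which we carry out via the Jacobi--Trudi identity $s_\lambda = \det(e_{\lambda'_i - i + j})_{1 \le i, j \le \ell(\lambda')}$. Under $L$, each entry becomes $y$, $1$, or $0$ according to whether its $e$-index is positive, zero, or negative. When $\lambda$ is not a hook---equivalently $\lambda'_2 \ge 2$---the first two rows of the specialized matrix both consist entirely of $y$'s, so $L(s_\lambda) = 0$. When $\lambda = (k, 1^{n-k})$ is a hook, the specialized matrix is $k \times k$ with an all-$y$ first row and strictly lower rows of the form $(0, \ldots, 0, 1, y, \ldots, y)$, where the $1$ sits on the subdiagonal. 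Subtracting the second row from the first and expanding along the resulting sparse first row yields the recursion $D_k = (y-1)\, D_{k-1}$ with $D_1 = y$, hence $L(s_{(k, 1^{n-k})}) = y(y-1)^{k-1}$.

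Putting the pieces together gives
\[
L(f) = \sum_{k=1}^{n} a_{(k, 1^{n-k})}\, y(y-1)^{k-1},
\]
and extracting the coefficient of $y^\ell$ using $[y^\ell]\, y(y-1)^{k-1} = (-1)^{k-\ell}\binom{k-1}{\ell-1}$ completes the proof. No serious obstacle arises: the algebraic content is the simple observation that tracking $e$-length is achieved by the specialization $e_i \mapsto y$, while the vanishing of $L(s_\lambda)$ on non-hooks is a one-line row-equality argument in the Jacobi--Trudi matrix.
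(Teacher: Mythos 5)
Your proof is correct and takes essentially the same route as the paper: the same specialization $e_i \mapsto y$ (the paper calls it $\zeta: e_i \mapsto t$), the same appeal to the dual Jacobi--Trudi determinant, the same observation that non-hook shapes vanish because two rows become equal, and the same hook evaluation $y(y-1)^{k-1}$. The only difference is that you spell out the hook determinant via a row operation and the recursion $D_k = (y-1)D_{k-1}$, whereas the paper asserts the hook formula by induction and omits the detail.
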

\begin{proof}
  We define a \( \QQ(q) \)-algebra homomorphism \( \zeta \) from \( \Lambda \) to \( \QQ(q)[t]
  \) given by
  \[
    \zeta: e_i(x) \mapsto t,
  \]
  so that
  \begin{equation} \label{eq:zeta=e}
    \zeta(f(x)) = \sum_{\ell\ge 1} \sum_{\ell(\lambda)=\ell} c_\lambda t^\ell.
  \end{equation}
  Recall the dual Jacobi--Trudi identity:
  \[
    s_\lambda(x) = \det
    \begin{pmatrix}
      e_{\lambda'_1}(x) & e_{\lambda'_1+1}(x) & \dots & e_{\lambda'_1+m-1}(x) \\
      e_{\lambda'_2-1}(x) & e_{\lambda'_2}(x) & \dots & e_{\lambda'_2+m-2}(x) \\
      \vdots & \vdots & \ddots & \vdots \\
      e_{\lambda'_m-m+1}(x) & e_{\lambda'_m-m+2}(x) & \dots & e_{\lambda'_m}(x)
    \end{pmatrix},
  \]
  where \( m = \lambda_1 \), \( e_0(x) = 1 \) and \( e_k(x)=0 \) for \( k < 0 \).
  Now consider \( \zeta(s_\lambda(x)) \). If
\( \lambda'_2-1 > 0 \),
then the first and second rows of the matrix are identical under the
map \( \zeta \), and thus \( \zeta(s_\lambda(x)) = 0 \). In addition,
we claim that if \( \lambda=(k, 1^{n-k}) \) for some \( k\ge 1 \),
then
  \[
    \zeta(s_\lambda(x)) = \sum_{i=1}^k (-1)^{k-i} \binom{k-1}{i-1} t^i.
  \]
  The claim can be easily verified by induction on \( k \), so we omit the detail.
  Once the claim holds, we have
  \begin{equation} \label{eq:zeta=s}
    \zeta(f(x)) = \sum_{k\ge 1} \sum_{i=1}^k (-1)^{k-i} \binom{k-1}{i-1} a_{(k,1^{n-k})} t^i,
  \end{equation}
  and comparing the coefficients in \eqref{eq:zeta=e} and \eqref{eq:zeta=s} completes the proof.
\end{proof}

Next, we observe a relationship between acyclic orientations and
\( P \)-tableaux with respect to \( \mm\in \HH_n \).
Let \( T \) be a \( P \)-tableau. Then we can obtain
an acyclic orientation from \( T \) as follows. Let \( (i,j) \) be
an edge in the graph \( \mm \). We assign a direction to the edge from
\( i \) toward \( j \) if \( j \) lies above \( i \) in \( T \); otherwise,
assign the opposite direction. Note that \( i \) and \( j \) cannot
lie on the same row because they are incomparable. For example, let
\( \mm=(2,4,4,5,5) \), and
\[
  T = \vcenter{\hbox{\ytableaushort{15,3,4,2}}}.
\]
Then the corresponding orientation is
\[
  \begin{tikzpicture}[scale=0.9]
    \foreach \i in {1,...,5}
    \filldraw (\i,2) circle (1.5pt);
    \foreach \i in {1,...,5}
    \node at (\i,1.6) {\i}; 
    \draw[thick] [->] (2,2) -- (1.4,2);
    \draw[thick] [->] (2,2) -- (2.6,2);
    \draw[thick] [->] (4,2) -- (3.4,2);
    \draw[thick] [->] (4,2) -- (4.6,2);
    \draw[thick] (1.5,2) edge (1,2) (2.5,2) edge (3,2) (3.5,2) edge (3,2) (4.5,2) edge (5,2);
    \draw[-<,thick] (4,2) arc[radius=1, start angle=0, end angle=91];
    \draw[thick] (4,2) arc[radius=1, start angle=0, end angle=180];
  \end{tikzpicture}
\]
By construction, the orientation is acyclic. We denote the orientation by \( \theta(T) \).
Note that if \( i \) lies on the first row of \( T \), then the vertex \( i \) is a sink in
\( \theta(T) \), and hence we deduce that if \( T\in\PT'_\mm(\lambda) \) for some
\( \lambda \vdash n \), then \( \theta(T)\in\AO'_\mm \).
Furthermore, one can easily check that \( \inv_\mm(T) = \asc_\mm(\theta(T)) \).

Note that if \( \theta\in\AO_\mm \), then \( \theta \) has at least
one sink. Moreover, there is no edge between any two sinks, so all
sinks are comparable under the partial order \( <_\mm \). Therefore,
there exists a smallest sink of \( \theta \) under \( <_\mm \), which
we will simply call the \emph{smallest sink} of \( \theta \).

\begin{lem} \label{lem:acyclic_orientation_PTab}
  For a Hessenberg function \( \mm \in \HH_n \),
  let \( \theta \) be
  an acyclic orientation of the graph \( \mm \) with \( \ell \) sinks such that
  the vertex \( 1 \) is a sink. Then for \( i\in [\ell] \), we have
  \[
    |\{T\in\PT'_\mm((i, 1^{n-i})) : \theta(T) = \theta \}| = \binom{\ell-1}{i-1}.
  \]
\end{lem}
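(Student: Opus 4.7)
The plan is to construct a bijection between $\{T \in \PT'_\mm((i, 1^{n-i})) : \theta(T) = \theta\}$ and the $(i-1)$-element subsets of $\{s_2, \ldots, s_\ell\}$, where the sinks of $\theta$ form a chain $1 = s_1 <_\mm s_2 <_\mm \ldots <_\mm s_\ell$. This chain structure is immediate: any two sinks are non-adjacent in the incomparability graph (otherwise the edge between them would have no valid orientation) and hence comparable in $\mm$.

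The map sends $T$ to the set $\{T(1,2), \ldots, T(1,i)\}$. This is well-defined because each first-row entry $v$ is a sink of $\theta$: any graph-neighbor of $v$ is incomparable to $v$ in $\mm$ and therefore cannot lie in the first row (which is a $<_\mm$-chain), so it lies strictly below $v$ in the first column, forcing the edge to be directed into $v$ under $\theta(T)$. Since $T(1,1) = 1$, the remaining first-row entries form an $(i-1)$-subset of $\{s_2, \ldots, s_\ell\}$.

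For the inverse, given a subset $\{s_{j_2}, \ldots, s_{j_i}\}$, I construct the unique $T$ by placing $1, s_{j_2}, \ldots, s_{j_i}$ in the first row in $<_\mm$-increasing order and filling the remaining column positions $(2,1), \ldots, (n-i+1,1)$ with the vertices $V = [n] \setminus (\{1\} \cup \{s_{j_2}, \ldots, s_{j_i}\})$ via the greedy rule: at each step, place the $<_\mm$-smallest sink of $\theta$ restricted to the still-unplaced vertices. A direct check using the natural unit interval structure shows this yields a valid P-tableau with orientation $\theta$.

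The hard part is uniqueness. Suppose $T$ is any valid P-tableau with the prescribed first row and orientation, but $T(2, 1) = s$ is not the $<_\mm$-smallest sink $s'$ of $\theta|_V$; then $s'$ appears at some row $r > 2$ in $T$. I propagate the strict inequality $T(k, 1) <_\mm s'$ upward by induction from $k = r - 1$ down to $k = 2$. The inductive step combines the P-tableau condition $T(k, 1) \not>_\mm T(k+1, 1)$ with the hypothesis $T(k+1, 1) <_\mm s'$, via a case analysis on the $[n]$-positions of $T(k, 1), T(k+1, 1), s'$ within the natural unit interval order. All configurations yield $T(k, 1) \not>_\mm s'$ directly, except for one potentially problematic configuration, namely $s' < T(k+1, 1) < T(k, 1)$ in $[n]$ with both $(s', T(k+1, 1))$ and $(T(k, 1), T(k+1, 1))$ incomparable in $\mm$; this case is ruled out by the sink property of $s'$, since incomparability $(s', T(k+1, 1))$ would force the edge to be directed $T(k+1, 1) \to s'$ in $\theta$, placing $s'$ above $T(k+1, 1)$ in $T$, contradicting that $T(k+1, 1)$ sits at row $k+1 < r$ while $s'$ sits at row $r$. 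The same sink-property argument at step $k$ rules out $T(k, 1)$ being incomparable with $s'$, so $T(k, 1) \not>_\mm s'$ strengthens to the strict $T(k, 1) <_\mm s'$. Reaching $k = 2$ gives $s <_\mm s'$, contradicting the $<_\mm$-minimality of $s'$ among sinks. Iterating this argument on subsequent rows establishes uniqueness of the entire column, and counting $(i-1)$-subsets of $\{s_2, \ldots, s_\ell\}$ yields the claimed $\binom{\ell - 1}{i - 1}$.
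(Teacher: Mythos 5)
Your proposal uses the same bijection as the paper (between $(i-1)$-subsets of the sinks other than $1$ and the relevant $P$-tableaux) and the same greedy inverse construction (iteratively placing the $<_\mm$-smallest sink of the restricted orientation). The genuine difference is the uniqueness argument. The paper proves uniqueness by a single maximal-counterexample contradiction: take the largest $j_0$ such that $b_{j_0}:=T(j_0+1,1)$ is not the $<_\mm$-smallest sink of $\theta$ restricted to $\{b_{j_0},\dots,b_{n-i}\}$, and derive a contradiction in two short cases using only poset-level facts (incomparability equals adjacency, transitivity of $<_\mm$). You instead do a row-by-row downward propagation of $T(k,1)<_\mm s'$ from $k=r-1$ to $k=2$, relying on an explicit case analysis of integer positions in the natural unit interval order, then iterate the whole argument row by row. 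Both are valid, but the paper's version is shorter, proves all rows simultaneously, and does not invoke the unit-interval case analysis. A few points in your version deserve tightening: (i) the ``problematic configuration'' $s'<T(k+1,1)$ in $[n]$ cannot actually occur under your stated inductive hypothesis $T(k+1,1)<_\mm s'$ (which already forces $T(k+1,1)<s'$ as integers), so your sink-property resolution of that case is superfluous; (ii) the base case $k=r-1$ needs separate treatment, since there the ``hypothesis'' $T(r,1)<_\mm s'$ reads $s'<_\mm s'$ and is false --- you should instead note that $T(r-1,1)\not>_\mm s'$ holds directly from the $P$-tableau condition and then upgrade via the sink property; and (iii) the claim that the greedy fill yields a valid $P$-tableau with orientation $\theta$ is asserted rather than verified --- the paper proves $b_j\not>_\mm b_{j+1}$ by observing that if $b_j>_\mm b_{j+1}$ they are nonadjacent, so $b_{j+1}$ would already be a sink before $b_j$ was removed, contradicting minimality of $b_j$, and this deserves to be spelled out. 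None of these is fatal, but the paper's route is the more economical one.
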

\begin{proof}
  Let \( A \) be a set of \( i \) sinks in \( \theta \) with
  \( 1\in A \). Then we can write
  \( A=\{a_1 <_\mm \dots <_\mm a_i\} \), where \( a_1 = 1 \). Since
  there are \( \binom{\ell-1}{i-1} \) ways to choose \( A \), it
  suffices to find a correspondence between such sets \( A \) and the
  \( P \)-tableaux \( T\in\PT'_\mm((i, 1^{n-i})) \) satisfying
  \( \theta(T)=\theta \).

  To construct \( T \) from \( A \), we enumerate \( [n]\setminus A \)
  as follows. First, let \( B=[n]\setminus A \), and consider the
  acyclic orientation \( \theta_B \) obtained by restricting
  \( \theta \) to \( B \). Then we set \( b_1 \) to be the smallest
  sink of \( \theta_B \). Remove \( b_1 \) from \( B \), and set
  \( b_2 \) similarly. By iterating this procedure until
  \( B=\emptyset \), we obtain a list \( b_1,\dots, b_{n-i} \) of the
  elements in \( [n]\setminus A \). We claim that for
  \( 1\le j < n-i \), we have \( b_j \not>_\mm b_{j+1} \). Indeed, if
  \( b_j >_\mm b_{j+1} \), then the vertices \( b_j \) and
  \( b_{j+1} \) are not adjacent in the graph \( \mm \). Then the
  vertex \( b_{j+1} \) is a sink of
  \( \theta_{[n]\setminus(A\cup\{b_1,\dots,b_{j-1} \} )} \), which
  implies \( b_j <_\mm b_{j+1} \) by the definition of \( b_j \). This
  is a contradiction, so we must have \( b_j \not>_\mm b_{j+1} \).
  Now, we set
  \[
    T = \vcenter{\hbox{
      \begin{ytableau}
        \scalebox{0.9}{\( 1 \)} & \scalebox{0.9}{\( a_2 \)}&
        \scalebox{0.9}{\( \cdots \)} & \scalebox{0.9}{\( a_i \)} \\
        \scalebox{0.9}{\( b_1 \)} \\
        \scalebox{0.9}{\( \vdots \)} \\
        \scalebox{0.8}{\( b_{n-i} \)}
      \end{ytableau}
    }}.
  \]
  By the construction of \( a_j \)'s and \( b_j \)'s, \( T \) is a \( P \)-tableau of shape
  \( (i,1^{n-i}) \) such that \( \theta(T) = \theta \).

  Now, let \( T\in\PT'_{\mm}((i,1^{n-i})) \) such that
  \( \theta(T) = \theta \). We show that \( T \) can be obtained by
  the above construction. Note that the elements in the first row of
  \( T \) are sinks of \( \theta \). For \( 1\le j \le n-i \), let
  \( b_j=T(j+1,1) \) be the element in the \( (j+1) \)-st row of
  \( T \) so that \( b_1\not>_\mm \dots \not>_\mm b_{n-i} \), and let
  \( B_j = \{b_j,\dots,b_{n-i}\} \). Since \( \theta(T) = \theta \),
  the vertex \( b_j \) is a sink of \( \theta_{B_j} \). Moreover, we
  claim that \( b_j \) is the smallest sink of \( \theta_{B_j} \). For
  a contradiction, suppose that there exists the largest integer
  \( j_0 \) for which the claim is false. Let \( b_k \) be the
  smallest sink of \( \theta_{B_{j_0}} \). Then \( j_0<k\le n-i \) and
  \( b_{k} <_\mm b_{j_0} \). If \( k = j_0+1 \), then
  \( b_{j_0} >_\mm b_{j_0+1} \), which is a contradiction. Therefore
  \( k > j_0+1 \). By the maximality of \( j_0 \), the vertex
  \( b_{j_0+1} \) is the smallest sink of \( \theta_{B_{j_0+1}} \), so
  we have \( b_{j_0+1}<_\mm b_k \). Then
  \( b_{j_0+1}<_\mm b_k <_\mm b_{j_0} \), which is a contradiction to
  \( b_{j_0} \not>_\mm b_{j_0+1} \). Thus the claim is true, which
  completes the proof.
\end{proof}

\begin{proof}[Proof of \Cref{thm:sink'_theorem}]
  \Cref{thm:E=G=S_intro} says that
  \[
    \sum_{\lambda\vdash n} c'_\lambda(q) e_\lambda(x)
      = \sum_{\lambda\vdash n} \sum_{T\in\PT'_\mm(\lambda)} q^{\inv_\mm(T)} s_\lambda(x).
  \]
  Hence applying \Cref{lem:sum_c_eq_sum_a}, we have that for \( \ell\ge 1 \),
  \begin{equation} \label{eq:sum_c=sum_binom_PT}
    \sum_{\ell(\lambda)=\ell} c'_\lambda(q) =
      \sum_{k\ge \ell} (-1)^{k-\ell} \binom{k-1}{\ell-1}
        \sum_{T\in \PT'_\mm((k,1^{n-k}))} q^{\inv_\mm(T)}.
  \end{equation}
  Since \( \theta(T) \) has at least \( k \) sinks for \( T\in \PT'_\mm((k,1^{n-k})) \),
  we can decompose \( \PT'_\mm((k,1^{n-k})) \) as
  \[
    \PT'_\mm((k,1^{n-k})) = \bigsqcup_{\substack{\theta\in\AO'_\mm \\ \sink(\theta)\ge k}}
    \{T\in \PT'_\mm((k,1^{n-k})) : \theta(T) = \theta \}.
  \]
  Then, by applying \Cref{lem:acyclic_orientation_PTab} with the decomposition to the right-hand
  side of \eqref{eq:sum_c=sum_binom_PT}, we deduce
  \begin{align*}
    \sum_{\ell(\lambda)=\ell} c'_\lambda(q)
      &= \sum_{k\ge \ell} (-1)^{k-\ell} \binom{k-1}{\ell-1}
          \sum_{\substack{\theta\in\AO'_\mm \\ \sink(\theta)\ge k}}
            \binom{\sink(\theta)-1}{k-1} q^{\asc_\mm(\theta)}, \\
      &= \sum_{\substack{\theta\in\AO'_\mm \\ \sink(\theta)\ge \ell}}
          \sum_{k=\ell}^{\sink(\theta)} (-1)^{k-\ell} \binom{\sink(\theta)-1}{k-1}
            \binom{k-1}{\ell-1} q^{\asc_\mm(\theta)} \\
      &= \sum_{\substack{\theta\in\AO'_\mm \\ \sink(\theta)= \ell}} q^{\asc_\mm(\theta)},
  \end{align*}
  since \( \sum_{i=b}^{a} (-1)^{i-b} \binom{a}{i} \binom{i}{b} = \delta_{a,b} \).
  This completes the proof.
\end{proof}

\section*{Acknowledgment}
The authors are grateful to Alex Abreu and Antonio Nigro for insightful discussions on their function \( g \) and the underlying geometry. This research was initiated during the workshop `Combinatorics on Flag Varieties and Related Topics 2025', and the authors want to thank the organizers for creating a stimulating and productive environment.

\appendix

\section{Comparison with Hikita's model}
\label{sec:comp-with-hikit}

In this section, we compare our definition of
\( \psi_k^{(r)}(T ; q) \) and Hikita's \( \varphi_k^{(r)}(T ; q) \).

Let
  \( \vec \delta =\left(1^{b_0}, 0^{a_1}, 1^{b_1}, \ldots, 0^{a_l},
    1^{b_l}, 0^{a_{l+1}}\right) \), where
  \( a_1,\dots,a_l, b_1,\dots,b_l \) are positive integers, and
  \( b_0 \) and \( a_{l+1} \) are nonnegative integers. For
  \( 0\le k\le l \), we define
  \[
    \varphi_k(\vec\delta)=q^{\sum_{i=1}^k a_i} \prod_{i=1}^k
    \frac{\left[\sum_{j=i+1}^k a_j+\sum_{j=i}^k
      b_j\right]_q}{\left[\sum_{j=i}^k a_j+\sum_{j=i}^k
      b_j\right]_q} \prod_{i=k+1}^l \frac{\left[\sum_{j=k+1}^i
      a_j+\sum_{j=k+1}^{i-1} b_j\right]_q}{\left[\sum_{j=k+1}^i
      a_j+\sum_{j=k+1}^i b_j\right]_q} ,
  \]
and
\begin{equation}\label{eq:29}
  \psi_k(\vec\delta)= q^{\sum_{i=k+1}^l b_i-\sum_{i=1}^k a_i} \varphi_k(\vec\delta).
\end{equation}
Hikita \cite{Hikita2024} showed that \( \varphi_k(\vec\delta) \) can
be understood as a probability in the following sense.

\begin{lem} \cite[Lemma~6]{Hikita2024}
  \label{lem:Hikita}
  Let
  \( \vec \delta =\left(1^{b_0}, 0^{a_1}, 1^{b_1}, \ldots, 0^{a_l},
    1^{b_l}, 0^{a_{l+1}}\right) \), where
  \( a_1,\dots,a_l,a_{l+1}, b_1,\dots,b_l \) are positive integers and
  \( b_0 \) is a nonnegative integer. Then,
  \[
    \sum_{k=0}^l\varphi_k(\boldsymbol{\delta})=1.
  \]
\end{lem}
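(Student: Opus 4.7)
The plan is to identify the identity $\sum_{k=0}^l \varphi_k(\vec\delta)=1$ as an instance of classical Lagrange interpolation, evaluated at points of the form $q^{N}$. First I would introduce the abbreviations
\[
A_j=\sum_{i=1}^j a_i,\qquad B_j=\sum_{i=1}^j b_i,\qquad N_j=A_j+B_j,\qquad \alpha_i=A_i+B_{i-1}=N_{i-1}+a_i,
\]
so that $N_0=0<N_1<\cdots<N_l$ are strictly increasing (because each $a_i,b_i>0$) and are interlaced by $N_{i-1}<\alpha_i<N_i$. A straightforward reindexing of the sums in the definition of $\varphi_k$ then rewrites it in the compact symmetric form
\[
\varphi_k(\vec\delta)=q^{A_k}\prod_{i=1}^k\frac{[N_k-\alpha_i]_q}{[N_k-N_{i-1}]_q}\prod_{i=k+1}^l\frac{[\alpha_i-N_k]_q}{[N_i-N_k]_q}.
\]

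Next, using the basic identity $q^a-q^b=q^b(q-1)[a-b]_q$ for $a>b$, I would rewrite each $q$-integer factor in terms of the monomials $q^{N_j}$ and $q^{\alpha_i}$. The sign $(-1)^{l-k}$ coming from the indices $i>k$ appears identically in the numerator and the denominator and cancels, the overall factor $(q-1)^l$ cancels, and the surviving powers of $q$ collapse via
\[
\sum_{i=1}^k\alpha_i-\sum_{j=0}^{k-1}N_j=\sum_{i=1}^k(\alpha_i-N_{i-1})=A_k,
\]
which accounts precisely for the leading $q^{A_k}$. The outcome is the clean formula
\[
\varphi_k(\vec\delta)=\frac{\prod_{i=1}^l(q^{N_k}-q^{\alpha_i})}{\prod_{\substack{0\le j\le l\\ j\ne k}}(q^{N_k}-q^{N_j})}.
\]

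Finally I would apply Lagrange interpolation to the monic polynomial $f(x)=\prod_{i=1}^l(x-q^{\alpha_i})$ of degree exactly $l$ at the $l+1$ distinct nodes $q^{N_0},q^{N_1},\ldots,q^{N_l}$: comparing coefficients of $x^l$ on both sides of
\[
f(x)=\sum_{k=0}^l f(q^{N_k})\prod_{j\ne k}\frac{x-q^{N_j}}{q^{N_k}-q^{N_j}}
\]
gives
\[
1=[x^l]f(x)=\sum_{k=0}^l\frac{f(q^{N_k})}{\prod_{j\ne k}(q^{N_k}-q^{N_j})}=\sum_{k=0}^l\varphi_k(\vec\delta),
\]
which is the claim. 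The only nontrivial step is the bookkeeping of signs and $q$-powers in the second paragraph, so that is the step I would verify most carefully; in particular the emergence of the prefactor $q^{A_k}$ is exactly what turns the classical $q=1$ Lagrange identity $\sum_{k}\prod_{i=1}^l(N_k-\alpha_i)/\prod_{j\ne k}(N_k-N_j)=1$ into its genuine $q$-analog.
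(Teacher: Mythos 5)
Your argument is correct, and I verified the key algebraic steps. Writing $A_j=\sum_{i\le j}a_i$, $B_j=\sum_{i\le j}b_i$, $N_j=A_j+B_j$, $\alpha_i=N_{i-1}+a_i$, one indeed has the clean rewriting
\[
\varphi_k(\vec\delta)=q^{A_k}\prod_{i=1}^k\frac{[N_k-\alpha_i]_q}{[N_k-N_{i-1}]_q}\prod_{i=k+1}^l\frac{[\alpha_i-N_k]_q}{[N_i-N_k]_q},
\]
the strict interlacing $N_0<\alpha_1<N_1<\cdots<\alpha_l<N_l$ holds because $a_i,b_i>0$ for $1\le i\le l$, and the conversion $q^a-q^b=q^{\min(a,b)}(q-1)\,\mathrm{sgn}(a-b)\,[|a-b|]_q$ makes the $(-1)^{l-k}$ and $(q-1)^l$ factors cancel between numerator and denominator, while the residual power of $q$ is $\sum_{i=1}^k\alpha_i-\sum_{j=0}^{k-1}N_j=A_k$. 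This gives $\varphi_k(\vec\delta)=\prod_{i=1}^l(q^{N_k}-q^{\alpha_i})\big/\prod_{j\ne k}(q^{N_k}-q^{N_j})$, and comparing the leading coefficient of $x^l$ in the Lagrange expansion of the monic degree-$l$ polynomial $f(x)=\prod_i(x-q^{\alpha_i})$ at the distinct nodes $q^{N_0},\dots,q^{N_l}$ yields $\sum_k\varphi_k(\vec\delta)=1$.

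The paper itself gives no proof of this statement: it is quoted directly from Hikita \cite[Lemma 6]{Hikita2024}, where it is part of the transition-probability setup of his Markov chain model. Your proof is therefore a self-contained alternative; its main merit is that it exposes the identity as the $q$-geometric specialization of the classical Lagrange partial-fractions identity $\sum_k\prod_i(N_k-\alpha_i)/\prod_{j\ne k}(N_k-N_j)=1$, with the extra power $q^{A_k}$ in the definition of $\varphi_k$ emerging exactly as the bookkeeping term $q^{\sum_i\alpha_i-\sum_j N_j}$ that the substitution $x\mapsto q^x$ produces. One minor remark: the hypothesis $a_{l+1}>0$ plays no role in the identity (it does not appear in $\varphi_k$ at all) and is there only so that the decomposition of $\vec\delta$ into blocks is canonical; your proof correctly never invokes it.
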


Note that, for any sequence
\( \boldsymbol{\delta} =\left(1^{b_0}, 0^{a_1}, 1^{b_1}, \ldots,
  0^{a_l}, 1^{b_l}, 0^{a_{l+1}}\right) \), we have
\(
\psi_k(\boldsymbol{\delta})=\varphi_{l-k}(\overline{\boldsymbol{\delta}})\),
where
\( \overline{\boldsymbol{\delta}}= (1^{a_{l+1}}, 0^{b_l},
1^{a_l},\dots,0^{b_0}) \). Thus, \Cref{lem:Hikita} implies the
following.

\begin{lem}
  \label{lem:modified_p}
  Let
  \( \vec \delta =\left(1^{b_0}, 0^{a_1}, 1^{b_1}, \ldots, 0^{a_l},
    1^{b_l}, 0^{a_{l+1}}\right) \), where
  \( a_1,\dots,a_l, b_1,\dots,b_l \) are positive integers, and
  \( b_0 \) and \( a_{l+1} \) are nonnegative integers. Then,
  \[
    \sum_{k=0}^l \psi_k(\boldsymbol{\delta})=1.
  \]
\end{lem}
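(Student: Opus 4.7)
The plan is to deduce \Cref{lem:modified_p} from \Cref{lem:Hikita} via the reversal identity $\psi_k(\vec\delta) = \varphi_{l-k}(\overline{\vec\delta})$ stated in the text just before the claim. Once that identity is in hand, the argument is immediate: reindexing $j = l-k$,
\[
\sum_{k=0}^{l} \psi_k(\vec\delta) \;=\; \sum_{k=0}^{l} \varphi_{l-k}(\overline{\vec\delta}) \;=\; \sum_{j=0}^{l} \varphi_j(\overline{\vec\delta}) \;=\; 1
\]
by \Cref{lem:Hikita} applied to $\overline{\vec\delta}$.

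The first step is therefore to verify the reversal identity by a direct substitution in the definitions. Writing $\overline{\vec\delta}$ in standard block form with parameters $(B_0,A_1,B_1,\dots,A_l,B_l,A_{l+1})$, one reads off $B_0 = a_{l+1}$, $A_i = b_{l+1-i}$, $B_i = a_{l+1-i}$ for $i \in [l]$, and $A_{l+1} = b_0$. Plugging these into $\varphi_{l-k}(\overline{\vec\delta})$ and reindexing via $s = l+1-i$ in the outer products and $t = l+1-j$ in the inner sums, the first product of $\varphi_{l-k}(\overline{\vec\delta})$ becomes the second product of $\varphi_k(\vec\delta)$, and vice versa. The $q$-power prefactor transforms as
\[
q^{\sum_{i=1}^{l-k} A_i} \;=\; q^{\sum_{i=1}^{l-k} b_{l+1-i}} \;=\; q^{\sum_{i=k+1}^{l} b_i},
\]
so comparing with $\varphi_k(\vec\delta) = q^{\sum_{i=1}^k a_i} \cdot (\text{products})$ and invoking \eqref{eq:29} yields $\varphi_{l-k}(\overline{\vec\delta}) = q^{\sum_{i=k+1}^l b_i - \sum_{i=1}^k a_i} \varphi_k(\vec\delta) = \psi_k(\vec\delta)$, as required.

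The one place I anticipate a wrinkle is a hypothesis mismatch: \Cref{lem:Hikita} requires the rightmost zero-block of its input to be nonempty, which for $\overline{\vec\delta}$ amounts to $b_0 \geq 1$, whereas \Cref{lem:modified_p} permits $b_0 = 0$. This obstacle is only superficial. The explicit formulas for $\varphi_k$ and $\psi_k$ involve only the interior parameters $a_1,\dots,a_l$ and $b_1,\dots,b_l$, and are therefore independent of the boundary values $b_0$ and $a_{l+1}$. Hence, if $b_0 = 0$, I can replace $\vec\delta$ by the sequence with $b_0$ (and if desired $a_{l+1}$) increased by $1$ without altering any summand in $\sum_k \psi_k(\vec\delta)$, putting the situation squarely within the hypotheses of Hikita's lemma and completing the proof.
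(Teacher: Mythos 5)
Your proof is correct and follows exactly the route the paper uses: the reversal identity \( \psi_k(\vec\delta)=\varphi_{l-k}(\overline{\vec\delta}) \), stated in the text immediately before the lemma, combined with \Cref{lem:Hikita}. You additionally flag and patch the hypothesis mismatch (the case \( b_0=0 \), where \Cref{lem:Hikita} does not directly apply to \( \overline{\vec\delta} \)) by noting that \( \varphi_k \) and \( \psi_k \) are independent of the boundary parameters \( b_0 \) and \( a_{l+1} \); the paper relies on this silently, so your version is slightly more careful than the original.
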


\begin{defn}\label{defn:Hikita_pT}
  Let \( \mm\in \HH_n \) and \( T\in \SYT(n) \). Let
  \( T'\in\SYT(n-1) \) be the tableau obtained from \( T \) by
  removing the entry \( n \). Let \( \mm'\in \HH_{n-1} \) be
  the Hessenberg function given by \( \mm'(i) = \mm(i+1)-1 \) for
  \( i\in [n-1] \) and let \( r= n-\mm(1) \). We define
  \( \overline{p}_\mm(T;q) \) recursively by \( \overline{p}_\emptyset(\emptyset;q)=1 \) for
  \( n=0 \) and
  \[
    \overline{p}_\mm(T;q)=
    \begin{cases}
       \varphi_{k}^{(r)}(T';q) \overline{p}_{\mm'}(T';q)
      & \mbox{if \( T=f_{k}^{(r)}(T') \) for some \( 0\le k\le l \),}\\
      0 & \mbox{otherwise,}
    \end{cases}
  \]
  where
  \[
    \varphi_k^{(r)}(T' ; q)= \varphi_k(\vec\delta^{(r)}(T')).
  \]
\end{defn}

\begin{lem}\label{lem:1}
  Let \( \mm \in \HH_n \) and \( T\in \SYT(n) \) such that 
  \( \overline{p}_\mm(T;q)\ne0 \). Let \( T'\in\SYT(n-1) \) be the tableau
  obtained from \( T \) by removing \( n \) and suppose
  \( \vec \delta^{(r)}(T') =\left(1^{b_0}, 0^{a_1}, 1^{b_1}, \ldots,
    0^{a_l}, 1^{b_l}, 0^{a_{l+1}}\right) \), where \( r= n-\mm(1) \).
  Then the following hold:
  \begin{enumerate}
  \item Every column of \( T' \) contains at most one integer greater than \( r \).
  \item We have \( \sum_{i=0}^l b_i = \mm(1)-1 \).
  \end{enumerate}
\end{lem}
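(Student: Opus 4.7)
The plan is to prove (1) by induction on $n$; statement (2) is then an immediate counting consequence. Indeed, $\sum_{i=0}^{l} b_i$ counts the columns of $T'$ that contain at least one entry exceeding $r = n - \mm(1)$, while the total number of such entries in $T'\in\SYT(n-1)$ is $n-1-r = \mm(1)-1$. Once (1) is established, each of these $\mm(1)-1$ ``large'' entries occupies a distinct column, so the two counts coincide.

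For the inductive step, let $T'' \in \SYT(n-2)$ denote $T'$ with $n-1$ removed, and let $r' = (n-1) - \mm'(1) = n - \mm(2)$ be the threshold used at the next level of the recursion. The hypothesis $\overline{p}_\mm(T;q) \ne 0$ forces both $\overline{p}_{\mm'}(T';q) \ne 0$ and $T' = f_{k}^{(r')}(T'')$ for some $k$. Applying the inductive hypothesis to the pair $(\mm', T')$ yields that every column of $T''$ contains at most one entry exceeding $r'$. Since $\mm$ is weakly increasing, $r = n - \mm(1) \ge n - \mm(2) = r'$, so the set $\{r+1,\dots,n-1\}$ is contained in $\{r'+1,\dots,n-1\}$; hence every column of $T''$ also contains at most one entry exceeding $r$.

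It then remains to examine what happens when $n-1$ is reinserted to form $T'$. By the construction of $f_k^{(r')}$, the new entry is appended to a column $c$ with $\delta_{c}^{(r')}(T'') = 0$, meaning column $c$ of $T''$ has no entry above $r'$, and therefore none above $r$ either. The inserted entry $n-1$ itself exceeds $r$ only when $\mm(1) \ge 2$, in which case column $c$ of $T'$ ends up containing exactly one entry above $r$; when $\mm(1) = 1$ we have $r = n - 1$ and $n-1 \not> r$, so nothing new is contributed. All other columns of $T'$ are inherited unchanged from $T''$, so the bound persists and (1) is proved. The one subtlety worth highlighting is that the induction is run at the \emph{smaller} threshold $r'$ but is used to deduce a statement at the \emph{larger} threshold $r$; monotonicity of $\mm$ is precisely what makes the inductive statement strong enough to imply what we need, and is the key conceptual point in the argument.
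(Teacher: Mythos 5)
Your proof is correct and follows essentially the same strategy as the paper's: induction on $n$ for part~(1), using the nonvanishing of $\overline{p}$ to unfold the recursive structure and the weak monotonicity $\mm(1)\le\mm(2)$ (hence $r\ge r'$) to transfer the inductive bound; part~(2) then follows by counting the $\mm(1)-1$ entries of $T'$ exceeding $r$, which by~(1) lie in distinct columns. The only cosmetic difference is bookkeeping: the paper effectively strengthens the inductive claim to a statement about $T$ itself and closes the loop by analyzing $T = f_k^{(r)}(T')$, whereas you keep the claim about $T'$ and analyze the insertion $T' = f_{k'}^{(r')}(T'')$; the two are the same argument shifted by one level, and your version is arguably spelled out more carefully, including the case split on whether $n-1 > r$.
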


\begin{proof}
  For the first statement, we use induction on \( n \), where the
  cases \( n=0 \) and \( n=1 \) are trivial. Suppose \( n\ge2 \) and
  the statement holds for \( n-1 \). Let
  \( \mm'\in \HH_{n-1} \) and \( \mm''\in \HH_{n-2} \)
  be the Hessenberg functions given by \( \mm'(i) = \mm(i+1)-1 \) for
  \( i\in [n-1] \) and \( \mm''(i) = \mm(i+2)-2 \) for
  \( i\in [n-2] \). Let \( T''\in\SYT(n-1) \) be the tableau obtained
  from \( T' \) by removing the entry \( n-1 \) and suppose
  \( \vec \delta^{(r')}(T'') =\left(1^{b'_0}, 0^{a'_1}, 1^{b'_1},
    \ldots, 0^{a'_{l'}}, 1^{b'_{l'}}, 0^{a'_{l'+1}}\right) \), where
  \( r'= n-1-\mm'(1) = n-\mm(2) \).
  Since \( \overline{p}_\mm(T;q) \ne 0 \), we have \( T=f_{k}^{(r)}(T') \) for
  some \( 0\le k\le l \). Then
  \( \overline{p}_\mm(T;q) = \varphi^{(r)}_k(T';q) \overline{p}_{\mm'}(T;q) \), so
  \( \overline{p}_{\mm'}(T';q) \ne 0 \). Thus, by the induction hypothesis, every
  column of \( T' \) has at most one integer greater than \( r' \).
  Moreover, by the construction of \( T=f_{k}^{(r)}(T') \), the column
  of \( T \) containing \( n \) does not contain any integer greater
  than \( r' \), except for \( n \). Since
  \( r=n-\mm(1)\ge n-\mm(2)=r' \), we obtain that every column of
  \( T \) has at most one integer greater than \( r \), as desired.

  For the second statement, observe that \( \sum_{i=0}^l b_i \) is the
  number of columns of \( T' \) that contain an integer greater than
  \( r \). By the first statement, this is exactly the number of
  integers greater than \( r \) in \( [n] \), which is
  \( n+1-r=\mm(1)-1 \). This completes the proof.
\end{proof}

\begin{prop}
  \label{prop:modified_p}
  Let \( \mm\in \HH_n \).
  For \( \lambda\vdash n \) and \( T\in\SYT(\lambda) \),
  \begin{equation}\label{eq:8}
    p_\mm(T;q) = q^{\area(\mm)-\sum_{j=1}^{\ell(\lambda)} \binom{\lambda_j}{2}}\overline{p}_\mm(T;q).
  \end{equation}
\end{prop}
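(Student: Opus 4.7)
The plan is to unroll both recursions step by step and use the direct relation \eqref{eq:29} between $\psi$ and $\varphi$, then sum the resulting powers of $q$. Both $p_\mm(T;q)$ and $\overline{p}_\mm(T;q)$ are defined by the identical recursive structure and vanish on exactly the same tableaux, so I would first dispose of the case $\overline{p}_\mm(T;q)=0$, leaving the task of computing the total $q$-exponent accumulated in the nonvanishing case.

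For the indexing, let $T_i\in\SYT(i)$ be the subtableau of $T$ with entries in $\{1,\dots,i\}$, and let $\mm^{(i)}\in\HH_i$ be the Hessenberg function arising at the $i$-th level of the recursion. Iterating the shift $\mm^{(k-1)}(j)=\mm^{(k)}(j+1)-1$ gives $\mm^{(i)}(j)=\mm(j+n-i)-(n-i)$, whence $r_i=i-\mm^{(i)}(1)=n-\mm(n-i+1)$. Writing the block decomposition of $\vec\delta^{(r_i)}(T_{i-1})$ as $(1^{b_0^{(i)}},0^{a_1^{(i)}},\dots,1^{b_{l_i}^{(i)}},0^{a_{l_i+1}^{(i)}})$ and letting $k_i$ be the placement index with $T_i=f_{k_i}^{(r_i)}(T_{i-1})$, the column $c_i$ of entry $i$ in $T$ is
\[
c_i=\sum_{j=1}^{k_i}a_j^{(i)}+\sum_{j=0}^{k_i}b_j^{(i)}+1.
\]

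The heart of the argument is the identification of the per-step exponent. By \eqref{eq:29}, the step adding entry $i$ contributes
\[
e_i:=\sum_{j>k_i}b_j^{(i)}-\sum_{j\le k_i}a_j^{(i)}=s_i-(c_i-1),
\]
where $s_i:=\sum_{j=0}^{l_i}b_j^{(i)}$ is the total number of $1$'s in $\vec\delta^{(r_i)}(T_{i-1})$. Applying Lemma~\ref{lem:1}(2) at level $i$ (to the pair $\mm^{(i)}$ and $T_i$) yields $s_i=\mm^{(i)}(1)-1=\mm(n-i+1)-(n-i+1)$. Summing over $i=1,\dots,n$ and reindexing the first sum by $j=n-i+1$, together with the bijection between cells $(r,c)\in\lambda$ and entries of $T$,
\[
\sum_{i=1}^n e_i=\sum_{j=1}^n\bigl(\mm(j)-j\bigr)-\sum_{(r,c)\in\lambda}(c-1)=\area(\mm)-\sum_{j=1}^{\ell(\lambda)}\binom{\lambda_j}{2},
\]
which is precisely the exponent appearing in \eqref{eq:8}.

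The only delicate point will be the bookkeeping of the iterated Hessenberg functions $\mm^{(i)}$ and the correct identification of $r_i$, so that Lemma~\ref{lem:1}(2) may be invoked at the right recursive level. Beyond this, every remaining identity is a routine manipulation of the definitions, and I do not anticipate a substantive obstacle.
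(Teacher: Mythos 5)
Your proof is correct and follows essentially the same route as the paper's: both rest on the relation \eqref{eq:29} between $\psi_k$ and $\varphi_k$ together with Lemma~\ref{lem:1}(2), and your per-step exponent $e_i = s_i - (c_i-1)$ summed over all levels is exactly the telescoped form of what the paper verifies inductively at each step (where the paper's reduction also terminates in the identity $\sum_i b_i = \mm(1)-1$). The only difference is presentational—global unrolling versus induction on $n$.
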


\begin{proof}
  We proceed by induction on \( n \), where the case \( n=0 \) is
  trivial. Let \( n\ge1 \) and suppose that \eqref{eq:8} holds for
  \( n-1 \). Let \( \mm'\in \HH_{n-1} \) be and
  let \( T'\in\SYT(n-1) \) be the tableau obtained
  from \( T \) by removing the entry \( n \). We may assume that
  \( \overline{p}_\mm(T;q) \ne 0 \), which is equivalent to
  \( p_\mm(T;q) \ne 0 \), because otherwise
  \( \overline{p}_\mm(T;q) = p_\mm(T;q) = 0 \), and thus \eqref{eq:8}
  holds.

  Suppose
  \( \vec\delta^{(r)}(T') =\left(1^{b_0}, 0^{a_1}, 1^{b_1}, \ldots,
    0^{a_l}, 1^{b_l}, 0^{a_{l+1}}\right) \), where \( r=n-\mm(1) \).
  Since \( \overline{p}_\mm(T;q) \ne 0 \), we have \( T=f_{k}^{(r)}(T') \) for
  some \( 0\le k\le l \). By the induction hypothesis,
  \[
    p_\mm(T;q)
    = \psi_{k}^{(r)}(T';q) p_{\mm'}(T;q)
    = \psi_{k}^{(r)}(T';q) \cdot
    q^{\area(\mm')-\sum_{j=1}^{\ell(\mu)} \binom{\mu_j}{2}}
    \overline{p}_{\mm'}(T;q),
  \]
  where \( \mu \) is the shape of \( T' \). Since
  \( \overline{p}_\mm(T;q) = \varphi_{k}^{(r)}(T';q) \overline{p}_{\mm'}(T';q) \),
  in order to prove \eqref{eq:8}, it suffices to show that
  \[
    \varphi_{k}^{(r)}(T';q) \cdot
    q^{\area(\mm)-\sum_{j=1}^{\ell(\lambda)} \binom{\lambda_j}{2}}
   = \psi_{k}^{(r)}(T';q) \cdot
    q^{\area(\mm')-\sum_{j=1}^{\ell(\mu)} \binom{\mu_j}{2}}.
  \]
  By \eqref{eq:29}, the above identity is
  equivalent to
  \begin{equation}\label{eq:16}
    \sum_{i=1}^k a_i + \area(\mm)
    -\sum_{j=1}^{\ell(\lambda)} \binom{\lambda_j}{2}
    = \sum_{i=k+1}^l b_i + \area(\mm')
    -\sum_{j=1}^{\ell(\mu)} \binom{\mu_j}{2}.
  \end{equation}

  Note that \( \area(\mm)=\sum_{i=1}^{n}\left( \mm(i)-i \right)=
  \area(\mm') +\mm(1)-1 \). Since
  \( T=f_{k}^{(r)}(T') \), the unique cell in \( \lambda/\mu \) is in
  column \( c+1 \), where
  \( c = \sum_{j=1}^{k} a_j + \sum_{j=0}^{k} b_j \). Thus,
  \( \sum_{j=1}^{\ell(\lambda)}\binom{\lambda_j}{2} =
  \sum_{j=1}^{\ell(\mu)}\binom{\mu_j}{2} + c \). Combining these
  observations, we can rewrite \eqref{eq:16} as
  \( \mm(1)-1 = \sum_{i=0}^l b_i \), which is proved in \Cref{lem:1}.
  This completes the proof.
\end{proof}

\bibliographystyle{abbrv}

\end{document}